\newtheorem{theorem}{Theorem}[section]
\newtheorem{lemma}[theorem]{Lemma}
\newtheorem{prop}[theorem]{Proposition}
\newtheorem{cor}[theorem]{Corollary}
\newtheorem{definition}[theorem]{Definition}
\newtheorem{example}[theorem]{Example}
\newtheorem{remark}[theorem]{Remark}
\numberwithin{equation}{section}
\newcommand{\GL}[1]{\operatorname{GL}_{#1}(\mathbb{F}_q)}
\newcommand{\F}{\mathcal{F}}
\newcommand{\FF}{\mathbb{F}}
\newcommand{\HH}{\mathcal{H}}
\newcommand{\syt}{{\sf SYT}}
\newcommand{\B}{{\mathcal{B}}}
\newcommand{\CC}{{\mathbb{C}}}
\newcommand{\R}{{\mathbb{R}}}
\newcommand{\Z}{{\mathbb{Z}}}
\newcommand{\D}{{\mathcal{D}}}
\newcommand{\RR}{{\mathcal{R}}}
\newcommand{\TT}{{\mathcal{T}}}
\newcommand{\setofcontent}{{\mathfrak{C}}}
\newcommand{\symm}{{\mathfrak{S}}}
\DeclareMathOperator{\im}{im}
\DeclareMathOperator{\shape}{sh}
\newcommand{\spann}{\operatorname{span}}
\newcommand{\frakt}{\mathfrak{t}}
\newcommand{\fraks}{\mathfrak{s}}
\newcommand{\frakq}{\mathfrak{q}}
\newcommand{\arinj}{\ar@{_{(}->}}
\newcommand{\arinjrev}{\ar@{^{(}->}}
\newcommand{\arsurj}{\ar@{->>}}
\newcommand{\word}{\mathrm{word}}
\newcommand{\sm}{\setminus}
\newcommand{\basis}{\mathfrak{B}}
\newcommand{\content}{\mathfrak{c}}
\newcommand{\eigen}{\mathcal{E}}
\newcommand{\tT}{\widetilde{T}}
\newcommand{\gtdom}{\mathrel{\triangleright}}
\newcommand{\geqdom}{\mathrel{\trianglerighteq}}
\newcommand{\leqdom}{\mathrel{\trianglelefteq}}
\newcommand{\ledom}{\mathrel{\triangleleft}}
\newcommand{\res}{{\sf Res}}
\newcommand{\ind}{{\sf Ind}}
\newcommand{\End}{{\sf End}}
\newcommand{\DD}{\mathfrak{D}}
\title[Spectrum of random-to-random shuffling in the Hecke algebra]{Spectrum of random-to-random shuffling \\ in the Hecke algebra}
\author[Axelrod-Freed]{Ilani Axelrod-Freed}
\email{ilani\_af@mit.edu}
\address{Massachusetts Institute of Technology, Cambridge MA, 02139}
\author[Brauner]{Sarah Brauner}
\email{sarahbrauner@gmail.com}
\address{Brown University, Providence RI, 02906}
\author[Chiang, Commins]{Judy Hsin-Hui Chiang, Patricia Commins}
\email{hchiang@umn.edu, commi010@umn.edu}
\address{University of Minnesota, Minneapolis MN 55455}
\author[Lang]{Veronica Lang}
\email{langv@umich.edu}
\address{University of Michigan, Ann Arbor MI 48109}
\keywords{Hecke algebra, Markov chain, card shuffling, random-to-random, Jucys-Murphy elements, Young seminormal forms, Okounkov-Vershik approach to representation theory, integral probability}
\subjclass{
05E10, 
60J10 
20C08 
}
\begin{document}
\begin{abstract}
We generalize random-to-random shuffling from a Markov chain on the symmetric group to one on the Type $A$ Iwahori Hecke algebra, and show that its eigenvalues are polynomials in $q$ with non-negative integer coefficients. Setting $q=1$ recovers results of Dieker and Saliola, whose computation of the spectrum of random-to-random in the symmetric group resolved a nearly 20 year old conjecture by Uyemura-Reyes. Our methods simplify their proofs by drawing novel connections to the Jucys-Murphy elements of the Hecke algebra, Young seminormal forms, and the Okounkov-Vershik approach to representation theory. 
\end{abstract}
\maketitle
\section{Introduction}
In this paper, we generalize the well-known but mysterious shuffling process \emph{random-to-random} $\RR_n$ from a Markov chain on the symmetric group $\symm_n$ to a Markov chain on the Type A Iwahori Hecke algebra $\HH_n(q)$. Building on seminal work by Dieker and Saliola \cite{DiekerSaliola}, we compute the complete spectrum of $\RR_n$ in $\HH_n(q)$. Our methods simplify the proof for $q=1$ by adopting the Okounkov-Vershik approach to the representation theory of the symmetric group and Hecke algebra, and drawing connections to the Jucys-Murphy elements and Young seminormal basis of $\symm_n$ and $\HH_n(q)$. 

This project is motivated by a growing interest in studying random walks on $\HH_n(q)$ from a combinatorial perspective. There is a rich connection between $\HH_n(q)$ and interacting particle systems, beginning with the work of Alcaraz--Droz--Henkel--Rittenberg \cite{alcaraz1994reaction} who realized that the generators of \emph{asymmetric simple exclusion processes (ASEPs)} satisfy the algebra relations of $\HH_n(q)$. Bufetov then showed in \cite{bufetov2020interacting} that this connection could be generalized to numerous important interacting particle systems with multiple species arising from statistical mechanics, including  ASEP, $M$-exclusion, TASEP, and stochastic vertex models \cite{bufetov2020interacting}. Many of these systems have been studied using algebraic combinatorics with great success; see for example \cite{ayyer2020modified, ayyer2022modified, corteel2022compact, corteel2017combinatorics, corteel2021cylindric,corteel2022multiline, diaconis2000analysis, diaconis2023double}. 

On the other hand, there is a beautiful theory of random walks on hyperplane arrangements (and more generally, left regular bands) pioneered by Bidigare--Hanlon--Rockmore \cite{bidigare1999combinatorial} and Brown \cite{BrownOnLRBs}, which was originally built as a way of understanding and computing the mixing times of card shuffling processes, i.e. Markov chains on the symmetric group. This approach forges important links between combinatorial representation theory, probability, statistical physics and dynamic data storage; see \cite{ assaf2009riffle, athanasiadis2010functions,ayyer2014combinatorial, ayyer2017spectral, diaconis2003mathematical, diaconis1995riffle, diaconis2000analysis, diaconis2023double, DiekerSaliola, Lafreniere-thesis}.  It has since been generalized to a broad class of random walks on semigroups in work of  Ayyer--Schilling--Steinberg--Thi\'ery \cite{ASST} and Rhodes--Schilling \cite{rhodes2019unified}.

Our work serves to unite these two perspectives, by defining and studying one of the most important shuffling processes arising in the latter setting---random-to-random---as a Markov chain on $\HH_n(q)$.

\subsubsection*{Random-to-random shuffling in the symmetric group}
The random-to-random shuffling process $\RR_n$ acts on a permutation $(w_1, \cdots, w_n) \in \symm_n$ by removing a ``card'' $w_i$ with uniform probability then re-inserting it with uniform probability to a new position in the deck. One can think of this as a two-step process: 
\begin{enumerate}
    \item Apply \emph{random-to-bottom} $\B^*_n$, which moves $w_i$ to the end of the word;
    \item Apply \emph{bottom-to-random} $\B_n$, which moves the last letter of the word to a new position $j$. 
\end{enumerate}
Formally, $\RR_n$ is the linear map $\CC[\symm_n] \longrightarrow \CC[\symm_n]$ acting on $w \in \symm_n$ by \emph{right} multiplication (i.e. by position):
\begin{align*}
    \RR_n(w):= w \cdot \underbrace{\left( \sum_{i=1}^{n} s_{i} \cdots s_{n-1}\right)}_{=:\B_n^*} \underbrace{\left( \sum_{j = 1}^{n} s_{n-1} \cdots s_{j} \right)}_{=:\B_n},
\end{align*}
where $s_k$ denotes the transposition $(k,k+1)$ in $\symm_n$ swapping $k$ and $k + 1.$ To obtain a random walk on $\CC[\symm_n]$, normalize both $\B_n^*$ and $\B_n$ by $\frac{1}{n}$; after normalization, the coefficient $[u]\RR_n(w)$ of $u \in \symm_n$ is the probability of obtaining $u$ from $w$ after one iteration of $\RR_n$. 

Random-to-bottom shuffling $\B_n^*$ is very well-studied and has numerous interesting connections to combinatorics.  Bidigare--Halon--Rockmore showed in \cite{bidigare1999combinatorial} that the eigenvalues of $\B_n^*$ acting on $\CC[\symm_n]$ are $0, 1, \cdots, n-2, n$, recovering a result of Phatarfod \cite{Phatarfod1991ONTM}. The multiplicity of the eigenvalue $j$ is $\binom{n}{j} d_{n-j},$
where $d_{n-j}$ is the $(n-j)$-\emph{derangement number} counting the number of permutations in $\symm_{n-j}$ with zero fixed points. The kernel of $\B^*_n$ carries the so-called \emph{derangement representation} introduced by D\'esarm\'enien and Wachs \cite{FrenchDesarmenienWachs}, which is related to well-loved objects such as Gessel's fundamental quassisymmetric function \cite{Gessel}, the free left regular band \cite{brauner2023invariant}, the complex of injective words \cite{ReinerWebb}, and the configuration space of $n$ points in $\R^3$ \cite{hersh2017representation}. 

Random-to-\emph{random} shuffling is significantly harder to understand. It was first defined by Diaconis (see \cite[p100]{Uyemura-Reyes}), and studied by Uyemura-Reyes in his thesis \cite{Uyemura-Reyes}. Uyemura-Reyes conjectured that the eigenvalues of $\RR_n$ were non-negative integers, and proved this to be true in several cases. The full conjecture was open for almost two decades, until it was resolved by Dieker and Saliola in 2018 \cite{DiekerSaliola}. Random-to-random is a special case of a broader family of ``symmetrized shuffling operators'' whose spectral properties are still quite mysterious; see \cite{kqr2r2025,Lafreniere-thesis, RSW}.

To state Dieker and Saliola's solution, recall that a skew shape (i.e., skew Young diagram) $\lambda \sm \mu$ is a \emph{horizontal strip} if it has at most one box in each column. The \emph{content} $ \content_{\lambda\sm \mu}$ of a skew shape $\lambda \sm \mu$ is defined by summing the difference of the column and row number for each box in $\lambda \sm \mu.$ Formally, letting $(i,j)$ indicate the coordinates of the box in the $i$-th row (ordered top-to-bottom, in English notation) and $j$-th column (ordered left-to-right),
\[ \content_{\lambda\sm \mu}:= \sum_{(i,j) \in \lambda \sm \mu} (j-i).\]  

Dieker and Saliola showed that the eigenvalues of $\RR_n$ acting on $\CC[\symm_n]$ are indexed by  horizontal strips $\lambda \sm \mu$, where $|\lambda| = n$ and $\mu \subseteq \lambda.$
The horizontal strip $\lambda \sm \mu$ corresponds to the eigenvalue 
\begin{equation}\label{eq:q=1_eigenvalues} 
\eigen_{\lambda \sm \mu} := \content_{\lambda \sm \mu} + \sum_{k=|\mu|+1}^{n} k. \end{equation}
Equation \eqref{eq:q=1_eigenvalues} implies that, remarkably,  $\eigen_{\lambda \sm \mu} \in \Z_{\geq 0}$, thereby proving Uyemura-Reyes's conjecture. Using \eqref{eq:q=1_eigenvalues}, Bernstein--Nestoridi \cite{bernstein2019cutoff} proved that $\RR_n$ exhibits cutoff behavior at 
\[ \frac{3}{4} n \log(n) - \frac{1}{4} \log(\log(n)). \]

At the heart of the Dieker--Saliola's proofs in \cite{DiekerSaliola} is the representation theory of the symmetric group, which they use to inductively construct eigenvectors of $\RR_n$ from the kernels of $\RR_j$ for $j < n$. Our work will follow a similar strategy, but utilize different tools that both simplify their arguments and deepen the connections between $\RR_n$ and fundamental concepts in representation theory.

\subsubsection*{Generalization to the Hecke algebra}
Given $q \in \CC$, the Type $A$ Iwahori Hecke algebra $\HH_n(q)$ is the associative $\CC$-algebra on the generators $T_{s_1}, \cdots, T_{s_{n-1}}$, subject to the relations
\begin{enumerate}
    \item $T_{s_i}^2 = (q-1)T_{s_i} + q$ for all $1 \leq i \leq n-1$, \smallskip
    \item $T_{s_i} T_{s_j} = T_{s_j} T_{s_i}$ when $|i-j| \geq 2$, and \smallskip
    \item $T_{s_i} T_{s_{i+1}} T_{s_i} = T_{s_{i+1}} T_{s_i} T_{s_{i+1}}$ for all $1 \leq i \leq n-2$. \smallskip
\end{enumerate} 
Note that $\HH_n(q)$ is a $q$-deformation of the symmetric group algebra $\CC[\symm_n]$, where $\HH_n(1) = \CC[\symm_n]$. As in the case of $\CC[\symm_n]$, the Hecke algebra has a $\CC$-basis given by $\{ T_w \}$ for $w$ in $\symm_n$. 

We define the following $q$-deformation of $\RR_n$.

\begin{definition}\label{def:qr2r_intro}
For any $q \in \CC$, define $q$-random-to-random shuffling $\RR_n(q): \HH_n(q) \longrightarrow \HH_n(q)$ by linearly extending 
\[ {\RR_n(q)}(T_w):= T_w \cdot \underbrace{\left( \sum_{i=1}^{n} T_{s_i} \cdots T_{s_{n-1}} \right)}_{=: {\B^*_n(q)}} \underbrace{\left( \sum_{j=1}^{n} \  T_{s_{n-1}} \cdots T_{s_j} \right)}_{=:{\B_n(q)}}. \]
 \end{definition}

Putting additional assumptions on $q$ allows us to define a random walk $\widetilde{\RR}_n(q)$ on $\HH_n(q)$ using a construction by Diaconis--Ram \cite[Theorem 4.3]{diaconis2000analysis}. Assume $q \geq 1 \in \R$, so that $q^{-1} \in (0,1] \subseteq \R$ can be understood as a probability. Define $\tT_{s_i}:= q^{-1}T_{s_i}$, and more generally let $\tT_w:= q^{-\ell(w)}T_w$, where $\ell(w)$ is the Coxeter length of the reduced word $w \in \symm_n$. Then $\tT_{s_i}$ acts by right multiplication on $\tT_w$: 
\begin{equation}\label{eq:heckeaction}
    \tT_w \cdot \tT_{s_i} := \begin{cases}
        \tT_{w s_i} & \ell(w s_i) > \ell(w) \\
        q^{-1} \tT_{w s_i} + (1-q^{-1}) \tT_w & \ell(w s_i) < \ell(w),
    \end{cases}
\end{equation}
thereby defining a Markov chain on $\HH_n(q)$.

Recall that the $q$-integer $[n]_q$ is defined for any $n \in \Z$ by 
\[ [n]_q:= \frac{1-q^n}{1-q} = \begin{cases}
    1 + q+ \cdots + q^{n-1} & n > 0 \\
    0 & n = 0\\
-q^{-1} - q^{-2} - \cdots - q^{n} & n < 0. 
\end{cases}\]
We can thus define a random walk $\widetilde{\RR}_n(q)$ by rewriting $\RR_n(q)$ in terms of the $\tT_{s_i}$ and normalizing: 
\begin{equation}\label{eq:probr2rinto} {\widetilde{\RR}_n(q)}(\tT_w):= \tT_w \cdot \underbrace{{\left( \frac{1}{[n]_q} \sum_{i=1}^{n} q^{n-i} \  \tT_{s_i} \cdots \tT_{s_{n-1}} \right)}}_{=: \widetilde{\B}_n^*(q)} \underbrace{{\left(\frac{1}{[n]_q} \sum_{j=1}^{n} q^{n-j} \  \tT_{s_{n-1}} \cdots \tT_{s_j} \right)}}_{=: \widetilde{\B}_n(q)}. \end{equation}

The goal of our work is to characterize the spectrum of ${\RR_n(q)}$ acting by right multiplication on $\HH_n(q)$. Observe that the eigenvalues of $\widetilde{\RR}_n(q)$ can be obtained immediately from those of $\RR_n(q)$ by restricting to the case where $q^{-1} \in (0,1] \subseteq \R$ and dividing by $([n]_q)^2$.   

Our spectral formula for $\RR_n(q)$ uses two combinatorial statistics. First, given a skew shape $\lambda \sm \mu$, we define the $q$-content 
\[ \content_{\lambda \sm \mu}(q):= \sum_{(i,j) \in \lambda \sm \mu} [j-i]_q. \]
Note that $\content_{\lambda \sm \mu}(q)$ is a Laurent polynomial in $q$ with integer coefficients; however, $q^{|\lambda|} \content_{\lambda \sm \mu}(q) \in \Z[q]$. 

Second, a standard Young tableau $\frakt$ has a \emph{descent} at position $i \in [n-1]$ if $i+1$ appears south and weakly west of $i$ in $\frakt$. Let ${\sf Des}(\frakt)$ be the set of descents of $\frakt$. A standard tableau $\frakt$ of size $n$ is a \emph{desarrangement tableau} if the minimum element of $[n] \setminus {\sf Des}(\frakt)$ is even. Let $d^\mu$ be the number of desarrangement tableaux of shape $\mu$ and $f^\mu$ be the number of standard Young tableaux of shape $\mu$. Our main result is Theorem \ref{thm:intromainthm}.

\begin{theorem}\label{thm:intromainthm} For any $q \in \CC,$ the right action of $\RR_n(q)$ on $\HH_n(q)$ has the following properties:
\begin{enumerate}
    \item All eigenvalues of $\RR_n(q)$ are of the form 
            \[ \eigen_{\lambda \sm \mu}(q) =  q^{n} \content_{\lambda \sm \mu}(q) + \sum_{k=|\mu|+1}^{n} q^{n-k} \  [k]_q \]
    where $\lambda \sm \mu$ is a horizontal strip with $|\lambda| = n$ and $0 \leq |\mu| \leq n$. \medskip
    
    \item The (algebraic) multiplicity of a fixed eigenvalue $\eigen(q)$ is given by \[ \sum_{\substack{\lambda \sm \mu \textrm{ a horizontal strip}: \\ \eigen_{\lambda \sm \mu}(q) = \eigen(q)}} f^\lambda  \ d^\mu. \]

\item Every eigenvalue $\eigen_{\lambda \sm \mu}(q)$ is a polynomial in $q$ with non-negative integer coefficients.
\medskip
\item If $q \in \R_{>0}$, then $\RR_n(q)$ is diagonalizable.
\end{enumerate}
\end{theorem}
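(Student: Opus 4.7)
My plan is to adapt the representation-theoretic strategy of Dieker--Saliola to the Hecke algebra by leveraging the Okounkov--Vershik construction of the Young seminormal basis and the Jucys--Murphy (JM) elements $L_1,\ldots,L_n$ of $\HH_n(q)$. For $q\in\R_{>0}$ (and in fact for all but finitely many $q\in\CC$) the algebra $\HH_n(q)$ is semisimple, so the left regular representation decomposes as $\bigoplus_\lambda (S^\lambda)^{\oplus f^\lambda}$, where each Specht module $S^\lambda$ carries a seminormal basis $\{v_\frakt : \frakt\in\syt(\lambda)\}$ on which the JM elements act diagonally by $q$-contents of $\frakt$. Since $\RR_n(q)$ acts by right multiplication by $\B_n^*(q)\B_n(q)\in\HH_n(q)$, its spectral analysis reduces to its action on each copy of $S^\lambda$ appearing $f^\lambda$ times; this already produces the $f^\lambda$ factor in the multiplicity formula of part (2).

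The key computation is to express $\B_n^*(q)$ and $\B_n(q)$ in terms of JM elements. Adapting the $q=1$ formula $\sum_j s_{n-1}\cdots s_j = \sum_j (j,n)$, I expect an identity that writes $\B_n(q)$ as a sum of JM-type elements whose action on the seminormal basis is triangular with respect to the branching filtration $\HH_n(q) \supset \HH_{n-1}(q) \supset \cdots$. Composing $\B_n^*(q)$ with $\B_n(q)$, one then obtains that $\RR_n(q)$ preserves this filtration and, on each associated graded piece indexed by a subdiagram $\mu\subseteq\lambda$, acts by a scalar. The eigenvalue then arises as a JM-eigenvalue summed over the boxes of the horizontal strip $\lambda\sm\mu$, contributing $q^n\content_{\lambda\sm\mu}(q)$, together with a bulk contribution $\sum_{k=|\mu|+1}^{n} q^{n-k}[k]_q$ from the part of the tableau that is unmoved by the shuffle.

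To realize the full eigenspace decomposition, I would construct eigenvectors inductively from the kernels of $\RR_j(q)$ for $j<n$, using the branching $\HH_{n-1}(q)\hookrightarrow\HH_n(q)$ and the desarrangement-tableau description of the null space coming from the Hecke analogue of the derangement representation. For each horizontal strip $\lambda\sm\mu$ this produces $f^\lambda d^\mu$ eigenvectors of eigenvalue $\eigen_{\lambda\sm\mu}(q)$. The identity $\sum_{\lambda\sm\mu}f^\lambda d^\mu = n!$ matches $\dim\HH_n(q)$, simultaneously establishing parts (1) and (2). For part (4), one concludes diagonalizability for $q\in\R_{>0}$ either from this explicit basis or, more conceptually, by showing that $\B_n(q)$ and $\B_n^*(q)$ are adjoint under the natural inner product making the seminormal basis orthogonal, so that $\RR_n(q)=\B_n^*(q)\B_n(q)$ is self-adjoint and hence diagonalizable with non-negative real eigenvalues.

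The main obstacle will be part (3): showing that $q^n\content_{\lambda\sm\mu}(q)+\sum_{k=|\mu|+1}^n q^{n-k}[k]_q$ lies in $\Z_{\ge 0}[q]$. The $q$-contents $[j-i]_q$ are Laurent polynomials with negative coefficients when $j<i$, so positivity is not manifest. My approach would be to rewrite the full sum by pairing, for each row of $\lambda\sm\mu$, the negative $[j-i]_q$ contributions with a matching slice of the bulk sum $q^{n-k}[k]_q$, using the horizontal-strip condition (at most one box per column) to ensure the matching is injective. A secondary technical point is verifying that the JM expansion of $\B_n^*(q)\B_n(q)$ really recovers the stated eigenvalue on the seminormal basis; I expect this to reduce to a Pieri-type identity in $\HH_n(q)$ relating $\RR_n(q)$-eigenvectors of shape $\lambda$ to horizontal strips $\lambda\sm\mu$, mirroring the shape of the $q=1$ argument but with JM-eigenvalues replaced by their $q$-analogues.
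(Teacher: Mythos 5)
Your skeleton (seminormal basis, JM elements, inductive construction of eigenvectors from kernels of $\RR_j(q)$, desarrangement multiplicities) matches the paper's strategy, but two of your load-bearing steps fail as stated. First, the identity you want to adapt, $\sum_j s_{n-1}\cdots s_j = \sum_j (j,n)$, is false: the summands on the left are the cycles $(j,j+1,\dots,n)$, not transpositions, so $\B_n(q)$ is not a sum of JM-type elements, and $\RR_n(q)$ is not triangular with scalar action on the graded pieces of the branching filtration. The actual mechanism in the paper is an intertwining recursion, $\B_n(q)\,\RR_n(q) = \bigl(q\,\RR_{n-1}(q) + [n]_q + q^n J_n(q)\bigr)\B_n(q)$, proved by a direct computation in the generators; eigenvectors are then produced by applying $\Phi_{\lambda\sm\lambda'}\,\B_n(q)\,p_\lambda$ to eigenvectors one rank down, and the JM eigenvalue $q^n\content_{\lambda\sm\lambda'}(q)$ enters through Theorem \ref{thm:dipperjamesyoungbasis}(2), not through a rewriting of $\B_n(q)$ itself. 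Second, your completeness argument is circular: producing $f^\lambda d^\mu$ candidate eigenvectors whose count totals $n!$ establishes nothing unless you also prove they are nonzero and linearly independent (or span), and the recursive construction genuinely can produce zero vectors. This is where the bulk of the paper's work lies: the Straightening Lemma (via the seminormal relations) to show the constructed set $\basis_\lambda$ spans $S^\lambda$ for $q\in\R_{>0}$, the Horizontal Strip Lemma to kill non-strip contributions, and crucially the input $\dim\ker\bigl(\RR_j(q)\vert_{S^\mu}\bigr) = d^\mu$, which you take for granted but which the paper must prove via the flag-variety model of $\B_n^*(q)$ (Brown's chain and the double centralizer theorem) combined with the positivity lemma $\ker\RR_n(q)=\ker\B_n^*(q)$ for $q\in\R_{>0}$. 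Relatedly, your ``all but finitely many $q$'' framing does not work: the spanning argument uses positivity of $q$ in an essential way, and for general complex $q$ the operator need not be diagonalizable; parts (1) and (2) for all $q\in\CC$ are obtained by the polynomial-identity trick applied to the characteristic polynomial, a step absent from your plan.

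On the brighter side, your proposal for part (4) via self-adjointness is sound and genuinely different from the paper: with the associative form $(T_x,T_y)=q^{\ell(x)}\delta_{x,y^{-1}}$ one gets a positive-definite pairing on the real span for $q\in\R_{>0}$ under which right multiplication by $a$ has adjoint right multiplication by $a^*$, so $\RR_n(q)=\B_n^*(q)\B_n(q)$ is positive semidefinite and diagonalizable with nonnegative real eigenvalues; the paper only exploits this form to prove $a^*a=0\Rightarrow a=0$ and builds an explicit eigenbasis instead. Finally, part (3) is far easier than you anticipate: pairing the box labeled $k$ in $\frakt^{\lambda\sm\mu}$ with the term $q^{n-k}[k]_q$ gives $q^n[\content]_q + q^{n-k}[k]_q = q^{n-k}[\content+k]_q$, and $\content_{\frakt^{\lambda\sm\mu},k}\ge -k$, so each summand lies in $\Z_{\ge 0}[q]$; no horizontal-strip injectivity argument is needed.
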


Theorem \ref{thm:intromainthm} has the following special cases. 

\begin{cor}\label{cor:introsecondeigen}
The case $\lambda = (n-k,1^k)$ and $\mu = (j-k,1^k)$ gives the eigenvalue
\[ \eigen_{(n-k,1^k) \sm (j-k,1^k)}(q) = [n-j]_q \ [n+j-k]_q.\]
Setting $j=k=0$ gives
$\eigen_{(n) \sm \emptyset}(q) = [n]_q  [n]_q$, which is the largest eigenvalue of $\RR_n(q)$ when $q \in \R_{>0}$.
The corresponding stationary distribution for $\widetilde{\RR}_n(q)$ is 
the \emph{Mallows measure} of $\symm_n$: 
\[ \mathcal{M}(\symm_n, q^{-1}) = \sum_{w \in \symm_n} q^{\ell(w)} \tT_w. \]
\noindent Setting $j = 2$ and $ k=1$ gives $ \eigen_{(n-1,1) \sm (1,1)}(q) = [n-2]_q [n+1]_q$, which is the second largest eigenvalue of $\RR_n(q)$ when $q \in \R_{>0}$.

\end{cor}




Our analysis also has implications for $\B_n^*(q)$ and $\B_n(q)$.

\begin{theorem}\label{thm:introb2r}
For any $q \in \CC$, the right actions of $\B_n^*(q)$ and $\B_n(q)$ on $\HH_n(q)$ have characteristic polynomial 
\[ \chi(y,q) = \prod_{j=0}^{n} (y - [n-j]_q)^{\binom{n}{j}d_{j}},  \]
 where $d_{j}$ is the $j$-derangement number counting permutations in $\symm_{j}$ with zero fixed points. 
 
 Moreover, whenever $\HH_n(q)$ is semisimple, both $\B_n^*(q)$ and $\B_n(q)$ are diagonalizable. 
\end{theorem}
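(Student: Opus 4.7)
The plan is to reduce both parts of the theorem to a single spectral computation for $\B_n^*(q)$ on each irreducible of $\HH_n(q)$ and then carry that out inductively. First, observe that the anti-involution $\psi \colon \HH_n(q) \to \HH_n(q)$ defined by $\psi(T_w) = T_{w^{-1}}$ sends $\B_n^*(q)$ to $\B_n(q)$, since $(s_i s_{i+1} \cdots s_{n-1})^{-1} = s_{n-1}\cdots s_{i+1} s_i$. In the semisimple case, the Artin--Wedderburn isomorphism $\HH_n(q) \cong \bigoplus_{\lambda \vdash n} \End(S^\lambda)$ forces left and right multiplication by any $a \in \HH_n(q)$ on the regular representation to share a characteristic polynomial (both equal to $\prod_\lambda \chi_{a|_{S^\lambda}}(y)^{f^\lambda}$). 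Since $\psi$ conjugates $L_a$ to $R_{\psi(a)}$, the operators $R_{\B_n^*(q)}$ and $R_{\B_n(q)}$ have the same characteristic polynomial for semisimple $q$; as both polynomials depend polynomially on $q$ (the matrix entries of $R_{\B_n^*(q)}$ in the $T_w$-basis are polynomials in $q$), this identity extends to all $q \in \CC$. So it suffices to compute the characteristic polynomial of $R_{\B_n^*(q)}$ and to work with semisimple $q$.

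For semisimple $q$, Artin--Wedderburn reduces the characteristic polynomial of $R_{\B_n^*(q)}$ on $\HH_n(q)$ to a product $\prod_{\lambda \vdash n} \chi_{\B_n^*(q)|_{S^\lambda}}(y)^{f^\lambda}$, so the core of the proof is to analyze $\B_n^*(q)$ on each irreducible $S^\lambda$. I would exploit the straightforward recursion
\[
\B_n^*(q) = 1 + \B_{n-1}^*(q) \cdot T_{s_{n-1}}
\]
(obtained by factoring $T_{s_{n-1}}$ out on the right from every term with $i < n$) combined with the branching rule $S^\lambda \downarrow^{\HH_n(q)}_{\HH_{n-1}(q)} \cong \bigoplus_{\mu \lessdot \lambda} S^\mu$. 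Working in the Young seminormal basis $\{v_\frakt : \frakt \in \syt(\lambda)\}$ adapted to branching, $\B_{n-1}^*(q)$ acts diagonally by induction, and $T_{s_{n-1}}$ acts in $1\times 1$ and $2 \times 2$ blocks indexed by pairs $\{\frakt, s_{n-1} \cdot \frakt\}$ via the explicit seminormal formulas. Hence $\B_n^*(q) = 1 + \B_{n-1}^*(q) T_{s_{n-1}}$ has the same block structure, and each block can be diagonalized explicitly; the resulting eigenvalues should turn out to be $[n-k(\frakt)]_q$ for a combinatorial statistic $k(\frakt)$ on standard tableaux, closely related to the desarrangement statistic introduced earlier in the paper.

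Aggregating over $\lambda$, the multiplicity of eigenvalue $[n-j]_q$ on $\HH_n(q)$ equals $\sum_{\lambda \vdash n} f^\lambda \cdot |\{\frakt \in \syt(\lambda) : k(\frakt) = j\}|$, which by a standard RSK-type enumeration equals the number of permutations of $[n]$ with exactly $n-j$ fixed points, i.e., $\binom{n}{j} d_j$. This establishes the characteristic polynomial formula. For diagonalizability in the semisimple case, the explicit eigenbasis of each $S^\lambda$ produced in the inductive step gives an eigenbasis of $R_{\B_n^*(q)}$ on $\HH_n(q)$; the corresponding statement for $\B_n(q)$ follows by applying $\psi$.

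The main obstacle is executing the inductive step: although the recursion $\B_n^*(q) = 1 + \B_{n-1}^*(q) T_{s_{n-1}}$ and the action of $T_{s_{n-1}}$ on seminormal vectors are both explicit, $\B_n^*(q)$ is non-central, so constructing eigenvectors requires simultaneously diagonalizing a $2 \times 2$ block for each pair $\{\frakt, s_{n-1} \cdot \frakt\}$ and verifying that the resulting eigenvalues have the claimed form $[n-k(\frakt)]_q$ for the correct combinatorial statistic $k$.
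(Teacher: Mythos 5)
There is a genuine gap at the heart of your argument. Your reductions at the ends are fine: the recursion $\B_n^*(q) = 1 + \B_{n-1}^*(q)\,T_{s_{n-1}}$ is correct, the transfer between $\B_n^*(q)$ and $\B_n(q)$ via the anti-involution $*$ together with Artin--Wedderburn is sound, and extending a characteristic-polynomial identity from semisimple $q$ to all $q \in \CC$ by polynomiality of the matrix entries is exactly the kind of argument the paper also uses (Lemma \ref{lem:suffices-to-prove-for-infinite-q}). But the core spectral computation on $S^\lambda$ is not carried out, and the mechanism you propose rests on a false premise. You assert that, in a seminormal basis adapted to branching, ``$\B_{n-1}^*(q)$ acts diagonally by induction.'' The induction hypothesis only gives you that $\B_{n-1}^*(q)$ is diagonalizable on each $S^\mu$, $\mu \lessdot \lambda$; its eigenvectors are \emph{not} the seminormal vectors. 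The seminormal basis diagonalizes the Jucys--Murphy elements, and $\B_{n-1}^*(q)$ is neither a Jucys--Murphy element nor central -- already at $q=1$ its eigenvectors inside an irreducible (e.g.\ a basis of its kernel, carrying the derangement representation) are complicated and have no known explicit description, which is precisely why the paper works with an abstract basis $\kappa_\mu$. Consequently $\B_{n-1}^*(q)$ and $T_{s_{n-1}}$ do not share a common $1\times1$/$2\times2$ block structure: the $T_{s_{n-1}}$-blocks pair seminormal vectors $\{w_\frakt, w_{\frakt\cdot s_{n-1}}\}$, while the $\B_{n-1}^*(q)$-eigenvectors mix many seminormal vectors across these pairs, so the product $\B_{n-1}^*(q)T_{s_{n-1}}$ admits no small-block decomposition and your proposed diagonalization step collapses. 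The statements that the eigenvalues ``should turn out to be $[n-k(\frakt)]_q$'' for a tableau statistic $k$ and that an ``RSK-type enumeration'' yields multiplicity $\binom{n}{j}d_j$ are exactly the content to be proved, and nothing in the proposal establishes them.

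For comparison, the paper proves this theorem by an entirely different route: it identifies $\B_n^*(q)$ (after the flip $\tau$ sending it to $\TT_n^*(q)$) with Brown's Markov chain $x^{(q)}$ on complete flags of $\FF_q^n$ when $q$ is a prime power (Proposition \ref{xisrantomtotop}), imports the known spectrum and eigenspace decomposition of $x^{(q)}$ from \cite{brauner2023invariant} (Theorem \ref{thm:xeigenvalues}), transfers this to $\HH_n(q)$ via the double centralizer theorem (Corollary \ref{cor:eigenvaluesinbimoduledecomposition}), computes the multiplicities $\binom{n}{j}d_j$ via the Pieri rule, and then extends to all $q \in \CC$ by the polynomial identity trick; semisimple diagonalizability follows because the annihilating polynomial $\prod_{j \in [0,n]\sm\{1\}}(y-[n-j]_q)$ has distinct roots. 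If you want to avoid the flag-variety input, you would need some genuinely new idea to control the spectrum of $\B_n^*(q)$ on each $S^\lambda$; the seminormal recursion alone does not provide it.
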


In a follow-up paper to this one \cite{kqr2r2025}, the second and fourth authors, along with Grinberg and Saliola, define a deformation to $\HH_n(q)$ of a family of operators called $k$-random-to-random $\RR_{n,k}$ introduced in \cite{RSW}. The $\RR_{n,k}$ pairwise commute \cite{Lafreniere-thesis}, and $\RR_{n,1} = \RR_{n}$. 

\subsubsection*{Methods}
Our overall strategy is similar in spirit to \cite{DiekerSaliola}, in that we inductively construct eigenvectors for the action of $\RR_n(q)$ on the irreducible representations of $\HH_n(q)$ when $\HH_n(q)$ is semisimple using the kernels of $\RR_j(q)$ for $j < n$. However, our approach differs from \cite{DiekerSaliola} in technique. 

The novelty of our method is to recursively relate $\RR_n(q)$ to the \emph{Jucys-Murphy elements of} $\HH_n(q)$
\[ J_k(q):= \sum_{i=1}^{k-1} q^{i-k} \ T_{(i,k)} \]
and their many wonderful properties\footnote{Sometimes these elements are referred to as the \emph{additive Jucys-Murphy elements}, see \cite{elias2017categorical}.}. In particular, we prove the following recursion for $\RR_n(q)$.

\begin{theorem}\label{thm:introrecursion}
For any $q \in \CC$, the following recursion holds in $\HH_n(q)$:
\[ \B_{n}(q) \RR_{n}(q) = \bigg( q \RR_{n-1}(q) + [n]_q + q^{n} J_{n}(q) \bigg) \B_{n}(q). \]
\end{theorem}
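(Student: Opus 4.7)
The identity is a pure algebraic equality in $\HH_n(q)$, so my plan is to prove it by direct manipulation. Three recursive decompositions serve as the main tools. By isolating the $j = n$ (resp.\ $i = n$) summand in the definitions of $\B_n(q)$ and $\B_n^*(q)$, I get
\[\B_n(q) = 1 + T_{s_{n-1}} \B_{n-1}(q), \qquad \B_n^*(q) = 1 + \B_{n-1}^*(q) T_{s_{n-1}},\]
while isolating the $i = n-1$ summand of $J_n(q)$ and applying the reduced expression $T_{(i,n)} = T_{s_{n-1}} T_{(i,n-1)} T_{s_{n-1}}$ for $i < n-1$ gives
\[q^n J_n(q) = q^{n-1} T_{s_{n-1}} + q^{n-1} T_{s_{n-1}} J_{n-1}(q) T_{s_{n-1}}.\]

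Substituting the first two decompositions, the LHS $\B_n(q) \B_n^*(q) \B_n(q)$ expands into nine triple products of the form $X_1 X_2 X_3$, where each $X_k$ is either $1$, $T_{s_{n-1}} \B_{n-1}(q)$, or $\B_{n-1}^*(q) T_{s_{n-1}}$. Repeatedly applying the quadratic relation $T_{s_{n-1}}^2 = (q-1)T_{s_{n-1}} + q$ collapses any adjacent pair of $T_{s_{n-1}}$'s and puts each resulting summand into one of the pieces $\HH_{n-1}(q)$, $T_{s_{n-1}} \HH_{n-1}(q)$, $\HH_{n-1}(q) T_{s_{n-1}}$, or $T_{s_{n-1}} \HH_{n-1}(q) T_{s_{n-1}}$. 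I would then expand the RHS $\bigl(q\RR_{n-1}(q) + [n]_q + q^n J_n(q)\bigr) \B_n(q)$ analogously, using the $\B_n(q)$ decomposition and the $J_n(q)$ recursion above to put all Jucys-Murphy terms into the final piece, and then compare both sides piece by piece.

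On the $\HH_{n-1}(q)$ piece the only nontrivial check is that the $\B_{n-1}^*(q) T_{s_{n-1}}^2 \B_{n-1}(q)$ contribution on the LHS, after collapsing $T_{s_{n-1}}^2 \to q$, yields exactly $q\RR_{n-1}(q)$, while the constant $1$'s from the various products assemble to $[n]_q$. The $T_{s_{n-1}} \HH_{n-1}(q)$ and $\HH_{n-1}(q) T_{s_{n-1}}$ pieces should absorb the residual $(q-1) T_{s_{n-1}}$ factors from the quadratic reductions together with the $q^{n-1} T_{s_{n-1}}$ piece of $q^n J_n(q) \B_n(q)$. The main obstacle is the $T_{s_{n-1}} \HH_{n-1}(q) T_{s_{n-1}}$ piece: on the RHS only $q^{n-1} T_{s_{n-1}} J_{n-1}(q) T_{s_{n-1}} \B_n(q)$ lands here, while on the LHS several distinct products of the form $T_{s_{n-1}} \B_{n-1}(q) \B_{n-1}^*(q) T_{s_{n-1}}$ and $T_{s_{n-1}} \B_{n-1}(q) T_{s_{n-1}} \B_{n-1}(q)$-type cross-terms conspire. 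I expect matching these will reduce to verifying an auxiliary ``sandwiched'' identity inside $\HH_{n-1}(q)$ relating a specific combination of $\B_{n-1}(q)$, $\B_{n-1}^*(q)$, and $T_{s_{n-1}}$ to $J_{n-1}(q) \B_{n-1}(q)$, which can in turn be proved by a direct computation or a small induction on $n$.
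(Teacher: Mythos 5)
Your plan has a genuine gap at the organizational step. Expanding $\B_n(q) = 1 + T_{s_{n-1}}\B_{n-1}(q)$ and $\B_n^*(q) = 1 + \B_{n-1}^*(q)T_{s_{n-1}}$ into the triple product gives eight (not nine) summands, and the claim that after applying the quadratic relation every summand lands in one of $\HH_{n-1}(q)$, $T_{s_{n-1}}\HH_{n-1}(q)$, $\HH_{n-1}(q)T_{s_{n-1}}$, or $T_{s_{n-1}}\HH_{n-1}(q)T_{s_{n-1}}$ is false. Several cross-terms --- for instance $T_{s_{n-1}}\B_{n-1}(q) \cdot 1 \cdot T_{s_{n-1}}\B_{n-1}(q)$, the $(q-1)\B_{n-1}^*(q)T_{s_{n-1}}\B_{n-1}(q)$ term produced from $\B_{n-1}^*(q)T_{s_{n-1}}^2\B_{n-1}(q)$, and on the RHS $q\RR_{n-1}(q)T_{s_{n-1}}\B_{n-1}(q) = q\B_{n-1}^*(q)\B_{n-1}(q)T_{s_{n-1}}\B_{n-1}(q)$ --- have a $T_{s_{n-1}}$ sandwiched between two nontrivial $\HH_{n-1}(q)$-factors and sit in $\HH_{n-1}(q)T_{s_{n-1}}\HH_{n-1}(q)$, which is not one of your four pieces. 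Without knowing how to collapse these, the ``piece by piece'' matching cannot proceed, and the entire burden falls on the ``auxiliary sandwiched identity,'' which you leave unstated.

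The missing ingredient, which the paper isolates as a separate lemma, is the factorization $\B_{n-1}(q)\B_n(q) = m_{(1^{n-2},2)}\,x_{(n-2,2)} = (1 + T_{s_{n-1}})\,x_{(n-2,2)}$, which immediately yields $T_{s_{n-1}}\B_{n-1}(q)\B_n(q) = q\,\B_{n-1}(q)\B_n(q)$ because $T_{s_{n-1}}(1 + T_{s_{n-1}}) = q(1 + T_{s_{n-1}})$. This one relation lets the paper avoid the triple-product expansion entirely: it first proves the \emph{two-factor} identity $\B_n(q)\B_n^*(q) = \B_{n-1}^*(q)T_{s_{n-1}}\B_{n-1}(q) + [n]_q + q^n J_n(q)$ by induction on $n$ (a four-term expansion, manageable since the only sandwiched piece is $T_{s_{n-1}}\B_{n-1}(q)\B_{n-1}^*(q)T_{s_{n-1}}$, which the inductive hypothesis handles), and then right-multiplies by $\B_n(q)$, using the relation above to turn $\B_{n-1}^*(q)T_{s_{n-1}}\B_{n-1}(q)\B_n(q)$ into $q\RR_{n-1}(q)\B_n(q)$. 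You should prove that factorization (or at least the resulting absorption identity) explicitly and restructure the argument around it; as written, the plan would stall on the cross-terms.
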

This connection with $J_k(q)$ unlocks a wealth of tools put forth by Dipper--James \cite{DipperJames}, Mathas \cite{Mathas}, Hoefsmit \cite{Hoefsmit_1974}, and others, which develop the representation theory of the Hecke algebra as a parallel to that of the symmetric group. Chief among these are \emph{Young's seminormal units} which give an elegant basis for the irreducible representation $S^\lambda$ of $\HH_n(q)$ in terms of standard Young tableaux of shape $\lambda$. The seminormal units are a simultaneous eigenbasis for the Jucys-Murphy elements $J_1(q), \cdots, J_n(q)$, with eigenvalues determined by the $q$-contents of the standard Young tableaux indexing each element. 

We briefly summarize how our argument for Theorem \ref{thm:intromainthm} builds from Theorem \ref{thm:introrecursion}:

\begin{enumerate}
  
\item We first prove a connection between $\B_n^*(q)$ and a Markov chain on the space of flags of the finite field $\mathbb{F}_q^n$ studied in \cite{brauner2023invariant, BrownOnLRBs} (Proposition \ref{xisrantomtotop}). We use this connection to compute the kernel of $\B_n^*(q)$ when $\HH_n(q)$ is semisimple (Theorems \ref{thm:B-polys} and \ref{pf.cor:prime-eigenspace.dimker}), and prove that if $q \in \R_{>0}$, we have $\ker \B_n^*(q) = \ker \RR_n(q)$ (Lemma~\ref{lemma:whenqisreal} (2)). This implies that the nullity of $\RR_n(q)$ in this setting is the derangement number $d_n$.

\medskip

    \item When $\HH_n(q)$ is semisimple, for each $\lambda \vdash n$ we construct a set of eigenvectors of $\RR_n(q)$ in the irreducible Specht module $S^\lambda$ of $\HH_n(q)$. We explain in Theorem \ref{thm:recursive_eigenvector_construction} how given an eigenvector of $\RR_{n-1}(q)$ acting on $S^{\lambda'}$ with eigenvalue $\eigen,$ one can use Theorem \ref{thm:introrecursion} to obtain an eigenvector for $\RR_n(q)$ acting on $S^\lambda$ for $\lambda' \subseteq \lambda$ with eigenvalue
\begin{equation*}\label{eq:introeigenvectorrecursion} q  \eigen + [n]_q + q^{n} \content_{\lambda \sm \lambda'}(q). \end{equation*}
We then iterate this process in Theorem \ref{thm:eigenvectors}, starting with kernel eigenvectors of $\RR_j(q)$ for $j< n$. In particular, the eigenvalue $\eigen_{\lambda \sm \mu}(q)$ in Theorem \ref{thm:intromainthm} corresponds to an eigenvector in $S^\lambda$ constructed from a $\RR_{|\mu|}(q)$-kernel eigenvector in $S^\mu$.
\medskip

\item We show in Theorem \ref{thm:orderedspanningset} that when $q \in \R_{>0}$, all eigenvectors of $\RR_n(q)$ can be obtained by the process in (2), and define a set $\basis_\lambda$ that spans $S^\lambda$.  Our proof relies on a \emph{Straightening Lemma} (Lemma \ref{lem:horiz-strip-diff-by-cnostant}) that draws upon the remarkable properties of Young's seminormal units.
We prove and later apply a \emph{Horizontal Strip Lemma} (Lemma \ref{horizontalstriplemma}) to show in Lemma \ref{lemma:xtoy} that the elements of $\basis_\lambda$ are indexed by horizontal strips.

\medskip

\item We combine points (1), (2), and (3) above to show that the spanning set $\basis_\lambda$ of $S^\lambda$ is indeed an $\RR_n(q)$-eigenbasis for $q \in \R_{>0}$. We use this fact to compute the characteristic polynomial of $\RR_n(q)$ acting on $\HH_n(q)$ for any $q \in \CC$, thereby obtaining Theorem \ref{thm:summarythm}.
\end{enumerate}

The remainder of the paper proceeds as follows. In Section \ref{section:reptheoryofhecke} we explain the necessary semisimple representation theory of $\HH_n(q)$, including the construction of the Young idempotents and seminormal forms. In Section \ref{sec:branching-rules-Hecke} we describe branching rules for $\HH_n(q)$, and prove the Horizontal Strip Lemma (Lemma \ref{horizontalstriplemma}). In Section \ref{sec:flagintroduction}, we study $\B_n^*(q)$ using the action on flags, as described in (1) above, and prove Theorem \ref{thm:introb2r}. We prove Theorem \ref{thm:introrecursion} and recursively construct $\RR_n(q)$-eigenvectors in Section \ref{section:constructionofeigenvectors}, as outlined in (2) above. We then turn our attention to the case when $q \in \R_{>0}$ in Section \ref{section:spanning}, and prove the results from (3) above. Finally, in Section \ref{section:maintheorem}, we obtain an $\RR_n(q)$-eigenbasis for $S^\lambda$ when $q$ is positive (Theorem \ref{thm:eigenbasisSlambda}) as in (4). We use this to prove Theorem \ref{thm:intromainthm} and Corollary \ref{cor:introsecondeigen}. Data for $\RR_n(q)$ up to $n=5$ can be found in Appendix \ref{appendix:data}.

\subsection*{Acknowledgements}
The authors are extremely appreciative of Darij Grinberg for providing extensive edits, feedback and fruitful discussions about this work, which greatly improved the exposition and corrected several issues and subtleties in earlier drafts of the paper. The authors are also grateful to Alexey Bufetov for illuminating discussions about random walks on Hecke algebras and for pointing us to \cite{diaconis2000analysis}; to Ben Elias for first introducing us the perspective in \cite{elias2017categorical}; to Vic Reiner for suggesting the problem and insightful guidance throughout the project; to Franco Saliola for invaluable discussions about Theorem \ref{thm:orderedspanningset}; and to Peter Webb for help with Lemma \ref{horizontalstriplemma}. The authors benefited from conversations with Anne Schilling, Arvind Ayyer, Jan de Gier, Zachary Hamaker, Andy Hardt, Arun Ram, Jeanne Scott, and Nadia Lafreni\`ere. 

All computations were done using SageMath. This project began as part of the University of Minnesota, Twin Cities REU in Algebra and Combinatorics, and all authors received partial support from NSF DMS-1745638. The second author was supported an NSF Mathematical Sciences Research Postdoctoral Fellowship under DMS 2303060, and the fourth author was supported by an NSF Graduate Research Fellowship.

\section{(Semisimple) Representation theory of $\HH_n(q)$}\label{section:reptheoryofhecke}
In this section we review the representation of the Type $A$ Iwahori Hecke algebra $\HH_n(q)$, with a view towards the Okounkov-Vershik perspective. For an excellent contextual overview of this approach, see Elias--Hogancamp \cite[\S 1]{elias2017categorical}. We assume for the remainder of this section that $\HH_n(q)$ is semisimple, which occurs if and only if $q$ is neither $0$ nor a primitive $k$th root of unity for some $2 \leq k \leq n.$ Note that for these values of $q,$ if $-n \leq m \leq n,$ then $[m]_q \neq 0$ if $m \neq 0$. For a treatment of the non-semisimple case, see \cite{Mathas}.

In what follows, we will write $\syt(\lambda)$ for the set of standard Young tableaux of shape $\lambda$, and
\[ \syt(n) := \bigcup_{\lambda \vdash n} \syt(\lambda), \quad \quad \quad \syt := \bigcup_{n \geq 0} \syt(n).\] 

 Given a tableau $\frakt \in \syt(\lambda)$, the shape of $\frakt$ will be written $\shape(\frakt) = \lambda$. We will also work with skew diagrams $\lambda \sm \mu$, and many of our definitions above can be adapted in this case. Let 
\[ \syt(\lambda \sm \mu) \]
 be the set of skew tableaux of shape $\lambda \sm \mu$ filled with the letters $|\mu|+1, \cdots, |\lambda|$ with entries increasing across rows and down columns.

\subsection{Basics of the Hecke algebra and conventions}\label{section:basicsofhecke}
As discussed in the introduction, for any $q \in \CC$, the Hecke algebra $\HH_n(q)$ is defined as the associative $\CC$-algebra on $T_{s_1}, \cdots, T_{s_{n-1}}$, subject to the relations 
\begin{enumerate}
    \item $T_{s_i}^2 = (q-1)T_{s_i} + q$ for all $1 \leq i \leq n-1$, \medskip
    \item $T_{s_i} T_{s_j} = T_{s_j} T_{s_i}$ when $|i-j| \geq 2$, and \medskip
    \item $T_{s_i} T_{s_{{i+1}}} T_{s_i} = T_{s_{i+1}} T_{s_i} T_{s_{i+1}}$ for all $1 \leq i \leq n-2$.\medskip
\end{enumerate}
The elements $T_{s_i}$ should be thought of as $q$-deformations of the simple transpositions $s_i:=(i,i+1)$, which generate $\symm_n$. Recall that a reduced word $w \in \symm_n$ is a minimal expression of $w$ in the generators $s_1, \cdots, s_{n-1}$. The number of generators $s_i$ used to write a reduced word (counting multiplicity) is independent of the choice of reduced word, and is called the length $\ell(w)$ of $w$.

It is well known that $\HH_n(q)$ has a linear basis indexed by $w \in \symm_n$, 
which we shall write as 
\[ T_w := T_{s_{i_1}} \cdots T_{{s_{i_\ell}}},\]
where $s_{i_1}\cdots s_{i_\ell}$ is any reduced expression for $w.$

Using the generating relations for $\HH_n(q)$, one can deduce the right action of $\HH_n(q)$ on itself:
\begin{equation}\label{eq:heckeactiononitself} T_w T_{s_i} = \begin{cases} 
        T_{w s_i} & \ell(w s_i) > \ell(w) \\
      q  T_{w s_i} + (q-1)T_w & \ell(w s_i) < \ell(w).
    \end{cases}
\end{equation}

Recall from the introduction that our operator of interest is the element $\RR_n(q) = \B_n^*(q) \B_n(q)$ of $\HH_n(q)$, which acts by right multiplication on $\HH_n(q)$. As in the introduction,  $\B_n^*(q)$ and $ \B_n(q)$ are defined to be: 
 \begin{align}
    \label{eq:b2rdef} q\textrm{-bottom-to-random}:= \B_n(q) &:=  \sum_{i = 1}^n T_{s_{n - 1}} T_{s_{n - 2}} \cdots T_{s_{i}},\\
    \label{eq:r2bdef} q\textrm{-random-to-bottom}:=\B_n^\ast(q)& := \sum_{j = 1}^n T_{s_{j}}T_{s_{ j + 1}}\cdots T_{s_{n - 1}}.
 \end{align}
Before proceeding, we make a few remarks on  conventions for the remainder of the paper.
\begin{remark} \rm
In the introduction, we also introduced $\tT_{w} := q^{-\ell(w)} T_w$. Diaconis and Ram show \cite[Theorem 4.3]{diaconis2000analysis} that one can deduce the right action of $\tT_{s_i}$ on $\tT_w$ given in equation \eqref{eq:heckeaction} using \eqref{eq:heckeactiononitself}. 
Everything that follows can be rephrased in terms of the $\tT_w$ by assuming $q \neq 0$ and substituting $T_w$ with $q^{\ell(w)}\tT_w$.
\end{remark}

\begin{remark}[Anti-isomorphism $*$]\label{rmk:antiiso}\rm
There is an anti-isomorphism on $\HH_n(q)$ defined by:
\begin{align*}
    *: \HH_n(q) &\longrightarrow \HH_n(q)\\
    T_w &\longmapsto (T_w)^* := T_{w^{-1}}.
\end{align*}
Note that $*$ is also an involution.
The map $*$ is useful in studying properties of random walks on $\HH_n(q)$ (see \cite{bufetov2020interacting}).
As the notation suggests, $\B_n^*(q)$ is the image of $\B_n(q)$ under $*$.
\end{remark}

\begin{remark}[Bottom versus Top] \label{rmkbottomvtop}\rm
We have made the choice to let the end of a word or permutation be its bottom (read left to right), and the beginning be its top. In some of the literature \cite{bidigare1999combinatorial, brauner2023invariant, Phatarfod1991ONTM} the authors prefer to use \emph{random-to-top} and \emph{top-to-random}, defined in $\HH_n(q)$ as 
\begin{align*}
         q\textrm{-top-to-random}:= \TT_n(q) &:=  \sum_{i = 1}^n T_{s_{1}} T_{s_{2}} \cdots T_{s_{i-1}},\\
     q\textrm{-random-to-top}:=\TT_n^\ast(q)& :=  \sum_{j = 1}^n T_{s_{j-1}}T_{s_{ j - 1}}\cdots T_{s_{1}}.
\end{align*}
We move between these perspectives by applying the $\CC$-algebra isomorphism $\tau: \HH_n(q) \longrightarrow \HH_n(q)$, 
sending
\[ \tau(T_{s_i}) = T_{s_{n-i}}, \quad \quad \tau(\TT_n(q)) = \B_n(q), \quad \quad \tau(\TT^*_n(q)) = \B^*_n(q).\]
 It is straightforward to check that $\tau$ preserves the eigenvalues and multiplicities of an element in $\HH_n(q)$ acting via multiplication on $\HH_n(q).$  Hence studying the spectrum of $\B_n^\ast(q)$ is equivalent to studying that of $\TT^\ast_n(q)$. Note that we make the choice to study $\B_n^\ast(q)$ rather than $\TT^\ast_n(q)$ because its indexing is more compatible with our recursion (Theorem \ref{thm:introrecursion}). We will need to use the equivalence of $\TT_n^*(q)$ and $\B_n^*(q)$ to understand the kernel of $\RR_n(q)$ in Section \ref{section:kernelofrr}.
 \end{remark}

Recall that in the representation theory of the symmetric group, the irreducible representations---called \emph{Specht modules}---are indexed by partitions $\lambda$ of $n$. The Specht module indexed by $\lambda$ has dimension given by $f^\lambda$, the number of standard Young tableaux of shape $\lambda$. The number $f^\lambda$ also appears when decomposing $\CC[\symm_n]$ into irreducible representations: there are $f^\lambda$ copies of the Specht module indexed by $\lambda$ inside $\CC[\symm_n]$.

In the case that $\HH_n(q)$ is semisimple, these facts remain true: 
\begin{itemize}
    \item the irreducible representations of $\HH_n(q)$ are indexed by $\lambda \vdash n$, and will be denoted by $S^\lambda$;
    \item the dimension of each $S^\lambda$ is $f^\lambda$; and
    \item for every $\lambda \vdash n$, the multiplicity of $S^\lambda$ in $\HH_n(q)$ is $f^\lambda$.
\end{itemize}

As in the case of the symmetric group, one way of understanding the second and third points is by decomposing $\HH_n(q)$ as a $(\HH_n(q), \HH_n(q))$-bimodule, where $\HH_n(q)$ acts on itself by multiplication on both the right and left. In this case, we obtain that 
\begin{equation}\label{eq:leftrightbimoduledecomposition}
    \HH_n(q) \cong \bigoplus_{\lambda \vdash n} (S^{\lambda})^* \otimes S^\lambda,
\end{equation}
where $S^\lambda$ is an irreducible \emph{right} module of $\HH_n(q)$ and $(S^\lambda)^*$ is an irreducible \emph{left} module of $\HH_n(q)$; see \cite{Halverson-Ram} for more details on this perspective. 

The bimodule decomposition of $\HH_n(q)$ in \eqref{eq:leftrightbimoduledecomposition} will be essential for our analysis. Given an element $\varphi \in \HH_n(q)$, suppose the right action of $\varphi$ on $\HH_n(q)$ is diagonalizable (i.e. $\varphi$ acts semisimply) with eigenspaces $V(\eigen_1), \cdots, V(\eigen_k)$ corresponding to eigenvalues $\eigen_1, \cdots, \eigen_k$. Each $V(\eigen_j)$ is a left $\HH_n(q)$-module, since 
\[ y \cdot (x \cdot \varphi) = (y \cdot x) \cdot \varphi \] for any $x, y, \varphi \in \HH_n(q)$. The decomposition in \eqref{eq:leftrightbimoduledecomposition} can be used to deduce the left module structure of each $V(\eigen_j)$ from the spectrum of $\varphi$ acting on  $S^\lambda$.

\begin{cor}\label{cor:righteigenleftmodule}

Let $\HH_n(q)$ be semisimple and suppose $\varphi \in \HH_n(q)$ acts semisimply on $\HH_n(q)$ by right multiplication with eigenvalues $\eigen_1, \cdots, \eigen_k$. 

\begin{enumerate}
    \item Then the $\eigen_j$-eigenspace is a left $\HH_n(q)$-module $V(\eigen_j)$; and \medskip
\item The (representation) multiplicity of $(S^\lambda)^*$ in $V(\eigen_j)$ is $m_{\eigen_j}(\lambda)$ if and only if the (eigenvalue) multiplicity of $\eigen_j$ in $S^\lambda$ is $m_{\eigen_j}(\lambda)$.

\end{enumerate}
\end{cor}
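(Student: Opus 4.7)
The plan is to leverage the bimodule decomposition \eqref{eq:leftrightbimoduledecomposition} in a straightforward way. For part (1), the argument is already indicated in the paragraph preceding the corollary: if $x \in V(\eigen_j)$, meaning $x \cdot \varphi = \eigen_j x$, and $y \in \HH_n(q)$ is arbitrary, then associativity of multiplication gives
\[ (y \cdot x) \cdot \varphi \; = \; y \cdot (x \cdot \varphi) \; = \; \eigen_j (y \cdot x), \]
so $y \cdot x \in V(\eigen_j)$. Thus $V(\eigen_j)$ is closed under left multiplication by $\HH_n(q)$, hence is a left $\HH_n(q)$-submodule.

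For part (2), the key observation is that the right multiplication action of $\varphi$ on $\HH_n(q)$ respects the bimodule decomposition in \eqref{eq:leftrightbimoduledecomposition}. Concretely, write $\varphi_\lambda$ for the endomorphism of the right module $S^\lambda$ given by right multiplication by $\varphi$; then under the isomorphism $\HH_n(q) \cong \bigoplus_{\lambda \vdash n} (S^\lambda)^* \otimes S^\lambda$, right multiplication by $\varphi$ becomes $\bigoplus_\lambda (\mathrm{id}_{(S^\lambda)^*} \otimes \varphi_\lambda)$. Since $\varphi$ acts semisimply on $\HH_n(q)$ by hypothesis, each $\varphi_\lambda$ also acts semisimply on $S^\lambda$, with eigenvalues drawn from $\{\eigen_1, \ldots, \eigen_k\}$.

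Let $V_\lambda(\eigen_j) \subseteq S^\lambda$ denote the $\eigen_j$-eigenspace of $\varphi_\lambda$, and set $m_{\eigen_j}(\lambda) := \dim V_\lambda(\eigen_j)$, which is by definition the eigenvalue multiplicity of $\eigen_j$ in $S^\lambda$. Because the $\eigen_j$-eigenspace of $\mathrm{id} \otimes \varphi_\lambda$ is precisely $(S^\lambda)^* \otimes V_\lambda(\eigen_j)$, we obtain the decomposition
\[ V(\eigen_j) \; \cong \; \bigoplus_{\lambda \vdash n} (S^\lambda)^* \otimes V_\lambda(\eigen_j) \]
as left $\HH_n(q)$-modules, where the left action is only on the $(S^\lambda)^*$ factor. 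The summand $(S^\lambda)^* \otimes V_\lambda(\eigen_j)$ is therefore isomorphic as a left module to the direct sum of $m_{\eigen_j}(\lambda)$ copies of $(S^\lambda)^*$. Since the $(S^\mu)^*$ for distinct partitions $\mu$ are pairwise non-isomorphic irreducible left modules, the representation multiplicity of $(S^\lambda)^*$ in $V(\eigen_j)$ is exactly $m_{\eigen_j}(\lambda)$, proving both directions of the ``if and only if.''

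I do not anticipate a real obstacle: the whole argument is a book-keeping exercise once one commits to the bimodule viewpoint. The only minor subtlety is confirming that semisimplicity of $\varphi$ on all of $\HH_n(q)$ transfers to semisimplicity on each $S^\lambda$, but this is immediate from the fact that the operator decomposes as $\bigoplus_\lambda (\mathrm{id} \otimes \varphi_\lambda)$, so its minimal polynomial is the least common multiple of those of the $\varphi_\lambda$ and is squarefree iff each $\varphi_\lambda$ has squarefree minimal polynomial.
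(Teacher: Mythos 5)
Your proof is correct and takes exactly the approach the paper intends: the paper leaves Corollary~\ref{cor:righteigenleftmodule} unproved because it regards it as immediate from the bimodule decomposition \eqref{eq:leftrightbimoduledecomposition} and the associativity remark in the preceding paragraph, and your argument simply fills in those details (identifying right multiplication by $\varphi$ with $\bigoplus_\lambda (\mathrm{id}_{(S^\lambda)^*} \otimes \varphi_\lambda)$ and reading off the $\eigen_j$-eigenspace as $\bigoplus_\lambda (S^\lambda)^* \otimes V_\lambda(\eigen_j)$). The closing observation about squarefree minimal polynomials transferring semisimplicity to each $S^\lambda$ is also sound and is exactly the implicit subtlety worth noting.
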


Corollary \ref{cor:righteigenleftmodule} assumes that $\varphi$ is diagonalizable. However, under certain assumptions, if $\varphi$ is diagonalizable for infinitely many values of $q$, we may deduce the characteristic polynomial of $\varphi$ for \emph{any} $q \in \CC$.

\begin{lemma}\label{lem:suffices-to-prove-for-infinite-q}
   Suppose $\varphi_q : \HH_n(q) \longrightarrow \HH_n(q)$ is a linear transformation that can be represented by an $n! \times n!$ matrix with entries in $\Z[q]$. 
   \begin{enumerate}
   \item Let $\chi(y,q)$ be a polynomial in $\Z[y,q]$. If the characteristic polynomial 
   \begin{equation}\label{eqn:char-poly}
       \det\left(y\cdot 1 - \varphi_q\right) = \chi(y,q)
   \end{equation}
   for infinitely many $q \in \CC$, then $\det\left(y\cdot 1 - \varphi_q\right) = \chi(y,q)$ for all $q \in \CC$. \medskip
   \item
   If some $P \in \Z[y,q]$ satisfies $P(\varphi_q,q) = 0$ for infinitely many $q \in \CC$, then $P(\varphi_q,q) = 0$ for all $q \in \CC$.
   \end{enumerate}
\end{lemma}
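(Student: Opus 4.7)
The plan is to reduce both claims to the standard fact that a polynomial in $\CC[q]$ with infinitely many zeros is identically zero, applied coefficient by coefficient (or entry by entry) to matrices whose entries lie in $\Z[q]$.

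For part (1), I first observe that since the matrix representing $\varphi_q$ has entries in $\Z[q]$, the matrix $y \cdot I - \varphi_q$ has entries in $\Z[y,q]$, and hence its determinant (computed by the Leibniz formula, say) is a polynomial $D(y,q) \in \Z[y,q]$. Therefore the difference
\[ F(y,q) := D(y,q) - \chi(y,q) \in \Z[y,q] \]
can be written as $F(y,q) = \sum_{k=0}^{n!} c_k(q)\, y^k$ with $c_k(q) \in \Z[q]$. By hypothesis there is an infinite set $S \subseteq \CC$ such that for every $q_0 \in S$, the polynomial $F(y,q_0) \in \CC[y]$ is identically zero; this forces $c_k(q_0) = 0$ for every $k$ and every $q_0 \in S$. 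Since each $c_k$ has infinitely many roots in $\CC$, each $c_k$ is the zero polynomial, so $F \equiv 0$ in $\Z[y,q]$. Evaluating at any $q \in \CC$ then yields the claim.

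For part (2), a similar entrywise argument applies: $P(\varphi_q, q)$ is an $n! \times n!$ matrix whose $(i,j)$-entry is a polynomial $g_{ij}(q) \in \Z[q]$, since $P \in \Z[y,q]$ and the entries of $\varphi_q$ lie in $\Z[q]$ (so powers $\varphi_q^k$ and scalar multiplication by $q^\ell$ keep entries in $\Z[q]$, and addition closes the argument). The hypothesis gives an infinite set $S \subseteq \CC$ with $g_{ij}(q_0) = 0$ for all $q_0 \in S$ and all $i,j$. Each $g_{ij} \in \CC[q]$ thus has infinitely many roots, hence is identically zero, and so $P(\varphi_q, q) = 0$ for every $q \in \CC$.

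There is no real obstacle here beyond carefully noting that the matrix entries of $\varphi_q$, of its characteristic polynomial, and of any polynomial expression in $\varphi_q$ and $q$ all remain in $\Z[q]$ (equivalently $\Z[y,q]$ after adjoining the formal indeterminate $y$); once this is in hand, the two parts are immediate applications of the one-variable identity theorem for polynomials over $\CC$. One minor bookkeeping point worth stating explicitly is that ``$\det(y\cdot 1 - \varphi_q) = \chi(y,q)$'' for a specific $q_0 \in \CC$ means equality of two elements of $\CC[y]$, which is what allows us to extract the vanishing of each $c_k(q_0)$ in the argument above.
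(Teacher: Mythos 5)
Your proposal is correct and follows essentially the same route as the paper: reduce the two-variable identity to finitely many one-variable polynomial identities in $q$ (coefficients of $y^k$ in part (1), matrix entries in part (2) versus the paper's coefficients of the basis elements $T_w$, which is the same coordinatewise reduction) and then invoke the fact that a complex polynomial with infinitely many roots vanishes identically. No gaps.
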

\begin{proof}
(1) Both sides of (\ref{eqn:char-poly}) belong to $\Z[y, q].$ By comparing the coefficients of $y^i$ on both sides, it suffices to prove the following polynomial identities in $\Z[q]$ hold for all $i$:
\[[y^i] \, \det\left(y\cdot 1 - \varphi_q\right) = [y^i]\ \chi(y,q) .\]
By assumption, these identities are true for infinitely many $q.$ Hence,  the ``polynomial identity trick'' (i.e., the fact that two polynomials with complex coefficients are equal if they agree on infinitely many points) implies it holds for all $q.$ This proves (1). \medskip

\noindent (2) This is similar: both sides of the equation $P(\varphi_q,q) = 0$ belong to $\HH_n(q)$. By comparing the coefficients of a given basis element $T_w$ on both sides, we can rewrite this equation as a family of identities between polynomials in $q$:
\[
[T_w] \ P(\varphi_q,q) = 0 \qquad \text{for each } w \in \symm_n.
\]
Again, if such an equation holds for infinitely many $q$, then it will hold for all $q$.
\end{proof}

In what follows, we will let $\varphi = \RR_n(q)$ and study its right action on $S^\lambda$. We will show that $\RR_n(q)$ is diagonalizable when $q \in \R_{> 0}$ by constructing an eigenbasis for each $S^\lambda$, and then appeal to Lemma \ref{lem:suffices-to-prove-for-infinite-q} to deduce the characteristic polynomial of $\RR_n(q)$ for any $q \in \CC$.

\subsection{Explicit constructions of Specht modules using the Jucys-Murphy elements }\label{sec:okounkovvershik}

As in the case of the symmetric group, there are several ways to explicitly construct Specht modules of $\HH_n(q)$. Here, we will adopt what is sometimes referred to as the ``Okounkov-Vershik approach'' described in the influential paper \cite{okounkov1996new}, which reframes tools developed earlier by Murphy \cite{MURPHY1981287} and Jucys \cite{jucys1971factorization} for $\symm_n$, and Dipper--James \cite{DipperJames} for $\HH_n(q)$; see \cite[\S 1]{elias2017categorical} for a nice overview of this story. While we only discuss $\HH_n(q)$, setting $q = 1$ recovers the classical $\symm_n$-theory.

We will be interested in explicit presentations of right Specht modules $S^\lambda$, but the corresponding left module $(S^\lambda)^*$ can be constructed analogously. Note that there is also a way to realize $(S^\lambda)^* \otimes S^\lambda$ inside $\HH_n(q)$; this is the perspective taken in Mathas \cite{Mathas} (see Remark \ref{remark:mathasdictionary}).

The key ingredient will be the Jucys-Murphy elements of $\HH_n(q)$, defined below. 

\begin{definition}[Jucys-Murphy elements]\label{def:Jucys-Murphy}
For $1 \leq k \leq n,$ and $q \neq 0$, the $k$-th Jucys-Murphy element of $\HH_n(q)$ is defined as 
    \[ J_k(q):= \sum_{i = 1}^{k - 1} q^{i - k} \ T_{(i, k)}. \]
\end{definition}
\begin{example}\rm
    In $\HH_6(q),$
    \begin{align*}
        J_4(q) = q^{-3}T_{(1,4)} + q^{-2}T_{(2, 4)} + q^{-1}T_{(3, 4)} = q^{-3}T_{s_1s_2s_3s_2s_1}  + q^{-2}T_{s_2s_3s_2} + q^{-1}T_{s_3.}
    \end{align*}
\end{example}
The Jucys-Murphy elements satisfy several miraculous properties. First, they pairwise commute (see \cite[Proposition 3.26(iii)]{Mathas}). Second, an element $x$ is in the center of $\HH_n(q)$ if and only if it is a symmetric polynomial in the $J_k(q)$. The forwards direction of this fact is a result of Francis--Graham \cite{francis2006centres}, solving a conjecture of Dipper--James \cite{DipperJames}; the converse appeared much earlier (see, for example,  \cite[Corollary 3.27]{Mathas}). Finally---and most importantly for us---the Jucys-Murphy elements have a simultaneous eigenbasis known as
\emph{Young's seminormal basis}, which is a particularly nice basis for the Specht modules $S^\lambda$. The first step in constructing the seminormal basis is to define \emph{Young's idempotents}.

\subsubsection{Young's idempotents} \label{sec:idempotents}

For any semisimple algebra $A$ one can define canonical projectors onto the isotypic components of $A$; these projectors form a family of mutually orthogonal, central idempotents in $A$. In the case of $\HH_n(q)$, these projectors will be written as 
\[\{  p_\lambda \in \HH_n(q): \lambda \vdash n \}. \]  As discussed in Section \ref{section:basicsofhecke}, the $\lambda$-isotypic component of $\HH_n(q)$ contains $f^\lambda$-many copies of $S^\lambda$. Further, for any right $\HH_n(q)$-module $M,$ right multiplication by $p_\lambda$ projects $M$ onto a submodule isomorphic to a direct sum of $S^\lambda$'s. However, in general there is no canonical way to project from $A$ onto a single, irreducible representation of $A$.

Remarkably, when $A = \CC[\symm_n]$ or $\HH_n(q)$, there exists a family of idempotents that project onto the individual Specht modules in $\HH_n(q)$: these are Young's idempotents, which we shall write as 
\[ \{ p_{\frakt} \in \HH_n(q): \frakt \in \syt(n) \}.\] 
Right multiplication by $p_{\frakt}$ projects onto a copy of the left module $(S^\lambda)^*$ in \eqref{eq:leftrightbimoduledecomposition}. Since there are $f^\lambda$ distinct $p_{\frakt}$, this gives a method of projecting onto each copy of $(S^\lambda)^*$ in $\HH_n(q)$.

The Young idempotent $p_{\frakt}$ will also be---by design---a projector onto a simultaneous eigenspace of the Jucys-Murphy elements. The $q$-content of $k$ in $\frakt$ determines the eigenvalue with which each $J_k(q)$ acts. Write $x_k(\frakt)$ to be the index of the row of $\frakt$ containing the entry $k$ (counting from top to bottom), and $y_k(\frakt)$ to be the column containing $k$ (counting from left to right). 
\begin{definition}
Given a tableau $\frakt \in \syt(\lambda)$, the $q$-\emph{content of $\frakt$} at $k$ is 
\[ \content_{\frakt,k}(q):= [y_k(\frakt) - x_k(\frakt)]_q. \]    
\end{definition}
Setting $q=1$, this recovers the classical content $\content_{\frakt,k}:= \content_{\frakt,k}(1)$.
We will soon see that the Young idempotent $p_\frakt$ is an eigenvector of $J_k(q)$ with eigenvalue $\content_{\frakt,k}(q)$ for every $1 \leq k \leq n$.

\ytableausetup{boxsize=1em}

\begin{example}\rm
    Consider the standard Young tableau $\frakt \in \syt(3,3,1,1)$:
     \begin{align*}
     \frakt =
        \begin{ytableau}
            1 & 3 & 4\\
            2 & 5 & 8\\
            6\\
            7
        \end{ytableau}.
    \end{align*}
    Then, 
    \begin{align*}
        \content_{\frakt, 7}(q) = [-3]_q, \ \ \content_{\frakt, 6}(q) = [-2]_q, \  \content_{\frakt, 2}(q) = [-1]_q,  \ \ \content_{\frakt, 1}(q) = \content_{\frakt, 5}(q) = [0]_q,  \ \ \content_{\frakt, 3}(q) = \content_{\frakt, 8}(q)  = [1]_q,  \text{ and } \content_{\frakt, 4}(q) = [2]_q.
    \end{align*}
\end{example}

Denote by $\setofcontent(m)$ the set of all possible values $\content_{\frakt,m}$ for any $\frakt \in \syt$:
  \begin{align*}
        \setofcontent(m) := \begin{cases}
            \{k: -m < k < m\} &m \neq 2, 3\\
             \{k: -m < k < m\}\setminus \{0\} &m = 2, 3.\\
        \end{cases}
    \end{align*}

\noindent We will use $\setofcontent(m)$ to define the Young idempotents
using \emph{Lagrange interpolation}; see \cite[p504]{MURPHY1992492}\footnote{The $p_\frakt$ are written as $E_t$ in \cite{MURPHY1992492}, but there is a minor typo in the definition in \cite[p504]{MURPHY1992492}: in the product index, the set $\setofcontent(n)$ should be replaced by $\setofcontent(m)$.}.

\begin{definition}\label{def:youngidempotent}
For $\HH_n(q)$ semisimple and $\frakt \in \syt(n)$, define the \emph{Young idempotent} $p_\frakt$ to be 
 \begin{align*}
       p_\frakt := \prod_{m = 1}^n \quad \prod_{\substack{\mathfrak{d} \in \setofcontent(m) \sm \{\content_{\frakt, m}\} }} \quad \frac{J_m(q) - [\mathfrak{d} ]_q}{\content_{\frakt, m}(q) - [\mathfrak{d} ]_q}.
   \end{align*}
\end{definition}
  Lagrange interpolation guarantees the following properties, which can be found in \cite[p. 506]{MURPHY1992492}.

 \begin{prop}\label{lem:interaction-of-jucys-murphys-with-pts}
 The Young idempotents $p_\frakt$ for $\frakt \in \syt$ satisfy the following properties:
 \begin{enumerate}
     \item The collection of $p_{\frakt}$ for $\frakt \in \syt(n)$
form a family of mutually orthogonal, complete idempotents, meaning that 
     \[ p_{\frakt} \  p_{\frakq} = \delta_{\frakt, \frakq} p_{\frakt} \quad \quad \textrm{ and } \quad \quad \sum_{\frakt \in \syt(n)} p_{\frakt} = 1.\]
    \item Each $p_\frakt$ for $\frakt \in \syt(n)$ is a simultaneous (right) eigenvector for $J_m(q)$ for $1 \leq m \leq n$: 
    \[  p_\frakt \ J_m(q)   = \content_{\frakt, m}(q) \ p_\frakt.\] 
     \item The collection of $p_\frakt$ for all $\frakt \in \syt(n)$ diagonalize $J_n(q)$, so that $J_n(q)$ can be written as 
     \[J_n(q) = \sum_{\substack{\frakt \in \syt(n)}}\content_{\frakt, n}(q) \ p_\frakt.\] 
     \item If $\frakt \in \syt(\lambda),$ then as left $\HH_n(q)$-modules, \[\HH_n(q) \  p_{\frakt} \cong \left(S^\lambda\right)^\ast.\] 
 \end{enumerate}
 \end{prop}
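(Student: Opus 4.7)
The plan is to reduce everything to the spectral theorem for the commuting family $J_1(q), \ldots, J_n(q)$ in the semisimple setting, which I would quote from \cite{Mathas, MURPHY1992492}: each $J_m(q)$ is diagonalizable with spectrum contained in $\{[\mathfrak{d}]_q : \mathfrak{d} \in \setofcontent(m)\}$, and the joint eigenspaces of $(J_1(q), \ldots, J_n(q))$ acting on $\HH_n(q)$ are indexed by $\frakt \in \syt(n)$ via the eigenvalue sequence $(\content_{\frakt,1}(q), \ldots, \content_{\frakt,n}(q))$. A key auxiliary fact I would also record is that two standard Young tableaux of size $n$ are determined by their content sequences.

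With this in hand, I would prove (2) first. The $J_m(q)$ pairwise commute, so $p_\frakt$ is a polynomial in commuting elements. Fix $m$ and factor $p_\frakt = Q_{\frakt,m}(J_m(q)) \cdot R_\frakt$, where $Q_{\frakt,m}(x) := \prod_{\mathfrak{d} \in \setofcontent(m) \sm \{\content_{\frakt,m}\}} \frac{x - [\mathfrak{d}]_q}{\content_{\frakt,m}(q) - [\mathfrak{d}]_q}$ and $R_\frakt$ depends only on the remaining Jucys--Murphy elements. Up to a nonzero scalar, $(x - \content_{\frakt,m}(q)) \, Q_{\frakt,m}(x) = \prod_{\mathfrak{d} \in \setofcontent(m)}(x - [\mathfrak{d}]_q)$, which annihilates $J_m(q)$ by the spectral theorem. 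Moving $R_\frakt$ across $J_m(q)$ using commutativity then gives $p_\frakt \, J_m(q) = \content_{\frakt,m}(q) \, p_\frakt$.

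Parts (1) and (3) follow by standard Lagrange-interpolation arguments once (2) is in hand. Idempotence holds because (2) implies that right action of any polynomial in $(J_1(q), \ldots, J_n(q))$ on $p_\frakt$ reduces to evaluating that polynomial at the content sequence of $\frakt$, and $p_\frakt$ evaluated at its own content sequence equals $\prod_m Q_{\frakt,m}(\content_{\frakt,m}(q)) = 1$. For orthogonality with $\frakt \neq \frakq$, I would pick any $m$ with $\content_{\frakt,m} \neq \content_{\frakq,m}$ (which exists since distinct tableaux have distinct content sequences); then the corresponding factor of $p_\frakq$ annihilates $p_\frakt$ on the right by (2). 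Completeness $\sum_\frakt p_\frakt = 1$ follows because the joint eigenspaces span $\HH_n(q)$ by semisimplicity and the left-hand side acts as the identity on each of them. Part (3) is then obtained by right-multiplying $1 = \sum_\frakt p_\frakt$ by $J_n(q)$ and applying (2) termwise.

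For (4), I would appeal to the bimodule decomposition \eqref{eq:leftrightbimoduledecomposition}: under $\HH_n(q) \cong \bigoplus_\lambda (S^\lambda)^* \otimes S^\lambda \cong \bigoplus_\lambda \End(S^\lambda)$, the Young seminormal basis provides a joint eigenbasis of the $J_m(q)$ on each $S^\lambda$ indexed by $\syt(\lambda)$. Consequently $p_\frakt$ for $\frakt \in \syt(\lambda)$ is a rank-one projector in the $\End(S^\lambda)$-summand, and left multiplication by $\HH_n(q)$ sweeps out the single column of this matrix block on which $p_\frakt$ is nonzero --- this column carries exactly one copy of the left module $(S^\lambda)^*$. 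The main obstacle throughout is the spectral theorem itself, i.e., the joint diagonalizability of the $J_m(q)$'s with eigenspaces faithfully labeled by $\syt(n)$; this is the deep content of the Okounkov--Vershik/Dipper--James theory and I would cite it rather than re-prove it here.
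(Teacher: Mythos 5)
Your proposal is correct, and it is essentially the route the paper takes: the paper offers no proof of this proposition, simply asserting that ``Lagrange interpolation guarantees'' these properties and citing Murphy \cite[p.~506]{MURPHY1992492}, and your argument is exactly the Lagrange-interpolation bookkeeping behind that citation, granting (as you do) the joint diagonalizability of the commuting $J_m(q)$ with eigenvalue sequences faithfully labelled by standard tableaux. The only point to keep explicit is the auxiliary fact you already flagged --- that distinct tableaux (in particular of distinct shapes) have distinct content sequences --- since your orthogonality step in (1) and the ``zero in the other blocks'' step in the matrix-unit argument for (4) both rely on it.
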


The Young idempotents beautifully encode the connection between representations of $\HH_n(q)$ and $\syt(n)$ as follows. For $\frakt \in \syt(n)$, let $\frakt|_k$ be the subtableau of $\frakt$ obtained from restricting $\frakt$ to the boxes labeled $1, \cdots, k$ of $\frakt$, and let $\shape(\frakt|_k)$ be the shape of $\frakt|_k$. Then 
\[ \frakt|_k \in \syt \left(\shape(\frakt|_k)\right) \subseteq \syt(k), \] and we will think of each  $\frakt \in \syt(n)$ as being built by a nested sequence of tableaux:
\[ \frakt|_1 \subseteq \frakt|_2 \subseteq \cdots \subseteq \frakt|_n = \frakt. \] 

Crucially, each $p_\frakt \in \HH_n(q)$ can be built from a tower of inclusions in the same way. Algebraically, the nested tableaux correspond to the algebra embedding $\HH_{k}(q) \subseteq \HH_{k+1}(q)$ (which sends each $T_{s_i}$ to $T_{s_i}$). This idea is encapsulated in the \emph{Tower Rule} (Proposition \ref{lem:tower-rule}) below, which will allow us to move between Hecke algebras of different sizes in a precise way. This is essential to our inductive arguments.

Recall that $\{ p_\lambda: \lambda \vdash n\} $ is the collection of canonical central orthogonal idempotents that project onto the $S^\lambda$-isotypic component of $\HH_n(q)$. We sketch the proof of the Tower rule for completeness, though the result is not new; see for instance \cite[\S 1]{elias2017categorical}. 

\begin{prop}[Tower rule] \label{lem:tower-rule}
For any $\lambda \vdash n$, we have that 
\begin{equation}\label{tower1}
    p_\lambda = \sum_{\frakt \in \syt(\lambda)} p_{\frakt},
\end{equation}
and for $\frakt \in \syt(\lambda)$,
\begin{equation} \label{tower2}
p_{\frakt} = p_{\shape\left(\frakt\vert_1\right)} \ p_{\shape\left(\frakt\vert_2\right)} \ \cdots \ p_{\shape\left(\frakt\vert_{n-1}\right)}  \ p_{\shape\left(\frakt\vert_n\right)}. \end{equation}

\end{prop}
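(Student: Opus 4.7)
The plan is to prove the two equations in order, deducing \eqref{tower2} from \eqref{tower1} by induction on $n$.

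For \eqref{tower1}, the key point is the orthogonality identity $p_\lambda \cdot p_\frakt = \delta_{\lambda, \shape(\frakt)} \, p_\frakt$ for all $\frakt \in \syt(n)$. This follows from Proposition \ref{lem:interaction-of-jucys-murphys-with-pts}(4): since $\HH_n(q) p_\frakt \cong (S^{\shape(\frakt)})^*$ lies entirely in the $\shape(\frakt)$-isotypic component of $\HH_n(q)$, the central projector $p_\lambda$ acts as the identity on this module when $\lambda = \shape(\frakt)$ and as zero otherwise. Then, using $1 = \sum_{\frakt \in \syt(n)} p_\frakt$ from Proposition \ref{lem:interaction-of-jucys-murphys-with-pts}(1),
$$p_\lambda = p_\lambda \cdot 1 = \sum_{\frakt \in \syt(n)} p_\lambda \cdot p_\frakt = \sum_{\frakt \in \syt(\lambda)} p_\frakt.$$

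For \eqref{tower2}, I would induct on $n$, with trivial base case $n = 1$. For the inductive step, let $\fraks := \frakt|_{n-1}$ and $\mu := \shape(\fraks)$, so that $\lambda \sm \mu$ is the single box of $\frakt$ containing $n$. Applying the inductive hypothesis in $\HH_{n-1}(q)$ shows that $p_\fraks$ equals the product of the first $n-1$ factors on the right-hand side of \eqref{tower2}, so it remains to prove $p_\frakt = p_\fraks \cdot p_\lambda$. Expanding $p_\lambda$ via \eqref{tower1}, this reduces to the main claim: for all $\frakq \in \syt(n)$,
$$p_\fraks \cdot p_\frakq = \delta_{\frakq|_{n-1}, \, \fraks} \, p_\frakq.$$
Given this, only $\frakq = \frakt$ contributes to $p_\fraks \sum_{\frakq \in \syt(\lambda)} p_\frakq$, since $\lambda \sm \mu$ has a unique box and therefore $\frakt$ is the unique element of $\syt(\lambda)$ restricting to $\fraks$ on $\{1, \ldots, n-1\}$.

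To prove the main claim I split into cases. If $\frakq|_{n-1} = \fraks$, then $\content_{\fraks, m}(q) = \content_{\frakq, m}(q)$ for all $m \leq n-1$, so the first $n-1$ factors in the Lagrange interpolation formula (Definition \ref{def:youngidempotent}) for $p_\frakq$ coincide with those for $p_\fraks$; since the $J_m(q)$ pairwise commute, I may factor $p_\frakq = p_\fraks \cdot B_\frakq$ where $B_\frakq$ is the $m = n$ factor, and idempotence of $p_\fraks$ yields $p_\fraks \cdot p_\frakq = p_\fraks^2 \cdot B_\frakq = p_\frakq$. If $\frakq|_{n-1} \neq \fraks$, there is some $m \leq n-1$ with $\content_{\fraks, m}(q) \neq \content_{\frakq, m}(q)$ (the Okounkov-Vershik fact that a standard tableau is determined by its content sequence); since $p_\fraks$ is a polynomial in the pairwise commuting $J_1(q), \ldots, J_{n-1}(q)$, it commutes with $J_m(q)$, and Proposition \ref{lem:interaction-of-jucys-murphys-with-pts}(2) applied on both sides gives
$$\content_{\fraks,m}(q) \, p_\fraks p_\frakq \; = \; J_m(q) p_\fraks p_\frakq \; = \; p_\fraks J_m(q) p_\frakq \; = \; \content_{\frakq,m}(q) \, p_\fraks p_\frakq,$$
forcing $p_\fraks p_\frakq = 0$. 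The main technical input is the Okounkov-Vershik separation of tableaux by content sequences; everything else is routine manipulation of commuting idempotents.
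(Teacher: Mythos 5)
Your proof follows essentially the same skeleton as the paper's: for \eqref{tower2} you induct on $n$, reduce to $p_\fraks \, p_\lambda = p_\frakt$, expand $p_\lambda$ via \eqref{tower1}, and handle the diagonal term by the Lagrange-interpolation factorization $p_\frakq = p_{\frakq|_{n-1}} B_\frakq$ plus idempotence --- this is exactly the paper's equation $p_\frakt = p_{\frakt'} p_\frakt$. (For \eqref{tower1} the paper instead appeals to uniqueness of the isotypic projectors, but your direct computation $p_\lambda \, p_\frakt = \delta_{\lambda, \shape(\frakt)} p_\frakt$ from Proposition \ref{lem:interaction-of-jucys-murphys-with-pts}(4) and completeness is an equivalent, perfectly fine variant.)

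The only genuine divergence is how you kill the cross terms $p_\fraks \, p_\frakq$ with $\frakq|_{n-1} \neq \fraks$, and there your justification is looser than it should be. You pick some $m \le n-1$ where the integer contents of $\fraks$ and $\frakq$ differ and conclude $\content_{\fraks,m}(q) \neq \content_{\frakq,m}(q)$. But semisimplicity only excludes $q$ being a primitive $k$th root of unity for $2 \le k \le n$; it allows, say, $q$ of order $n+1$, and then distinct integer contents in the range $(-m,m)$ can have equal $q$-analogues (e.g.\ $[3]_q = [-4]_q$ whenever $q^7 = 1$), so ``the integer content sequences differ somewhere'' does not by itself give ``the $q$-content sequences differ somewhere.'' The step is salvageable: take $m$ \emph{minimal} with $\fraks|_m \neq \frakq|_m$; then the boxes containing $m$ are two distinct addable corners of the common shape $\shape(\fraks|_{m-1})$, whose contents $a > b$ satisfy $a - b \le m \le n-1$, whence $[a]_q - [b]_q = q^b [a-b]_q \neq 0$ by semisimplicity. (You also implicitly use the left-hand identity $J_m(q)\, p_\frakq = \content_{\frakq,m}(q)\, p_\frakq$; this is fine, but note it needs that $p_\frakq$, not just $p_\fraks$, commutes with $J_m(q)$, which again holds because $p_\frakq$ is a polynomial in the commuting Jucys--Murphy elements.) The paper sidesteps all of this: it applies the same factorization trick to $\frakq$ to write $p_\frakq = p_{\frakq|_{n-1}} p_\frakq$ and then uses the orthogonality $p_\fraks \, p_{\frakq|_{n-1}} = 0$ from Proposition \ref{lem:interaction-of-jucys-murphys-with-pts}(1) at level $n-1$, with no eigenvalue-separation input at all. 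With the refinement above your route is correct; the paper's is shorter and more robust.
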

\begin{proof}
Since the isotypic decomposition of semisimple algebras is unique, the corresponding complete, orthogonal idempotent projectors $p_\lambda$ are unique. It is straightforward to check that
\[ \sum_{\frakt \in \syt(\lambda)} p_\frakt\]
form a family of complete, orthogonal idempotents, which implies that $\HH_n(q) \sum_{\frakt \in \syt(\lambda)} p_\frakt$ is a left submodule of $\HH_n(q)$ isomorphic to \[ \bigoplus_{\frakt \in \syt(\lambda)}\HH_n(q) p_\frakt \cong \bigoplus_{\frakt \in \syt(\lambda)}\left(S^\lambda\right)^\ast,\] which must be the $\lambda$-isotypic subspace. 
For the second claim, by induction on $n$ it suffices to show 
\[p_{\frakt'}\cdot p_{\lambda} = p_{\frakt}\]
where $\frakt \in \syt(\lambda)$ and $\frakt'= \frakt \vert_{n - 1}.$
By definition of $p_{\frakt}$, we can rewrite 
        \begin{align*}
        p_{\frakt}
        & = \left(\prod_{m = 1}^{n-1} \quad  \prod_{\mathfrak{d} \in \setofcontent(m) \sm \{\content_{\frakt', m}\}}\frac{J_m(q) - [\mathfrak{d}]_q}{\content_{\frakt', m}(q) - [\mathfrak{d}]_q}\right)\cdot \left(  \prod_{\mathfrak{d} \in \setofcontent(n) \sm \{\content_{\frakt, n}\}}\frac{J_{n}(q) - [\mathfrak{d}]_q}{\content_{\frakt, n}(q) - [\mathfrak{d}]_q}\right) \\
        & = p_{\frakt'} \left(  \prod_{\mathfrak{d} \in \setofcontent(n) \sm \{\content_{\frakt, n}\}}\frac{J_{n}(q) - [\mathfrak{d}]_q}{\content_{\frakt, n}(q) - [\mathfrak{d}]_q}\right).
        \end{align*}
        Combining this observation with the fact that $p_{\frakt'}$ is idempotent, we rewrite
        \begin{align}\label{eqn:pt'=ptpt'}
        p_{\frakt}= p_{\frakt'}\cdot p_{\frakt'}\left(  \prod_{\mathfrak{d} \in \setofcontent(n) \sm \{\content_{\frakt, n}\}}\frac{J_{n}(q) - [\mathfrak{d}]_q}{\content_{\frakt, n}(q) - [\mathfrak{d}]_q}\right)  = p_{\frakt'} \cdot p_{\frakt}.
        \end{align}

        Equation (\ref{eqn:pt'=ptpt'}) and orthogonality imply that 
        \[ p_{\frakt'} \cdot p_\frakq = p_{\frakt'} \cdot p_{\frakq \vert_{n - 1}} \cdot p_{\frakq} = 0 \] 
        for any $\frakq \in \syt(\lambda)$ with $\frakq \vert_{n - 1} \neq \frakt'.$ Since the only $\frakq \in \syt(\lambda)$ with $\frakq \vert_{n - 1} = \frakt'$ is $\frakq = \frakt,$ we can again apply equation (\ref{eqn:pt'=ptpt'}) to show 
\begin{align*}
    p_{\frakt'} \cdot p_{\lambda} = p_{\frakt'}  \sum_{\frakq\in \mathrm{SYT}(\lambda)}p_{\frakq}
   = p_{\frakt'}p_\frakt = p_{\frakt}.
\end{align*}
\end{proof}
\begin{example}\rm
Let \begin{align*}
    \frakt = \begin{ytableau}
            1 & 2 & 5\\
            3 & 4
        \end{ytableau}.
\end{align*}
Then, the shapes of the restricted tableaux $\frakt \vert_k$ are as follows.
\setlength{\extrarowheight}{.25cm}
\begin{center}
\begin{tabular}{c|ccccc}
  $k$   &  $1$ & $2$ & $3$ & $4$ & $5$\\ \hline
  
 $ \frakt \vert_k$  & $\begin{ytableau}
       1
   \end{ytableau}$  & $\begin{ytableau}
       1 & 2
   \end{ytableau}$  & $\begin{ytableau}
       1 & 2\\
       3
   \end{ytableau}$ & $\begin{ytableau}
       1 & 2\\
       3 & 4
   \end{ytableau}$  & $\begin{ytableau}
       1 & 2 & 5\\
       3 & 4
   \end{ytableau}$\\[1.25em] \hline
   $\shape \left(\frakt \vert_k \right)$ & $(1)$ & $(2)$ & $(2, 1)$ & $(2, 2)$ & $(3, 2)$
\end{tabular}
\end{center}
    Hence, by the tower rule:
    \begin{align*}
        p_{\frakt} = p_{(1)} \  p_{(2)} \  p_{(2,1)}  \ p_{(2,2)} \ p_{(3, 2)}.
    \end{align*}
\end{example}

We use the tower rule to extend our definition of Young idempotents to skew diagrams $\lambda \sm \mu$:

\begin{definition}
\label{def:skew-pt}
For $\frakt \in \syt(\lambda \sm \mu)$, let
\[
p_\frakt := p_{\shape\left(\frakt\vert_{|\mu|+1}\right)} \ p_{\shape\left(\frakt\vert_{|\mu|+2}\right)} \ \cdots \ p_{\shape\left(\frakt\vert_{|\lambda|} \right)}.
\]
\end{definition}

 It will also be useful to build a standard tableau in $\syt(\lambda)$ from elements of $\syt(\lambda \sm \mu)$ and $\syt(\mu)$.

\begin{definition}\label{defconstructtableau}
Given a tableau $\fraks \in \syt(\mu)$ and a skew tableau $\frakt \in \syt(\lambda \sm \mu)$, define $\frakt(\fraks) \in \syt(\lambda)$ to be the unique tableau for which
\[ \frakt(\fraks)|_{|\mu|} = \fraks \quad \quad \textrm{ and } \quad \quad \frakt(\fraks) \sm \fraks = \frakt.\]
\end{definition}

\begin{example}\rm
    If
    \begin{align*}
       \fraks =  \begin{ytableau}[*(lightgray)]
            1 & 2 & 5\\
            3& 4\\
        \end{ytableau} \quad \text{ and } \quad \frakt =  \begin{ytableau}
            \none & \none & \none & 7 & 9\\
            \none & \none & 8 & 10 \\
            6 & 11
        \end{ytableau}\ ,
    \end{align*}
    then 
    \begin{align*}
    \frakt(\fraks) =  \begin{ytableau}
           *(lightgray) 1 & *(lightgray) 2 & *(lightgray) 5 & 7 & 9\\
            *(lightgray) 3 & *(lightgray) 4 & 8 & 10 \\
            6 & 11
        \end{ytableau}.
    \end{align*}
\end{example}

\subsubsection{Specht modules and the seminormal basis}\label{subsec:spech-modules-seminormal-basis}

We will now use the Young idempotents to construct the seminormal basis for Specht modules of $\HH_n(q)$. 

In order to describe this basis and its properties, we will define a partial order on $\syt(\lambda)$ as follows. Recall the \emph{dominance} partial order on partitions of a fixed size, given by $\mu \leqdom \nu$ if for all $j,$
\[ \sum_{i = 1}^j \  \mu_i  \ \leq  \ \sum_{i = 1}^j \  \nu_i \] 

\begin{definition}[Dominance order on $\syt(\lambda)$] \label{def:dominanceorder}

 For any $\frakt, \frakq \in \syt(\lambda)$ we say
 $\frakq \leqdom \frakt$ if 
 \[ \shape \left(\frakq \vert_{k}\right) \leqdom \shape \left(\frakt \vert_k\right) \quad \quad \textrm{ for all} \quad \quad  1 \leq k \leq |\lambda|.  \]
\end{definition}
There is always a unique maximal element in $\syt(\lambda)$ with respect to $\leqdom$, which we denote by $\frakt^\lambda.$ The tableau $\frakt^\lambda$ is given by filling in a Young diagram of shape $\lambda$ with the entries $1, 2, \cdots, n$ starting with the box in the upper left corner, then continuing across each row left to right, proceeding from the top row to the bottom row.
\begin{example}\rm
We draw Hasse diagram for dominance order on $\syt(3,2).$ The top tableau is $\frakt^{(3,2)}$.

\begin{center}
    \begin{tikzpicture}
        \node (1) at (0, 4.5) {$\begin{ytableau}
            1 & 2 & 3\\ 4 & 5
        \end{ytableau}$};
        \node (2) at (0, 3) {$\begin{ytableau}
            1 & 2 & 4\\ 3 & 5
        \end{ytableau}$};
          \node (3) at (-1.5, 1.5) {$\begin{ytableau}
            1 & 2 & 5\\ 3 & 4
        \end{ytableau}$};
          \node (4) at (1.5, 1.5) {$\begin{ytableau}
            1 & 3 & 4\\ 2 & 5
        \end{ytableau}$};
           \node (5) at (0,0) {$\begin{ytableau}
            1 & 3 & 5\\ 2 & 4
        \end{ytableau}$};
        \draw (1)--(2);
        \draw (2) -- (3);
        \draw (2) -- (4);
        \draw (3)--(5);
        \draw(4)--(5);
    \end{tikzpicture}
\end{center}
\end{example}

 Definition \ref{def:dominanceorder} can easily be extended to an ordering on $\syt(\lambda \sm \mu)$ by $\leqdom$. We will use $\frakt^{\lambda \sm \mu}$ to denote the largest element of $\syt(\lambda \sm \mu)$ with respect to $\leqdom$. 

Given any $\frakt \in \syt(\lambda)$, one can define a word, $\word(\frakt),$ in the alphabet $\{ 1, 2, \cdots, \ell(\lambda)\}$ as follows. 

\begin{definition} Given $\frakt \in \syt(\lambda)$ with $|\lambda| = n$, define $\word(\frakt)$ to be the word \[ w_1\ w_2\cdots \  w_n, \]
where $w_i \in [\ell(\lambda)]$ has value given by the row in which $i$ appears in $\frakt$. 
\end{definition}

\begin{example}\rm
For $\lambda = (\lambda_1, \cdots, \lambda_k)$, we have that 
\[ \word(\frakt^\lambda) = (\underbrace{1, \cdots, 1}_{\lambda_1}, \underbrace{2, \cdots, 2}_{\lambda_2},\cdots, \underbrace{k, \cdots, k}_{\lambda_k}).    \]
\end{example}

By construction $\word(\frakt)$ has \textit{content} $\lambda$, which means it contains $\lambda_1$ $1$'s, $\lambda_2$ $2$'s, and so on. There is a natural right symmetric group action on any word ${\mathbf{w}} = w_1\ w_2 \cdots w_n$ by position, where $s_i$ swaps the positions of $w_i$ and $w_{i+1}$. Importantly, \cite[Chapter 4, Exercise 19, pg 67]{Mathas} shows this action generalizes
to a right action by $\HH_n(q)$ as well, defined as follows:
\begin{align} \label{eqn: word-action}
    \mathbf{w} \cdot T_{s_i} := \begin{cases}
        q\mathbf{w}, & \text{$w_i = w_{i + 1}$},\\
        \mathbf{w}s_i, & \text{$w_i < w_{i + 1}$},\\
        q\mathbf{w}s_i + (q - 1)\mathbf{w}, & \text{$w_i > w_{i + 1}.$}
    \end{cases}
\end{align}
Let $W^\lambda$ denote the $\CC$-span of all words of length $n$ with content $\lambda$. Note that by \eqref{eqn: word-action}, the $\HH_n(q)$-action on words preserves content, and so $W^\lambda$ is a $\HH_n(q)$-module.

We are at last ready to define Young's seminormal units:

\begin{definition}[Young seminormal units] \label{def:youngseminormalunits}
Given $\frakt \in \syt(\lambda)$, define the Young seminormal unit: 
\begin{align}
 w_{\frakt}:= \word(\frakt) \ p_{\frakt} \in W^\lambda.
 \label{eq.def:youngseminormalunits.wt=}
\end{align}
\end{definition}

The importance of the seminormal units is that they provide a particularly nice basis for the irreducible representations $S^\lambda$ of $\HH_n(q)$.
To state this, we recall the action of the symmetric group $\symm_n$ on the tableaux of shape $\lambda$ with entries $1,2,\ldots,n$ from the right by value --- that is, the entries of $\frakt \cdot \sigma$ are obtained by applying $\sigma^{-1}$ to the entries of $\frakt$.

\begin{theorem}[Dipper-James] \label{thm:dipperjamesyoungbasis}
Suppose $\HH_n(q)$ is semisimple. Then the following holds:
\begin{enumerate}
    \item The collection 
    \[ \{ w_{\frakt}: \frakt \in \syt(\lambda) \}\]
    give a $\CC$-basis for an irreducible representation $S^\lambda$ of $\HH_n(q)$. \medskip
    \item For every $1 \leq m \leq n$ and $\frakt \in \syt(\lambda)$,
     \begin{equation*}\label{eqn:JMscalingSN}
    w_\frakt \  J_m(q) = \content_{\frakt, m}(q) \  w_\frakt.\end{equation*}
    \item \label{thm:dipperjamesactiononseminormalunits}
   For $\frakt \in \syt(\lambda),$ $\frakq = \frakt \cdot s_i$ and $\rho_i:= \content_{\frakt, i}-\content_{\frakq, i},$ 
\begin{equation*}
    w_{\frakt} T_{s_i} = \begin{cases} q \ w_{\frakt} &  i \textrm{ and } i+1 \textrm{ are in the same row in } \frakt \bigskip \\
    -w_{\frakt} &  i \textrm{ and } i+1 \textrm{ are in the same column in } \frakt \bigskip \\
    -\frac{1}{[\rho_i]_q} w_{\frakt} +  w_{\frakq} &  \frakq \in \syt(\lambda) \textrm{ and } \frakq \ledom \frakt \bigskip \\ 
    - \frac{1}{[\rho_i]_q} \ w_{\frakt}  +  \frac{q[\rho_i+1]_q \ [\rho_i-1]_q}{([\rho_i]_q)^2} \ w_{\frakq} & \frakq \in \syt(\lambda) \textrm{ and } \frakt \ledom \frakq.
    \end{cases}
\end{equation*}
\end{enumerate}
\end{theorem}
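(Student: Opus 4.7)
The plan is to establish (2) first as an essentially formal consequence of Proposition \ref{lem:interaction-of-jucys-murphys-with-pts}, then tackle the explicit action (3), from which (1) will follow by a dimension-and-irreducibility argument. The main technical obstacle will lie in (3), because $T_{s_i}$ and $p_{\frakt}$ do not commute in general.

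For part (2), since $p_{\frakt}$ is by Definition \ref{def:youngidempotent} a polynomial in the pairwise commuting elements $J_1(q), \ldots, J_n(q)$, it commutes with every $J_m(q)$. Hence
\[
w_{\frakt}\, J_m(q) \;=\; \word(\frakt)\, p_{\frakt}\, J_m(q) \;=\; \word(\frakt)\, J_m(q)\, p_{\frakt} \;=\; \content_{\frakt,m}(q)\, \word(\frakt)\, p_{\frakt} \;=\; \content_{\frakt,m}(q)\, w_{\frakt},
\]
where the third equality invokes Proposition \ref{lem:interaction-of-jucys-murphys-with-pts}(2). A key consequence is the linear independence of $\{w_{\frakt} : \frakt \in \syt(\lambda)\}$, because the joint eigenvalue sequence $(\content_{\frakt,1}(q), \ldots, \content_{\frakt,n}(q))$ is known to determine $\frakt$ uniquely --- a central fact of the Okounkov--Vershik framework.

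For part (3), the main technical step, I would proceed case by case on the relative position of $i$ and $i+1$ in $\frakt$, aiming to express $w_{\frakt} T_{s_i} = \word(\frakt)\, p_{\frakt}\, T_{s_i}$ as a combination of $w_{\frakt}$ and (where defined) $w_{\frakq}$ for $\frakq = \frakt \cdot s_i$. The obstacle is the non-commutativity of $p_{\frakt}$ and $T_{s_i}$; to handle it I would exploit the fact that conjugation by $T_{s_i}$ swaps $J_i(q)$ and $J_{i+1}(q)$ up to an error term lying in $\CC[T_{s_i}]$, so that $T_{s_i}^{-1} p_{\frakt} T_{s_i}$ can be written as a $\CC(q)$-linear combination of $p_{\frakt}$ and $p_{\frakq}$ (the latter present only when $\frakq \in \syt(\lambda)$). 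Combined with the word action (\ref{eqn: word-action}), this reduces each case to a Lagrange interpolation identity in $\content_{\frakt,i}$ and $\content_{\frakt,i+1}$. In the same-row and same-column cases $\frakq$ is not standard, so the $p_{\frakq}$-term vanishes, and the quadratic relation $T_{s_i}^2 = (q-1) T_{s_i} + q$ together with the joint eigenvalue of $(J_i(q), J_{i+1}(q))$ on $w_{\frakt}$ forces the scalar to be $q$ or $-1$ respectively. In the two remaining cases the coefficients are pinned down by applying (2) to both $w_{\frakt}$ and $w_{\frakq}$, the Hecke quadratic, and the explicit value of $\rho_i = \content_{\frakt,i} - \content_{\frakq,i}$.

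Finally, for part (1), set $V^\lambda := \spann \{w_{\frakt} : \frakt \in \syt(\lambda)\} \subseteq W^\lambda$. The formulas from (3) show that $V^\lambda$ is a right $\HH_n(q)$-submodule, and linear independence from (2) gives $\dim V^\lambda = f^\lambda$. Irreducibility follows because any nonzero submodule of $V^\lambda$ is a sum of joint $J_m(q)$-weight spaces --- each one-dimensional by (2) and the separation of content sequences --- and the off-diagonal cases of (3) produce $w_{\frakq}$ from $w_{\frakt}$ (up to nonzero scalars and $w_{\frakt}$-multiples), making the $\HH_n(q)$-action transitive on the basis $\{w_{\frakt}\}$ (any two standard tableaux of shape $\lambda$ are connected by a sequence of $s_i$-moves through $\syt(\lambda)$). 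This identifies $V^\lambda$ with the irreducible Specht module $S^\lambda$, completing the proof.
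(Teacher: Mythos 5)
The paper does not actually prove this theorem: it is quoted from Dipper--James, and the remark immediately after the statement refers the reader to Mathas \cite{Mathas} (Prop.\ 3.35(ii) and Thm.\ 3.36) for the proof, only correcting a typo in the last case of (3). So your proposal is an attempt to reconstruct a proof along standard Okounkov--Vershik lines rather than an alternative to an argument in the paper. The outline has the right shape, but as written it contains genuine gaps.

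The most serious one is that you never show $w_\frakt = \word(\frakt)\,p_\frakt \neq 0$. Your linear-independence argument via distinct joint Jucys--Murphy eigenvalue sequences, and hence the dimension count $\dim V^\lambda = f^\lambda$ needed for (1), silently assume nonvanishing; a priori the image of the single vector $\word(\frakt)$ under $p_\frakt$ could be zero even though $W^\lambda p_\frakt$ is not. Closing this requires an extra ingredient, e.g.\ a triangularity statement for the action of the $J_m(q)$ on words showing that the coefficient of $\word(\frakt)$ in $w_\frakt$ is $1$ (this is essentially how the $m_\frakt$-based treatment in \cite{Mathas} proceeds). Second, part (3) is the substantive content of the theorem --- the exact coefficients $-\frac{1}{[\rho_i]_q}$ and $\frac{q[\rho_i+1]_q[\rho_i-1]_q}{([\rho_i]_q)^2}$ are precisely what the paper is careful about --- yet your proposal only asserts that they are ``pinned down'': neither the key claim that $T_{s_i}^{-1}p_\frakt T_{s_i}$ lies in the span of $p_\frakt$ and $p_\frakq$ (with the relevant cross terms), nor the resolution of $q$ versus $-1$ in the same-row/same-column cases, is actually carried out. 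The latter can be done, for instance, by applying the recursion $J_{i+1}(q)=q^{-1}T_{s_i}J_i(q)T_{s_i}+q^{-1}T_{s_i}$ to $w_\frakt$ and comparing with $\content_{\frakt,i+1}(q)$, but some such computation must appear for the stated formulas to be established. A minor point: in (2) your middle step is misattributed --- Proposition \ref{lem:interaction-of-jucys-murphys-with-pts}(2) gives $p_\frakt\,J_m(q)=\content_{\frakt,m}(q)\,p_\frakt$ directly, so no commutation past $\word(\frakt)$ is needed (and $J_m(q)$ does not act on $\word(\frakt)$ by a scalar), though the conclusion of (2) is correct.
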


\begin{remark}\rm
For a proof, see \cite[Prop 3.35 (ii) and Thm 3.36]{Mathas}. However,
Theorem \ref{thm:dipperjamesyoungbasis}(3) as stated in Mathas \cite[Thm 3.36 (ii), pg 44]{Mathas} has a minor typo in the final case, corrected above.
\end{remark}

\begin{example}\label{ex:triv-rep}\rm
The one-dimensional representation $S^{(n)}$ of $\HH_n(q)$ has basis element $w_{\frakt^{(n)}}$ and 
\[ w_{\frakt^{(n)}} \cdot T_w = q^{\ell(w)} w_{\frakt^{(n)}}. \]
We can see this as an example of Theorem \ref{thm:dipperjamesyoungbasis} (3), since $i$ and $i+1$ are always in the same row in 
\[ \frakt^{(n)} =  \begin{ytableau}
            1 & 2 & \cdot & \cdot & \cdot & n
        \end{ytableau}\]
Thus the \emph{character} of $T_w$ acting on $S^{(n)}$ is $q^{\ell(w)}$; see \cite{ram1991frobenius} for more on characters of $\HH_n(q)$. 

Similarly, the one-dimensional representation $S^{(1^n)}$ has basis element $w_{\frakt^{(1^n)}}$, and the action of $T_w \in \HH_n(q)$ on $S^{(1^n)}$ is given by: 
\[w_{\frakt^{(1^n)}} \cdot T_w = (-1)^{\ell(w)} w_{\frakt^{(1^n)}}. \]

This can also be deduced from Theorem \ref{thm:dipperjamesyoungbasis}(3), since for any $i$, one always has $i$ and $i+1$ in the same column in $\frakt^{(1^n)}$. 
\end{example}
\begin{example}\rm
Consider $i=2$, with  \[\frakt = \begin{ytableau}
    1 & 3 & 4\\
    2 & 5
\end{ytableau} \ \text{ so that } \frakt \cdot s_2 = \frakq = \begin{ytableau}
    1 & 2 & 4\\
    3 & 5
\end{ytableau}. \]
We see that $\frakt \ledom \frakq$, and $\rho_2 = \content_{\frakt, 2} - \content_{\frakq, 2} = -1 - 1 = -2$.
Thus the final case of Theorem \ref{thm:dipperjamesyoungbasis}(3) gives 
\begin{align*}
    w_{\frakt} \cdot T_{s_i} = -\frac{1}{[-2]_q}w_{\frakt} + \frac{q[-1]_q[-3]_q}{\left([-2]_q\right)^2}w_{\frakq}.
\end{align*} 
\end{example}

\begin{remark}\label{remark:mathasdictionary}\rm
Our presentation of $S^\lambda$ is slightly different (though equivalent) to the one presented by Mathas in \cite[Chapter 3]{Mathas}. While we use $w_{\frakt} = \word(\frakt) \ p_{\frakt}$, Mathas defines the corresponding basis elements of $S^\lambda$ as 
$m_{\frakt} \ p_{\frakt}$, where 
\[ m_{\frakt} = m_{\lambda} \  T_w,\]
with $w \in \symm_n$ being the minimal length permutation such that $\frakt  = \frakt^\lambda \cdot w$ and 
 \[m_{\lambda}:= \sum_{w \in \symm_\lambda}T_w\]
for the Young subgroup $\symm_\lambda \cong \symm_{\lambda_1} \times \symm_{\lambda_2} \times \cdots \times \symm_{\lambda_k}$. 

It is not hard to check that $W^\lambda$ is isomorphic to the right $\HH_n(q)$-module generated by the $m_{\frakt}$ for all \emph{row strict tableaux} $\frakt$, i.e. $\frakt$ whose entries increase across rows. 

The advantage of Mathas's approach is that the Specht modules he defines are actually in $\HH_n(q)$, and because of this he can construct a basis for the bimodule $(S^\lambda)^* \otimes S^\lambda$. The advantage of our approach is that the elements $\word(\frakt)$ are quite concrete, and work well with our inductive arguments.

\end{remark}

\begin{example}\label{ex:m1n-j}\rm
The element  
\[ m_{(1^j, n-j)} = \sum_{w \in \symm_{(1^j,n-j)}} T_w \]
will appear many times in subsequent sections. Note that $\symm_{(1^j,n-j)} \cong \symm_{n-j}$ in the generators  $s_{n-j+1}, \cdots, s_{n-1}$. Thus, as in Example \ref{ex:triv-rep}, for any $w \in \symm_{(1^j,n-j)}, $
\[ m_{(1^j,n-j)} T_w = q^{\ell(w)} m_{(1^j,n-j)}. \]

\end{example}

\section{Branching rules for $\HH_n(q)$ and the Horizontal Strip Lemma}\label{sec:branching-rules-Hecke}
We review the branching rules of $\HH_n(q)$. When $\HH_n(q)$ is semisimple, one can define a Frobenius characteristic map (Section \ref{sec:frob-char}). We prove the \emph{horizontal strip lemma} (Lemma \ref{horizontalstriplemma}) in Section \ref{subsec:horiz-strip}.

\subsection{Branching rules and the Frobenius characteristic map}\label{sec:frob-char}\label{section:pieri}

In the semisimple case, the branching rules (i.e. behavior of restriction and induction) for $\HH_n(q)$ mirror the symmetric group, as we now explain. 

Suppose $V$ is an $\HH_n(q)$-module. Then one can define $\res(V)$, the restriction of $V$ to $\HH_{n-1}(q)$, as the module $V$ viewed as an $\HH_{n-1}(q)$-module using the standard embedding $\HH_{n-1}(q) \hookrightarrow \HH_n(q)$.

The induction of $V$ to $\HH_{n+1}(q)$ is the $\HH_{n+1}(q)$-module 
\[ \ind(V): = V \otimes_{\HH_n(q)} \HH_{n+1}(q). \]

Given a partition $\lambda$, we say $\mu \lessdot \lambda$ if $\mu$ is a partition obtained from removing a single box from $\lambda$.

\begin{theorem}[Dipper-James]\label{thm:branchingrules}
When $\HH_n(q)$ is semisimple, for any irreducible $\HH_n(q)$-module $S^\lambda$, 
\[ \res(S^\lambda) \cong \bigoplus_{\mu \lessdot \lambda} S^\mu \quad \quad \quad \ind(S^\lambda) \cong \bigoplus_{\lambda \lessdot \nu} S^\nu.\]

\end{theorem}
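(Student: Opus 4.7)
The plan is to prove the restriction statement directly using the Young seminormal basis from Theorem~\ref{thm:dipperjamesyoungbasis}, and then derive the induction statement via Frobenius reciprocity, exploiting the fact that $\HH_n(q) \hookrightarrow \HH_{n+1}(q)$ makes $\HH_{n+1}(q)$ a free $\HH_n(q)$-module (so that induction is exact and well-behaved). Working with the seminormal basis is natural here because the operators $T_{s_i}$ for $i \leq n-2$ lie in $\HH_{n-1}(q)$, so their action on the basis immediately tells us how $\res(S^\lambda)$ decomposes.

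For restriction, I would partition $\syt(\lambda)$ according to the shape of the restriction to the first $n-1$ entries:
\[
\syt(\lambda) = \bigsqcup_{\mu \lessdot \lambda} \syt(\lambda)_\mu, \qquad \syt(\lambda)_\mu := \{ \frakt \in \syt(\lambda) : \shape(\frakt|_{n-1}) = \mu\}.
\]
Removing the entry $n$ gives a bijection $\syt(\lambda)_\mu \leftrightarrow \syt(\mu)$. Let $V_\mu \subseteq S^\lambda$ be the span of $\{w_\frakt : \frakt \in \syt(\lambda)_\mu\}$. The key observation is that for $i \leq n-2$, the tableau $\frakt \cdot s_i$ has the same position of $n$ as $\frakt$, so $\shape(\frakt|_{n-1}) = \shape((\frakt \cdot s_i)|_{n-1})$; thus each $V_\mu$ is $\HH_{n-1}(q)$-stable by Theorem~\ref{thm:dipperjamesyoungbasis}(3). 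To identify $V_\mu$ with $S^\mu$, I would check that under the bijection $\frakt \leftrightarrow \frakt|_{n-1}$, each ingredient of the action formulas in Theorem~\ref{thm:dipperjamesyoungbasis}(3) matches: the row/column relationship of $i, i+1$, the scalars $\content_{\frakt,i}(q)$ and $\rho_i$, and the dominance comparison $\frakt \ledom \frakt \cdot s_i$. The last point uses that $\shape(\frakt|_k) = \shape((\frakt|_{n-1})|_k)$ for $k \leq n-1$, so the dominance orders on $\syt(\lambda)_\mu$ and $\syt(\mu)$ coincide. This linear map is an $\HH_{n-1}(q)$-module isomorphism $V_\mu \xrightarrow{\sim} S^\mu$, yielding the restriction formula.

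For induction, I would invoke Frobenius reciprocity in the semisimple setting:
\[
\dim \Hom_{\HH_{n+1}(q)}(\ind(S^\lambda), S^\nu) = \dim \Hom_{\HH_n(q)}(S^\lambda, \res(S^\nu)).
\]
By the restriction rule already established (applied to $S^\nu$), the right-hand side equals $1$ if $\lambda \lessdot \nu$ and $0$ otherwise. Semisimplicity of $\HH_{n+1}(q)$ then forces $\ind(S^\lambda) \cong \bigoplus_{\lambda \lessdot \nu} S^\nu$. As a dimension sanity check, $\dim \ind(S^\lambda) = (n+1) f^\lambda$ since $\HH_{n+1}(q)$ is free of rank $n+1$ over $\HH_n(q)$, and the identity $\sum_{\lambda \lessdot \nu} f^\nu = (n+1) f^\lambda$ follows from the dual branching identity $f^\nu = \sum_{\lambda \lessdot \nu} f^\lambda$ combined with a double-counting argument on $\syt$ of size $n+1$.

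The main technical obstacle I anticipate is the careful verification that the seminormal formulas for $S^\lambda$ restrict exactly to those for $S^\mu$ --- specifically, that the dominance ordering inherited from $\syt(\lambda)$ on $\syt(\lambda)_\mu$ coincides with the dominance order on $\syt(\mu)$, and that the $q$-content differences $\rho_i$ are invariant under the bijection. Everything else is bookkeeping, but the alignment of dominance orders is where the recursive structure of the Young tableau calculus really enters, and it is what makes the Okounkov--Vershik/Dipper--James basis so well-suited to branching arguments.
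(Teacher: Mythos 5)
The paper never proves this statement: it is quoted as a theorem of Dipper--James and used as a black box, so your argument is a self-contained route rather than a parallel of anything in the text --- and it is correct. The restriction half is the classical seminormal-form argument: grouping the basis $\{w_\frakt : \frakt \in \syt(\lambda)\}$ by $\mu = \shape(\frakt|_{n-1})$, and observing that for $i \leq n-2$ every ingredient of Theorem~\ref{thm:dipperjamesyoungbasis}(3) --- the row/column relation of $i,i+1$, the contents entering $\rho_i$, and the dominance comparison between $\frakt$ and $\frakt \cdot s_i$ (which reduces to comparing shapes at level $i$, since the two tableaux agree at every other level) --- is computed entirely inside $\frakt|_{n-1}$. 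Hence each block $V_\mu$ is $\HH_{n-1}(q)$-stable and $w_\frakt \mapsto w_{\frakt|_{n-1}}$ matches structure constants, giving $V_\mu \cong S^\mu$; this is exactly the kind of argument the paper's Okounkov--Vershik setup is designed for, and it meshes with the tower rule and with Proposition~\ref{prop:restrictionrewrite}. The induction half via the tensor-hom adjunction $\operatorname{Hom}_{\HH_{n+1}(q)}(\ind(S^\lambda), S^\nu) \cong \operatorname{Hom}_{\HH_n(q)}(S^\lambda, \res(S^\nu))$ plus Schur's lemma is also fine; your dimension ``sanity check'' is not needed (the Hom multiplicities already determine $\ind(S^\lambda)$ in the semisimple case), and as an aside the identity $\sum_{\nu \gtrdot \lambda} f^\nu = (n+1) f^\lambda$ is the Young-lattice identity $DU = UD + 1$ and requires more than naive double counting --- but nothing in your proof rests on it.

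One caveat, which is really an imprecision inherited from the statement rather than a flaw in your argument: the induction formula lives in $\HH_{n+1}(q)$, and semisimplicity of $\HH_n(q)$ does not imply semisimplicity of $\HH_{n+1}(q)$ (take $q$ a primitive $(n+1)$-st root of unity; already for $n=1$, $q=-1$, the module $\ind(S^{(1)}) = \HH_2(-1)$ is indecomposable of length two, so no such decomposition into irreducibles exists). You invoke semisimplicity of $\HH_{n+1}(q)$ at exactly the step where it is needed, so you should record it as a hypothesis for the induction half; with that hypothesis made explicit, your proof is complete.
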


More generally, for a subalgebra $B$ of an algebra $A,$ one can define the induction of a $B$-module $V$ to an $A$-module as \[ \ind_B^{A}\left(V\right) := V \otimes_{B}A.\] We will be most interested in the case $B = \HH_{j}(q) \otimes \HH_{n-j}(q)$ and $A = \HH_n(q).$

Note that $\HH_{j}(q) \otimes \HH_{n-j}(q)$ can be realized explicitly in $\HH_n(q)$ as the algebra generated by $T_{s_i}$ for $i \neq j$; write this realization as $\HH_{j, n-j}(q)$. Write $S^{\mu} \otimes S^{\nu}$ to be an irreducible representation of $\HH_{j, n - j}(q)$, where $S^\mu$ is a Specht module of $\HH_{j}(q)$ and $S^\nu$ is a Specht module of $\HH_{n-j}(q)$. 

\subsubsection{Frobenius characteristic map}
One of the most powerful tools to study the representation theory of the symmetric group is the \emph{Frobenius characteristic map} from the ring of virtual symmetric group representations to the ring of symmetric functions. When $\HH_n(q)$ is semisimple, there is an analogous map (\cite[\S 3.2]{krob1997noncommutative}). For a proof, see  \cite[Prop 1.2]{GOODMAN1990244}. 
Let \[{\sf{Rep}} [\HH(q)] = \bigoplus_{n \geq 0} {\sf{Rep}}[\HH_n(q)]\] be the graded ring of isomorphism classes of $\HH_n(q)$-modules, where addition corresponds to direct sum of representations, and the product of $M \in {\sf{Rep}} [\HH_m(q)]$ and $N \in {\sf{Rep}} [\HH_n(q)]$ corresponds to induction  
\[ \ind_{\HH_{m, n}(q)}^{\HH_{m + n}(q)}\left(M \otimes N\right). \] Write $\Lambda$ for the ring of symmetric functions, $s_\lambda$ for the Schur function indexed by a partition $\lambda$, and $h_n$ for the homogeneous symmetric function.  

\begin{prop}[Frobenius characteristic map]
    When $\HH_n(q)$ is semisimple, the map \begin{align*}
    \mathrm{ch}: {\sf{Rep}}[\HH(q)] &\to \Lambda\\ S^\lambda &\mapsto s_\lambda \\
    S^{(n)} & \mapsto h_n
\end{align*}
is a ring isomorphism.
\end{prop}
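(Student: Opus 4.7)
The map $\mathrm{ch}$ is tautologically an isomorphism of graded abelian groups: the Specht modules $\{S^\lambda\}_{\lambda\vdash n}$ form a $\Z$-basis of ${\sf{Rep}}[\HH_n(q)]$, the Schur functions $\{s_\lambda\}_{\lambda\vdash n}$ form a $\Z$-basis of the degree-$n$ component of $\Lambda$, and the condition $S^{(n)} \mapsto h_n$ in the statement is automatic from $S^\lambda \mapsto s_\lambda$ since $s_{(n)} = h_n$. The entire substance of the proof lies in showing $\mathrm{ch}$ is a ring homomorphism, i.e., that induction of external tensor products in $\HH(q)$ corresponds under $\mathrm{ch}$ to multiplication of Schur functions in $\Lambda$.

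Since $\Lambda$ is generated as a commutative ring by $\{h_m\}_{m\ge 1} = \{\mathrm{ch}(S^{(m)})\}$, it suffices to establish the Pieri rule for $\HH(q)$:
\[
\ind_{\HH_{n,m}(q)}^{\HH_{n+m}(q)}\!\left(S^\lambda \otimes S^{(m)}\right) \;\cong\; \bigoplus_{\substack{\mu \supseteq \lambda,\ |\mu|=n+m\\ \mu/\lambda\ \text{horizontal strip}}} S^\mu,
\]
matching the classical identity $s_\lambda h_m = \sum_\mu s_\mu$ over the same index set. My approach is Frobenius reciprocity: the multiplicity of $S^\mu$ on the left equals $\dim \operatorname{Hom}_{\HH_{n,m}(q)}\!\left(S^\lambda \otimes S^{(m)},\; \res^{\HH_{n+m}(q)}_{\HH_{n,m}(q)} S^\mu\right)$. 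Iterating Theorem \ref{thm:branchingrules} along the tower $\HH_{n+m}(q) \supset \HH_{n+m-1}(q) \supset \cdots \supset \HH_n(q)$ decomposes this restricted module as $\bigoplus_{\lambda \subseteq \mu,\,|\lambda|=n} S^\lambda \otimes V_{\mu/\lambda}$, where $V_{\mu/\lambda}$ is the $\HH_m(q)$ skew Specht module, with basis indexed by saturated chains $\lambda = \mu^{(0)} \lessdot \mu^{(1)} \lessdot \cdots \lessdot \mu^{(m)} = \mu$ in Young's lattice, equivalently by elements of $\syt(\mu/\lambda)$. The Pieri rule then reduces to the claim that the trivial $\HH_m(q)$-module $S^{(m)}$ appears in $V_{\mu/\lambda}$ with multiplicity $1$ when $\mu/\lambda$ is a horizontal strip and $0$ otherwise.

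To verify this last multiplicity claim, I would transfer it from the classical symmetric group setting: because Theorem \ref{thm:branchingrules} exhibits formally identical branching data for $\symm$ and $\HH(q)$ (both indexed by partitions, with identical covering relations), the correspondence $S^\lambda_{\symm} \leftrightarrow S^\lambda_{\HH(q)}$ defines a graded $\Z$-module isomorphism $\Phi : {\sf{Rep}}[\symm] \to {\sf{Rep}}[\HH(q)]$ commuting with restriction; by adjointness of $\ind$ and $\res$, $\Phi$ also commutes with induction, so multiplicities of irreducibles in $V_{\mu/\lambda}$ agree on both sides. On the $\symm_m$ side, the multiplicity of the trivial representation in the skew module $S^{\mu/\lambda}$ counts standard tableaux of shape $\mu/\lambda$ with empty descent set, which is exactly $1$ (a unique such tableau, filled left-to-right and top-to-bottom) when $\mu/\lambda$ is a horizontal strip and $0$ otherwise. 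The main technical obstacle will be making this transfer rigorous --- i.e., upgrading $\Phi$ from a $\Z$-module isomorphism to one that identifies the skew modules themselves as $\HH_m(q)$-representations. A concrete alternative, if the abstract route proves awkward, is to compute the multiplicity directly using the seminormal machinery of Section \ref{section:reptheoryofhecke}: Example \ref{ex:triv-rep} identifies $S^{(m)}$ as the common $q$-eigenspace of all the $T_{s_i}$, and applying Theorem \ref{thm:dipperjamesyoungbasis}(3) to the seminormal basis of $V_{\mu/\lambda}$ shows this eigenspace has dimension $1$ precisely in the horizontal-strip case.
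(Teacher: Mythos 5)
The paper does not actually prove this proposition: it cites \cite[Prop 1.2]{GOODMAN1990244} and \cite[\S 3.2]{krob1997noncommutative}, and then \emph{derives} the Pieri rule (Corollary \ref{pierirules}) from it. Your plan runs in the opposite direction — prove a Hecke Pieri rule first, then deduce multiplicativity — which is a legitimate route and not circular, since you never invoke Corollary \ref{pierirules}. The reduction is fine (applying the generation of $\Lambda$ by the $h_m$ to $\mathrm{ch}^{-1}$ works, since products of the $S^{(m)}$ then span ${\sf{Rep}}[\HH(q)]$), as is the use of Frobenius reciprocity and semisimplicity to write $\res^{\HH_{n+m}(q)}_{\HH_{n,m}(q)} S^\mu \cong \bigoplus_{\lambda} S^\lambda \otimes V_{\mu \sm \lambda}$ with $V_{\mu \sm \lambda} = \operatorname{Hom}_{\HH_n(q)}(S^\lambda, \res S^\mu)$ an $\HH_m(q)$-module of dimension $f^{\mu \sm \lambda}$.

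The gap is in the one step that carries all the content: that $S^{(m)}$ occurs in $V_{\mu \sm \lambda}$ with multiplicity $1$ for a horizontal strip and $0$ otherwise. Your primary argument does not establish this. Theorem \ref{thm:branchingrules} only controls restriction along the chain $\HH_{k-1}(q) \subset \HH_k(q)$; it determines $\dim V_{\mu \sm \lambda} = f^{\mu \sm \lambda}$ but says nothing about how the commuting copy of $\HH_m(q)$ acts on that multiplicity space. A $\Z$-linear correspondence $\Phi$ intertwining one-step restriction (hence, by adjunction, one-step induction) only identifies $\ind_{\HH_n}^{\HH_{n+m}} S^\lambda \cong \ind\bigl(S^\lambda \otimes \HH_m(q)\bigr)$ on the two sides; it is not even defined on ${\sf{Rep}}[\HH_{n,m}(q)]$ in a way that is known to commute with $\res^{\HH_{n+m}(q)}_{\HH_{n,m}(q)}$, so the conclusion ``multiplicities of irreducibles in $V_{\mu \sm \lambda}$ agree on both sides'' is exactly the statement you are trying to prove (it amounts to the $q$-independence of the Littlewood--Richardson-type branching data), not a formal consequence of the chain branching rule. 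Your fallback is the right way to close this with the paper's tools: for fixed $\fraks \in \syt(\lambda)$ the span of $\{w_\frakt : \frakt|_n = \fraks\}$ in $S^\mu$ is stable under $T_{s_{n+1}},\dots,T_{s_{n+m-1}}$ and models $V_{\mu \sm \lambda}$, and the multiplicity of $S^{(m)}$ is the dimension of the common $q$-eigenspace there, computable from Theorem \ref{thm:dipperjamesyoungbasis}(3). But as written this is a one-sentence assertion; the actual dimension count (a unique $q$-eigenvector when $\mu \sm \lambda$ is a horizontal strip, none when some pair $i,i+1$ must share a column) is precisely the Pieri rule and still needs an argument, e.g.\ induction on $m$ using the $2\times 2$ blocks in Theorem \ref{thm:dipperjamesyoungbasis}(3). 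So the architecture is sound and genuinely different from the paper's (which is a citation), but the key multiplicity claim is left unproven.
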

   We thus immediately obtain \emph{Pieri rules} for $\HH_n(q)$: 

\begin{cor}[Pieri rules for $\HH_n(q)$]\label{pierirules}
   When $\HH_n(q)$ is semisimple, 
    \begin{equation*} \ind_{\HH_{j, n- j}(q)}^{\HH_n(q)} \ \left( S^{\mu} \otimes S^{(n-j)}\right) \cong \bigoplus_{\substack{\nu \vdash n\\ \nu \sm \mu \textrm{ a horizontal strip}}} S^{\nu}.\end{equation*}
\end{cor}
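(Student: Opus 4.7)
The plan is to deduce the Pieri rule directly from the fact that $\mathrm{ch}$ is a ring isomorphism, reducing the representation-theoretic statement to the classical Pieri rule for Schur functions.

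First, I would apply $\mathrm{ch}$ to both sides of the desired isomorphism. Since $\mathrm{ch}$ sends induction product (i.e., the ring structure on ${\sf{Rep}}[\HH(q)]$) to ordinary multiplication in $\Lambda$, the image of the left-hand side is
\[
\mathrm{ch}\!\left(\ind_{\HH_{j,n-j}(q)}^{\HH_n(q)}\!\left(S^\mu \otimes S^{(n-j)}\right)\right) = \mathrm{ch}(S^\mu)\cdot \mathrm{ch}(S^{(n-j)}) = s_\mu \cdot h_{n-j}.
\]

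Next, I would invoke the classical Pieri rule for symmetric functions, which asserts
\[
s_\mu \cdot h_{n-j} = \sum_{\substack{\nu \vdash n \\ \nu \sm \mu \text{ horizontal strip}}} s_\nu.
\]
Since $\mathrm{ch}(S^\nu) = s_\nu$ and $\mathrm{ch}$ is a ring isomorphism (hence injective on isomorphism classes of representations), applying $\mathrm{ch}^{-1}$ to this identity yields the desired decomposition.

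There is essentially no obstacle here: the entire content has been pushed into the preceding proposition identifying $\mathrm{ch}$ as a ring isomorphism with $\mathrm{ch}(S^{(n)}) = h_n$. The only small point to verify is that the horizontal strips appearing in the sum automatically have size $n - |\mu| = n-j$, which is forced by the requirement $\nu \vdash n$ together with $\mu \vdash j$. If one wanted to be fully self-contained, one could note that the hypothesis $\HH_n(q)$ semisimple implies $\HH_{j,n-j}(q)$ is semisimple as well, so that $S^\mu \otimes S^{(n-j)}$ is indeed an irreducible $\HH_{j,n-j}(q)$-module and its induction is a well-defined element of ${\sf{Rep}}[\HH_n(q)]$ to which $\mathrm{ch}$ can be applied.
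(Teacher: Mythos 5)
Your proposal is correct and is exactly the argument the paper intends: the corollary is stated as an immediate consequence of the Frobenius characteristic map being a ring isomorphism sending $S^\lambda \mapsto s_\lambda$ and $S^{(n)} \mapsto h_n$, combined with the classical Pieri rule $s_\mu h_{n-j} = \sum_\nu s_\nu$ over horizontal strips. Nothing further is needed.
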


\subsubsection{The operators $\Phi$}
Our proofs in Section \ref{section:constructionofeigenvectors} build concrete realizations of $\res(S^\lambda)$ and $\ind(S^\lambda)$.  We define concatenation operators $\Phi_\frakt$ and $\Phi_{\lambda \sm \mu}$ that will facilitate these constructions.

\begin{definition}\label{def:phi}\rm
Let $\mu \subseteq \lambda$ be partitions.
For any $\frakt \in \syt(\lambda \sm \mu)$, define a $\CC$-linear map $\Phi_{\frakt} : W^\mu \to W^\lambda$ by
\begin{equation}
\word(\fraks) \cdot  \Phi_{\frakt} := \word \left(\frakt(\fraks)\right)
\qquad \text{for all } \fraks \in \syt(\mu) .
\label{eq.def:phi.wordPhi}
\end{equation}
Note that the action of $\Phi_{\frakt}$ on a word $w_1 w_2 \cdots w_{|\mu|} \in W^\mu$ can also be described explicitly by
\[
w_1 w_2 \cdots w_{|\mu|} \cdot \Phi_{\frakt} = w_1 w_2 \cdots w_{|\lambda|} ,
\]
where $w_i$ for each $i > |\mu|$ is the number of the row of $\frakt$ that contains the entry $i$.

Furthermore, define a map $\Phi_{\lambda \sm \mu} : W^\mu \to W^\lambda$ by
\begin{align*}
	 \Phi_{\lambda \sm \mu} := \Phi_{\frakt^{\lambda \sm \mu}}.
\end{align*}
\end{definition}

\begin{example}\rm
Consider  \begin{align*}
       \fraks =  \begin{ytableau}
            1 & 2 & 5\\
            3& 4\\
        \end{ytableau} \quad \text{ and } \quad \frakt =  \begin{ytableau}
            \none & \none & \none & 7 & 9\\
            \none & \none & 8 & 10 \\
            6 & 11
        \end{ytableau}\ .
    \end{align*}
    Then, 
    \begin{align*}
    \word(\fraks) \ \Phi_\frakt = \word\left(\frakt(\fraks)\right) = 11221312123,
    \end{align*}
    whereas 
    \begin{align*}
    \word(\fraks) \ \Phi_{(5, 4,2) \sm (3,2)} = \word\left(\frakt^{(5, 4,2) \sm (3,2)}(\fraks)\right) = 11221112233.
    \end{align*}
\end{example}

The following lemma is straightforward to check and will be used throughout our paper. 

\begin{lemma}\label{lem:commuting-phi}
    Let $\mu \subseteq \lambda$ be partitions with $\left|\mu\right| = j$ and $\left|\lambda\right| = n$.
	Then, for each $\frakt \in \syt\left(\lambda \sm \mu\right)$, 
 \[ \Phi_\frakt : W^\mu \to W^\lambda \] 
 is an $\HH_j(q)$-module morphism, where $\HH_j(q)$ acts on $W^\lambda$ via the canonical embedding $\HH_j(q) \hookrightarrow \HH_n(q)$.
	In other words,
	\[
	{\bf w} \ \Phi_\frakt \cdot x = \left({\bf w}\cdot x\right) \Phi_\frakt
	\qquad \text{for all ${\bf w} \in W^\mu$ and $x \in \HH_j(q)$.}
	\]
\end{lemma}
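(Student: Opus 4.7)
The plan is to reduce the verification of module-morphism compatibility to a generator check on a convenient spanning set of $W^\mu$. Since $\HH_j(q)$ is generated as a $\CC$-algebra by $T_{s_1}, T_{s_2}, \ldots, T_{s_{j-1}}$ and $\Phi_\frakt$ is $\CC$-linear, it suffices to show that
\[
{\bf w} \cdot \Phi_\frakt \cdot T_{s_i} = \left({\bf w} \cdot T_{s_i}\right) \cdot \Phi_\frakt
\]
for each $1 \le i \le j-1$ and each basis word ${\bf w} = w_1 w_2 \cdots w_j$ of content $\mu$ spanning $W^\mu$. Using the explicit description of $\Phi_\frakt$ from Definition~\ref{def:phi}, the word ${\bf w} \cdot \Phi_\frakt$ is obtained from ${\bf w}$ by appending a fixed tail $w_{j+1} w_{j+2} \cdots w_n$ determined solely by $\frakt$ (specifically, $w_k$ for $k > j$ is the row of $\frakt$ containing $k$), and this tail is independent of ${\bf w}$.

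The key observation is that the action of $T_{s_i}$ on a word, as given in \eqref{eqn: word-action}, only inspects and modifies the letters in positions $i$ and $i+1$, both of which are at most $j$. Thus, for each of the three cases ($w_i = w_{i+1}$, $w_i < w_{i+1}$, $w_i > w_{i+1}$), one computes that applying $T_{s_i}$ first (to ${\bf w}$) and then $\Phi_\frakt$ produces the same linear combination of words as applying $\Phi_\frakt$ first and then $T_{s_i}$: in both orders, positions $i, i+1$ get transformed identically by \eqref{eqn: word-action}, while the appended tail is left untouched (since $i+1 \le j < j+1$, the transposition of positions $i, i+1$ never interacts with the appended letters). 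A case-by-case comparison using \eqref{eqn: word-action} makes this explicit.

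This is essentially a bookkeeping argument, and no real obstacle arises; the only mild subtlety worth flagging is verifying that $\Phi_\frakt$ really is well-defined on all of $W^\mu$ (not just on the specific words $\word(\fraks)$ from \eqref{eq.def:phi.wordPhi}), which is immediate from the explicit alternative description in Definition~\ref{def:phi}: appending the same fixed tail to every content-$\mu$ word extends linearly to a well-defined map $W^\mu \to W^\lambda$. One could wrap up the proof by also noting that $\Phi_\frakt$ intertwines the Jucys--Murphy action of $J_m(q)$ for $m \le j$, but this is not needed for the statement.
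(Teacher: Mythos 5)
Your proof is correct and is exactly the ``straightforward check'' the paper leaves to the reader: reduce to the generators $T_{s_1},\dots,T_{s_{j-1}}$ acting via \eqref{eqn: word-action} on basis words, and observe that appending the fixed tail determined by $\frakt$ commutes with an operation that only reads and modifies positions $i,i+1 \le j$. Your remark that $\Phi_\frakt$ should be taken as defined on all content-$\mu$ words via the explicit tail-appending description (not just on the $\word(\fraks)$, which do not span $W^\mu$) is a worthwhile point of care, and your treatment of it is right.
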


We will also repeatedly make use of the following expression for $w_{\frakt(\fraks)}$.
\begin{lemma}\label{lem:wts}
Let $\mu \subseteq \lambda$ be partitions. Let $\fraks \in \syt(\mu)$ and $\frakt \in \syt(\lambda \sm \mu)$. Then, $w_{\frakt(\fraks)} = w_\fraks \ \Phi_\frakt \ p_\frakt$.
\end{lemma}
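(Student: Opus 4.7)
The plan is to unwind the two definitions of $w_{\frakt(\fraks)}$ and $w_\fraks$ and then show that the two sides agree by combining the Tower rule with the commutation property of $\Phi_\frakt$. Concretely, I would start by writing
\[
w_{\frakt(\fraks)} = \word\left(\frakt(\fraks)\right) \cdot p_{\frakt(\fraks)} = \word(\fraks) \cdot \Phi_\frakt \cdot p_{\frakt(\fraks)},
\]
where the first equality is Definition~\ref{def:youngseminormalunits} and the second is the defining property \eqref{eq.def:phi.wordPhi} of $\Phi_\frakt$. So the entire task reduces to identifying $p_{\frakt(\fraks)}$ with $p_\fraks\cdot p_\frakt$, and then sliding $p_\fraks$ across $\Phi_\frakt$.

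The first subtask is the factorization $p_{\frakt(\fraks)} = p_\fraks \cdot p_\frakt$. By the Tower rule (Proposition~\ref{lem:tower-rule}),
\[
p_{\frakt(\fraks)} \;=\; \prod_{k=1}^{n} p_{\shape\left(\frakt(\fraks)\vert_k\right)}.
\]
I would then split this product at $k = |\mu|$, observing that by Definition~\ref{defconstructtableau} we have $\frakt(\fraks)\vert_k = \fraks\vert_k$ for $k \leq |\mu|$, so the first $|\mu|$ factors are exactly $p_\fraks$, while the last $n-|\mu|$ factors match Definition~\ref{def:skew-pt} for $p_\frakt$. This gives the desired factorization.

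The second subtask is to commute $p_\fraks$ past $\Phi_\frakt$. Since $p_\fraks$ is built out of $J_1(q),\ldots,J_{|\mu|}(q)$, it lies in the subalgebra $\HH_{|\mu|}(q) \subseteq \HH_n(q)$. Therefore Lemma~\ref{lem:commuting-phi} applies to give
\[
\word(\fraks) \cdot \Phi_\frakt \cdot p_\fraks \;=\; \bigl(\word(\fraks) \cdot p_\fraks\bigr)\cdot \Phi_\frakt \;=\; w_\fraks \cdot \Phi_\frakt.
\]
Assembling the pieces:
\[
w_{\frakt(\fraks)} \;=\; \word(\fraks)\cdot \Phi_\frakt \cdot p_\fraks \cdot p_\frakt \;=\; \word(\fraks)\cdot p_\fraks \cdot \Phi_\frakt \cdot p_\frakt \;=\; w_\fraks \cdot \Phi_\frakt \cdot p_\frakt,
\]
which is the claim.

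There is no real obstacle here; the content of the lemma is purely bookkeeping, and the only point worth double-checking is that the shapes $\shape(\frakt(\fraks)\vert_k)$ appearing in the Tower-rule expansion of $p_{\frakt(\fraks)}$ split cleanly at $k=|\mu|$ into the factors used to define $p_\fraks$ and (separately) $p_\frakt$. Once that matching is verified, the rest of the argument is a single application of Lemma~\ref{lem:commuting-phi}.
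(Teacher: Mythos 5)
Your proposal is correct and follows essentially the same route as the paper's proof: unwind $w_{\frakt(\fraks)} = \word(\fraks)\,\Phi_\frakt\,p_{\frakt(\fraks)}$, factor $p_{\frakt(\fraks)} = p_\fraks\,p_\frakt$ via the Tower rule together with Definition~\ref{def:skew-pt}, and then slide $p_\fraks$ past $\Phi_\frakt$ using Lemma~\ref{lem:commuting-phi}. The only difference is that you spell out the splitting of the Tower-rule product at $k=|\mu|$, which the paper leaves implicit.
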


\begin{proof}
Combining \eqref{tower2} with \eqref{def:skew-pt}, we see that
$p_{\frakt(\fraks)} = p_\fraks \ p_\frakt$.
Since \eqref{eq.def:phi.wordPhi} yields $\word \left(\frakt(\fraks)\right) = \word(\fraks) \ \Phi_{\frakt}$, applying \eqref{eq.def:youngseminormalunits.wt=} yields
\[
w_{\frakt(\fraks)}
= \word(\frakt(\fraks)) \ p_{\frakt(\fraks)}
= \word(\fraks) \ \Phi_{\frakt} \ p_\fraks \ p_\frakt.
\]
By Lemma~\ref{lem:commuting-phi}, we can move the $\Phi_{\frakt}$ past the $p_\fraks$, thus rewriting the above as
\[
w_{\frakt(\fraks)}
= \word(\fraks) \ p_\fraks \ \Phi_{\frakt} \ p_\frakt
= w_\fraks \ \Phi_\frakt \ p_\frakt
\]
by \eqref{eq.def:youngseminormalunits.wt=} again.
\end{proof}

Importantly, the operators $\Phi$ behave well with restriction.
\begin{prop}\label{prop:restrictionrewrite}
Let $\frakt \in \syt(\lambda)$ and $\frakt' = \frakt|_{n-1}$. Then, $w_{\frakt}$ can be rewritten as 
\begin{equation}
w_{\frakt} = w_{\frakt'} \ \Phi_{\lambda \sm \shape(\frakt')} \ p_{\lambda}.
\label{eq:restrictionrewrite-wt}
\end{equation}

\end{prop}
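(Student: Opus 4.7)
The plan is to reduce the statement to Lemma \ref{lem:wts} by recognizing that $\lambda \sm \shape(\frakt')$ is a single box. Writing $\mu := \shape(\frakt')$, the key observation is that since $\left|\mu\right| = n-1$ and $\left|\lambda\right| = n$, the skew shape $\lambda \sm \mu$ consists of exactly one box. Hence $\syt(\lambda \sm \mu)$ has a unique element $\frakt''$, namely the skew tableau with entry $n$ in that one box. In particular, $\frakt'' = \frakt^{\lambda \sm \mu}$ (it is trivially the maximum under $\leqdom$), so $\Phi_{\frakt''} = \Phi_{\lambda \sm \mu}$ by definition.

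Next I would identify $\frakt'' (\frakt') = \frakt$ from Definition \ref{defconstructtableau}: the unique tableau of shape $\lambda$ whose restriction to the first $n-1$ entries is $\frakt'$, and whose complementary entry agrees with $\frakt''$, is $\frakt$ itself. This lets me apply Lemma \ref{lem:wts} with $\fraks = \frakt'$ and $\frakt$ in the lemma replaced by $\frakt''$, yielding
\[
w_\frakt \;=\; w_{\frakt''(\frakt')} \;=\; w_{\frakt'} \, \Phi_{\frakt''} \, p_{\frakt''}.
\]

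The last step is to identify $p_{\frakt''}$ with $p_\lambda$. Since $\frakt''$ has only the single entry $n$, the product defining $p_{\frakt''}$ in Definition \ref{def:skew-pt} collapses to a single factor $p_{\shape(\frakt''|_n)}$; and since the shape obtained from $\mu$ by adjoining the one box of $\frakt''$ is $\lambda$, this factor equals $p_\lambda$. Substituting back gives exactly \eqref{eq:restrictionrewrite-wt}.

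I do not expect any real obstacle here: the argument is purely a bookkeeping exercise in the definitions of $\Phi_{\lambda \sm \mu}$, $\frakt(\fraks)$, and the skew Young idempotent $p_{\frakt''}$, together with one application of Lemma \ref{lem:wts}. The only place a reader could stumble is the convention that $p_{\frakt''}$ for a one-box skew tableau is $p_\lambda$ rather than, say, $p_{(1)}$, so I would spend one sentence spelling this out explicitly.
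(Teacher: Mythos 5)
Your proof is correct. All the bookkeeping checks out: for $\mu = \shape(\frakt')$ the skew shape $\lambda \sm \mu$ is a single box, its unique standard filling $\frakt''$ (entry $n$) is trivially $\frakt^{\lambda\sm\mu}$, one has $\frakt''(\frakt') = \frakt$ by Definition \ref{defconstructtableau}, and Definition \ref{def:skew-pt} collapses to the single factor $p_{\shape(\frakt''|_n)} = p_\lambda$, so Lemma \ref{lem:wts} gives exactly \eqref{eq:restrictionrewrite-wt}.

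The route differs from the paper's only in packaging. The paper proves the proposition directly by the chain $w_\frakt = \word(\frakt)\,p_\frakt = \word(\frakt')\,\Phi_{\lambda\sm\shape(\frakt')}\,p_{\frakt'}\,p_\lambda = w_{\frakt'}\,\Phi_{\lambda\sm\shape(\frakt')}\,p_\lambda$, invoking the tower rule \eqref{tower2} and Lemma \ref{lem:commuting-phi}; you instead specialize the already-proved Lemma \ref{lem:wts} to a one-box strip. Since Lemma \ref{lem:wts} is itself proved by exactly that chain (via $p_{\frakt(\fraks)} = p_\fraks\,p_\frakt$ and Lemma \ref{lem:commuting-phi}), the two arguments have identical mathematical content; yours avoids repeating the computation at the price of the small one-box identifications, which you state explicitly and correctly — in particular the point that $p_{\frakt''}$ means $p_\lambda$ (the shape $\mu$ with the box adjoined) rather than $p_{(1)}$.
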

\begin{proof}
    This follows from the fact that 
    \begin{align*} w_{\frakt} = \word(\frakt) \ p_{\frakt} &= \word(\frakt')  \ \Phi_{\lambda \sm \shape(\frakt')}  \ p_{\frakt} \\
    &= \word(\frakt') \  \Phi_{\lambda \sm \shape(\frakt')}  \ p_{\frakt'}  \ p_{\lambda}  \tag{Proposition \ref{lem:tower-rule}: \eqref{tower2}}\\
    & =  \word(\frakt') \ p_{\frakt'} \ \Phi_{\lambda \sm \shape(\frakt')} \   p_{\lambda}  \tag{Lemma \ref{lem:commuting-phi}}\\
    &= w_{\frakt'} \  \Phi_{\lambda \sm \shape(\frakt')} \  p_{\lambda}.
    \end{align*}
\end{proof}

The maps $\Phi$ also provide a method of constructing induced representations, as we will see in Section \ref{subsec:horiz-strip}.

\subsection{The horizontal strip lemma}\label{section:factorization}\label{subsec:horiz-strip}

Our $\RR_n(q)$-eigenvectors will be constructed from products of $\B_i(q)$ for $i \in (j,n]$ as $j$ varies. Write such a product as $\B_{n, n-j}(q)$.

\begin{definition}\label{def:Cjn}
For any $j \in [0, n]$, we set
\[\B_{n, n-j}(q):= \B_{j + 1}(q) \cdots \B_{n}(q).\]
\end{definition}

We will first show that $\B_{n, n-j}(q)$ can be factored in a useful way (Proposition \ref{prop:factoring-Cj-symm-coset}). We will apply this factorization to prove the \emph{Horizontal Strip Lemma} (Lemma \ref{horizontalstriplemma}), which will be used later to characterize when certain eigenvectors in our recursive construction are zero.

For a permutation $w \in \symm_n,$ recall that $\ell(w)$ is the \textit{Coxeter length} of $w.$ It is well-known (see, for instance, \cite[Section 2.4]{bjorner2005combinatorics}) that each right coset $\symm_\alpha w \in \symm_\alpha \sm \symm_n$ has a unique representative $w' \in \symm_\alpha w$ such that $\ell(w') < \ell(u)$ for all other $u \in \symm_\alpha w$. 
We denote the set of these minimal-length coset representatives as $X_\alpha$ and define the corresponding sum \[x_\alpha:= \sum_{w \in X_\alpha}T_w.\] 

Our first goal is to prove the following:
\begin{prop}\label{prop:factoring-Cj-symm-coset}
The element $\B_{n, n-j}(q) := \B_{j+1}(q) \ \B_{j+2}(q) \cdots \B_{n-1}(q) \ \B_n(q)$ factors as 
    \[ \B_{n, n-j}(q) = m_{(1^j, n - j)} \ x_{(j, n - j)}.\]
\end{prop}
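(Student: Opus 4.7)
The plan is to show that both sides equal the same sum of basis elements $T_u$ over a common subset of $\symm_n$. Specifically, define
\[
U_j := \{u \in \symm_n : u^{-1}(1) < u^{-1}(2) < \cdots < u^{-1}(j)\},
\]
the set of permutations whose one-line notation has the values $1, 2, \ldots, j$ appearing in increasing order of position. I will argue both $\C_j^{(n)}$ and $m_{(1^j, n-j)} \cdot x_{(j, n-j)}$ equal $\sum_{u \in U_j} T_u$.

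For the right-hand side $m_{(1^j,n-j)} \cdot x_{(j,n-j)}$, I would invoke the standard parabolic decomposition: every $u \in \symm_n$ factors uniquely as $u = \sigma w$ with $\sigma \in \symm_{(j,n-j)}$, $w \in X_{(j,n-j)}$, and $\ell(u) = \ell(\sigma) + \ell(w)$, so $T_u = T_\sigma T_w$. Decomposing $\sigma = \sigma_1 \sigma_2$ along $\symm_{(j,n-j)} = \symm_{\{1,\ldots,j\}} \times \symm_{\{j+1,\ldots,n\}}$, the product $m_{(1^j,n-j)} \cdot x_{(j,n-j)}$ expands as $\sum_{(\sigma_2, w)} T_{\sigma_2 w}$, i.e., sums $T_u$ over those $u$ with $\sigma_1 = e$. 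Since $w \in X_{(j,n-j)}$ places the values $1, \ldots, j$ in increasing positions and $\sigma_1$ only permutes values within $\{1, \ldots, j\}$, the condition $\sigma_1 = e$ is equivalent to $u \in U_j$.

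For the left-hand side, I would induct on $n$ with $j$ fixed. The base case $n = j$ gives $\C_j^{(j)} = 1 = T_e$ with $U_j^{(j)} = \{e\}$. For the inductive step, write $\C_j^{(n)} = \C_j^{(n-1)} \cdot \B_n(q)$ and recognize $\B_n(q) = x_{(n-1,1)}$ as the sum of $T_\tau$ over the minimum-length right coset representatives $\tau$ of $\symm_{n-1} \setminus \symm_n$. Because the map $(u', \tau) \mapsto u' \tau$ is a length-additive bijection $\symm_{n-1} \times X'_n \leftrightarrow \symm_n$, the inductive hypothesis $\C_j^{(n-1)} = \sum_{u' \in U_j^{(n-1)}} T_{u'}$ gives $\C_j^{(n)} = \sum_{(u',\tau)} T_{u'\tau}$ with $u' \in U_j^{(n-1)}$ and $\tau \in X'_n$, and it remains to verify this ranges precisely over $U_j^{(n)}$.

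The main (mild) technical obstacle is the last equivalence $u' \tau \in U_j^{(n)} \Longleftrightarrow u' \in U_j^{(n-1)}$. This reduces to the observation that each $\tau \in X'_n$, having the form $s_{n-1} s_{n-2} \cdots s_i$, satisfies that $\tau^{-1}$ restricted to $\{1, \ldots, n-1\}$ is strictly increasing --- it fixes $\{1, \ldots, i-1\}$ pointwise and shifts $\{i, \ldots, n-1\}$ to $\{i+1, \ldots, n\}$. Since $u^{-1}(k) = \tau^{-1}((u')^{-1}(k))$ for $k \in \{1, \ldots, j\} \subseteq \{1, \ldots, n-1\}$, this order-preserving property immediately converts the chain $u^{-1}(1) < \cdots < u^{-1}(j)$ into $(u')^{-1}(1) < \cdots < (u')^{-1}(j)$, completing the induction.
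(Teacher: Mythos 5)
Your proposal is correct and follows essentially the same route as the paper: your set $U_j$ is exactly the paper's $X_{(j,1^{n-j})}$ (via the descent characterization of minimal coset representatives), and both arguments rest on the same length-additive parabolic factorizations. The only differences are organizational---you establish $\C_j^{(n)} = \sum_{u \in U_j} T_u$ by induction on $n$ where the paper invokes the Bj\"orner--Brenti normal form all at once (Lemma \ref{sequenceofbottomtorandoms}), and you expand $m_{(1^j,n-j)}\, x_{(j,n-j)}$ directly where the paper combines the factorization $w = u\cdot v$ with a cardinality count.
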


We record here a few Coxeter-theoretic facts from \cite{bjorner2005combinatorics} which will be useful for this goal, translated to be statements about right cosets, since this is the setting relevant to us. 

\begin{prop} \label{lem:factoring-minimal}\label{lem:factoring-prods-of-xn-11}
Let $w \in \symm_n$. 
\begin{enumerate}
    \item \cite[Proposition 2.4.4]{bjorner2005combinatorics}: For any composition $\alpha$ of $n$, we can \textit{uniquely} factor $w$ as 
    \[ w = u \cdot v,\]
    where $u \in \symm_\alpha$ and $v \in X_\alpha.$ For this factorization,  $\ell(u) + \ell(v) = \ell(w)$.
    \item \cite[Corollary 2.4.6]{bjorner2005combinatorics}:
     Furthermore, $w$ can be written uniquely in the form \[w = y_1 \cdot y_2 \cdots y_{n - 1},\] where $y_i \in X_{(i,1)} \subseteq \symm_{i+1}$. For this factorization, $\ell(w) = \ell(y_1) + \ell(y_2) + \cdots + \ell(y_{n - 1}).$
\end{enumerate}
\end{prop}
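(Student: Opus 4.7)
The plan is to recognize that both parts are standard Coxeter-theoretic facts about parabolic subgroups, and to reduce everything to one foundational statement: each right coset $\symm_\alpha w$ contains a unique element of minimal Coxeter length. First I would prove this foundational statement by characterizing the minimal representative $v$ of $\symm_\alpha w$ by the descent condition that $\ell(s_i v) > \ell(v)$ for every simple reflection $s_i$ that lies in $\symm_\alpha$. Existence is immediate since $\ell$ is $\Z_{\geq 0}$-valued, so a minimum is attained, and any minimum automatically satisfies the descent condition. For uniqueness, the strong exchange (or deletion) condition for $\symm_n$ shows that if two coset representatives both satisfy the descent condition, they must be equal: any reduced word for one can be transformed into a reduced word for the other by moves that would produce a left descent in $\symm_\alpha$ unless the two words coincide.

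For part~(1), given any $w \in \symm_n$, let $v$ be the minimal representative of $\symm_\alpha w$ (so $v \in X_\alpha$ by definition) and set $u := w v^{-1}$, which lies in $\symm_\alpha$. This gives the factorization $w = u v$. The length additivity $\ell(u) + \ell(v) = \ell(w)$ follows from the descent characterization above: concatenating a reduced word for $u$ (in the simple generators of $\symm_\alpha$) with a reduced word for $v$ produces a reduced expression for $uv$, because the descent-freeness of $v$ with respect to the generators of $\symm_\alpha$ blocks the kind of length-reducing cancellation that the exchange condition would otherwise detect. Uniqueness of the factorization then follows: any other factorization $w = u' v'$ with $u' \in \symm_\alpha$ and $v' \in X_\alpha$ forces $v'$ to be a minimal-length representative of $\symm_\alpha w$, hence $v' = v$, and then $u' = u$.

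For part~(2), I would iterate part~(1) and induct on $n$. Apply part~(1) with the composition $\alpha = (n-1,1)$, under which $\symm_\alpha$ is the copy of $\symm_{n-1}$ fixing $n$, to write
\[
w = u_{n-1} \cdot y_{n-1}
\qquad \text{with } u_{n-1} \in \symm_{n-1},\ y_{n-1} \in X_{(n-1,1)},
\]
and with $\ell(w) = \ell(u_{n-1}) + \ell(y_{n-1})$. The inductive hypothesis applied to $u_{n-1} \in \symm_{n-1}$ yields a unique factorization $u_{n-1} = y_1 y_2 \cdots y_{n-2}$ with $y_i \in X_{(i,1)} \subseteq \symm_{i+1}$ and with lengths adding. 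Concatenating produces $w = y_1 y_2 \cdots y_{n-1}$ with total length additivity, and uniqueness at each step of the iteration gives the global uniqueness.

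The main obstacle is the foundational descent characterization of minimal coset representatives, together with the length-additivity for the factorization in part~(1); once these are in hand, part~(2) is a clean induction. Everything else is bookkeeping, and in practice I would simply cite Björner--Brenti~\cite[\S 2.4]{bjorner2005combinatorics} as the paper already does, since the result is standard and not the focus of our contribution.
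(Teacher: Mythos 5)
Your proposal is correct, and it matches the paper's treatment: the paper offers no proof of this proposition beyond the citations to \cite[Proposition 2.4.4 and Corollary 2.4.6]{bjorner2005combinatorics}, and your sketch is precisely the standard textbook argument underlying those citations (descent characterization and uniqueness of minimal-length right-coset representatives, length-additive parabolic factorization for part (1), and induction on $n$ via $\alpha=(n-1,1)$ for part (2)). Since you also conclude that citing Bj\"orner--Brenti suffices, your approach coincides with the paper's.
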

The elements of $X_\alpha$ have a nice characterization in terms of descents. Recall that $1 \leq i \leq n - 1$ is a \textit{right} descent of a permutation $w \in \symm_n$ if $w(i) > w(i + 1).$ In contrast, $i$ is a \textit{left} descent of $w$ if $\ell(s_iw) < \ell(w);$ note that $i$ is a left descent of $w$ precisely if $i$ is a right descent of $w^{-1}.$ Define a subset $J(\alpha) \subseteq [n - 1]$ associated to the composition $\alpha = (\alpha_1, \alpha_2, \cdots, \alpha_k)$ as
\begin{align*}
   J(\alpha):= \{\alpha_1, \alpha_1 + \alpha_2, \cdots, \alpha_1 + \alpha_2 + \cdots + \alpha_{k - 1}\}.
\end{align*}
As explained in \cite[p41]{bjorner2005combinatorics}, the elements in $X_\alpha$ are precisely the permutations in $\symm_n$ whose left descents are contained in the set $J(\alpha).$ We will use this characterization to first realize $\B_{n, n-j}(q)$ as the element $x_{(j, 1^{n - j})}.$ 

\begin{lemma}\label{sequenceofbottomtorandoms}
For any  $q \in \CC$ the element $\B_{n, n-j}(q)=\B_{j+1}(q) \ \B_{j+2}(q) \cdots \B_{n-1}(q) \ \B_n(q)$ can be rewritten as \[\B_{n, n-j}(q) = x_{(j, 1^{n - j})}.\] 
\end{lemma}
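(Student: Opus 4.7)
The plan is to expand the product $\C_j^{(n)}$, recognize each factor $\B_k(q)$ as a sum over minimal coset representatives, and then identify the resulting set of permutations with $X_{(j, 1^{n-j})}$.

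First, since $X_{(k-1,1)} \subseteq \symm_k$ is the set of permutations with left descents contained in $\{k-1\}$, a direct enumeration gives $X_{(k-1,1)} = \{1,\ s_{k-1},\ s_{k-1}s_{k-2},\ \ldots,\ s_{k-1}s_{k-2}\cdots s_1\}$, which matches the summands of $\B_k(q)$. Hence $\B_k(q) = x_{(k-1,1)}$ and
\[
\C_j^{(n)} = \sum_{(y_j,\,y_{j+1},\,\ldots,\,y_{n-1})} T_{y_j} T_{y_{j+1}} \cdots T_{y_{n-1}},
\]
where $y_i \in X_{(i,1)} \subseteq \symm_{i+1}$. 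For each such tuple, extending with $y_1 = y_2 = \cdots = y_{j-1} = 1$ yields a valid Proposition \ref{lem:factoring-prods-of-xn-11}(2) decomposition of $y_j y_{j+1} \cdots y_{n-1}$, so lengths add: $\ell(y_j y_{j+1} \cdots y_{n-1}) = \sum_{i=j}^{n-1}\ell(y_i)$. The standard Hecke algebra fact $T_u T_v = T_{uv}$ whenever $\ell(uv) = \ell(u) + \ell(v)$ then collapses each summand to $T_{y_j y_{j+1}\cdots y_{n-1}}$, and uniqueness in Proposition \ref{lem:factoring-prods-of-xn-11}(2) makes the indexing map $\Psi : (y_j,\ldots,y_{n-1}) \mapsto y_j y_{j+1} \cdots y_{n-1}$ injective.

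It then remains to show $\mathrm{im}(\Psi) = X_{(j,1^{n-j})}$. Both sets have cardinality $\prod_{i=j}^{n-1}(i+1) = n!/j!$, so it suffices to prove containment, which I would do by induction on $j$. The base case $j=1$ is immediate since $X_{(1^n)} = \symm_n$ and $\mathrm{im}(\Psi_1) = \symm_n$ by Proposition \ref{lem:factoring-prods-of-xn-11}(2). For the inductive step, take $w = y_j y_{j+1}\cdots y_{n-1} \in \mathrm{im}(\Psi_j) = X_{(j,1^{n-j})}$; I would determine whether $j$ is a left descent of $w$. If $y_j = 1$, the tuple $(1,\ldots,1,\ s_j,\ y_{j+1},\ldots,y_{n-1})$ (valid since $s_j \in X_{(j,1)}$) exhibits $s_j w$ as a product of length $\ell(w)+1$ via Proposition \ref{lem:factoring-prods-of-xn-11}(2), so $j$ is not a left descent and $w \in X_{(j+1,1^{n-j-1})}$. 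If $y_j \neq 1$, writing $y_j = s_j y_j''$ with $y_j'' \in \symm_j$ and $\ell(y_j'') = \ell(y_j)-1$, and decomposing $y_j''$ via Proposition \ref{lem:factoring-prods-of-xn-11}(2) within $\symm_j$, yields a length-additive factorization of $s_j w = y_j'' y_{j+1}\cdots y_{n-1}$ of total length $\ell(w)-1$, so $j$ is a left descent and $w \notin X_{(j+1,1^{n-j-1})}$. This matches $\mathrm{im}(\Psi_{j+1})$ with $X_{(j+1,1^{n-j-1})}$ and closes the induction.

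The main obstacle is the last step, which requires carefully leveraging the length-additivity in Proposition \ref{lem:factoring-prods-of-xn-11}(2) to read off left-descent behavior and match $\mathrm{im}(\Psi)$ with $X_{(j,1^{n-j})}$. An alternative approach proceeds by induction on $n-j$, reducing the claim to the transitive parabolic identity $x_{(j,1)} \cdot x_{(j+1,1^{n-j-1})} = x_{(j,1^{n-j})}$ in $\HH_n(q)$, derived by applying Proposition \ref{lem:factoring-prods-of-xn-11}(1) to the tower $\symm_j \subseteq \symm_{j+1} \subseteq \symm_n$.
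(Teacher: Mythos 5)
Your proof is correct and follows essentially the same approach as the paper: identify each $\B_{k+1}(q)$ with $x_{(k,1)}$, use the length-additive unique factorization of Proposition \ref{lem:factoring-prods-of-xn-11}(2) to collapse the expanded product into a sum of single $T_w$'s, and then identify the indexing set with $X_{(j,1^{n-j})}$ via a cardinality count plus a left-descent verification. The only cosmetic difference is in that last verification: the paper directly inserts $s_i$ at slot $i$ for every $i<j$ and invokes length-additivity once, whereas you wrap the same insertion idea into an induction on $j$ and also prove the (unnecessary, given the cardinality count) converse that $y_j \neq 1$ forces $j$ to be a left descent.
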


\begin{proof}
Observe that $X_{(k, 1)} = \{s_{k}s_{k - 1}\cdots s_i: 1 \leq i \leq k + 1\}.$ Hence, $\B_{k + 1}(q) = x_{(k, 1)}$ and the terms in the expansion of $\B_{n, n-j}(q) = \B_{j + 1}(q)\cdots \B_n(q)$ biject with tuples $(y_{j}, \cdots, y_{n - 1})$ where $y_k \in X_{(k, 1)}.$ Notice that $1 \in X_{(k, 1)}$ for any $k,$ so by Proposition \ref{lem:factoring-prods-of-xn-11}(2), the expressions of the form $y_{j} y_{j + 1}\cdots y_{n - 1}$ are all  distinct and length-additive, implying that 
\[ T_{y_{j}\cdots y_{n - 1}} = T_{y_{j}} \ T_{y_{j + 1}}\cdots T_{y_{n - 1}}. \]

To complete the proof, it suffices to explain that there is an equality of sets \[\left\{y_{j}\ldots y_{n - 1}: (y_{j}, \ldots, y_{n - 1}) \in X_{(j, 1)} \times \ldots \times X_{(n - 1, 1)}\right\} = X_{(j, 1^{n - j})}.\]
Since \[\frac{|\symm_n|}{\left \vert \symm_{(j, 1^{n - j})}\right \vert} = n \cdot (n - 1) \cdots (j + 1),\] the two sets have the same size, so it is enough to show that the left descent set of each $y_{j}\cdots y_{n - 1}$ is contained in $J(j, 1^{n - j}) = \{j, j + 1, \cdots, n - 1\}.$ This is equivalent to proving that for $i < j,$ the element $s_i$ is not a left descent of $y_j\cdots y_{n - 1}.$

To this end, fix $i < j.$
Since $s_i \in X_{(i,1)}$,
it follows from Proposition \ref{lem:factoring-prods-of-xn-11}(2) that 
\[ \ell(s_i y_{j}\cdots y_{n - 1}) = 1 + \ell(y_{j}\cdots y_{n - 1}). \] 

Thus, $s_i$ is not a left descent of $y_{j}\cdots y_{n-1}$, as desired. 
\end{proof}

We are now able to prove Proposition \ref{prop:factoring-Cj-symm-coset}.

\begin{proof}[Proof of Proposition \ref{prop:factoring-Cj-symm-coset}]
 By Lemma \ref{sequenceofbottomtorandoms}, it suffices to prove that \[x_{(j, 1^{n - j})} = m_{(1^j, n - j)}\cdot x_{(j, n - j)}.\] Using the descent interpretation of $X_{(j, 1^{n - j})}$, note that \[X_{(j, 1^{n - j})} = \{w \in \symm_n: w^{-1}(1) < w^{-1}(2) < \cdots < w^{-1}(j)\}.\]

Suppose $w \in X_{(j, 1^{n - j})}$.
By Proposition \ref{lem:factoring-minimal}(1), there is a unique way to factor $w =u  \cdot v $ where $u \in \symm_{(1^j, n - j)}$ and $ v \in X_{(1^j, n - j)}$ with $\ell(w) = \ell(u) + \ell(v)$. Since $v \in X_{(1^j, n - j)}$, the left descents of $v$ are contained in $J(1^j, n - j) = \{1, 2, \cdots,j\},$ meaning $ v$ has the property that 
\[ v^{-1}(j+1) < v^{-1}(j + 2) < \cdots < v^{-1}(n).\]
However, since $w^{-1}(1) < \cdots < w^{-1}(j)$
and $u$ fixes each element of $\{ 1, 2, \cdots, j \}$, it follows that $v$ \textit{also} satisfies 
\[ v^{-1}(1) < v^{-1}(2) < \cdots < v^{-1}(j). \]
Therefore, $v \in X_{(j, n - j)}.$ Thus each element $w \in X_{(j, 1^{n - j})}$ can be factored uniquely as $u \cdot v$ where $u \in \symm_{(1^j, n - j)}$ and $v \in X_{(j, n - j)}.$ 

Since $\lvert X_{(j, 1^{n - j})}\rvert = \lvert\symm_{(1^j, n - j)}\rvert \cdot \lvert X_{(j, n - j)}\rvert,$ we can conclude that 

\[ x_{(j, 1^{n - j})} =  \sum_{\substack{u \in \symm_{(1^j, n - j)}\\ v \in X_{(j, n - j)}}} T_{uv}=  \left(\sum_{\substack{u \in \symm_{\left(1^j, n - j\right)}}}T_u \right)\cdot \left(\sum_{v \in X_{(j, n - j)}}T_v\right) = m_{\left(1^j, n - j\right)} \cdot x_{(j, n - j)}. \] 
\end{proof}

We will use Proposition \ref{prop:factoring-Cj-symm-coset} to prove the \emph{Horizontal Strip Lemma} (Lemma \ref{horizontalstriplemma}), which shows that when $\HH_n(q)$ is semisimple, the element $\B_{n, n-j}(q)$ behaves like the induction operators discussed in Section \ref{section:pieri}. This will eventually explain why our eigenvectors in Theorem \ref{thm:intromainthm} are indexed by horizontal strips.

\begin{lemma}[Horizontal strip lemma]\label{horizontalstriplemma}\label{cor:nonhorizontalstripgives0}
    Assume $\HH_n(q)$ is semisimple. Let $\lambda$ be a partition of $n$ and $\mu$ a partition of $0 \leq j \leq n$ such that  $\mu \subseteq \lambda$. Then for any $\frakt \in \syt(\lambda \sm \mu )$,
    \[ (S^\mu) \ \Phi_{\frakt} \ \B_{n, n-j}(q)\] is contained in a $\HH_n(q)$-module isomorphic to a quotient of \[ \bigoplus_{\substack{\nu \vdash n \\ \nu \sm \mu \textrm{ is a horizontal strip}}} S^\nu. \]
    Thus, if $\lambda \sm \mu$ is not a horizontal strip, then for any $u \in S^\mu$
        \[u \  \Phi_{\frakt}  \  \B_{n, n-j}(q) \ p_{\lambda} = 0. \]
\end{lemma}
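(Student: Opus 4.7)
The plan is to exhibit an $\HH_n(q)$-module map out of an induced module whose image contains $(S^\mu) \Phi_\frakt \C_j^{(n)}$, and then invoke the Pieri rule for $\HH_n(q)$. The starting point is the factorization $\C_j^{(n)} = m_{(1^j, n-j)} \, x_{(j, n-j)}$ supplied by Proposition \ref{prop:factoring-Cj-symm-coset}. Since $x_{(j, n-j)} \in \HH_n(q)$, it suffices to exhibit an $\HH_n(q)$-submodule of $W^\lambda$ that contains $(S^\mu) \Phi_\frakt \, m_{(1^j, n-j)}$ and is a quotient of $\bigoplus_{\nu \sm \mu \text{ horiz strip}} S^\nu$.

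To this end, I would define a $\CC$-linear map
\[
f : S^\mu \otimes S^{(n-j)} \longrightarrow W^\lambda,
\qquad
v \otimes 1 \longmapsto v \, \Phi_\frakt \, m_{(1^j, n-j)},
\]
where $1$ denotes a basis vector of the one-dimensional trivial $\HH_{n-j}(q)$-module $S^{(n-j)}$. The bulk of the work is to check that $f$ is $\HH_{j, n-j}(q)$-equivariant. For generators $T_{s_i}$ with $i < j$ (the $\HH_j(q)$ factor), Lemma \ref{lem:commuting-phi} lets us slide $T_{s_i}$ past $\Phi_\frakt$, while the commutation relation $T_{s_i} T_{s_k} = T_{s_k} T_{s_i}$ (valid when $k \geq j+1$, since then $|i-k| \geq 2$) lets us slide it past $m_{(1^j, n-j)}$. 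For generators $T_{s_i}$ with $j < i < n$ (the $\HH_{n-j}(q)$ factor), Example \ref{ex:m1n-j} gives $m_{(1^j, n-j)} T_{s_i} = q \, m_{(1^j, n-j)}$, matching the trivial-representation scaling $1 \cdot T_{s_i} = q \cdot 1$ on the source.

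By Frobenius reciprocity, $f$ then extends uniquely to a right $\HH_n(q)$-module map
\[
F : \ind_{\HH_{j, n-j}(q)}^{\HH_n(q)} \! \left( S^\mu \otimes S^{(n-j)} \right) \longrightarrow W^\lambda
\]
sending $(v \otimes 1) \otimes h \mapsto v \, \Phi_\frakt \, m_{(1^j, n-j)} \, h$. Taking $h = x_{(j, n-j)}$ shows $(S^\mu) \Phi_\frakt \, \C_j^{(n)} \subseteq \operatorname{im}(F)$, and Corollary \ref{pierirules} identifies $\operatorname{im}(F)$ as a quotient of $\bigoplus_{\nu \sm \mu \text{ horiz strip}} S^\nu$, proving the first claim. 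The second claim follows at once: if $\lambda \sm \mu$ is not a horizontal strip, then no $S^\lambda$-summand appears in $\operatorname{im}(F)$, so right multiplication by the central idempotent $p_\lambda$, which projects onto the $\lambda$-isotypic component, annihilates every element of $\operatorname{im}(F)$. The main obstacle is the equivariance verification of $f$ described above; once that is in hand, the remainder is a formal application of Frobenius reciprocity and Pieri.
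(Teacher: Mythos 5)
Your proposal is correct and follows essentially the same route as the paper: both start from the factorization $\C_j^{(n)} = m_{(1^j, n-j)}\,x_{(j,n-j)}$ (Proposition \ref{prop:factoring-Cj-symm-coset}), identify $S^\mu \Phi_\frakt\, m_{(1^j,n-j)}$ with $S^\mu \otimes S^{(n-j)}$ as an $\HH_{j,n-j}(q)$-module, apply the Pieri rule (Corollary \ref{pierirules}) to the induction, and finish by noting that $p_\lambda$ kills the image when $\lambda \sm \mu$ is not a horizontal strip. Where the paper directly writes down the surjection $x \otimes T_v \mapsto x T_v$ from the induced module onto $M$, you arrive at the same map by invoking Frobenius reciprocity on the equivariant map $f$ — a cosmetic repackaging of the identical argument.
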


\begin{proof}

We shall prove the first statement by
\begin{enumerate}
    \item  Showing $S^\mu \ \Phi_\frakt \ \B_{n, n-j}(q)$ is contained in a certain $\HH_n(q)$-module we call $M,$ 
    \item Defining an $\HH_n(q)$-module $N$ which is isomorphic to 
    \[ \bigoplus_{\substack{\nu \vdash n \\ \nu \sm \mu \textrm{ is a horizontal strip}}} S^\nu, \]  
   \item explaining that $M$ is isomorphic to a quotient of $N.$
\end{enumerate} 

We begin with item (1). Observe that $S^\mu\ \Phi_\frakt \  m_{(1^j, n - j)}$ is closed under the right action of $\HH_{j, n - j}(q).$ In fact, it is straightforward to check that as right $\HH_{j, n - j}(q)$-modules,
\begin{equation}\label{eqn:submodule}
   S^\mu \ \Phi_\frakt \  m_{(1^j, n - j)} \cong S^\mu \otimes S^{(n-j)}. 
\end{equation}
This follows by noting that any element of $\HH_{j, n - j}(q)$ can be written as $T_uT_v$ where $u \in \symm_{(j, 1^{n - j})}$  and $v \in \symm_{(1^j, n - j)}.$ Since $T_u$ commutes with $\Phi_{\frakt}\  m_{(1^j,n-j)}$, by Example \ref{ex:triv-rep},
\[  S^\mu \ \Phi_\frakt \  m_{(1^j, n - j)} \cdot T_{u}T_v = \left( S^\mu \cdot T_u  \right) \left( \Phi_{\frakt} m_{(1^j,n-j)} \cdot T_v\right) =  \left( S^\mu \cdot T_u  \right) \left( \Phi_{\frakt} \  m_{(1^j,n-j)}  \ q^{\ell(v)} \right).  \]

 Define a right $\HH_n(q)$-module $M$ by
 \[ M:= S^\mu \ \Phi_\frakt \  m_{(1^j, n - j)} \ \HH_n(q). \]

Proposition \ref{prop:factoring-Cj-symm-coset} shows that $\B_{n, n-j}(q) = m_{(1^j, n - j)}x_{(j, n-j)}$. Thus, for each $u \in S^\mu$, we have
  \[u \  \Phi_\frakt \  \B_{n, n-j}(q) = u\ \Phi_\frakt \  m_{(1^j, n - j)} \ x_{(j, n-j)} \in M,\] 
completing item (1).

We now explain item (2). Set $N$ to be the following right $\HH_n(q)$-module:
\[ N := S^\mu \ \Phi_\frakt \  m_{(1^j, n - j)} \otimes_{\HH_{j, n - j}(q)} \HH_n(q)
= \ind_{\HH_{j,n-j}(q)}^{\HH_n(q)} \left( S^\mu \ \Phi_\frakt \  m_{(1^j, n - j)} \right) . \]
Using \eqref{eqn:submodule}, we thus obtain
\[
N \cong \ind_{\HH_{j,n-j}(q)}^{\HH_n(q)} \left( S^{\mu} \otimes S^{(n-j)} \right)
\cong \bigoplus_{\substack{\nu \vdash n \\ \nu \sm \mu \textrm{ is a horizontal strip}}} S^\nu
\]
(by Corollary \ref{pierirules}).

It now suffices to prove item (3). Observe that the map linearly extending
\begin{align*}
    N &\longrightarrow M\\
    x \otimes T_v &\longmapsto x \cdot T_v
\end{align*}
is surjective and commutes with the right action of $\HH_n(q).$ Hence, $M$ is isomorphic to a quotient of $N,$ completing item (3) and thus the first claim. 

 For the second claim, if $\lambda \sm \mu$ is not a horizontal strip, then $S^\lambda$ does not appear in the irreducible decomposition of $N$. Since $p_\lambda$ is the projector onto the $\lambda$-isotypic component, this forces $ u \  \Phi_{\frakt }  \  \B_{n, n-j}(q) \ p_{\lambda}= 0.$ 

    \end{proof}

\section{Flags of $\mathbb{F}_q^n$ and the spectrum of $\B_n^*(q)$}\label{sec:flagintroduction}
The goal of this section is to relate the
$\B_n(q)$ and $\B_n^*(q)$ to Markov chains on flags over a finite field $\mathbb{F}_q^n$, and to use this connection to compute the spectrum of both operators. We also obtain the $\HH_n(q)$-module structure on the eigenspaces of $\B_n^*(q)$ whenever $\HH_n(q)$ is semisimple. These results will serve as the essential base case of our analysis of the full spectrum of $\RR_n(q)$ in Section \ref{section:maintheorem}.

We first define in Section \ref{section:kernelofrr} the \emph{derangement representation} of $\HH_n(q)$. We then explain the general connection between $\HH_n(q)$ and flags over a finite field in Section \ref{section:connectionheckeandflags}, and use this in Section \ref{section:r2tflagstohecke} to link  $\B_n^*(q)$ to a Markov chain on flags introduced by Brown \cite{BrownOnLRBs} and studied in \cite{brauner2023invariant}. Finally, we apply this analysis in Section \ref{sec:b2r} to show that the kernel of $\B_n^*(q)$ carries the derangement representation, and compute the characteristic polynomials of $\B_n^*(q)$ and $\B_n(q)$.

\subsection{The Derangement representation}\label{section:kernelofrr}

Recall that an integer $i \in [n-1]$ is a \textit{descent} of $\frakt \in \syt$ if $i + 1$ appears south and weakly west of $i$. We write ${\sf Des}(\frakt)$ for the set of descents of $\frakt$. Then $\frakt \in \syt(n)$ is a \emph{desarrangement tableau} if the smallest element of $[n] \setminus {\sf Des}(\frakt)$ is even. Write 
\[ \D_n:= \{ \frakt \in \syt(n): \frakt \textrm{ is a desarrangement tableau} \}.\]
We will keep track of the number of desarrangement tableaux of shape $\mu$ by
\[ d^\mu:= \# \{ \frakt \in \D_n: \shape(\frakt) = \mu  \} .\]

Let $d_n$ be the number of \textit{derangements} permutations, or permutations with no fixed points, in $\symm_n.$ Then work of D\'esarm\'enien and Wachs \cite{FrenchDesarmenienWachs} shows that 
\[d_n = \sum_{\mu \vdash n}d^\mu f^\mu.\] 
(For an explicit proof, evaluate \cite[Proposition 2.3]{ReinerWebb} at the identity.)

\begin{example}\rm The desarrangement tableaux in $\D_5$ are shown below.\\
\begin{center}
\ytableausetup{smalltableaux}
\begin{tabular}{c|c|c|c|c}
   \begin{ytableau} 1 & 3 & 4 & 5\\ 2 \end{ytableau}  &  \begin{ytableau} 1 & 3 & 4\\ 2 & 5 \end{ytableau} \ \ \ \begin{ytableau} 1 & 3 & 5\\ 2 & 4\end{ytableau} & \begin{ytableau} 1 & 3 & 4\\ 2\\ 5\end{ytableau} \ \ \ \begin{ytableau} 1 & 3 & 5\\ 2\\ 4\end{ytableau} &  \begin{ytableau} 1 & 3\\ 2 & 4\\ 5\end{ytableau} \ \ \ \begin{ytableau} 1 & 3\\ 2 & 5\\ 4\end{ytableau}  & \begin{ytableau} 1 & 3\\ 2\\ 4 \\ 5\end{ytableau} \ \ \ \begin{ytableau} 1 & 5\\ 2\\ 3 \\ 4\end{ytableau}\\[3em]
   $d^{(4,1)} = 1$  & $d^{(3,2)} = 2$ & $d^{(3,1,1)} = 2$ & $d^{(2,2,1)} = 2$ & $d^{(2,1,1,1)} = 2.$
\end{tabular}
\end{center}
Hence, in total, \[d_5 = \sum_{\mu \vdash 5}d^\mu f^\mu = 1 \cdot 4 + 2 \cdot 5 + 2 \cdot 6 + 2 \cdot 5 + 2 \cdot 4 = 44.\]
\end{example}

We define a representation from the set $\D_n$ as follows.

\begin{definition}[Derangement representation]\label{def:desrep}
Let $\HH_n(q)$ be semisimple. 
The derangement representation of $\HH_n(q)$ is the left-module
\begin{align*}
\DD_n(q) := \bigoplus_{\frakt \in \D_n} \left(S^{\ \shape(\frakt)}\right)^\ast.
\end{align*}
Equivalently, $\DD_n(q)$ is the representation satisfying 
\begin{equation}\label{eq:frobeniusderangement} \sum_{j = 0}^n \mathrm{ch} \left(\DD_j(q)\right)h_{n-j} = h_{1^n}, \ \ \mathrm{ch} \left(\DD_0(q)\right) = 1.\end{equation}
\end{definition}
In addition to the characterizations in Definition \ref{def:desrep}, there are several other equivalent ways to describe $\DD_n(q)$ in terms of symmetric functions; see \cite[Prop. 3.1]{brauner2023invariant}.

 By construction, the multiplicity of $(S^\mu)^*$ in $\DD_n(q)$ is $d^\mu$ and the total dimension of $\DD_n(q)$ is $d_n.$ We will soon show that when $q \in \R_{>0}$, the kernel of $\RR_n(q)$ carries the derangement representation. 

\subsection{Connection between $\HH_n(q)$ and flags over a finite field}  \label{section:connectionheckeandflags}
We will now discuss an alternative perspective of the Hecke algebra---in fact its original motivation---which realizes $\HH_n(q)$ as the centralizer of the action of $\GL{n}$ on the space of complete flags over $\FF_q^n$, where $\FF_q$ is the field with $q$ elements. We will summarize the relevant aspects of this story; for a comprehensive study, see Halverson--Ram \cite{Halverson-Ram}. 

When working in this setting, we set $q$ to be the power of a prime number, which we shall denote by $q=p^m$. Let 
\[ F_{\bullet} = ( F_1 \subset \cdots \subset F_{n-1} \subset F_n = \FF^n_q)\]
be a complete flag of $\FF_q^n$ with $\dim(F_i) = i$, and write the collection of all such flags as $\F_n$. We will be interested in their $\CC$-span $\CC[\F_n]$, upon which $\GL{n}$ acts diagonally:
\[ g \cdot F_{\bullet} = (g \cdot F_1 \subset \cdots \subset g \cdot  F_{n-1} \subset g \cdot F_n = \FF^n_q). \]

There is a $\GL{n}$-equivariant bijection between $\F_n$ and the coset space $\GL{n} / B$, where $B$ is the Borel subgroup of upper triangular matrices of $\GL{n}$; see \cite[\S 1]{Halverson-Ram} for details.
This induces an isomorphism $\CC[\F_n] \cong \CC[\GL{n} / B]$ of $\GL{n}$-representations. 

The subset of irreducible representations of $\GL{n}$ which appear as the irreducible constituents from the action of $\GL{n}$ on $\CC[\F_n]$ were studied by Steinberg \cite{steinberg1951geometric} and are called the \emph{unipotent representations}; more explanation can be found in \cite[\S 3.2]{brauner2023invariant}. Each irreducible unipotent representation is indexed by a partition, denoted as $G^\lambda$, with dimension $f^{\lambda}(q):= \dim(G^\lambda)$  given by the $q$-\emph{hook formula}, see \cite[Equation (4.2)]{LewisReinerStanton}. In fact, the space $\CC[\F_n]$ decomposes as $\GL{n}$-representation in an analogous way to $\HH_n(q)$:
\[ \CC[\F_n] \cong \bigoplus_{\lambda \vdash n} \left(G^\lambda \right)^{f^\lambda}.\]

In this setting, one can define $\HH_n(q)$  in terms of $B$-double cosets of $\GL{n}$. We omit this description, but note that it implies there is an $\HH_n(q)$-action on $\CC[\GL{n} / B] \cong \CC[\F_n]$ as follows. 

\begin{definition}\label{def:heckeactiononflags}
When $q = p^m,$ there is a right action of $\HH_n(q)$ on $\CC[\F_n]$ by 
\[ F_\bullet \cdot  T_{s_i} = \sum_{\substack{G \neq F_i \\ \dim(G) = i \\ F_{i-1} \subset G \subset F_{i+1}}} (F_1 \subset F_2 \subset \cdots \subset  F_{i-1} \subset G \subset F_{i+1} \subset \cdots \subset F_n = \FF_q^n).\] 
\end{definition}

Importantly, the right action by $\HH_n(q)$ on $\CC[\F_n]$ commutes with the left action by $\GL{n}$, and so we obtain a bimodule decomposition of $\CC[\F_n]$, see \cite[p.253-254]{Halverson-Ram}. Write 
\[ \End_{\GL{n}}(\CC[\F_n]) \]
for the centralizer algebra of the action of $\GL{n}$ on $\CC[\F_n]$.

\begin{theorem}[Double centralizer theorem] \label{thm:doublecentralizerthm}
When $q = p^m,$ 
\[ \HH_n(q) \cong \End_{\GL{n}}(\CC[\F_n]).\]
Moreover, there is a $\GL{n} \times \HH_n(q)$-bimodule decomposition of $\CC[\F_n]$
\[ \CC[\F_n] \ \  \cong  \ \ \bigoplus_{\lambda \vdash n}  \ \ G^\lambda \otimes S^\lambda, \]
where $G^\lambda$ is an irreducible \emph{unipotent} representation of $\GL{n}$ and $S^\lambda$ is a Specht module of $\HH_n(q)$.
\end{theorem}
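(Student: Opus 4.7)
The plan is to combine the classical Iwahori--Matsumoto identification of $\HH_n(q)$ with a double-coset algebra on $\GL{n}/B$ together with the general double centralizer theorem for semisimple algebras, then to match the indexing of the resulting summands. Writing $G = \GL{n}$, my first step is to identify $\CC[\F_n] \cong \CC[G/B] \cong \mathrm{Ind}_B^G(\mathbf{1})$. By a standard computation with induced representations, $\End_G(\CC[G/B])$ has a canonical linear basis indexed by $B$-double cosets in $G$: the operator $\xi_w$ associated to $BwB$ sends the coset $B \in G/B$ to the sum of all $B$-cosets contained in $BwB$ and then extends $G$-equivariantly. The Bruhat decomposition identifies $B \backslash G / B$ with $\symm_n$, so $\dim \End_G(\CC[\F_n]) = n! = \dim \HH_n(q)$.

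Next I would check that the operators $\xi_{s_i}$ satisfy the defining relations of $\HH_n(q)$ from the introduction. The quadratic relation $\xi_{s_i}^2 = (q-1)\xi_{s_i} + q$ is a computation inside the minimal parabolic $P_i \supset B$ using that $P_i/B \cong \mathbb{P}^1(\FF_q)$ has exactly $q+1$ points; the braid and commutation relations reduce to rank-two identities inside $\GL{3}(\FF_q)$ and $\GL{2}(\FF_q) \times \GL{2}(\FF_q)$ respectively. Compatibility with the action of Definition~\ref{def:heckeactiononflags} is then verified by comparing the effect of $T_{s_i}$ on a flag $F_\bullet$ with the effect of $\xi_{s_i}$. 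This produces an algebra homomorphism $\HH_n(q) \to \End_G(\CC[\F_n])$, which is surjective since its image contains every $\xi_w$, and hence an isomorphism by the dimension count above.

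For the bimodule decomposition I would invoke the double centralizer theorem for semisimple algebras. Since $G$ is finite, Maschke's theorem makes $\CC[\F_n]$ a semisimple $\CC[G]$-module, and $\HH_n(q) = \End_G(\CC[\F_n])$ is its commutant. The general theory then yields
\[ \CC[\F_n] \;\cong\; \bigoplus_{U} U \otimes \mathrm{Hom}_G(U, \CC[\F_n]), \]
with $U$ ranging over the irreducible $G$-summands and each $\mathrm{Hom}$-space an irreducible $\HH_n(q)$-module. These $G$-summands are by definition the unipotent representations $\{G^\lambda\}_{\lambda \vdash n}$. Because $q = p^m > 1$ forces $[k]_q \neq 0$ for $1 \leq k \leq n$, $\HH_n(q)$ is semisimple with irreducibles exactly the Specht modules $\{S^\mu\}_{\mu \vdash n}$ from Section~\ref{section:reptheoryofhecke}, so each $\mathrm{Hom}$-space is some $S^{\lambda'}$. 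To pin down the bijection $\lambda \leftrightarrow \lambda'$ I would run a Tits-style deformation argument: the decomposition multiplicities are locally constant as $q$ ranges over prime powers, and at $q = 1$ the bimodule reduces to $\CC[\symm_n] \cong \bigoplus_\lambda (S^\lambda)^\ast \otimes S^\lambda$ with matching indices on both sides.

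The main obstacle I anticipate is Step 2, where careful Bruhat-cell bookkeeping is required to recover the $q$ versus $q-1$ coefficients in the quadratic relation; it is precisely here that the parameter $q$ in $\HH_n(q)$ gets identified with the residue-field size, and the rest of the proof is essentially formal, flowing from Maschke plus general nonsense about semisimple commutants.
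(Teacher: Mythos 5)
The paper never proves this theorem: it is quoted as a known result, with the bimodule decomposition cited from Halverson--Ram and the indexing of unipotent representations from Steinberg, so there is no internal argument to compare against. Your reconstruction is the classical Iwahori route --- identify $\CC[\F_n]$ with $\CC[\GL{n}/B]$, realize $\End_{\GL{n}}(\CC[\F_n])$ as the $B$-double-coset convolution algebra with basis indexed by $\symm_n$ via Bruhat, verify the quadratic relation on a minimal parabolic using $\#\mathbb{P}^1(\FF_q)=q+1$ and the braid relations in rank two, and then invoke Maschke plus the abstract double centralizer theorem --- and this is exactly the standard proof in the literature the paper is leaning on; up to that point your outline is sound.

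The one genuine soft spot is the final index-matching step. As stated, the ``Tits-style deformation'' does not work: prime powers form a discrete set, so ``locally constant in $q$'' has no content without embedding the situation in an algebraic family; there is no group $\GL{n}$ at $q=1$, so the bimodule $\CC[\F_n]$ does not literally specialize to $\CC[\symm_n]$; and even granting a deformation of the Hecke-algebra side, dimension data cannot distinguish $S^\lambda$ from $S^{\lambda^{\,\prime}}$ for a conjugate pair, since $f^\lambda = f^{\lambda^{\,\prime}}$. The honest resolution is that the bijection is part of the cited labeling convention: the unipotent constituents of $\CC[\F_n]$ are indexed by partitions (Steinberg, Halverson--Ram) precisely so that $\mathrm{Hom}_{\GL{n}}\bigl(G^\lambda, \CC[\F_n]\bigr) \cong S^\lambda$ as $\HH_n(q)$-modules, equivalently so that the two sides of the Schur--Weyl pairing carry the same partition label; alternatively one can pin it down by a character computation (Green/Steinberg theory), but not by a naive $q \to 1$ limit of the finite groups. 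With that step replaced by the definitional or character-theoretic matching, your proof is complete and agrees with the source the paper cites.
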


\begin{example}[Derangement representation of $\GL{n}$]\rm
Just as one can define $\DD_n(q)$ as a representation of $\HH_n(q)$, there is a natural \emph{derangement representation} $\widetilde{\DD}_n(q)$ of $\GL{n}$, defined as the analogous sum over unipotent representations:
\[  \widetilde{\DD}_n(q):=\bigoplus_{\frakt \in \D_n} G^{\ \shape(\frakt)}.
 \]
 This will become relevant in Section \ref{section:r2tflagstohecke}, when we study a Markov chain on $\CC[\F_n]$ whose kernel carries $\widetilde{\DD}_n(q)$.
\end{example}

As in the case of the bimodule decomposition of $\HH_n(q)$ in \eqref{eq:leftrightbimoduledecomposition}, Theorem \ref{thm:doublecentralizerthm} allows us to deduce information about the actions of elements of $\HH_n(q)$ on $\CC[\F_n]$.

\begin{cor}\label{cor:eigenvaluesinbimoduledecomposition}
Consider $\widetilde{\varphi} \in \End_{\GL{n}}(\CC[\F_n]),$ and suppose that for $\varphi \in \HH_n(q),$ 
\[\widetilde{\varphi} \cdot F_{\bullet} =  F_{\bullet} \cdot \varphi   \]
for every $F_{\bullet} \in \F_n$. Then, if $\widetilde{\varphi}$ acts semisimply on $\CC[\F_n]$, Theorem \ref{thm:doublecentralizerthm} implies the following:
    \begin{enumerate}
    \item The element $\varphi$ acts semisimply on $\HH_n(q).$\normalcolor \smallskip
          \item If the $\eigen$-eigenspace of $\widetilde{\varphi}$ acting on $\CC[\F_n]$ has irreducible decomposition as a $\GL{n}$-module
    
         \[  \bigoplus_{\lambda \vdash n}  \ \ \underbrace{G^{\lambda} \oplus G^{\lambda} \oplus \cdots \oplus G^{\lambda} }_{m_\eigen(\lambda)}  \]
        then the right $\eigen$-eigenspace of $\varphi$ acting on $\HH_n(q)$ decomposes as a left $\HH_n(q)$-module 

         \[  \bigoplus_{\lambda \vdash n}  \ \ \underbrace{(S^{\lambda})^{*} \oplus (S^{\lambda})^{*} \oplus \cdots \oplus (S^{\lambda})^{*} }_{m_\eigen(\lambda)}.  \] 
      
        \item With notation as (2) above, the set of eigenvalues of $\widetilde{\varphi}$ acting on $\CC[\F_n]$ will equal that of $\varphi$ acting by right multiplication on $\HH_n(q)$. The multiplicity of the eigenvalue $\eigen$ in $\varphi$ acting on $\HH_n(q)$ will be 
        \[ \sum_{\lambda \vdash n} f^\lambda \  m_\eigen(\lambda) = \sum_{\lambda \vdash n} \dim(S^\lambda) \ m_\eigen(\lambda),\]
        while the multiplicity of $\eigen$ from $\widetilde{\varphi}$ acting on $\CC[\F_n]$ is 
        \[  \sum_{\lambda \vdash n} f^{\lambda}(q) \ m_\eigen(\lambda) = \sum_{\lambda \vdash n} \dim(G^\lambda) \ m_\eigen(\lambda). \]
  
    \end{enumerate}  
\end{cor}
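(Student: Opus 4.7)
The plan is to leverage the $\GL{n} \times \HH_n(q)$-bimodule decomposition of $\CC[\F_n]$ from Theorem~\ref{thm:doublecentralizerthm} to reduce everything to parallel statements about the right action of $\varphi$ on each $S^\lambda$. The key observation is that the hypothesis $\widetilde{\varphi}\cdot F_\bullet = F_\bullet \cdot \varphi$ for all $F_\bullet \in \F_n$ says exactly that $\widetilde{\varphi}$ equals right multiplication by $\varphi$ on $\CC[\F_n]$ (see Definition~\ref{def:heckeactiononflags}). Therefore $\widetilde{\varphi}$ and the right action of $\varphi$ on $\HH_n(q)$ are two incarnations of the same endomorphism, obtained from the two bimodule decompositions
\[
\CC[\F_n] \;\cong\; \bigoplus_{\lambda \vdash n} G^\lambda \otimes S^\lambda
\qquad \text{and} \qquad
\HH_n(q) \;\cong\; \bigoplus_{\lambda \vdash n} (S^\lambda)^\ast \otimes S^\lambda.
\]

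First I would show that under both decompositions, the right action of $\varphi$ takes the block-diagonal form $\bigoplus_{\lambda} \mathrm{id} \otimes \rho_\lambda(\varphi)$, where $\rho_\lambda(\varphi) \in \End(S^\lambda)$ denotes the right action of $\varphi$ on the Specht module $S^\lambda$. For $\CC[\F_n]$ this follows because $\widetilde{\varphi}$ lies in $\End_{\GL{n}}(\CC[\F_n])$ and therefore acts as the identity on each $G^\lambda$-tensor factor; the restriction to the $S^\lambda$-factor must then be $\rho_\lambda(\varphi)$ because the full right action of $\HH_n(q)$ is $\mathrm{id}\otimes$ (right multiplication) in the decomposition. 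The analogous statement for $\HH_n(q)$ itself is exactly \eqref{eq:leftrightbimoduledecomposition}. From this block-diagonal form, the semisimplicity of $\widetilde{\varphi}$ on $\CC[\F_n]$ is equivalent to the semisimplicity of every $\rho_\lambda(\varphi)$, which in turn is equivalent to the semisimplicity of $\varphi$ acting on the right of $\HH_n(q)$, giving~(1).

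Next, for each eigenvalue $\eigen$, let $V_\lambda(\eigen) \subseteq S^\lambda$ denote the $\eigen$-eigenspace of $\rho_\lambda(\varphi)$, and set $m_\eigen(\lambda) := \dim V_\lambda(\eigen)$. Then the $\eigen$-eigenspace of $\widetilde{\varphi}$ on $\CC[\F_n]$ is $\bigoplus_{\lambda} G^\lambda \otimes V_\lambda(\eigen) \cong \bigoplus_{\lambda} (G^\lambda)^{m_\eigen(\lambda)}$ as a $\GL{n}$-module, while the right $\eigen$-eigenspace of $\varphi$ on $\HH_n(q)$ is $\bigoplus_{\lambda} (S^\lambda)^\ast \otimes V_\lambda(\eigen) \cong \bigoplus_{\lambda} ((S^\lambda)^\ast)^{m_\eigen(\lambda)}$ as a left $\HH_n(q)$-module. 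This directly yields~(2), and~(3) then follows by taking dimensions on both sides, since $\dim G^\lambda = f^\lambda(q)$ and $\dim S^\lambda = f^\lambda$.

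The only mildly subtle point is the compatibility step: one must verify that the $\End(S^\lambda)$-factor obtained by restricting $\widetilde{\varphi}$ to $G^\lambda \otimes S^\lambda$ is literally the same operator $\rho_\lambda(\varphi)$ that appears when decomposing the right regular action on $\HH_n(q)$. This is where the identification $\HH_n(q) \cong \End_{\GL{n}}(\CC[\F_n])$ from Theorem~\ref{thm:doublecentralizerthm} does all the work: it ensures that both right actions of $\HH_n(q)$ are intertwined by the bimodule isomorphisms, so the block actions of $\varphi$ must agree. Everything else is a bookkeeping exercise with Schur's lemma.
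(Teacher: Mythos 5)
Your argument is correct and is precisely the reasoning the paper intends: the corollary is stated as an immediate consequence of Theorem \ref{thm:doublecentralizerthm}, namely that $\widetilde{\varphi}$ is right multiplication by $\varphi$, so under both bimodule decompositions it acts block-diagonally as $\mathrm{id}\otimes\rho_\lambda(\varphi)$, and Schur-lemma bookkeeping gives (1)--(3). The paper leaves this verification implicit, and your write-up supplies exactly that missing bookkeeping with no gaps.
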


We will use Corollary \ref{cor:eigenvaluesinbimoduledecomposition} in Section \ref{section:r2tflagstohecke} to translate properties between our operators in $\HH_n(q)$ and a flag analog of random-to-top introduced by Brown \cite{BrownOnLRBs} and studied in \cite{brauner2023invariant}.

\subsection{Random-to-top on flags}
\label{section:r2tflagstohecke}
In \cite[Section 5.2, for $w_\ell=1$]{BrownOnLRBs}, Brown introduced a $q$-analogue of $\B_n^*$ which acts on $\CC[\F_n]$. 

\begin{definition}\label{def:xq}
    Define $x^{(q)} \in \End_{\GL{n}}(\CC[\F_n])$ to be the $\CC$-linear map given by
    \begin{align*}
        x^{(q)}(F_\bullet)  = \sum_{i = 1}^n \sum_{\substack{L_i}}  \left( L_i \subset L_i + F_1 \subset \cdots \subset L_i + F_{i - 2} \subset F_i \subset F_{i + 1} \subset \cdots \subset F_n\right)
		\qquad \text{for all } F_\bullet \in \F_n,
    \end{align*}
    where the second sum is over one-dimensional subspaces (or \textit{lines}) $L_i$ for which $L_i \subseteq F_i$ but $L_i \nsubseteq F_{i - 1}.$
\end{definition}

\begin{example}\rm
Let $q = n= 2.$ Use $\langle v \rangle $ to express the $\FF_q$-span of the vector $v \in \FF_q^n$ and let $e_1, e_2$ be the standard basis vectors of $\FF_q^n$. Suppose $F_\bullet  = \left(\left \langle e_1 + e_2 \right \rangle \subset \FF_q^n\right)$. Then
\begin{align*}
    x^{(q)}\left(F_\bullet\right) &= \underbrace{ \left(\left \langle e_1 + e_2 \right \rangle \subset \FF_q^n\right)}_{i = 1} + \underbrace{ \left(\left \langle e_1 \right \rangle \subset \FF_q^n\right) +  \left(\left \langle e_2 \right \rangle \subset \FF_q^n\right)}_{i = 2}.
\end{align*}
\end{example}
In \cite{brauner2023invariant}, Reiner, the second, and the fourth author study the eigenspaces of $x^{(q)}.$ (See \cite[\S 4A]{brauner2023invariant}; in the notation in \cite{brauner2023invariant}, $ {\bf k} \mathscr{F}_{n,n}$ is $\CC[\F_n]$ with ${\bf k} = \CC$.)

\begin{theorem}[\cite{brauner2023invariant}]\label{thm:xeigenvalues}The action of $x^{(q)}$ on $\CC[\F_n]$ is semisimple with eigenvalues $[0]_q, [1]_q, \cdots, [n-2]_q, [n]_q$. The $[n-j]_q$-eigenspace of $x^{(q)}$ carries the representation 
\begin{equation}\label{eq:xeigenspaces} \widetilde{\DD}_{j}(q) * G^{(n-j)} \cong \bigoplus_{\substack{ \lambda \vdash n \\ \frakq \  \in \  \D_{j} \textrm{ with } \shape(\frakq) = \mu \\ \lambda \sm \mu \textrm{ a horizontal strip} }} G^{\lambda}, \end{equation}
where $*$ is \emph{Harish-Chandra induction}, see \cite[\S 3B]{brauner2023invariant}. In particular, the kernel of $x^{(q)}$ carries $\widetilde{\DD}_n(q)$, the derangement representation of $\GL{n}$.

\end{theorem}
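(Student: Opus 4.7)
My plan is to exploit the double centralizer structure of $\CC[\F_n]$ (Theorem \ref{thm:doublecentralizerthm}) to transfer the problem into $\HH_n(q)$, and then apply the Okounkov--Vershik technology reviewed in the paper. Because $x^{(q)}$ is manifestly $\GL_n(\FF_q)$-equivariant (both the flag space and the rewriting rule are $\GL_n$-invariant), Theorem \ref{thm:doublecentralizerthm} realizes it as right multiplication by some $y \in \HH_n(q)$. I would identify $y$ explicitly by evaluating $x^{(q)}$ on the standard $B$-fixed flag and decomposing the output in the Bruhat basis using Definition \ref{def:heckeactiononflags}; the calculation should produce $y = \TT_n^*(q) = \sum_{j=1}^n T_{s_{j-1}}T_{s_{j-2}}\cdots T_{s_1}$, the $q$-random-to-top. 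Through the algebra isomorphism $\tau$ of Remark \ref{rmkbottomvtop}, this operator has the same spectrum (with matching multiplicities on each Specht module) as $\B_n^*(q)$.

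Next, I would compute the spectrum of $\TT_n^*(q)$ on each $S^\lambda$ by induction on $n$. The Dipper--James seminormal basis $\{w_\frakt : \frakt \in \syt(\lambda)\}$ interacts cleanly with each summand $T_{s_{j-1}}\cdots T_{s_1}$ through iterated application of Theorem \ref{thm:dipperjamesyoungbasis}(3); I would aim to establish a recursion, in the spirit of Theorem \ref{thm:introrecursion}, splitting off the action of the last summand $T_{s_{n-1}}\cdots T_{s_1}$ as a Jucys--Murphy-like contribution whose eigenvalue on $w_\frakt$ depends only on the content of the box containing $n$. Tracking the resulting eigenvalues as $\frakt$ ranges over $\syt(\lambda)$ should force the allowed values to lie in $\{[0]_q,[1]_q,\ldots,[n-2]_q,[n]_q\}$, with $[n-1]_q$ absent because the corresponding tableaux would violate the desarrangement condition.

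Once the spectrum and multiplicities on each Specht module are known, Corollary \ref{cor:eigenvaluesinbimoduledecomposition} transports the eigenspace decomposition to $\CC[\F_n]$ by replacing each $(S^\lambda)^*$ with its unipotent partner $G^\lambda$. The identification of the $[n-j]_q$-eigenspace as $\widetilde{\DD}_j(q) * G^{(n-j)}$ would then follow by matching this multiplicity decomposition against the finite-field Pieri rule for unipotent representations under Harish--Chandra induction, which is the analog for $\GL_n(\FF_q)$ of Corollary \ref{pierirules}.

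The main obstacle is the kernel case ($j = n$, eigenvalue $[0]_q = 0$): showing $\ker x^{(q)} \cong \widetilde{\DD}_n(q)$, i.e.\ that $G^\lambda$ appears in the kernel with multiplicity exactly $d^\lambda$. I would handle this by extracting the generating function $\sum_\lambda [G^\lambda : \ker x^{(q)}]\,s_\lambda$ from the Specht-side analysis and verifying it satisfies the defining identity \eqref{eq:frobeniusderangement} for $\widetilde{\DD}_n(q)$. An alternative route that avoids the Hecke algebra altogether is to treat $x^{(q)}$ as a random walk on the LRB of flags following Brown \cite{BrownOnLRBs}, in which case the general BHR spectral theory yields the eigenvalues directly and the kernel identification reduces to a homology calculation on a $q$-analog of the complex of injective words --- the approach I suspect is closest to the one taken in \cite{brauner2023invariant}.
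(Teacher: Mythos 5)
There is a genuine gap, and it sits at the center of your plan. The paper does not reprove this statement at all: its proof is a citation, namely that $x^{(q)}$ agrees with the walk studied in \cite{brauner2023invariant}, that \cite[Theorem 4.2]{brauner2023invariant} gives the semisimplicity, the eigenvalues and the eigenspace modules, and that the right-hand side of \eqref{eq:xeigenspaces} is just the Pieri rule applied to the Harish--Chandra induction (plus the observation $\widetilde{\DD}_1(q)=0$, which is why $[n-1]_q$ is missing). You instead propose to prove the theorem from scratch by first establishing the spectrum of $\TT_n^*(q)$ on each Specht module via a seminormal-form recursion and then transporting it to $\CC[\F_n]$ through the bimodule decomposition. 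The transfer step is fine in principle (Proposition \ref{xisrantomtotop} and the symmetry of Theorem \ref{thm:doublecentralizerthm} allow it), but the Hecke-side input you need is exactly the hard content of the theorem, and the mechanism you sketch for it does not work: the summand $T_{s_{n-1}}\cdots T_{s_1}$ is not a Jucys--Murphy element, it does not act on $w_\frakt$ by a content scalar, and $\TT_n^*(q)$ does not commute with the $J_k(q)$, so it is not diagonal (indeed not even triangular with distinct diagonal entries) in the seminormal basis; there is no analogue of Theorem \ref{thm:introrecursion} for $\B_n^*(q)$ or $\TT_n^*(q)$ alone in the paper, and none falls out of Theorem \ref{thm:dipperjamesyoungbasis}(3) in the way you suggest. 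Your kernel step inherits the same problem: verifying \eqref{eq:frobeniusderangement} for the kernel requires already knowing the eigenspace structure for all $j<n$, which in your plan comes from the unproved recursion.

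Note also that within this paper the logical flow runs in the opposite direction from yours: Theorem \ref{thm:xeigenvalues} is imported from \cite{brauner2023invariant} and is then \emph{used} to compute the characteristic polynomial and kernel of $\B_n^*(q)$ on the Hecke side (Theorem \ref{thm:B-polys} and Theorem \ref{cor:prime-eigenspace}). So a purely Hecke-theoretic derivation of the spectrum of $\TT_n^*(q)$, if you had one, would be new content rather than a reorganization of existing lemmas, and you would need to supply it in full (including the multiplicities $\binom{n}{j}d_j$ and the identification of the kernel with the derangement representation) before any of the flag-side conclusions follow. Your closing alternative --- treating $x^{(q)}$ as a hyperplane-arrangement/LRB random walk in the sense of Brown \cite{BrownOnLRBs}, getting the eigenvalues from that theory, and identifying the eigenspaces equivariantly --- is indeed essentially how \cite{brauner2023invariant} proves the result, but as written it is a pointer rather than an argument; in particular the eigenspace identification \eqref{eq:xeigenspaces}, which is what this paper actually needs, is the part that requires the work.
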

\begin{proof}
The $x^{(q)}$ action in \cite{brauner2023invariant} agrees with Definition \ref{def:xq} (see \cite[the first equation after Example 4.1]{brauner2023invariant}). We thus apply \cite[Theorem 4.2]{brauner2023invariant}. The isomorphism in \eqref{eq:xeigenspaces} follows from the Pieri rules after taking the map ${\sf ch}_q$ from \cite[\S 3B]{brauner2023invariant}. Note that $\widetilde{\DD}_{1}(q) = 0$, so the eigenvalue $[n-1]_q$ does not appear.
\end{proof}
We will use the bimodule decomposition in Section \ref{section:connectionheckeandflags} to relate $x^{(q)}$ to our operator $\B_n^*(q)$.
By Corollary \ref{cor:eigenvaluesinbimoduledecomposition}, studying the spectrum of $x^{(q)}$ is equivalent to studying that of an element $\varphi \in \HH_n(q)$ for which 
\[ x^{(q)}\cdot F_\bullet = F_\bullet \cdot \varphi \] for all flags $F_\bullet \in \F_n.$ We will show that this element is $q$-random-to-top $\TT_n^*(q)$, and use this to deduce information about $q$-random-to-bottom $\B_n^*(q)$ using Remark \ref{rmkbottomvtop}. 

\begin{prop}\label{xisrantomtotop}
For any flag $F_\bullet \in \F_n,$
\begin{align*}
    x^{(q)}  \cdot F_\bullet = F_\bullet \cdot \TT_n^\ast(q). 
\end{align*}
\end{prop}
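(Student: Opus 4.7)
The plan is to establish the identity summand-by-summand: for each $i \in \{1,2,\ldots,n\}$, I aim to show that the $i$-th summand of $x^{(q)}(F_\bullet)$ (as indexed in Definition \ref{def:xq}) coincides with $F_\bullet \cdot T_{s_{i-1}} T_{s_{i-2}} \cdots T_{s_1}$. Summing over $i$ then matches $x^{(q)}(F_\bullet)$ against $F_\bullet \cdot \TT_n^*(q)$ term-by-term. This I prove by induction on $i$. The base case $i = 1$ is immediate: the only line $L_1 \subseteq F_1$ with $L_1 \nsubseteq 0$ is $L_1 = F_1$, returning $F_\bullet$; and the corresponding Hecke product (for $j = 1$) is empty.

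For the inductive step, I unfold the right action as $F_\bullet \cdot T_{s_{i-1}} T_{s_{i-2}} \cdots T_{s_1} = (F_\bullet \cdot T_{s_{i-1}}) \cdot (T_{s_{i-2}} \cdots T_{s_1})$. By Definition \ref{def:heckeactiononflags}, the outer factor expands as a sum of flags $F'_\bullet$, each obtained from $F_\bullet$ by replacing $F_{i-1}$ with some $(i-1)$-dimensional subspace $G_{i-1}$ satisfying $F_{i-2} \subset G_{i-1} \subset F_i$ and $G_{i-1} \neq F_{i-1}$. Applying the inductive hypothesis (for $i-1$) to each such $F'_\bullet$ rewrites $F'_\bullet \cdot T_{s_{i-2}} \cdots T_{s_1}$ as a sum, over lines $L \subseteq G_{i-1}$ with $L \nsubseteq F_{i-2}$, of the flags
\[
(L \subset L + F_1 \subset \cdots \subset L + F_{i-3} \subset G_{i-1} \subset F_i \subset \cdots \subset F_n).
\]

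The key combinatorial step is to exhibit a bijection between the pairs $(G_{i-1}, L)$ appearing in the double sum above and the lines $L_i \subseteq F_i$ satisfying $L_i \nsubseteq F_{i-1}$ that index the $i$-th summand of $x^{(q)}$. The bijection is $L_i := L$ and $G_{i-1} := L_i + F_{i-2}$. Two short dimension counts handle both directions: the hypothesis $L \nsubseteq F_{i-2}$ forces $\dim(L + F_{i-2}) = i-1$, so the inclusion $L + F_{i-2} \subseteq G_{i-1}$ between $(i-1)$-dimensional subspaces is an equality; and conversely, $G_{i-1} \neq F_{i-1}$ forces $L_i \nsubseteq F_{i-1}$, for otherwise $L_i + F_{i-2} \subseteq F_{i-1}$ would make the equal-dimensional spaces $L_i + F_{i-2}$ and $F_{i-1}$ coincide, contradicting $G_{i-1} = L_i + F_{i-2} \neq F_{i-1}$. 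Under this correspondence, the flag above rewrites exactly as $(L_i \subset L_i + F_1 \subset \cdots \subset L_i + F_{i-2} \subset F_i \subset \cdots \subset F_n)$, which is the $L_i$-summand of the $i$-th term of $x^{(q)}(F_\bullet)$.

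The only real technical obstacle is the bijection and the two dimension-counting verifications above; after that, the argument is a routine unpacking of Definition \ref{def:heckeactiononflags} and the inductive hypothesis.
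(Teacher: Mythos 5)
Your proof is correct, and it is organized differently from the paper's. The paper first reduces to the standard flag $E_\bullet$: since $\GL{n}$ acts transitively on $\F_n$ and both $x^{(q)}$ and the Hecke action commute with $\GL{n}$, it suffices to verify the identity on $E_\bullet$, and there the authors expand $E_\bullet \cdot T_{s_{i-1}}\cdots T_{s_1}$ in one shot as the sum over flags $F_\bullet$ with $E_{j-1}\subset F_j \neq E_j$ for $j \le i-1$ and $F_j = E_j$ for $j \ge i$, then show by an induction on $j$ that every such flag is forced to satisfy $F_j = F_1 + E_{j-1}$, hence is determined by the line $F_1 \subseteq E_i$, $F_1 \nsubseteq E_{i-1}$. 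You instead prove the per-$i$ identity for an \emph{arbitrary} flag by induction on $i$, peeling off one generator $T_{s_{i-1}}$ at a time and re-indexing the resulting double sum over pairs $(G_{i-1},L)$ by the single line $L_i = L$ via $G_{i-1} = L_i + F_{i-2}$; your two dimension counts establishing this bijection are exactly right, and the coefficient bookkeeping is trivial since every summand carries coefficient $1$. What each approach buys: the paper's equivariance reduction lets one work with the concrete coordinate flag and characterize the whole support of the iterated product at once, at the (mild) cost of invoking transitivity and the commutation of the two actions; your argument is self-contained at the level of a single flag, needs no equivariance input, and makes the ``one generator at a time'' structure of the computation explicit, with the same elementary linear-algebra content (the identification $F_j = L + F_{j-1}$ appearing in your bijection rather than in an inner induction). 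Both are complete proofs; yours is a legitimate and slightly more elementary alternative.
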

\begin{proof}
Let $E_\bullet = \left(E_1 \subset \cdots \subset E_n\right)$, where each $E_i$ is the span of the first $i$ standard basis vectors of $\FF_q^n.$ Since $\GL{n}$ acts transitively on $\F_n$ and $x^{(q)}$ commutes with the action of $\GL{n},$ it suffices to prove that \[x^{(q)}(E_\bullet) = \sum_{i = 1}^{n} E_\bullet \cdot T_{s_{i - 1}}T_{s_{i - 2}}\cdots T_{s_1}.\]
We will show that for a fixed $i$,
  \[E_\bullet  \cdot T_{s_{i - 1}}T_{s_{i - 2}}\cdots T_{s_1} = \sum_{L_i} \left(L_i \subset L_i + E_1  \subset \cdots \subset L_i + E_{i - 2} \subset E_i \subset \cdots \subset E_n\right),\] where the sum is over lines $L_i$ contained in $E_i$ but not contained in $E_{i - 1}.$ 
  Using Definition \ref{def:heckeactiononflags}, we have that
    \begin{align}\label{eqn:flag-sum}
        E_\bullet  \cdot T_{s_{i - 1}}T_{s_{i - 2}}\ldots T_{s_1} &= \sum F_\bullet,
    \end{align}
    where the sum is over all flags $F_\bullet \in \F_n$ for which both:
    \begin{enumerate}
        \item[(i)] $E_{j-1} \subset F_j \neq E_j$ for all $1 \leq j \leq i - 1,$ and
        \item[(ii)] $E_{j} = F_j$ for all $j \geq i$.
    \end{enumerate}
We claim these conditions force $F_j = F_1 + E_{j - 1}$ for all $2 \leq j \leq i - 1.$ Indeed, since $E_1$ and $F_1$ are two distinct one-dimensional subspaces of the two-dimensional subspace $F_2,$  we must have  $F_1 + E_1 = F_2.$ Inductively, for $j > 2,$ assume $F_{j - 1} = F_1 + E_{j - 2}.$  Since $E_{j - 2} \subset F_{j - 1} \neq E_{j - 1},$ the vector $e_{j - 1}$ is not contained in $F_{j - 1}$, so  
\[
F_{j - 1} \subsetneq F_{j - 1} + \langle e_{j - 1} \rangle =  E_{j - 2} + F_1 + \langle e_{j - 1} \rangle  = E_{j - 1} + F_1 \subseteq  F_j. \]
Hence, $F_j =  E_{j - 1}+ F_1,$ as desired. This also forces that $F_1 \nsubseteq E_{i - 1}$ (since otherwise $F_{i - 1} =F_1 + E_{i - 2}$ would equal $E_{i - 1}$). Thus, each flag $F_\bullet$ appearing in the sum (\ref{eqn:flag-sum}) is determined by the line $F_1$ which is contained in $E_i$ but not $E_{i - 1}.$ Conversely, any choice of such a line gives rise to a flag in (\ref{eqn:flag-sum}).
\end{proof}

\subsection{Analysis of $\B_n^*(q)$}\label{sec:b2r}
Finally, we will use the connection between $x^{(q)}$ and $\TT_n^*(q)$ to compute the spectrum of $\B_n^*(q)$. Our analysis for $\B_n^*(q)$ allows us to deduce the same information for $\B_n(q)$.

\begin{theorem}\label{thm:B-polys}
Let $q \in \CC$ be arbitrary. 
\begin{enumerate}
\item Define the polynomial
        \[
        P(y, q) = \prod_{j\in [0,n] \sm \{1\}} (y - [n-j]_q) \in \CC[y].
		\]
		Then, $P(\B_n^*(q), q) = 0 = P(\B_n(q), q)$. 
  \medskip
\item The characteristic polynomial of $\B_n^*(q)$ and $\B_n(q)$ is
        \[ \chi(y,q) = \prod_{j=0}^{n} (y - [n-j]_q)^{m_{n-j}} \in \CC[y],
		\qquad \text{where } m_{n-j} = \binom{n}{j}d_{j} .
		\] \medskip
\item If $\HH_n(q)$ is semisimple, then $\B_n^*(q)$ and $\B_n(q)$ act semisimply on $\HH_n(q)$. \medskip
\item If $\HH_n(q)$ is semisimple,   then $\dim \ker \B_n^*(q) =  d_n = \dim \ker \B_n(q)$.
\end{enumerate}
\end{theorem}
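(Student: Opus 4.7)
The plan is to reduce (1) and (2) to the spectrum of the flag operator $x^{(q)}$ of Theorem~\ref{thm:xeigenvalues}, then extend from prime power $q$ to all $q \in \CC$ via Lemma~\ref{lem:suffices-to-prove-for-infinite-q}, and finally deduce (3) and (4). Fix a prime power $q = p^m$, so that $\HH_n(q)$ is automatically semisimple. Proposition~\ref{xisrantomtotop} identifies the right action of $\TT_n^*(q)$ on $\HH_n(q)$ with the action of $x^{(q)}$ on $\CC[\F_n]$ via the double centralizer correspondence, so Corollary~\ref{cor:eigenvaluesinbimoduledecomposition} combined with Theorem~\ref{thm:xeigenvalues} gives a semisimple action of $\TT_n^*(q)$ on $\HH_n(q)$ with eigenvalue set $\{[k]_q : k \in \{0,1,\ldots,n-2,n\}\}$. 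In particular, the minimal polynomial of $\TT_n^*(q)$ divides $P(y,q)$. Applying the $\CC$-algebra isomorphism $\tau$ of Remark~\ref{rmkbottomvtop} gives $P(\B_n^*(q),q) = 0$, and applying the anti-involution $*$ of Remark~\ref{rmk:antiiso} turns this into $P(\B_n(q),q) = 0$ since $P(\B_n(q))^* = P(\B_n^*(q))$. Because $\B_n^*(q)$ and $\B_n(q)$ have matrices in the $T_w$-basis with entries in $\Z[q]$ and prime powers form an infinite family, Lemma~\ref{lem:suffices-to-prove-for-infinite-q}(2) extends (1) to every $q \in \CC$.

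For (2), still at prime power $q$, Theorem~\ref{thm:xeigenvalues} implies that the $[n-j]_q$-eigenspace of $x^{(q)}$ contains $G^\lambda$ with multiplicity $\sum_{\mu \vdash j,\ \lambda \sm \mu \text{ a horizontal strip}} d^\mu$. By Corollary~\ref{cor:eigenvaluesinbimoduledecomposition}(3), the multiplicity of $[n-j]_q$ in the right action of $\TT_n^*(q)$ on $\HH_n(q)$ is therefore
\[
\sum_{\lambda \vdash n} f^\lambda \sum_{\substack{\mu \vdash j \\ \lambda \sm \mu \text{ horiz.\ strip}}} d^\mu \;=\; \sum_{\mu \vdash j} d^\mu \sum_{\substack{\lambda \vdash n \\ \lambda \sm \mu \text{ horiz.\ strip}}} f^\lambda.
\]
Using the Pieri rule (Corollary~\ref{pierirules}) in the form $\dim \ind_{\HH_j(q) \otimes \HH_{n-j}(q)}^{\HH_n(q)}(S^\mu \otimes S^{(n-j)}) = \binom{n}{j}f^\mu$, the inner sum on the right collapses to $\binom{n}{j}f^\mu$; combined with the D\'esarm\'enien--Wachs identity $d_j = \sum_\mu d^\mu f^\mu$ recalled in Section~\ref{section:kernelofrr}, this yields the total $\binom{n}{j}d_j$. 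Since $\tau$ preserves spectra, this is the corresponding multiplicity for $\B_n^*(q)$. For $\B_n(q)$, I would use that (a) the anti-involution $*$ intertwines right multiplication by $\B_n(q)$ with left multiplication by $\B_n^*(q)$, and (b) in a semisimple algebra, left and right multiplication by the same element share a characteristic polynomial; together these identify the characteristic polynomials of $\B_n(q)$ and $\B_n^*(q)$ at every prime power, and Lemma~\ref{lem:suffices-to-prove-for-infinite-q}(1) then extends (2) to all $q \in \CC$.

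Parts (3) and (4) now fall out quickly. A brief case analysis on the semisimplicity condition (splitting $q = 1$ from $q \neq 1$) shows that whenever $\HH_n(q)$ is semisimple, the values $\{[k]_q : k \in \{0,1,\ldots,n-2,n\}\}$ are pairwise distinct; combined with (1), this forces $\B_n^*(q)$ and $\B_n(q)$ to be diagonalizable, proving (3). The kernel dimension in (4) then reads off from the formula in (2) as the multiplicity of the eigenvalue $[0]_q = 0$, which is $\binom{n}{n}d_n = d_n$. I expect the main obstacle to be the multiplicity count in paragraph two: one must correctly interpret the $G^\lambda$-content of each eigenspace of $x^{(q)}$ from Theorem~\ref{thm:xeigenvalues} in terms of desarrangement tableaux and horizontal strips, and then recognize the Pieri--D\'esarm\'enien--Wachs collapse; the transfer across $\tau$ and $*$, the polynomial extension via Lemma~\ref{lem:suffices-to-prove-for-infinite-q}, and diagonalizability from the simple roots of $P$ are then routine.
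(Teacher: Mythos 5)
Your proposal is correct and follows the paper's own route almost exactly: reduce to prime powers $q=p^m$, use Proposition~\ref{xisrantomtotop} and Corollary~\ref{cor:eigenvaluesinbimoduledecomposition} to transfer the spectrum of $x^{(q)}$ (Theorem~\ref{thm:xeigenvalues}) to $\TT_n^*(q)$, pass to $\B_n^*(q)$ via $\tau$ and to $\B_n(q)$ via $*$, extend to all $q\in\CC$ with Lemma~\ref{lem:suffices-to-prove-for-infinite-q}, and get (3) from the simple roots of $P$ and (4) from $m_0=d_n$; your multiplicity count (swapping the sums and collapsing $\sum_{\lambda}f^\lambda$ over horizontal strips via the Pieri rule and the D\'esarm\'enien--Wachs identity) is just a more explicit version of the computation the paper states tersely. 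The one genuine divergence is how you get the characteristic polynomial of $\B_n(q)$ in part (2): the paper goes back to the flag module, observing that each $T_{s_i}$ acts by a symmetric matrix so that the transpose $(x^{(q)})^*$ realizes $\TT_n(q)$, and then applies Corollary~\ref{cor:eigenvaluesinbimoduledecomposition} again; you instead conjugate right multiplication by $\B_n(q)$ into left multiplication by $\B_n^*(q)$ via the anti-involution $*$, and invoke the fact that left and right multiplication by the same element of a semisimple algebra share a characteristic polynomial (which indeed follows from the bimodule decomposition \eqref{eq:leftrightbimoduledecomposition}, since the left action on $(S^\lambda)^*$ is dual to the right action on $S^\lambda$). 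Your variant is valid and bypasses the flag-side symmetry argument entirely, at the cost of needing that one standard left/right fact spelled out; either way the conclusion at prime powers is the same and the extension to all $q$ goes through unchanged.
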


\begin{proof}
\noindent(1) Let us first assume that $q$ is a prime power $p^m$.
We shall show that $P(\TT_n^*(q), q) = 0$ first.
Indeed, Theorem~\ref{thm:xeigenvalues} shows that we can apply Corollary~\ref{cor:eigenvaluesinbimoduledecomposition} to $\varphi = \TT_n^*(q)$ and $\widetilde{\varphi} = x^{(q)}$.
As a result, we see that the element $\TT_n^*(q)$ acts semisimply on $\HH_n(q)$, and that its eigenvalues are those of $x^{(q)}$.
But Theorem~\ref{thm:xeigenvalues} shows that the latter eigenvalues are the numbers $[0]_q, [1]_q, \ldots, [n-2]_q, [n]_q$. We thus see that
\[
\prod_{j\in [0,n] \sm \{1\}} (\TT_n^*(q) - [n-j]_q) = 0 = P(\TT_n^*(q), q).
\]
Applying the algebra isomorphism $\tau$ from Remark~\ref{rmkbottomvtop}, we obtain $P(\B^*_n(q), q) = 0$, since $\tau(\TT_n^*(q)) = \B_n^*(q)$. This implies that $P(\B^*_n(q), q) = 0$ in the case when $q$ is a prime power $p^m$.
Thus, Lemma~\ref{lem:suffices-to-prove-for-infinite-q} (2) ensures that $P(\B^*_n(q), q) = 0$ holds for all $q \in \CC$, since there are infinitely many prime powers. To obtain the analogous statement for $\B_n(q)$, apply the anti-isomorphism $*$ to $P(\B^*_n(q),q) = 0$.
This proves part (1). \medskip

\noindent(2) 
When $q$ is a prime power $p^m$, we again use Corollary~\ref{cor:eigenvaluesinbimoduledecomposition} parts (2) and (3) with Theorem~\ref{thm:xeigenvalues} to deduce the characteristic polynomial of $\TT_n^*(q)$. Theorem~\ref{thm:xeigenvalues} gives the irreducible decomposition of the $[n-j]_q$-eigenspace of $x^{(q)}$, and so by Corollary \ref{cor:eigenvaluesinbimoduledecomposition}(2), we obtain the analogous decomposition for the $[n-j]_q$-eigenspace of $\TT_n^*(q)$. Corollary \ref{cor:eigenvaluesinbimoduledecomposition}(3) then uses this decomposition to obtain the dimension of each eigenspace of $\TT_n^*(q)$. In particular, using the Pieri rules, the dimension of the $[n-j]_q$-eigenspace is 
\[ \sum_{\substack{ \lambda \vdash n \\ \frakq \  \in \  \D_{j} \textrm{ with } \shape(\frakq) = \mu \\ \lambda \sm \mu \textrm{ a horizontal strip} }} \dim(S^\lambda)  = \binom{n}{j} \dim(\DD_{n-j}(q)) = \binom{n}{j} d_{j} = m_{n-j}.  \]
Thus the characteristic polynomial of $\TT_n^*(q)$ is
\[ \prod_{j=0}^{n} (y - [n-j]_q)^{m_{n-j}}. \]
Again using $\tau$, we conclude that $\B_n^*(q) = \tau(\TT_n^*(q))$ has the same characteristic polynomial. This proves the claim for $\B_n^*(q)$ when $q$ is a prime power $p^m$.
Using Lemma~\ref{lem:suffices-to-prove-for-infinite-q} (1), we extend this to all $q \in \CC$. To see the statement for $\B_n(q)$, realize the action of $x^{(q)}$ on $\CC[\F_n]$ as a $[n]!_q \times [n]!_q$ matrix, and let $(x^{(q)})^*$ be its transpose. Then since each $T_{s_i}$ acts as a symmetric matrix, it is not difficult to deduce that for any $F_{\bullet} \in \CC[\F_n]$, 
\[ (x^{(q)})^* \cdot  F_{\bullet} = F_{\bullet} \cdot  \TT_n(q). \]
We can thus apply Corollary~\ref{cor:eigenvaluesinbimoduledecomposition} to $\varphi = \TT_n(q)$ and $\widetilde{\varphi} = (x^{(q)})^*$. This implies that when $q=p^m$, the characteristic polynomial of $\TT_n(q)$ is also $\chi(y,q)$ by Theorem~\ref{thm:xeigenvalues}, since taking the transpose of a matrix preserves its characteristic polynomial. Again, Lemma~\ref{lem:suffices-to-prove-for-infinite-q} (1) then gives the statement for all $q \in \CC$.  

Since $\tau$ preserves characteristic polynomials, we obtain the same result for $\B_n(q) = \tau(\TT_n(q))$. 
\medskip

\noindent(3) 
If $\HH_n(q)$ is semisimple, $q$ is not a primitive $k$-th root of unity for $2 \leq k \leq n$. 

Hence, the $q$-integers $[0]_q, [1]_q, \ldots, [n]_q$ are distinct,
since their pairwise differences $[j]_q - [i]_q$ for $i < j$ can be rewritten as
\[
[j]_q - [i]_q = q^i \left(1 + q + \cdots + q^{j-i-1}\right)
= q^i \cdot \dfrac{1 - q^{j-i}}{1 - q} \neq 0 .
\]
In particular, the polynomial $P$ in part (1) has no double roots.
Thus, both $\B^*_n(q)$ and $\B_n(q)$ act semisimply,
since part (1) shows they are annihilated by this polynomial.
\medskip

\noindent(4) This follows from parts (2) and (3), since $[0]_q = 0$ and $m_0 = d_n$.
\end{proof}

When $\HH_n(q)$ is semisimple, we also describe the left module structure of each eigenspace of $\B^*_n(q)$, which we apply in Section \ref{section:maintheorem}. We will begin by constructing eigenvectors of $\B_n^*(q)$.

\begin{prop}\label{eigenvector_construction}
  Let $q \in \CC$. Suppose $u \in \ker \B_j^\ast(q)$ for some $0 \leq j \leq n$.  Then, $m_{(1^j, n - j)} \ u$ is an eigenvector for $\B_n^*(q)$ with eigenvalue $[n-j]_q$: 
   \[ \left(m_{(1^j, n - j)} \ u\right)\B_n^\ast(q) = [n-j]_q  \left(m_{(1^j, n - j)} \ u\right). \]
\end{prop}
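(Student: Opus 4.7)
The strategy is to split $\B_n^*(q)$ according to whether its summation index is at most $j$ or exceeds $j$, and then to exploit the fact that the subalgebras $\HH_j(q)$ and $\HH_{(1^j, n-j)}(q)$ (generated by $T_{s_1},\ldots,T_{s_{j-1}}$ and by $T_{s_{j+1}},\ldots,T_{s_{n-1}}$ respectively) commute inside $\HH_n(q)$, since their generator indices differ by at least $2$.

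Concretely, using the length-additive factorization $T_{s_i} T_{s_{i+1}} \cdots T_{s_{n-1}} = (T_{s_i} \cdots T_{s_{j-1}})(T_{s_j} \cdots T_{s_{n-1}})$ for $i \le j$, I would rewrite
\[
\B_n^*(q) \;=\; \B_j^*(q) \cdot T_{s_j} T_{s_{j+1}} \cdots T_{s_{n-1}} \;+\; \sum_{i=j+1}^{n} T_{s_i} T_{s_{i+1}} \cdots T_{s_{n-1}}.
\]
Right-multiplying $m_{(1^j,n-j)} u$ by the first summand gives $m_{(1^j,n-j)} \cdot u\B_j^*(q) \cdot T_{s_j} \cdots T_{s_{n-1}}$ (after sliding $u \in \HH_j(q)$ past $m_{(1^j,n-j)}\in \HH_{(1^j,n-j)}(q)$), which vanishes by the hypothesis $u \in \ker \B_j^*(q)$.

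For the second summand, I would slide $u$ past each $T_{s_i}$ with $i\ge j+1$ using the commutativity above, reducing the task to evaluating $m_{(1^j,n-j)}\cdot T_{s_i}\cdots T_{s_{n-1}}$ for each $i\in\{j+1,\ldots,n\}$. Since $s_i s_{i+1}\cdots s_{n-1}$ lies in $\symm_{(1^j,n-j)}$ and has length $n-i$, Example~\ref{ex:m1n-j} gives $m_{(1^j,n-j)}\cdot T_{s_i}\cdots T_{s_{n-1}}=q^{n-i}\,m_{(1^j,n-j)}$. Summing over $i$ yields $\sum_{i=j+1}^{n} q^{n-i} = [n-j]_q$, so the second summand contributes $[n-j]_q \cdot m_{(1^j,n-j)} u$, producing the desired eigenvalue identity.

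Everything reduces to the commutation between well-separated Hecke generators and the basic identity $m_{(1^j,n-j)}T_w = q^{\ell(w)}m_{(1^j,n-j)}$ recorded in Example~\ref{ex:m1n-j}, so I do not anticipate any serious obstacle. The only conceptual step is recognizing the two-piece decomposition of $\B_n^*(q)$, which cleanly isolates the ``kernel-killing'' part from the eigenvalue-producing part.
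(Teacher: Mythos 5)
Your proposal is correct and follows essentially the same route as the paper's proof: split $\B_n^*(q)$ into the terms with $i\le j$ (which assemble into $\B_j^*(q)\,T_{s_j}\cdots T_{s_{n-1}}$ and are killed by $u\,\B_j^*(q)=0$) and the terms with $i\ge j+1$ (where $u$ commutes past the generators and Example~\ref{ex:m1n-j} gives the factors $q^{n-i}$, summing to $[n-j]_q$). The only difference is cosmetic — you state the decomposition of $\B_n^*(q)$ up front, while the paper splits the sum after multiplying — so there is nothing to add.
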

\begin{proof}
Using the fact that $u \in \HH_{j}(q)$ commutes with $T_{s_i}$ for $i > j$, we have 
\begin{align*}
   \left( m_{(1^j, n - j)} \ u \right)\B_n^\ast(q)  &=  \left(m_{1^j, n - j }\ u\right) \sum_{i=1}^n T_{s_i} \cdots T_{s_{n-1}}\\
   &=  \sum_{i = 1}^j\left(  m_{(1^j, n - j)} \ u\right) T_{s_i} \cdots T_{s_{n-1}} + \sum_{i=j + 1}^n \left( m_{(1^j, n - j)} \ u\right) T_{s_i} \cdots T_{s_{n-1}} \\
    &=  \sum_{i = 1}^j \left(m_{(1^j, n - j)} \ u \right)\ T_{s_i}T_{s_{i + 1}}\cdots T_{s_{j - 1}}  T_{s_j}\cdots T_{s_{n-1}} + \sum_{i=j + 1}^n \left(  m_{(1^j, n - j)} \ u \right) T_{s_i} \cdots T_{s_{n-1}} \\
    &= m_{(1^j, n - j)} \ \left( u  \ \B_j^\ast(q)  \right)  \  T_{s_j}\cdots T_{s_{n-1}} + \sum_{i=j + 1}^n  m_{(1^j, n - j)}\ T_{s_i} \cdots T_{s_{n-1}} \ u. \\
    \end{align*}
 The discussion in Example \ref{ex:m1n-j} and the fact that $u \ \B_n^\ast(q) = 0$ then imply that
    \begin{align*}
   m_{(1^j, n - j)} \ \left( u  \ \B_j^\ast(q)  \right)  \  T_{s_j}\cdots T_{s_{n-1}} + \sum_{i=j + 1}^n  m_{(1^j, n - j)}\ T_{s_i} \cdots T_{s_{n-1}} \ u &= \sum_{i=j + 1}^n q^{n - i}\left( m_{(1^j, n - j)} \ u\right) \\
    &= \left(\sum_{i=0}^{n - j - 1} q^{i}\right)\left( m_{(1^j, n - j)} \ u\right)\\
    &= [n - j]_q\left( m_{(1^j, n - j)} \ u\right).
\end{align*}
\end{proof}

We will use the eigenvectors we have constructed in Proposition \ref{eigenvector_construction}, along with our knowledge of the characteristic polynomial of $\B_n^*(q)$, to describe the left $\HH_n(q)$-representations on each eigenspace of $\B_n^*(q)$.

\begin{theorem}\label{cor:prime-eigenspace}
Let $\HH_n(q)$ be semisimple.
Then the $[n-j]_q$-eigenspace of $\B_n^*(q)$ carries the left $\HH_n(q)$-representation 
\[\ker \left(\B_n^\ast(q) - [n - j]_q \right) \cong \ind_{\HH_{j, n - j}(q)}^{\HH_n(q)}\left( \DD_j(q) \otimes \left(S^{(n - j)}\right)^\ast\right) \quad \text{for each $0 \leq j \leq n$}.\]
\end{theorem}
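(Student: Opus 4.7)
My plan is to prove the theorem by induction on $n$, using Proposition~\ref{eigenvector_construction} to explicitly build the required isomorphisms. The base case $n=0$ is trivial, and the inductive step splits into the cases $0 \le j < n$ (handled via the induced-module construction) and $j = n$ (handled by subtracting off the other eigenspaces and invoking the D\'esarm\'enien--Wachs identity).

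For $j < n$, the inductive hypothesis applied with $n'=j$ gives $\ker \B_j^*(q) \cong \DD_j(q)$ as a left $\HH_j(q)$-module. I realize $\DD_j(q) \otimes (S^{(n-j)})^*$ concretely inside $\HH_n(q)$ as
\[
V_j := \ker \B_j^*(q) \cdot m_{(1^j,n-j)} ,
\]
using that the subalgebras $\HH_j(q) = \langle T_{s_1},\ldots,T_{s_{j-1}}\rangle$ and $\HH_{n-j}(q) = \langle T_{s_{j+1}},\ldots,T_{s_{n-1}}\rangle$ commute inside $\HH_n(q)$, and that $\CC \cdot m_{(1^j,n-j)}$ realizes $(S^{(n-j)})^*$ as a left $\HH_{n-j}(q)$-module (Example~\ref{ex:m1n-j}). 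I then define the $\HH_n(q)$-module map
\[
\psi_j : \ind_{\HH_{j,n-j}(q)}^{\HH_n(q)} V_j \longrightarrow \HH_n(q), \qquad x \otimes v \mapsto xv,
\]
whose image lies in $\ker(\B_n^*(q) - [n-j]_q)$ by Proposition~\ref{eigenvector_construction}. Both source and target have dimension $\binom{n}{j} d_j$, using the induced-module formula together with $\dim \ker \B_j^*(q) = d_j$ (Theorem~\ref{thm:B-polys}(4)), and Theorem~\ref{thm:B-polys}(2) for the target. To see that each $\psi_j$ is actually an isomorphism onto its eigenspace, I will combine the maps for $0 \le j < n$: the aggregate source has dimension $\sum_{j<n}\binom{n}{j}d_j = n! - d_n$, matching the aggregate target. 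Since eigenspaces for distinct $[n-j]_q$ are linearly independent in the semisimple case and each $\operatorname{im}(\psi_j)$ sits in a single eigenspace, it suffices to show the combined map is injective; I will obtain this by decomposing each $\ind V_j$ into irreducibles via the Pieri rule (Corollary~\ref{pierirules}) and matching the resulting multiplicities against those provided by the bimodule decomposition of $\HH_n(q)$.

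For the remaining case $j = n$, I deduce $\ker \B_n^*(q) \cong \DD_n(q)$ by a multiplicity computation. Using $\HH_n(q) \cong \bigoplus_\lambda (S^\lambda)^{*\, f^\lambda}$ and $\HH_n(q) = \bigoplus_{j=0}^n \ker(\B_n^*(q) - [n-j]_q)$, the multiplicity of $(S^\lambda)^*$ in $\ker \B_n^*(q)$ is
\[
f^\lambda \;-\; \sum_{j=0}^{n-1}\, \sum_{\substack{\mu \vdash j\\ \lambda\sm\mu\ \text{horizontal strip}}} d^\mu ,
\]
where the subtracted sums are read off from the Pieri decompositions of the $\ind V_j$ just established. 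By the D\'esarm\'enien--Wachs identity
$f^\lambda = \sum_{\mu \subseteq \lambda,\ \lambda\sm\mu\ \text{horizontal strip}} d^\mu$
(equivalent to the defining character identity (\ref{eq:frobeniusderangement}) for $\DD_n(q)$), this multiplicity simplifies to $d^\lambda$, completing the induction.

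The main obstacle is the injectivity of $\psi_j$: the dimensions match and the image sits in the correct eigenspace, but one must rule out any irreducible summand of $\ind V_j$ being killed. I expect to resolve this cleanly by matching the irreducible decomposition of $\ind V_j$ (given by the Pieri rule applied to $\DD_j(q) \otimes (S^{(n-j)})^*$) against the multiplicities forced by the bimodule structure of $\HH_n(q)$ and the aggregate dimension identity $\sum_{j=0}^n \binom{n}{j} d_j = n!$.
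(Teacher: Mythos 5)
Your construction is, up to bookkeeping, the paper's own: the subspace $V_j := \ker \B_j^*(q)\cdot m_{(1^j,n-j)}$ is the paper's $N_j$, landing in the $[n-j]_q$-eigenspace by Proposition~\ref{eigenvector_construction}, with dimensions supplied by Theorem~\ref{thm:B-polys}; and your induction on $n$, finishing the $j=n$ case by subtracting Pieri multiplicities and using $f^\lambda=\sum_{\mu\,:\,\lambda\sm\mu\text{ horizontal strip}} d^\mu$, is an acceptable variant of the paper's Frobenius-characteristic finish. The genuine gap is exactly the step you defer: injectivity of $\psi_j$ (equivalently, $\dim \HH_n(q)V_j=\binom{n}{j}d_j$), and the mechanism you propose cannot close it. Decomposing $\ind_{\HH_{j,n-j}(q)}^{\HH_n(q)}V_j$ by the Pieri rule and comparing with the bimodule decomposition of $\HH_n(q)$ and the identity $\sum_j\binom{n}{j}d_j=n!$ only yields the constraints $\sum_j m_j(\nu)=f^\nu$ and $\sum_\nu m_j(\nu)f^\nu=\binom{n}{j}d_j$ on the unknown eigenspace multiplicities $m_j(\nu)$, together with the fact that $\im\psi_j$ is \emph{some} quotient of $\ind V_j$ sitting inside one eigenspace. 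That gives upper bounds on what the images contain, never the lower bounds you need: a priori $\psi_j$ could kill an irreducible constituent of $\ind V_j$, with the ``missing'' copies of that $(S^\nu)^*$ appearing in other eigenspaces, and every numerical identity above would still balance (the constraint system is underdetermined in general). Trying to pin down the $m_j(\nu)$ from these equations is circular, since those multiplicities are precisely what the theorem asserts.

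The missing ingredient is structural, not numerical, and it is what the paper's map $\gamma$ encodes: $V_j$ is a left ideal of the parabolic subalgebra $\HH_{j,n-j}(q)$, and $\HH_n(q)$ is free as a right $\HH_{j,n-j}(q)$-module with basis $\{T_{v^{-1}}: v\in X_{(j,n-j)}\}$ by Proposition~\ref{lem:factoring-minimal}(1); hence $\HH_n(q)\,V_j=\bigoplus_{v\in X_{(j,n-j)}}T_{v^{-1}}V_j$, so the multiplication map $\ind_{\HH_{j,n-j}(q)}^{\HH_n(q)}V_j\to\HH_n(q)V_j$ is an isomorphism and $\dim\HH_n(q)V_j=\binom{n}{j}\,d_j$. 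Once you have this lower bound, the containments $\HH_n(q)V_j\subseteq\ker\!\left(\B_n^*(q)-[n-j]_q\right)$ together with $n!=\sum_j\binom{n}{j}d_j\le\sum_j\dim\ker\!\left(\B_n^*(q)-[n-j]_q\right)\le n!$ force equality for all $j$ simultaneously (this also quietly uses that the $[n-j]_q$ are pairwise distinct in the semisimple case, which you should state). With that repair your argument goes through; as written, the injectivity step would fail.
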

\begin{proof}
Since $\HH_n(q)$ is semisimple,
$q$ is neither $0$ nor a $k$-th root of unity with $1 < k \leq n$.
Thus, Theorem~\ref{thm:B-polys} (4) shows that
\begin{equation}
\dim \ker \B_j^*(q) = d_j
\qquad \text{for all $0 \leq j \leq n$}.
\label{pf.cor:prime-eigenspace.dimker}
\end{equation}

For each $0 \leq j \leq n$, define
\begin{equation}
N_j:= \mathrm{span}_{\CC}\{m_{(1^j, n - j)} \ u \ : \ u \in \ker \B_j^\ast(q)\} \subseteq \HH_{j,n-j}(q) \subseteq \HH_n(q).
\end{equation}
 Observe that each $N_j$ is closed under the left action of the subalgebra $\HH_{j,n - j}(q)$ and carries the representation \[\ker \B_j^\ast(q) \otimes \left(S^{(n - j)}\right)^\ast.\]

 Define $V_j$ to be the left $\HH_n(q)$-module \[V_j:= \HH_n(q) N_j.\]

  Since each eigenspace $ \ker \left(\B_n^\ast(q) -[n - j]_q \right)$ is closed under the left action of $\HH_n(q),$ Proposition \ref{eigenvector_construction} implies there is a containment 
\begin{align}\label{eqn:eigenspace-subspace}
    V_j \subseteq \ker \left(\B_n^\ast(q) -[n - j]_q \right).
\end{align}

 On the other hand, one can use Proposition \ref{lem:factoring-minimal}(1) to see that $\HH_n(q)$ is free as a left $\HH_{j, n - j}(q)$-module with basis 
 \[ \{T_v: v \in X_{j, n - j}\}. \] 
 Similarly, $\HH_n(q)$ is free as a right $\HH_{j, n - j}(q)$-module with basis $\{T_{v^{-1}}: v \in X_{j, n - j}\}.$
 Thus,
\begin{equation} \label{eq:heckeascoset} \HH_n(q) = \bigoplus_{v \in X_{j, n - j}}T_{v^{-1}} \HH_{j, n - j}(q). \end{equation}
 Since $N_j \subseteq \HH_{j, n - j}(q)$,
 it follows that $V_j$ can be written as
 \begin{equation}\label{eq:vjascoset}
 V_j= \bigoplus_{v \in X_{j, n - j}} T_{v^{-1}} \ N_j.\end{equation}

We will construct an isomorphism 
\[ \gamma: \ind_{\HH_{j, n-j}(q)}^{\HH_n(q)}\left(N_j\right) \longrightarrow V_j\]
as follows. By the definition of induction and \eqref{eq:heckeascoset}, the domain can be written as 
\[ \ind_{\HH_{j, n-j}(q)}^{\HH_n(q)}\left(N_j\right) := \HH_{n}(q) \otimes_{\HH_{j, n - j}(q)} N_j = \bigoplus_{v \in X_{j, n - j}} \CC T_{v^{-1}} \otimes_{\HH_{j, n - j}(q)} N_j.\]
Using the description of $V_j$ as in \eqref{eq:vjascoset}, we define $\gamma$ as 
\[ \gamma: T_{v^{-1}} \otimes z \longmapsto T_{v^{-1}}z,\]
which is an isomorphism by the above discussion of $\{ T_{v^{-1}}: v \in X_{j,n-j} \}$ as an $\HH_{j,n-j}(q)$-basis for $\HH_n(q)$.

Hence, as left $\HH_n(q)$-modules
\begin{equation}\label{eqn:module-structure}
   V_j \cong \ind_{\HH_{j, n-j}(q)}^{\HH_n(q)}\left(N_j\right) \cong \ind_{\HH_{j, n-j}(q)}^{\HH_n(q)}\left( \ker \B_j^\ast (q) \otimes \left(S^{(n - j)}\right)^\ast\right) 
\end{equation}
and \[\dim V_j = \binom{n}{j} \cdot \dim \ker \B_j^\ast(q) = \binom{n}{j} d_j \qquad \text{by \eqref{pf.cor:prime-eigenspace.dimker}}.\]

Thus 
\begin{equation}\label{eqn:counting}
    n! =  \sum_{j = 0}^n  \binom{n}{j}  d_j = \sum_{j = 0}^n \dim V_j \leq \sum_{j = 0}^n \dim \ker \left(\B_n^\ast(q) -[n - j]_q \right)  \leq n!,
\end{equation}
forcing the containments in \eqref{eqn:eigenspace-subspace} to be equalities.

In particular, for each $j$, we have
\[\ker \left(\B_n^\ast(q) - [n - j]_q \right) = V_j \cong \ind_{\HH_{j, n - j}(q)}^{\HH_n(q)}\left( \ker \B_j^\ast (q) \otimes \left(S^{(n - j)}\right)^\ast\right).
\]
It thus remains to show that $\DD_j(q) \cong \ker \B_j^\ast(q)$ as left $\HH_j(q)$-modules. To do so, we make use of the Frobenius characteristic map introduced in Section \ref{sec:frob-char}. Let \[\mathcal{K}_j = \mathrm{ch}\left(\ker \B_j^\ast(q)\right).\] 

Observe that $\bigoplus_{j=0}^n V_j \cong \HH_n(q),$ which has Frobenius characteristic $h_{1^n}$. Taking the Frobenius characteristic of (\ref{eqn:module-structure}) and summing over all $j$ implies that \[\sum_{j = 0}^n \mathcal{K}_jh_{n - j} = h_{1^n}.\]  
By the characterization of $\DD_n(q)$ in \eqref{eq:frobeniusderangement}, we conclude that $\DD_j(q) \cong \ker \B_j^\ast(q).$
\end{proof}

\section{Constructing eigenvectors for $\RR_n(q)$}\label{section:constructionofeigenvectors}
In this section we describe a process to recursively compute eigenvectors of $\RR_n(q)$. In Section \ref{subsec:recursionr2r} we prove Theorem \ref{thm:introrecursion}, which gives a recursive formulation of $\RR_n(q)$ in $\HH_n(q)$. We use this recursion in Section \ref{section:recursiveeigenvectorconstruction} to construct eigenvectors of $\RR_{n}(q)$ from eigenvectors of $\RR_{n-1}(q)$, and in turn, apply this to obtain eigenvectors of $\RR_n(q)$ from $\ker \ \RR_{j}(q) $ for $j < n$.

In what follows, we denote by 
\[ \ker\left(\RR_n(q)\mid_{S^\lambda}\right) \subseteq S^\lambda \quad \quad \textrm{ and }\quad \quad \im\left(\RR_n(q)\mid_{S^\lambda}\right) \subseteq S^\lambda \] the kernel and image from the action of $\RR_n(q)$ on $S^\lambda$, respectively. We will fix a basis of the former space, and construct explicit vectors of the latter.

\begin{definition}\label{def:kappa}
Let $\HH_n(q)$ be semisimple. For any $S^\mu$, we fix a basis
    $\kappa_{\mu}$ of $\ker\left(\RR_{|\mu|}(q)\mid_{S^\mu}\right)$.
\end{definition}

In general, it is an open problem to determine explicit expressions for the elements of $\kappa_\mu$, even when $q=1$. However for our arguments, the existence of a basis inside of $S^\mu$ is sufficient. Later, we will show that $|\kappa_\mu| = d^\mu$ when $q\in \R_{>0}$ (Section \ref{sec:eigenbasisprimeq}), which will allow us to construct an eigenbasis of $S^\lambda$ in those cases. 

\subsection{Proof of Theorem \ref{thm:introrecursion}: the recursion}
\label{subsec:recursionr2r}
Recall that $\RR_n(q) = \B_n^*(q) \B_n(q),$ with the definitions of $\B_n(q)$ and $\B_n^*(q)$ given in equations \eqref{eq:b2rdef} and \eqref{eq:r2bdef}. 

Our goal is to prove a recursion for $\RR_n(q)$ using the Jucys-Murphy elements\footnote{Note that while $J_m(q)$ is not defined for $q =0$, the product $q^n J_m(q) $ is defined for any $q \in \CC$ (and is simply 0 when $q=0$).}:

\begin{theorem}[Theorem \ref{thm:introrecursion}] \label{thm:rec} For any $q \in \CC$, the following holds in $\HH_n(q)$:
\[ \B_{n}(q) \ \RR_{n}(q) = \bigg( q  \ \RR_{n-1}(q) + [n]_q + q^{n} J_{n}(q) \bigg) \  \B_{n}(q). \]
\end{theorem}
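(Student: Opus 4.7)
The plan is to prove the stronger identity
\[
\B_n(q)\, \B_n^*(q) \ = \ q\, \RR_{n-1}(q) + [n]_q + q^n J_n(q) + E_n
\]
for some element $E_n \in \HH_n(q)$ satisfying $E_n\, \B_n(q) = 0$; right-multiplying by $\B_n(q)$ then produces the desired recursion. Since every matrix entry of every operator involved is a polynomial in $q$ with integer coefficients in the $T_w$-basis, Lemma~\ref{lem:suffices-to-prove-for-infinite-q}(2) permits me to work at generic $q$ (where $\HH_n(q)$ is semisimple) throughout, which streamlines the bookkeeping.

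To compute $E_n$ I would expand
\[
\B_n(q)\, \B_n^*(q) \ = \ \sum_{i,j=1}^{n} T_{s_{n-1}} T_{s_{n-2}} \cdots T_{s_i} \cdot T_{s_j} T_{s_{j+1}} \cdots T_{s_{n-1}}
\]
and simplify in cases. The boundary pairs $i=n$ or $j=n$ contribute $\B_n(q) + \B_n^*(q) - 1$. For $1 \le i,j \le n-1$: when $|i-j|\ge 2$, the ``middle'' generators commute past each other, leaving a product matching a summand of $q\,\RR_{n-1}(q) = q\,\B_{n-1}^*(q)\,\B_{n-1}(q)$; when $i=j$, the middle collapses via $T_{s_i}^2 = (q-1)T_{s_i} + q$, and the $q$-piece reassembles into $T_{(i,n)} = T_{s_{n-1}} \cdots T_{s_{i+1}} T_{s_i} T_{s_{i+1}} \cdots T_{s_{n-1}}$ with precisely the weight needed to sum over $i$ and produce $q^n J_n(q) = \sum_{i=1}^{n-1} q^i T_{(i,n)}$, while the $(q-1)T_{s_i}$-piece contributes to $q\,\RR_{n-1}(q)$ and to $E_n$; when $|i-j|=1$, the braid relation splits the product similarly between $q\,\RR_{n-1}(q)$ and $E_n$. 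The scalars produced along the way collectively sum to $[n]_q = 1 + q + \cdots + q^{n-1}$.

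The main obstacle is the combinatorial bookkeeping in this case analysis and then verifying the annihilator relation $E_n\, \B_n(q) = 0$. For the latter, a useful tool is the identity
\[
(T_{s_{n-1}} - q)\, \B_n(q) \ = \ -(T_{s_{n-1}} - q)\bigl(\B_{n-1}(q) - 1\bigr),
\]
which one verifies directly from the decomposition $\B_n(q) = 1 + T_{s_{n-1}}\B_{n-1}(q)$ together with $T_{s_{n-1}}^2 = (q-1)T_{s_{n-1}} + q$; iterating and combining with analogous identities for $T_{s_k} - q$ at other indices should express $E_n$ as a sum of elements in the left annihilator of $\B_n(q)$. As an independent cross-check at generic (hence semisimple) $q$, I would evaluate both sides on each seminormal basis vector $w_\frakt$ for $\frakt \in \syt(n)$: since $w_\frakt\, J_n(q) = \content_{\frakt,n}(q)\, w_\frakt$ by Theorem~\ref{thm:dipperjamesyoungbasis}(2), the recursion reduces per tableau to a scalar relation derivable using Proposition~\ref{prop:restrictionrewrite} to track how $\B_n(q)$ interacts with the branching $\res(S^\lambda) \cong \bigoplus_{\mu \lessdot \lambda} S^\mu$. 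Lemma~\ref{lem:suffices-to-prove-for-infinite-q}(2) then promotes the identity to all $q \in \CC$.
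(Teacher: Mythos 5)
Your overall strategy is sound and is in fact the same shape as the paper's: the paper proves the exact identity $\B_n(q)\,\B_n^*(q) = \B_{n-1}^*(q)\,T_{s_{n-1}}\,\B_{n-1}(q) + [n]_q + q^n J_n(q)$ and then uses the separate fact $\B_{n-1}^*(q)\,T_{s_{n-1}}\,\B_{n-1}(q)\,\B_n(q) = q\,\RR_{n-1}(q)\,\B_n(q)$ (Lemma \ref{lem:recursion-lemma}), which is exactly your ``$E_n\,\B_n(q)=0$'' framework with $E_n = \B_{n-1}^*(q)\,(T_{s_{n-1}}-q)\,\B_{n-1}(q)$. The problem is in your execution of both halves. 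First, the term-sorting in your case analysis is wrong. For $1 \le i,j \le n-1$ with $i \neq j$, the word $s_{n-1}\cdots s_i\, s_j \cdots s_{n-1}$ is reduced, so the product $(T_{s_{n-1}}\cdots T_{s_i})(T_{s_j}\cdots T_{s_{n-1}})$ is a single basis element $T_w$ with $w \notin \symm_{n-1}$ (it genuinely involves $s_{n-1}$); for instance $n=4$, $(i,j)=(1,3)$ gives $T_{s_3 s_2 s_1 s_3}$. Such a term cannot ``match a summand of $q\,\RR_{n-1}(q)$,'' since $q\,\RR_{n-1}(q) \in \HH_{n-1}(q)$ is supported on $T_w$ with $w \in \symm_{n-1}$. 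In fact the terms with $j = i+1$ are exactly $T_{(i,n)}$ and are needed to complete $q^n J_n(q)$: the diagonal terms $i=j$ alone only produce $(q^i-1)\,T_{(i,n)}$ plus the scalar $[n]_q - 1$, not $q^i T_{(i,n)}$. The correct grouping is that \emph{all} terms with $i,j \le n-1$ assemble into $T_{s_{n-1}}\bigl(\B_{n-1}(q)\,\B_{n-1}^*(q)\bigr)T_{s_{n-1}}$, which the paper then handles by induction on $n$; the leftover non-scalar, non-$J_n$ piece is $\B_{n-1}^*(q)\,T_{s_{n-1}}\,\B_{n-1}(q)$, not $q\,\RR_{n-1}(q)$.

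Second, the annihilation step is not established. Your identity $(T_{s_{n-1}}-q)\,\B_n(q) = -(T_{s_{n-1}}-q)\bigl(\B_{n-1}(q)-1\bigr)$ is correct, but it does not by itself give $E_n\,\B_n(q)=0$: what is needed is $(T_{s_{n-1}}-q)\,\B_{n-1}(q)\,\B_n(q) = 0$, and since $T_{s_{n-1}}$ does not commute with $\B_{n-1}(q)$ you cannot get this by ``iterating'' your identity; the paper obtains it from the parabolic factorization $\B_{n-1}(q)\,\B_n(q) = m_{(1^{n-2},2)}\,x_{(n-2,2)} = (1+T_{s_{n-1}})\,x_{(n-2,2)}$ together with $(T_{s_{n-1}}-q)(1+T_{s_{n-1}})=0$ (Proposition \ref{prop:factoring-Cj-symm-coset} and Lemma \ref{lem:recursion-lemma}). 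Moreover, your proposed seminormal-basis verification is only a heuristic check: $\B_n(q)$ does not act diagonally on the $w_\frakt$, so the identity does not reduce to a scalar relation per tableau without essentially redoing the branching analysis. So as written there is a genuine gap: you have not correctly identified $E_n$, and you have not proved that it annihilates $\B_n(q)$; repairing both points leads you to the paper's inductive proof of \eqref{eqn:simpler-recursion} plus Lemma \ref{lem:recursion-lemma}.
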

To do so, we first need the following lemma. 

\begin{lemma}\label{lem:recursion-lemma}
For any $q \in \CC$, 
    \[\B_{n - 1}^\ast(q) \  T_{s_{n - 1}} \ \B_{n - 1}(q) \ \B_n(q) = q \ \RR_{n - 1}(q) \ \B_n(q).\]
\end{lemma}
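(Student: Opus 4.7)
The plan is to reduce the lemma to the cleaner identity
\[
T_{s_{n-1}} \ \B_{n-1}(q) \ \B_n(q) = q \ \B_{n-1}(q) \ \B_n(q) \qquad \text{in } \HH_n(q).
\]
Once this is established, left-multiplying both sides by $\B_{n-1}^*(q)$ and invoking $\RR_{n-1}(q) = \B_{n-1}^*(q) \ \B_{n-1}(q)$ yields the lemma immediately by associativity; no commutation between $\B_{n-1}^*(q)$ and $T_{s_{n-1}}$ is needed.

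To prove this identity, I would first apply Lemma~\ref{sequenceofbottomtorandoms} to rewrite
\[
\B_{n-1}(q) \ \B_n(q) = \C_{n-2}^{(n)} = x_{(n-2,1,1)} = \sum_{w \in X_{(n-2,1,1)}} T_w .
\]
As recalled in the excerpt, the elements of $X_{(n-2,1,1)}$ are precisely the $w \in \symm_n$ whose left descents lie in $\{n-2, n-1\}$, equivalently those satisfying $w^{-1}(1) < w^{-1}(2) < \cdots < w^{-1}(n-2)$. I would partition $X_{(n-2,1,1)} = X^+ \sqcup X^-$, where $X^+$ consists of those $w$ without a left descent at $n-1$ and $X^-$ of those with such a descent. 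Left-multiplication by $s_{n-1}$ swaps the values $n-1$ and $n$ in the one-line notation of $w^{-1}$ while leaving the relative positions of $1, \ldots, n-2$ unchanged; hence it restricts to an involution on $X_{(n-2,1,1)}$ that interchanges $X^+$ and $X^-$.

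With this pairing in hand, for each $w \in X^+$ the standard formula for left multiplication by $T_{s_{n-1}}$ (the left-sided analogue of \eqref{eq:heckeactiononitself}, obtainable either directly or via the anti-isomorphism $*$ from Remark~\ref{rmk:antiiso}) gives
\[
T_{s_{n-1}} T_w = T_{s_{n-1}w}, \qquad T_{s_{n-1}} T_{s_{n-1}w} = q \, T_w + (q-1) \, T_{s_{n-1}w},
\]
so that $T_{s_{n-1}} (T_w + T_{s_{n-1}w}) = q (T_w + T_{s_{n-1}w})$. Summing over the pairs $\{w, s_{n-1} w\}$ with $w \in X^+$ establishes the identity, completing the proof. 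The main subtlety is verifying that left-multiplication by $s_{n-1}$ preserves $X_{(n-2,1,1)}$ and toggles the descent status at $n-1$, but this is transparent from the one-line notation perspective; all remaining steps are routine local computations using the quadratic relation $T_{s_{n-1}}^2 = (q-1)T_{s_{n-1}} + q$.
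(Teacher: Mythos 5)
Your proof is correct, and it reaches the key identity $T_{s_{n-1}} \B_{n-1}(q) \B_n(q) = q \, \B_{n-1}(q) \B_n(q)$ by a somewhat different route than the paper. The reduction step is identical: the paper also first proves this identity and then left-multiplies by $\B_{n-1}^*(q)$. For the identity itself, however, the paper invokes Proposition \ref{prop:factoring-Cj-symm-coset} to write $\B_{n-1}(q)\B_n(q) = m_{(1^{n-2},2)}\, x_{(n-2,2)} = (1+T_{s_{n-1}})\, x_{(n-2,2)}$ and then finishes with the one-line absorption $T_{s_{n-1}}(1+T_{s_{n-1}}) = q(1+T_{s_{n-1}})$, whereas you use only the coarser Lemma \ref{sequenceofbottomtorandoms}, $\B_{n-1}(q)\B_n(q) = x_{(n-2,1,1)}$, and then pair up the minimal coset representatives under left multiplication by $s_{n-1}$. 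Your pairing argument is in effect a hands-on proof that $x_{(n-2,1,1)}$ lies in $(1+T_{s_{n-1}})\HH_n(q)$, i.e.\ the same structural fact the paper gets from its factorization; what your route buys is independence from Proposition \ref{prop:factoring-Cj-symm-coset} (which the paper derives from Lemma \ref{sequenceofbottomtorandoms} with additional coset bookkeeping), at the cost of redoing a small involution argument that the factorization makes automatic. One small imprecision: left multiplication of $w$ by $s_{n-1}$ swaps the \emph{entries in positions} $n-1$ and $n$ of the one-line notation of $w^{-1}$ (equivalently, the values $n-1$ and $n$ in the one-line notation of $w$), not the values $n-1$ and $n$ of $w^{-1}$ as written; with that reading, your verification that the operation preserves $X_{(n-2,1,1)}$ (the first $n-2$ entries of $w^{-1}$ stay increasing) and toggles the left descent at $n-1$ is exactly right, and the local computations with the quadratic relation are correct.
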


\begin{proof}
We claim that
\begin{equation}\label{eqn:simpler-pre-recursion-recursion}
    T_{s_{n - 1}} \ \B_{n - 1}(q) \ \B_{n}(q) = q \ \B_{n - 1}(q) \ \B_{n}(q).
\end{equation}
To see why, note that by Proposition \ref{prop:factoring-Cj-symm-coset}, we have
\[\B_{n - 1}(q) \B_n(q) = m_{(1^{n - 2}, 2)} \ x_{(n - 2, 2)} = (1 + T_{s_{n - 1}}) \ x_{(n - 2, 2)}.\]
Combining this equality with the fact that $T_{s_{n - 1}}(1 + T_{s_{n - 1}}) = q \ (1 + T_{s_{n - 1}})$, we obtain
\[ T_{s_{n - 1}} \ \B_{n - 1}(q) \ \B_n(q) = T_{s_{n - 1}}(1 + T_{s_{n - 1}}) \ x_{(n - 2, 2)} =  q(1 + T_{s_{n - 1}}) \ x_{(n - 2, 2)} = q\B_{n - 1}(q) \ \B_n(q).\]

Using Equation \eqref{eqn:simpler-pre-recursion-recursion} then gives
\begin{align*}
    \B_{n - 1}^\ast(q) \ T_{s_{n - 1}} \ \B_{n - 1}(q) \ \B_n(q) = q\B_{n - 1}^\ast(q) \  \B_{n - 1}(q) \ \B_n(q) = q\RR_{n - 1}(q) \ \B_n(q).
\end{align*}
\end{proof}

We are now ready for the proof of Theorem \ref{thm:rec}:

\begin{proof}[Proof of Theorem \ref{thm:rec}]
Scaling $J_n(q)$ by $q^n$ removes all negative exponents of $q,$ making both sides of the equation defined for all $q.$ Note that if 
\begin{equation} \label{eqn:simpler-recursion}
    \B_n(q) \ \B_n^{\ast}(q) =  \B_{n - 1}^\ast(q) \  T_{s_{n - 1}} \ \B_{n - 1}(q) + [n]_q + q^nJ_n(q),
\end{equation}
then the claim follows by multiplying both sides by $\B_n(q)$ on the right and applying Lemma \ref{lem:recursion-lemma}. Hence, it suffices to prove Equation (\ref{eqn:simpler-recursion}), which we do by induction on $n.$ 

First, we rewrite $\B_n(q) \ \B_n^\ast(q):$
\begin{align*}
    \B_n(q) \ \B_n^\ast(q) &= \sum_{i = 1}^{n}\sum_{j = 1}^n (T_{s_{n - 1}} T_{s_{n - 2}} \cdots T_{s_i}) (T_{s_j} \cdots T_{s_{n - 2}}T_{s_{n - 1}})\\
        &= \sum_{j=1}^{n}T_{s_j} \dots T_{s_{n - 1}} + \sum_{i=1}^{n - 1} T_{s_{n - 1}}T_{s_{n - 2}} \dots T_{s_i} + T_{s_{n - 1}}\left(\sum_{i=1}^{n - 1}\sum_{j=1}^{n - 1}(T_{s_{n-2}}\dots T_{s_i})(T_{s_j} \dots T_{s_{n-2}})\right) T_{s_{n - 1}}
        \\
        &= \sum_{j=1}^{n}T_{s_j} \dots T_{s_{n - 1}} + \sum_{i=1}^{n - 1} T_{s_{n - 1}} \dots T_{s_i} + T_{s_{n - 1}}\left(\B_{n - 1}(q) \ \B_{n - 1}^\ast(q)\right) T_{s_{n - 1}}.
        \end{align*}
   We rewrite $\B_{n - 1}(q) \ \B_{n - 1}^\ast(q)$ with the induction hypothesis:
        \begin{align*}
        \sum_{j=1}^{n}T_{s_j} \dots T_{s_{n - 1}} &+ \sum_{i=1}^{n - 1} T_{s_{n - 1}} \dots T_{s_i} + T_{s_{n - 1}}\bigg(\B_{n - 2}^\ast(q) T_{s_{n - 2}}\B_{n - 2}(q) + [n - 1]_q + q^{n - 1}J_{n - 1}(q)\bigg) T_{s_{n - 1}}\\
        &= \sum_{j=1}^{n}T_{s_j} \dots T_{s_{n - 1}} + \sum_{i=1}^{n - 1} T_{s_{n - 1}} \dots T_{s_i} + \underbrace{[n - 1]_qT_{s_{n - 1}}^2}_{=:v_1} + \underbrace{\sum_{i = 1}^{n - 2}q^i T_{s_{n - 1}}T_{(i, n - 1)}T_{s_{n - 1}}}_{=:v_2}\\&+\underbrace{\sum_{i=1}^{n-2}\sum_{j=1}^{n-2}T_{s_{n - 1}}(T_{s_i}\dots T_{s_{n - 3}})T_{s_{n-2}}(T_{s_{n - 3}}\dots T_{s_j})T_{s_{n - 1}}}_{=:v_3}.
        \end{align*}
    We expand $v_1$, $v_2,$ and $v_3$ separately. First,
    \begin{align*}
        v_1 = [n - 1]_q \left(q + (q - 1)T_{s_{n - 1}}\right)= [n]_q - 1 + [n - 1]_q(q - 1)T_{s_{n - 1}}= [n]_q - 1 + (q^{n - 1}- 1)T_{s_{n - 1}}.
    \end{align*}
    Next, since $(i,n-1)$ has reduced expression 
    \[ (i,n-1) = s_{i} \  s_{i+1} \cdots s_{n-3} \  s_{n-2} \  s_{n-3} \cdots s_{i+1} \ s_i,\]
    we have $(i,n) = s_{n-1} \cdot (i,n-1) \cdot s_{n-1}$ is a reduced expression and so
    \begin{align*}
        v_2 := \sum_{i = 1}^{n - 2}q^i T_{s_{n - 1}}T_{(i, n - 1)}T_{s_{n - 1}}= \sum_{i=1}^{n-2}q^{i}T_{(i, n)}.
    \end{align*}
    Finally, we use the generating relations for $\HH_n(q)$ to rewrite $v_3$:

    \begin{align*}
        v_3  &:= \sum_{i=1}^{n-2}\sum_{j=1}^{n-2}\color{blue}{T_{s_{n - 1}}}\normalcolor (T_{s_i}\dots T_{s_{n - 3}})\color{blue}{T_{s_{n-2}}}\normalcolor (T_{s_{n - 3}}\dots T_{s_j})\color{blue}T_{s_{n - 1}}  \\
        &=\sum_{i=1}^{n-2}\sum_{j=1}^{n-2}(T_{s_i}\dots T_{s_{n-3}}) \blue{T_{s_{n - 1}}} T_{s_{n-2}} \blue{T_{s_{n - 1}}} \normalcolor (T_{s_{n - 3}} \dots T_{s_j}) \tag{$T_{s_i} T_{s_j} = T_{s_j} T_{s_i}$ when $|i-j|\geq 2$}\\
        &= \sum_{i=1}^{n-2}\sum_{j=1}^{n-2}(T_{s_i}\dots T_{s_{n-3}}) \blue{T_{s_{n-2}} T_{s_{n - 1}} T_{s_{n-2}}} \normalcolor (T_{s_{n-3}} \dots T_{s_j}).\tag{$T_{s_i}T_{s_{i + 1}}T_{s_i} = T_{s_{i + 1}}T_{s_i}T_{s_{i + 1}}$}
    \end{align*}
 
    Substituting these simplifications into our full expression and rearranging, we obtain
        \begin{align*}
        \B_{n}(q) \ \B_n^\ast(q) &= \left(1 + \sum_{j=1}^{n - 1}T_{s_j} \dots T_{s_{n - 1}} \right) + \sum_{i=1}^{n - 1} T_{s_{n - 1}} \dots T_{s_i} + \left( [n]_q - 1 + q^{n - 1} T_{s_{n - 1}} - T_{s_{n - 1}} \right) + \sum_{i=1}^{n-2}q^{i}T_{(i, n)}\\
        &\quad + \sum_{i=1}^{n-2}\sum_{j=1}^{n-2}T_{s_i}\dots T_{s_{n-2}}T_{s_{n - 1}}T_{s_{n-2}} \dots T_{s_j}\\
        &=[n]_q + \left( q^{n - 1} T_{s_{n - 1}} + \sum_{i=1}^{n-2}q^{i}T_{(i, n)} \right)+ \sum_{i=1}^{n-1}\sum_{j=1}^{n-1}T_{s_i}\dots T_{s_{n-2}}T_{s_{n - 1}}T_{s_{n-2}} \dots T_{s_j}
        \\
        &= [n]_q + q^nJ_n(q) + \B_{n - 1}^\ast(q) \  T_{s_{n - 1}}\B_{n-1}(q).
    \end{align*}
    \end{proof}

\subsection{Building eigenvalues from Theorem \ref{thm:introrecursion}}\label{section:recursiveeigenvectorconstruction}
Our next task is to turn the recursion in Theorem \ref{thm:introrecursion} into a method for constructing eigenvectors of $\RR_n(q)$. The idea behind Theorem \ref{thm:recursive_eigenvector_construction} below is to use the Tower Rule (Proposition \ref{lem:tower-rule}) and the properties of the seminormal units to build an eigenvector for $\RR_{n}(q)$ from one of $\RR_{n-1}(q)$.

\begin{theorem}\label{thm:recursive_eigenvector_construction}
Let $\HH_n(q)$ be semisimple and $\lambda' \lessdot \lambda$ with $|\lambda| = n$.
Assume that $u' \in S^{\lambda'}$ is an eigenvector for $\RR_{n-1}(q)$ with eigenvalue $\eigen$.
Then,
\[ u:= u' \ \Phi_{\lambda \sm \lambda'} \  \B_n(q) \ p_{\lambda}\]
belongs to $S^\lambda$, and is
either zero or an eigenvector for $\RR_n(q)$ with eigenvalue 
\[ q \eigen + [n]_q + q^{n} \content_{\lambda \sm \lambda'}(q). \]
 
\end{theorem}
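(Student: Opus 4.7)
The plan is to reduce to Theorem~\ref{thm:introrecursion} after rewriting $u$ in a form where the Jucys--Murphy element $J_n(q)$ acts by a scalar. The key observation is that $p_\lambda$ is central in $\HH_n(q)$ (cf.~Section~\ref{sec:idempotents}), so
\[ u = u' \ \Phi_{\lambda \sm \lambda'} \ p_\lambda \ \B_n(q). \]
Setting $v := u' \ \Phi_{\lambda \sm \lambda'} \ p_\lambda$, we have $u = v \ \B_n(q)$. Let $\frakt_0$ denote the unique standard tableau of the one-box skew shape $\lambda \sm \lambda'$; Definition~\ref{def:skew-pt} forces $p_{\frakt_0} = p_\lambda$, so Lemma~\ref{lem:wts} gives $w_{\frakt_0(\fraks)} = w_\fraks \ \Phi_{\lambda \sm \lambda'} \ p_\lambda$ for every $\fraks \in \syt(\lambda')$. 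Expanding $u' = \sum_\fraks c_\fraks w_\fraks$ in the seminormal basis of $S^{\lambda'}$ and extending linearly yields
\[ v = \sum_{\fraks \in \syt(\lambda')} c_\fraks \ w_{\frakt_0(\fraks)} \in S^\lambda, \]
so in particular $u = v \ \B_n(q) \in S^\lambda$, establishing the first claim.

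Next, I would apply Theorem~\ref{thm:introrecursion} to obtain
\[ u \ \RR_n(q) = v \ \B_n(q) \ \RR_n(q) = q \cdot v \ \RR_{n-1}(q) \ \B_n(q) + [n]_q \cdot v \ \B_n(q) + q^n \cdot v \ J_n(q) \ \B_n(q), \]
and handle each of the three terms separately. Because $n$ occupies the box $\lambda \sm \lambda'$ in every $\frakt_0(\fraks)$, we have $\content_{\frakt_0(\fraks), n}(q) = \content_{\lambda \sm \lambda'}(q)$, and Theorem~\ref{thm:dipperjamesyoungbasis}(2) gives $v \ J_n(q) = \content_{\lambda \sm \lambda'}(q) \cdot v$. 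For the first term, I would combine Lemma~\ref{lem:commuting-phi} (which allows $\RR_{n-1}(q) \in \HH_{n-1}(q)$ to commute past $\Phi_{\lambda \sm \lambda'}$) with the centrality of $p_\lambda$ to compute
\[ v \ \RR_{n-1}(q) = u' \ \Phi_{\lambda \sm \lambda'} \ p_\lambda \ \RR_{n-1}(q) = u' \ \RR_{n-1}(q) \ \Phi_{\lambda \sm \lambda'} \ p_\lambda = \eigen \cdot v. \]
Substituting into the expansion above collapses everything into the single scalar multiple $(q\eigen + [n]_q + q^n \content_{\lambda \sm \lambda'}(q)) \cdot u$, as claimed.

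The only real insight is the rewriting $u = v \ \B_n(q)$: once one recognizes via Lemma~\ref{lem:wts} that $v$ is a simultaneous $J_n(q)$-eigenvector lying in a specific copy of $S^{\lambda'}$ inside $S^\lambda|_{\HH_{n-1}(q)}$, the proof is short bookkeeping on top of Theorem~\ref{thm:introrecursion}. I do not anticipate any serious obstacle beyond carefully tracking the centrality of $p_\lambda$ and the $\HH_{n-1}(q)$-equivariance of $\Phi_{\lambda \sm \lambda'}$ when moving factors past one another.
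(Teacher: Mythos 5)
Your proof is correct and takes essentially the same route as the paper: rewrite $u$ as $v\,\B_n(q)$ with $v := u'\,\Phi_{\lambda\sm\lambda'}\,p_\lambda \in S^\lambda$ via centrality of $p_\lambda$ and a seminormal-basis expansion (the paper uses \eqref{eq:restrictionrewrite-wt} where you invoke Lemma~\ref{lem:wts} with $p_{\frakt_0}=p_\lambda$; these are the same fact), then apply the recursion from Theorem~\ref{thm:introrecursion} and handle the three resulting terms by the same commutation/eigenvalue arguments. No gaps.
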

\begin{proof} 
 Consider the $u$ defined above. Thus,
\begin{equation}
 u = u' \ \Phi_{\lambda \sm \lambda'} \  \B_n(q) \ p_{\lambda}
 = u' \ \Phi_{\lambda \sm \lambda'} \ p_{\lambda}\ \B_n(q) ,
 \label{eq:recursive_eigenvector_construction:pf.0}
\end{equation}
since $p_\lambda$ is central in $\HH_n(q)$.

Write $u'$ as 
\begin{equation}
u' = \sum_{\frakt' \in \syt(\lambda')} c_{\frakt'} w_{\frakt'}
\label{eq:recursive_eigenvector_construction:pf.1}
\end{equation}
for scalars $c_{\frakt'}$. This can be rewritten as
\begin{equation}
u' = \sum_{\substack{\frakt \in \syt(\lambda):\\ \shape(\frakt \vert_{n - 1}) = \lambda'}} c_{\frakt \vert_{n - 1}} w_{\frakt \vert_{n - 1}},
\label{eq:recursive_eigenvector_construction:pf.2}
\end{equation}
since there is a bijection from the set of all $\frakt \in \syt(\lambda)$ satisfying $\shape(\frakt \vert_{n - 1}) = \lambda'$ to $\syt(\lambda')$ that sends each $\frakt$ to $\frakt \vert_{n-1}$.
Now, for each $\frakt \in \syt(\lambda)$ satisfying $\shape(\frakt \vert_{n - 1}) = \lambda'$, we have $ w_{\frakt \vert_{n - 1}}  \ \Phi_{\lambda \sm \lambda'} \ p_{\lambda} = w_{\frakt} $
(by \eqref{eq:restrictionrewrite-wt}). Hence, subjecting
\eqref{eq:recursive_eigenvector_construction:pf.2} to the action of $\Phi_{\lambda \sm \lambda'} \ p_{\lambda}$, we obtain
\begin{equation}
u' \ \Phi_{\lambda \sm \lambda'} \ p_{\lambda}
= \sum_{\substack{\frakt \in \syt(\lambda):\\ \shape(\frakt \vert_{n - 1}) = \lambda'}} c_{\frakt \vert_{n - 1}} w_{\frakt} \in S^\lambda .
\label{eq:recursive_eigenvector_construction:pf.3}
\end{equation}
Thus, \eqref{eq:recursive_eigenvector_construction:pf.0} shows that $u \in S^\lambda \ \B_n(q) \subseteq S^\lambda$.

Next, again using the definition of $u$ and the fact that $p_\lambda$ is central,
and then using Theorem \ref{thm:rec}, we find
\begin{align*}
    u \  \RR_n(q) & =  u' \ \Phi_{\lambda \sm \lambda'} \  \B_n(q) \ p_{\lambda} \ \RR_n(q) \\
    &= \bigg( u' \ \Phi_{\lambda \sm \lambda'} \bigg) \bigg( q \RR_{n-1}(q) + [n]_q + q^{n} J_{n}(q) \bigg) \B_{n}(q) \  p_\lambda \\
    &= \underbrace{ q\big( u' \ \Phi_{\lambda \sm \lambda'}  \ \RR_{n-1}(q) \ \B_n (q)\ p_{\lambda}\big)}_{=:v_1} +  \underbrace{[n]_q\big( u' \ \Phi_{\lambda \sm \lambda'} \B_n(q) \  p_{\lambda} \big) }_{=:v_2}  + \underbrace{q^n\big(u' \ \Phi_{\lambda \sm \lambda'} J_n(q) \ \B_n(q) \ p_{\lambda} \big)}_{=:v_3}.
\end{align*}
We will go through each summand in turn. For $v_1$, note that $\RR_{n-1}(q)$ commutes with $\Phi_{\lambda \sm \lambda'}$ since $\RR_{n-1}(q) \in \HH_{n-1}(q)$. 
Since $u'$ is an eigenvector for $\RR_{n-1}(q)$ by assumption, we thus have
\[ v_1 = q\big( u' \ \RR_{n-1}(q) \Phi_{\lambda \sm \lambda'}\ \B_n(q)  \ \ p_{\lambda}\big) = (q \eigen) \ u' \ \Phi_{\lambda \sm \lambda'} \ \B_n(q) \  p_{\lambda} = (q \eigen) u. \]
The second term is $v_2 = [n]_q u$.

The third term is the most interesting.

Using the centrality of $p_\lambda$ once again, we can rewrite $v_3$ as
\begin{align*}
    v_3
	&= q^n\big(u' \ \Phi_{\lambda \sm \lambda'} \ p_{\lambda} \ J_n(q) \ \B_n(q) \big) \\
	&= q^n \ \left( \sum_{\substack{\frakt \in \syt(\lambda):\\ \shape(\frakt \vert_{n - 1}) = \lambda'}} c_{\frakt \vert_{n - 1}} w_{\frakt} \right)  \ J_n(q) \ \B_n (q)
	\ \tag{by \eqref{eq:recursive_eigenvector_construction:pf.3}}\\
    &= q^n \content_{\lambda \sm \lambda'}(q) \left( \sum_{\substack{\frakt \in \syt(\lambda):\\ \shape(\frakt \vert_{n - 1}) = \lambda'}} c_{\frakt \vert_{n - 1}} w_{\frakt} \right)  \ \B_n(q) \ \tag{Theorem \ref{thm:dipperjamesyoungbasis} (2)}\\
    &= q^n \content_{\lambda \sm \lambda'}(q) \left( u' \ \Phi_{\lambda \sm \lambda'} \ p_{\lambda} \right)  \ \B_n(q)
	\ \tag{by \eqref{eq:recursive_eigenvector_construction:pf.3}}\\
    &= q^n \content_{\lambda \sm \lambda'}(q) \  u .
    \ \tag{by \eqref{eq:recursive_eigenvector_construction:pf.0}}
\end{align*}
Thus in total we can conclude that 
\[ u \RR_n(q) = v_1 + v_2 + v_3 = (q \eigen)\ u + [n]_q \ u +q^n \content_{\lambda \sm \lambda'}(q) \  u = \big(q\eigen + [n]_q + q^n \content_{\lambda \sm \lambda'}(q)\big) \ u. \]
\end{proof}

Theorem \ref{thm:recursive_eigenvector_construction} gives an inductive method of constructing eigenvectors of $\RR_n(q)$. The base case of this inductive process are the 0-eigenvectors of $\RR_n(q)$, i.e. the elements of $\kappa_\mu$. Theorem \ref{thm:eigenvectors} below iterates the construction in Theorem \ref{thm:recursive_eigenvector_construction}, starting with this base case.
Recall the elements $p_{\frakt}$ introduced in Definition~\ref{def:skew-pt}.

\begin{theorem}\label{thm:eigenvectors}
Suppose $\HH_n(q)$ is semisimple. Fix a partition $\lambda$ of $n$, and a partition $\mu$ of $j$ with $\mu \subseteq \lambda$.
Let $u \in \kappa_\mu$  and $\frakt \in \syt(\lambda \sm \mu)$.
Then, the element
\[ u \ \Phi_{\frakt } \ p_{\frakt } \  \B_{n, n-j}(q) \]
belongs to $S^\lambda$, and is either zero or is an eigenvector for $\RR_n(q)$ with eigenvalue 
\[ q^{n} \content_{\lambda\sm \mu}(q)+ \sum_{k=j + 1}^{n} q^{n-k}[k]_q.  \]
 \end{theorem}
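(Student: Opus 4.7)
The plan is to iterate Theorem \ref{thm:recursive_eigenvector_construction} along the chain of partitions encoded by $\frakt$. Since $\frakt \in \syt(\lambda \sm \mu)$ prescribes a saturated chain
\[
\mu = \lambda^{(j)} \lessdot \lambda^{(j+1)} \lessdot \cdots \lessdot \lambda^{(n)} = \lambda,
\]
with $\lambda^{(k)} \sm \lambda^{(k-1)}$ being the box of $\frakt$ labeled $k$, I set $u_j := u$ and recursively define
\[
u_k := u_{k-1} \ \Phi_{\lambda^{(k)} \sm \lambda^{(k-1)}} \ \B_k(q) \ p_{\lambda^{(k)}} \qquad (j < k \leq n).
\]
Since $u \in \kappa_\mu \subseteq \ker(\RR_j(q)|_{S^\mu})$, the base vector $u_j$ is an $\RR_j(q)$-eigenvector in $S^\mu$ with eigenvalue $\eigen_j = 0$, so applying Theorem \ref{thm:recursive_eigenvector_construction} inductively at each level shows that $u_k \in S^{\lambda^{(k)}}$ is either zero or an $\RR_k(q)$-eigenvector with eigenvalue
\[
\eigen_k = q\, \eigen_{k-1} + [k]_q + q^k\, \content_{\lambda^{(k)} \sm \lambda^{(k-1)}}(q).
\]
Unrolling this recursion from $\eigen_j = 0$ telescopes the content contributions, giving
\[
\eigen_n = q^n\, \content_{\lambda \sm \mu}(q) + \sum_{k=j+1}^n q^{n-k}[k]_q,
\]
which matches the claimed eigenvalue.

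The remaining task is to identify $u_n$ with the target expression $u \ \Phi_\frakt \ p_\frakt \ \C_j^{(n)}$. I will prove the stronger identity
\[
u_k = u \ \Phi_{\frakt|_k} \ p_{\frakt|_k} \ \C_j^{(k)}
\]
by induction on $k$, where $\frakt|_k \in \syt(\lambda^{(k)} \sm \mu)$ denotes the restriction of $\frakt$ to its entries $\leq k$ and $\C_j^{(j)} = p_{\frakt|_j} = \Phi_{\frakt|_j} = 1$ by convention. The base case $k = j$ is immediate. For the inductive step, substitute the inductive hypothesis into $u_k = u_{k-1} \Phi_{\lambda^{(k)} \sm \lambda^{(k-1)}} \B_k(q) p_{\lambda^{(k)}}$ and rearrange via three ingredients: Lemma \ref{lem:commuting-phi} to slide $\Phi_{\lambda^{(k)} \sm \lambda^{(k-1)}}$ leftward past the factors $p_{\frakt|_{k-1}}$ and $\C_j^{(k-1)}$ (both in $\HH_{k-1}(q)$); the identity $\Phi_{\frakt|_{k-1}} \ \Phi_{\lambda^{(k)} \sm \lambda^{(k-1)}} = \Phi_{\frakt|_k}$ (read directly off Definition \ref{def:phi} via the action on words); and the centrality of $p_{\lambda^{(k)}}$ in $\HH_k(q)$, coupled with the factorization $p_{\frakt|_k} = p_{\frakt|_{k-1}} \ p_{\lambda^{(k)}}$ from Definition \ref{def:skew-pt}. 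Taking $k = n$ yields the claim.

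The main subtlety is that one \emph{cannot} directly rearrange the telescoping product $\prod_{k} \Phi_{\lambda^{(k)} \sm \lambda^{(k-1)}} \B_k(q) p_{\lambda^{(k)}}$ into the grouped form $\Phi_\frakt \ p_\frakt \ \C_j^{(n)}$ in a single global sweep, since each $p_{\lambda^{(k)}}$ is central only in the small algebra $\HH_k(q)$ and does not commute past $\B_m(q)$ for $m > k$. The inductive scheme bypasses this obstacle by only ever requiring commutations within a single $\HH_k(q)$, precisely where Lemma \ref{lem:commuting-phi} and the Tower Rule apply. Ensuring this step-by-step control, rather than attempting a one-shot rewriting, is the chief technical delicacy of the proof.
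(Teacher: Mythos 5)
Your proof is correct and follows essentially the same strategy as the paper: identify the saturated chain $\mu = \lambda^{(j)} \lessdot \cdots \lessdot \lambda^{(n)} = \lambda$ encoded by $\frakt$, iterate Theorem \ref{thm:recursive_eigenvector_construction}, and unroll the eigenvalue recursion. Your inductive identity $u_k = u\,\Phi_{\frakt|_k}\,p_{\frakt|_k}\,\C_j^{(k)}$ is exactly the paper's observation that $v_m = u\,\Phi_{\frakt|_m}\,p_{\frakt|_m}\,\C_j^{(m)}$, and the commutation justifications you invoke (Lemma \ref{lem:commuting-phi}, $\Phi_{\frakt|_{k-1}}\Phi_{\lambda^{(k)}\sm\lambda^{(k-1)}}=\Phi_{\frakt|_k}$, $p_{\frakt|_k}=p_{\frakt|_{k-1}}p_{\lambda^{(k)}}$, centrality) are the right ones.

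One remark in your last paragraph is inaccurate, though it does not damage the argument. You claim the ``one global sweep'' from $\prod_k\bigl(\Phi_{\lambda^{(k)}\sm\lambda^{(k-1)}}\,\B_k(q)\,p_{\lambda^{(k)}}\bigr)$ to $\Phi_\frakt\,p_\frakt\,\C_j^{(n)}$ is impossible because $p_{\lambda^{(k)}}$ does not commute past $\B_m(q)$ for $m>k$. But that commutation is never needed: in both the interleaved and the grouped expression, $p_{\lambda^{(k)}}$ already sits to the left of every $\B_m(q)$ with $m>k$, so their relative order is preserved throughout. The only moves required are sliding each $\Phi_{\lambda^{(m)}\sm\lambda^{(m-1)}}$ left past elements of $\HH_{m-1}(q)$ (Lemma \ref{lem:commuting-phi}) and sliding each $p_{\lambda^{(k)}}$ left past $\B_m(q)$ with $m\le k$ (centrality of $p_{\lambda^{(k)}}$ in $\HH_k(q)$). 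This is precisely the single regrouping the paper performs before starting the iteration. Your step-by-step induction is a perfectly valid alternative bookkeeping of the same commutations; it is just not forced by any obstruction.
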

 \begin{proof}

For any $\fraks \in \syt(\mu)$ and $j \leq k \leq n$ define 
$\lambda^{(k)}:= \shape\left(\frakt(\fraks) \vert_k \right)$ (this is independent of $\fraks$).
Thus,
\[ \mu = \lambda^{(j)} \subseteq \lambda^{(j + 1)} \subseteq \cdots \subseteq \lambda^{(n)} = \lambda. \]

We have
\begin{align*}
\Phi_{\frakt} &= \Phi_{\lambda^{(j+1)} \sm \lambda^{(j)}} \Phi_{\lambda^{(j+2)} \sm \lambda^{(j+1)}} \cdots \Phi_{\lambda^{(n)} \sm \lambda^{(n-1)}} \qquad \text{by Definition~\ref{def:phi};} \\
p_{\frakt} &= p_{\lambda^{(j + 1)}} \ p_{\lambda^{(j + 2)}} \cdots p_{\lambda^{(n)}}
\qquad \text{by Definition~\ref{def:skew-pt};} \\
\B_{n, n-j}(q) &= \B_{j+1}(q) \ \B_{j+2}(q) \cdots \B_n(q) \qquad \text{by Definition~\ref{def:Cjn}.}
\end{align*}
Thus,
 \begin{align*}
      u \ \Phi_{\frakt } \ p_{\frakt} \  \B_{n, n-j}(q)
	  &=  u \ \Phi_{\lambda^{(j+1)} \sm \lambda^{(j)}} \Phi_{\lambda^{(j+2)} \sm \lambda^{(j+1)}} \cdots \Phi_{\lambda^{(n)} \sm \lambda^{(n-1)}} \ \ p_{\lambda^{(j + 1)}} \ p_{\lambda^{(j + 2)}} \cdots p_{\lambda^{(n)}} \  \B_{j+1}(q) \ \B_{j+2}(q) \cdots \B_n(q) \\ 
      &=  u \  \left(\Phi_{\lambda^{(j+1)} \sm \lambda^{(j)}} \  \B_{j+1} (q) \ p_{\lambda^{(j + 1)}} \right) \  \left(\Phi_{\lambda^{(j+2)} \sm \lambda^{(j+1)}} \  \B_{j+2}(q) \ p_{\lambda^{(j + 2)}}\right)  \  \cdots \  \left(\Phi_{\lambda^{(n)} \sm \lambda^{(n-1)}} \  \B_n(q) \ p_{\lambda^{(n)}} \right)
 \end{align*} 
by Lemma~\ref{lem:commuting-phi} and because the $p_{\lambda^{(j)}}$ are central in the respective $\HH_{|\lambda^{(j)}|}(q)$.

For $j \leq m \leq n$, define $v_m$ as
 \[ v_m:=u  \ \left(\Phi_{\lambda^{(j+1)} \sm \lambda^{(j)}} \ \B_{j+1}(q)\ p_{\lambda^{(j + 1)}} \right)   \cdots \left( \Phi_{\lambda^{(m)} \sm \lambda^{(m-1)}} \ \B_{m}(q) \  p_{\lambda^{(m)}} \right),  \]
so that $u \ \Phi_\frakt \ p_{\frakt} \ \B_{n, n-j}(q) = v_{n}.$
Assume by induction that $v_m$ belongs to $S^{\lambda^{(m)}}$ and is an eigenvector for $\RR_m(q)$ with eigenvalue 
\[ q^{m} \content_{\lambda^{(m)} \sm \mu}(q)+ \sum_{k=j + 1}^{m} q^{m - k}[k]_q.  \]
(Note this holds trivially in the base case $m = j$.) By Theorem \ref{thm:recursive_eigenvector_construction}, the element
\[ v_{m+1}= v_m\left( \Phi_{\lambda^{(m+1)} \sm \lambda^{(m)}} \ \B_{m+1}(q) \  p_{\lambda^{(m + 1)}} \right) \]
belongs to $S^{\lambda^{(m+1)}}$, and is zero or an eigenvector for $\RR_{m+1}(q)$ with eigenvalue
\begin{align*}
    q \left(q^{m} \content_{\lambda^{(m)} \sm \mu}(q)+ \sum_{k=j + 1}^{m} q^{m - k}[k]_q\right)& + [m + 1]_q + q^{m + 1}\content_{\frakt, m + 1}(q)\\
    &= q^{m + 1}\left(\content_{\lambda^{(m)} \sm \mu}(q) + \content_{\frakt, m + 1}(q)\right) + \sum_{k = j + 1}^{m + 1}q^{m + 1 - k}[k]_q\\
    &= q^{m + 1}\content_{\lambda^{(m + 1)}\sm \mu}(q) + \sum_{k = j + 1}^{m + 1}q^{m + 1 - k}[k]_q.
\end{align*}
 \end{proof}
 \begin{example}\rm
 Suppose we begin with $\mu = (1,1).$ Then $d^{(1,1)} = 1$, so $\kappa_{(1,1)}$ contains one element. One choice of such an element is $u = 12 - 21 \in S^{(1,1)}.$ 
 To build an eigenvector in $S^{(2,1,1)}$, take
 \[ \frakt^{(2,1,1) \sm (1,1)}= \begin{ytableau} *(lightgray) & 3\\ *(lightgray) \\ 4
\end{ytableau}.\]
 One can check that the element 
 \begin{equation}\label{ex:eigenvector}(12 - 21) \ \Phi_{\frakt^{(2,1,1) \sm (1,1)}}  \ p_{\frakt^{(2,1,1) \sm (1,1)}} \  \B_{4, 2}(q) = (1  2 13- 2 113)\ p_{(2,1)} \ p_{(2,1,1)} \ \B_3(q) \ \B_4(q) \neq 0.\end{equation} Hence, by Theorem \ref{thm:eigenvectors}, it is an eigenvector for $\RR_4(q)$ with eigenvalue \[q^4 c_{(2,1,1) \sm (1,1)}(q) + \sum_{k = 3}^4 q^{4  - k}[k]_q = q^4 \left([1]_q + [-2]_q\right) + q[3]_q + [4]_q = [5]_q + q.\]
 We will see in Section \ref{section:spanning}
why the vector in \eqref{ex:eigenvector} is a ``good'' choice of eigenvector.
 \end{example}

\section{A spanning set of $\RR_n(q)$-eigenvectors when $q$ is positive}\label{section:spanning}
We will use the recursive construction of eigenvectors of $\RR_n(q)$ acting on $S^\lambda$ in Theorem \ref{thm:eigenvectors} to define a spanning set $\basis_\lambda$ of $S^\lambda$ when $q \in \R_{>0}$. This is in some ways the most important---but also the most technical---section of the paper. We will break the argument up into three parts. We first explain the necessity of the assumption that $q \in \R_{>0}$ in Section \ref{sec:useofpositivity}. In Section \ref{sec:spanningargumentsubsection}, we define the set $\basis_\lambda$ and prove it spans $S^\lambda$ (Theorem~\ref{thm:orderedspanningset}) using a \emph{Straightening Lemma} (Lemma~\ref{lem:horiz-strip-diff-by-cnostant}). Then in Section \ref{section:simpledescriptionbasis} we give a simpler description of $\basis_\lambda$ that shows it can be written with elements indexed by horizontal strips. 

\subsection{Use of positivity}\label{sec:useofpositivity}
We prove a lemma that that is responsible for the case $q \in \R_{>0}$ being simpler to handle than the general case.

\begin{lemma}[Use of positivity]\label{lemma:whenqisreal}
Suppose $q \in \R_{>0}$. Then the following are true:
\begin{enumerate}
    \item If $a \in \HH_n(q)$ has real coefficients and satisfies $a^* a = 0$, then $a = 0$. \medskip
    \item We have $\ker \RR_n(q) = \ker \B_n^*(q)$. \medskip
	\item We have $\HH_n(q) = \ker \RR_n(q) \oplus \im \RR_n(q)$. \medskip
	\item For any partition $\lambda \vdash n$, we have $S^\lambda = \ker \left(\RR_n(q)\vert_{S^\lambda}\right) \oplus \im \left(\RR_n(q)\vert_{S^\lambda}\right)$.
\end{enumerate}
\end{lemma}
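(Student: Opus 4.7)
The plan rests on exploiting positivity via the symmetrizing trace $\tau: \HH_n(q) \to \CC$ given by $\tau(T_w) = \delta_{w,e}$. A standard inductive computation (using the quadratic relation $T_{s_i}^2 = (q-1)T_{s_i} + q$) establishes the formula $\tau(T_w T_v) = q^{\ell(w)}\, \delta_{w^{-1}, v}$ for all $w, v \in \symm_n$. For part (1), write $a = \sum_w c_w T_w$ with $c_w \in \R$; then $a^* = \sum_w c_w T_{w^{-1}}$ and expanding yields $\tau(a^* a) = \sum_w c_w^2 \, q^{\ell(w)}$. Since $q > 0$, this is a sum of nonnegative real numbers that vanishes iff every $c_w$ does; thus $a^* a = 0 \implies \tau(a^*a) = 0 \implies a = 0$.

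For part (2), the inclusion $\ker \B_n^*(q) \subseteq \ker \RR_n(q)$ is immediate from $\RR_n(q) = \B_n^*(q) \B_n(q)$. For the reverse, Remark~\ref{rmk:antiiso} gives $\B_n(q) = (\B_n^*(q))^*$, so $\RR_n(q) = \B_n^*(q) (\B_n^*(q))^*$. Given $x \in \ker \RR_n(q)$, decompose $x = x_1 + i x_2$ with $x_j$ having real coefficients; since $\RR_n(q)$ and $\B_n^*(q)$ both have real coefficients when $q \in \R$, each $x_j$ separately lies in $\ker \RR_n(q)$, reducing the problem to the real case. Setting $z = x \B_n^*(q)$ (also real), compute
\[
z z^* \;=\; x \B_n^*(q) (\B_n^*(q))^* x^* \;=\; x \RR_n(q) x^* \;=\; 0,
\]
and apply part (1) to $z^*$ to conclude $z = 0$, i.e.\ $x \in \ker \B_n^*(q)$.

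For part (3), rank--nullity reduces the claim to $\ker \RR_n(q) \cap \im \RR_n(q) = \{0\}$. Note $\RR_n(q)^* = (\B_n^*(q) \B_n(q))^* = \B_n^*(q) \B_n(q) = \RR_n(q)$, so $\RR_n(q)$ is self-adjoint. Given $y = x \RR_n(q)$ with $y \RR_n(q) = 0$, split into real and imaginary parts $y = y_1 + i y_2$, $y_j = x_j \RR_n(q)$; then $y_j \RR_n(q) = 0$ gives
\[
y_j y_j^* \;=\; (x_j \RR_n(q))(\RR_n(q) x_j^*) \;=\; x_j \RR_n(q)^2 x_j^* \;=\; 0,
\]
and part (1) forces $y_j = 0$, hence $y = 0$. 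Part (4) then follows from the bimodule decomposition \eqref{eq:leftrightbimoduledecomposition}: right multiplication by $\RR_n(q)$ on $\HH_n(q) \cong \bigoplus_\lambda (S^\lambda)^* \otimes S^\lambda$ acts solely on the right tensor factor, so the semisimplicity supplied by part (3) restricts to every copy of $S^\lambda$ in the decomposition, giving the desired splitting.

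The main technical obstacle is the trace formula $\tau(T_w T_v) = q^{\ell(w)}\, \delta_{w^{-1}, v}$. This is the classical statement that $\HH_n(q)$ is a symmetric Frobenius algebra with $\{q^{-\ell(w)} T_{w^{-1}}\}_w$ dual to $\{T_w\}_w$ under $\tau$, but it is not recorded in the excerpt, so either a citation (e.g.\ to Mathas) or a brief inductive verification would be needed. Beyond this point, the remaining arguments are clean $C^*$-style positivity manipulations, all of the form ``$aa^* = 0 \Rightarrow a = 0$'' applied after extracting the appropriate factor via the $*$-involution.
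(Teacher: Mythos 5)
Your proof is correct and takes essentially the same route as the paper: the trace identity $\tau(T_wT_v)=q^{\ell(w)}\,\delta_{w^{-1},v}$ that you flag as the missing ingredient is exactly the associative bilinear form $(T_x,T_y)$ the paper uses (citing Mathas, Prop.\ 1.16), and the rest is the same chain of $*$-positivity manipulations plus rank--nullity. The only cosmetic differences are that you split elements into real and imaginary parts where the paper simply restricts to the real subalgebra (observing that kernels and images are unchanged under field extension), and for part (4) you invoke the bimodule decomposition where the paper embeds $S^\lambda$ into $\HH_n(q)$; both variants are valid.
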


\begin{proof}
Throughout the proof, we restrict ourselves to the elements of $\HH_n(q)$ that have real coefficients.
These form an $\R$-subalgebra of $\HH_n(q)$, since $q \in \R$.
Restricting ourselves to this subalgebra does not change the claims of (2) and (3), since the linear algebra does not change when we extend the base field. \smallskip

\noindent (1) To prove the first claim, we use a bilinear form on $\HH_n(q)$ defined by
\begin{equation}\label{bilinearform} ( T_x , T_y ) := \begin{cases}
    q^{\ell(x)} & x = y^{-1} \\
    0 & \textrm{otherwise}.
\end{cases} \end{equation}
This form is known to be associative (see, e.g., \cite[Prop 1.16]{Mathas}).
Now, let $a \in \HH_n(q)$ have real coefficients and satisfy $a^* a = 0$.
Let $[T_w] a$ denote the coefficient of $T_w$ in $a$.
Then, by the associativity of our form,
\[ (a^*, a) = (a^*a,1) = 0,  \]
since $a^* a = 0$. Therefore,
\[ 0 = (a^*, a) = \sum_{w \in \symm_n} q^{\ell(w)} ([T_w]a)^2
\qquad \text{by \eqref{bilinearform}.}
\]
Since $q \in \R_{>0}$ and all squares $([T_w]a)^2$ are in $\R_{\geq 0}$ by assumption,
we thus conclude that $[T_w]a = 0$ for all $w \in \symm_n$.
Hence, $a =0$. \medskip

\noindent (2) For the second claim, it is clear that $\ker \B_n^*(q) \subseteq \ker \RR_n(q)$ since $\RR_n(q)$ acts on the right. Thus it is enough to prove the reverse containment.
Let $x \in \ker \RR_n(q)$. Thus, $x \in \HH_n(q)$ and $x \ \RR_n(q) = 0$, and therefore
\[ x \ \RR_n(q) \ x^* = x \ \B^*_n(q)\ \B_n(q) \ x^* = (\B_n(q) \ x^*)^* \ (\B_n(q) \ x^*) = 0.\]
Setting $a = \B_n(q) x^*$ in (1) above lets us conclude that $\B_n(q) x^* = 0$, and applying $*$ again transforms this into $x \B^*_n(q) = 0$, meaning that $x \in \ker \B_n^*(q)$ as desired.
\medskip

\noindent (3) Recall from Remark~\ref{rmk:antiiso} that $*$ is an involutive anti-isomorphism. Note that 
\[ \RR_n^*(q):=\left( \RR_n(q) \right)^* = \left( \B^*_n(q) \ \B_n(q) \right)^*  = \B_n^*(q) \ \B_n(q) = \RR_n(q).\]

We must prove that $\HH_n(q) = \ker \RR_n(q) \oplus \im \RR_n(q)$. The dimensions of $\ker \RR_n(q)$ and $\im \RR_n(q)$ add up to $\dim \HH_n(q)$ by the rank-nullity theorem.
Thus, it suffices to show that $\ker \RR_n(q) \cap \im \RR_n(q) = 0$.

Assume $v \in \ker \RR_n(q) \cap \im \RR_n(q)$. Since $v \in \im \RR_n(q)$ it can be written as $v = x \RR_n(q)$ for some $x \in \HH_n(q)$. On the other hand, because $v \in \ker \RR_n(q)$, 
\[ 0 = v  \ \RR_n(q) = x  \ \RR_n(q)  \ \RR_n(q) = x  \ \RR_n(q) \  \RR^*_n(q) .\]
Thus we also have $ x \ \RR_n(q) \  \RR_n^*(q)  \ x^* = 0$. Hence, by part (1), we obtain $v = x \RR_n(q) = 0$. \medskip

\noindent (4) We can assume that $S^\lambda$ is a right $\HH_n(q)$-submodule of $\HH_n(q)$, since any irreducible representation of a semisimple finite-dimensional algebra can be embedded into the algebra.
From part (3), we know that $\ker \RR_n(q) \cap \im \RR_n(q) = 0$. Thus, a forteriori, we have $\ker \left(\RR_n(q)\vert_{S^\lambda}\right) \cap \im \left(\RR_n(q)\vert_{S^\lambda}\right) = 0$. Since the dimensions of the kernel and image add up to $\dim S^\lambda$, we obtain  $S^\lambda = \ker \left(\RR_n(q)\vert_{S^\lambda}\right) \oplus \im \left(\RR_n(q)\vert_{S^\lambda}\right)$.
\end{proof}
\subsection{The spanning argument}\label{sec:spanningargumentsubsection}
We are now ready to define the set $\basis_\lambda$.
Recall that $\kappa_\mu \subseteq S^\mu$ is a basis of $\ker \left(\RR_{|\mu|}(q) \vert_{S^\mu}\right)$. We also have from Section \ref{section:factorization} that 
\[ \B_{n, n-|\mu|}(q):=  \ \B_{|\mu|+1}(q) \ \cdots \B_{n}(q),\]
while the definitions of $p_{\frakt^{\lambda \sm \mu}}$ and $\Phi_{\frakt^{\lambda \sm \mu}}$ can be found in Definitions \ref{def:skew-pt} and \ref{def:phi}.

\begin{definition}
For a fixed $\lambda \vdash n$, any $\mu \subseteq \lambda,$ and $u \in \kappa_\mu$ define the element 
\[ x_{\mu(u)}^\lambda:= u \  \Phi_{\frakt^{\lambda \sm \mu}} \  p_{\frakt^{\lambda \sm \mu}} \  \B_{n, n-|\mu|}(q) \in S^\lambda. \]
Denote the set of all such $x_{\mu(u)}^\lambda$ as
    \[ \basis_{\lambda}:= \{ x_{\mu(u)}^\lambda: u \in \kappa_\mu, \ \  \mu \subseteq \lambda \}  . \]
\end{definition}

Our goal is to prove that $\basis_\lambda$ spans $S^\lambda$ (Theorem \ref{thm:orderedspanningset}) when $q \in \R_{>0}$.  Recall that the general construction in Theorem \ref{thm:eigenvectors} showed that for any $u \in \kappa_\mu$ with $|\mu| = j$ and $\frakt \in \syt(\lambda \sm \mu)$,
\begin{equation}\label{eq:arbitraryeigenvector}
u \ \Phi_{\frakt } \ p_{\frakt } \  \B_{n, n-|\mu|}(q) \in S^\lambda \end{equation}
was either 0 or an eigenvector for $\RR_n(q)$. The idea behind this construction was to build the element in \eqref{eq:arbitraryeigenvector} iteratively via the Tower Rule (Proposition \ref{lem:tower-rule}) using the chain of inclusions 
\begin{equation}\label{eq:tableauchain} \mu \sm \mu \subseteq \shape(\frakt|_{j+1}) \subseteq \shape(\frakt|_{j+2}) \cdots \subseteq \shape(\frakt) = \lambda \sm \mu. \end{equation}

An element $x_{\mu(u)}^\lambda$ of $\basis_\lambda$ is a special case of an element of the form \eqref{eq:arbitraryeigenvector}. The key difference is that for $x_{\mu(u)}^\lambda$ we have fixed a specific tableau $\frakt^{\lambda \sm \mu}$, and thus chosen a specific inclusion chain of the form in \eqref{eq:tableauchain}.

At the heart of proving that $\basis_\lambda$ spans $S^\lambda$ is showing that this is possible; namely, that at every step in the inductive process given in Theorem \ref{thm:eigenvectors}, we can \emph{straighten} our tableau to ensure that we are using the tableau of largest dominance order $\frakt^{\shape(\frakt|_k) \sm \mu}$. We demonstrate this idea in Example \ref{ex:straighteningrule} below.

\begin{example}\label{ex:straighteningrule} \rm
   We illustrate a subtlety that arises in proving Theorem \ref{thm:orderedspanningset}. In particular, we will need to show that for any $\lambda$, whenever $\mu \subseteq \lambda'$ for $\lambda' \lessdot \lambda$ and $u \in \kappa_\mu,$
   we have 
      \begin{equation}\label{eqn:suffices-to-show}
       x_{\mu(u)}^{\lambda'} \ \Phi_{\lambda \sm \lambda'} \ p_\lambda \ \B_n(q) \in \spann \{x_{\mu(u)}^\lambda \}.
   \end{equation}
We explain why this is not obvious below. To illustrate,  consider \eqref{eqn:suffices-to-show} for $\lambda = (5, 3, 1)$, $\lambda' = (4, 3, 1),$ and $\mu = (3,1).$ First, note that
\[ x^{\lambda'}_{\mu(u)} = u \ \Phi_{\frakt^{\lambda' \sm \mu}} \ p_{\lambda' \sm \mu} \ \B_{8, 4}(q) \quad \quad  \textrm{ where } \quad \quad \frakt^{\lambda' \sm \mu}  = \ytableausetup{centertableaux}
 \ytableaushort
  {\none\none\none5,\none67, 8}
 * {4, 3, 1}
 * [*(lightgray)]{3, 1}.\]

Thus, 
\[ x^{\lambda'}_{\mu(u)}  {\Phi_{\lambda \sm \lambda'} \ p_{\lambda} \ \B_9(q)} \normalcolor =  \left(u \ \Phi_{\frakt^{\lambda'\sm \mu}} \ p_{\frakt^{\lambda' \sm \mu}} \ \B_{8, 4}(q) \right){\Phi_{\lambda \sm \lambda'} \ p_{\lambda} \ \B_9(q)} \normalcolor = u \ \red{\Phi_{\frakt}}\normalcolor \ p_{\frakt} \ \B_{9, 5}(q)\] where 
  \[\textcolor{red}{\frakt} = \begin{ytableau}
      *(lightgray)  & *(lightgray) & *(lightgray) & 5 & *(blue){\color{white}{9}}\\
      *(lightgray) & 6 & 7\\
      8
  \end{ytableau} \quad \quad \neq \quad \quad \frakt^{\lambda \sm \mu} = \begin{ytableau}
      *(lightgray)  & *(lightgray) & *(lightgray) & 5 & 6\\
      *(lightgray) & 7 & 8\\
      *(blue){\color{white}{9}}
  \end{ytableau}.\]

Since $\frakt \neq \frakt^{\lambda \sm \mu}$, it is not clear that $u \ \Phi_\frakt \  p_\frakt \ \B_{9, 5}(q)$ is a scalar multiple of  $x_{\mu(u)}^\lambda  = u\ \Phi_{\frakt^{\lambda \sm \mu}} \  p_{\frakt^{\lambda \sm \mu}} \ \B_{9, 5}(q)$. The goal of our Straightening Lemma (Lemma \ref{lem:horiz-strip-diff-by-cnostant}) is to show that it actually is. 
\end{example}

\begin{remark}\rm
The straightening process is missing from \cite{DiekerSaliola}. In particular, the proof of \cite[Proposition 58]{DiekerSaliola} assumes they can inductively construct an eigenvector that is already ``straightened,'' but without an argument as in Lemma \ref{lem:horiz-strip-diff-by-cnostant}, this assumption is not valid, as illustrated in Example \ref{ex:straighteningrule}. By setting $q=1$, our Lemma \ref{lem:horiz-strip-diff-by-cnostant} thus corrects this mistake.
\end{remark}

We will now formalize the straightening performed in Example \ref{ex:straighteningrule} in Lemmas \ref{missinglemma}, \ref{lemma:factordelta} and \ref{lem:horiz-strip-diff-by-cnostant}. Though these arguments are somewhat technical, the key idea behind them is that the miraculous properties of the seminormal units given in Theorem \ref{thm:dipperjamesyoungbasis}\eqref{thm:dipperjamesactiononseminormalunits} provide a way to straighten our tableau. 

\begin{lemma}\label{missinglemma}
Assume $\HH_n(q)$ is semisimple. Suppose $\frakt, \frakq \in \syt(\lambda)$ are such that  
\begin{itemize}
    \item  $\frakt \cdot s_{i} = \frakq$ so that $\frakt|_{i-1} = \frakq|_{i-1}$, and
    \item $\frakq \gtdom \frakt$.
\end{itemize}
 Then $ w_\frakt \left(1 + T_{s_{i}}\right) $ is a scalar multiple of $w_\frakq \left(1 + T_{s_{i}}\right)$. Specifically:
 \begin{align*}
       w_\frakt \left(1 + T_{s_{i}}\right)
	   = w_\frakq \left(1 + T_{s_{i}}\right) \cdot \left(q + \frac{1}{[\rho]_q}\right),
 \end{align*}
 where $\rho:= \content_{\frakq, i} - \content_{\frakq, i + 1} = \content_{\frakq, i} - \content_{\frakt, i}.$
\end{lemma}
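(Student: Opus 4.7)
The plan is to exploit the fact that right-multiplication by $1 + T_{s_i}$ sends everything into the $q$-eigenspace of $T_{s_i}$. Indeed, the quadratic relation gives $T_{s_i}(1 + T_{s_i}) = T_{s_i} + T_{s_i}^2 = T_{s_i} + (q-1)T_{s_i} + q = q(1 + T_{s_i})$, so any element of the form $v(1 + T_{s_i})$ is a $q$-eigenvector for $T_{s_i}$ acting on the right. This reduces the lemma to an assertion inside the $q$-eigenspace of $T_{s_i}$.

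Next, I would restrict attention to the subspace $V := \spann\{w_\frakt, w_\frakq\} \subseteq S^\lambda$. By Theorem~\ref{thm:dipperjamesyoungbasis}(3) (the cases applicable when $\frakt \cdot s_i = \frakq \in \syt(\lambda)$), $V$ is invariant under right-multiplication by $T_{s_i}$. The eigenvalues of $T_{s_i}|_V$ are $q$ and $-1$, and these are distinct because semisimplicity of $\HH_n(q)$ forces $[2]_q \neq 0$, hence $q \neq -1$. Therefore the $q$-eigenspace of $T_{s_i}|_V$ is one-dimensional, and since $w_\frakt(1 + T_{s_i})$ and $w_\frakq(1 + T_{s_i})$ both lie in it, they must be scalar multiples of each other.

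To identify the scalar, I would compute the $w_\frakt$-coefficient of each side using Theorem~\ref{thm:dipperjamesyoungbasis}(3). The third case, applied with the roles swapped (so $\frakq \cdot s_i = \frakt \ledom \frakq$), gives
\[
w_\frakq(1 + T_{s_i}) = w_\frakt + \left(1 - \tfrac{1}{[\rho]_q}\right) w_\frakq,
\]
whose $w_\frakt$-coefficient is $1$ (in particular nonzero, so the scalar is well-defined). The fourth case, applied with $\rho_i := \content_{\frakt,i} - \content_{\frakq,i} = -\rho$, gives $w_\frakt$-coefficient $1 - \tfrac{1}{[\rho_i]_q}$ on the other side. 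So the scalar I need is $1 - \tfrac{1}{[\rho_i]_q}$. The remaining obstacle is purely an identity between $q$-integers: using $[-n]_q = -q^{-n}[n]_q$ I rewrite $\tfrac{1}{[\rho_i]_q} = -\tfrac{q^\rho}{[\rho]_q}$, so the scalar becomes $1 + \tfrac{q^\rho}{[\rho]_q}$, and this equals $q + \tfrac{1}{[\rho]_q}$ precisely because $(q-1)[\rho]_q = q^\rho - 1$. The representation-theoretic input is short; the only genuine bookkeeping concern is keeping straight the two sign conventions for $\rho$ versus $\rho_i$ and the $q$-integer identity at the end.
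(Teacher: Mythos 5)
Your argument is correct, but it takes a somewhat different route from the paper's. The paper's proof is a two-line substitution: apply the third case of Theorem \ref{thm:dipperjamesyoungbasis}(3) to $w_\frakq$ to get $w_\frakq T_{s_i} = -\tfrac{1}{[\rho]_q} w_\frakq + w_\frakt$, solve for $w_\frakt = w_\frakq \bigl(T_{s_i} + \tfrac{1}{[\rho]_q}\bigr)$, and multiply on the right by $1 + T_{s_i}$, using only the quadratic relation to collapse $\bigl(T_{s_i} + \tfrac{1}{[\rho]_q}\bigr)(1+T_{s_i}) = (1+T_{s_i})\bigl(q + \tfrac{1}{[\rho]_q}\bigr)$ --- no eigenspace considerations, no fourth case, no $q$-integer identities. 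You instead use $(1+T_{s_i})T_{s_i} = q(1+T_{s_i})$ to put both vectors in the $q$-eigenspace of $T_{s_i}$ restricted to $V = \spann\{w_\frakt, w_\frakq\}$, argue that this eigenspace is one-dimensional, check $w_\frakq(1+T_{s_i}) \neq 0$ via its $w_\frakt$-coefficient, and then pin down the scalar by matching $w_\frakt$-coefficients, which brings in the fourth case of Theorem \ref{thm:dipperjamesyoungbasis}(3) together with $[-\rho]_q = -q^{-\rho}[\rho]_q$ and $(q-1)[\rho]_q = q^{\rho}-1$; that bookkeeping checks out. The one step you assert without justification is that $T_{s_i}$ restricted to $V$ has eigenvalues exactly $q$ and $-1$ (rather than $q$ with multiplicity two, which would break one-dimensionality): this follows in one sentence, since the minimal polynomial of right multiplication by $T_{s_i}$ divides $(x-q)(x+1)$ with $q \neq -1$, so the restriction is diagonalizable with eigenvalues in $\{q,-1\}$, and it is not $q$ times the identity because $w_\frakq T_{s_i}$ has nonzero $w_\frakt$-component. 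Net comparison: your proportionality argument gives a conceptual reading (both vectors land on the $q$-isotypic line of the two-dimensional $s_i$-string), while the paper's substitution is shorter and avoids the fourth case's messier coefficient and the positivity/semisimplicity bookkeeping beyond what Theorem \ref{thm:dipperjamesyoungbasis} already provides.
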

\begin{proof}
By Theorem \ref{thm:dipperjamesyoungbasis}(3),
    \begin{align*}
        w_\frakq T_{s_i} = -\frac{1}{[\rho]_q}w_\frakq + w_\frakt.
    \end{align*}
    Solving the above for $w_\frakt$,  \[w_\frakt = w_\frakq \left(T_{s_i} + \frac{1}{[\rho]_q}\right).\]

    Multiplying both sides on the right by $1 + T_{s_i}$ then reveals

\begin{align*}
    w_\frakt(1 + T_{s_i}) &= w_\frakq \left(T_{s_i} + \frac{1}{[\rho]_q}\right)(1 + T_{s_i})\\
    &= w_\frakq \left(T_{s_i} + (q - 1)T_{s_i} + q + \frac{1}{[\rho]_q} + \frac{1}{[\rho]_q}T_{s_i}\right)\\
    &= w_{\frakq}(1 + T_{s_i}) \left(q + \frac{1}{[\rho]_q}\right).
\end{align*}
\end{proof}
In order to use Lemma \ref{missinglemma}, we will need to factor the term $(1+T_{s_i})$ out of $\B_{n, n-j}(q) = m_{(1^j, n - j)} \ x_{(j,n-j)} $; this is the goal of Lemma \ref{lemma:factordelta}.
\begin{lemma}\label{lemma:factordelta} For any $j < i < n,$ the element $m_{(1^j, n - j)}$ factors as

    \[ m_{(1^j, n - j)} = (1+T_{s_{i}}) \sum_{\substack{w \in \symm_{\left(1^j, n - j\right)}: \\w^{-1}(i) < w^{-1}(i + 1) }} T_w.\] 
\end{lemma}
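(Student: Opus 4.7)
The plan is to exhibit a bijection that realizes the sum $m_{(1^j, n-j)}$ as the advertised product. Let $A = \{w \in \symm_{(1^j, n-j)} : w^{-1}(i) < w^{-1}(i+1)\}$, so that the right-hand side of the identity is $(1 + T_{s_i}) \sum_{w \in A} T_w$. Since $j < i < n$, the simple transposition $s_i$ lies in $\symm_{(1^j, n-j)}$, so left multiplication by $s_i$ is a well-defined involution of the Young subgroup $\symm_{(1^j, n-j)}$. My first step would be to check that this involution pairs up elements so that $\symm_{(1^j, n-j)} = A \sqcup s_i A$: for any $w \in \symm_{(1^j, n-j)}$, the condition $w^{-1}(i) < w^{-1}(i+1)$ is equivalent to $i$ not being a left descent of $w$, and replacing $w$ by $s_i w$ exchanges these two cases.

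Next, I would verify the key length-additivity statement: for every $w \in A$, one has $\ell(s_i w) = \ell(w) + 1$. This is precisely the standard characterization of left descents in Coxeter groups, and it lets me conclude that $T_{s_i w} = T_{s_i} T_w$ in $\HH_n(q)$ (no quadratic relation gets triggered because the reduced expression is length-additive).

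With these two facts in hand, the calculation is immediate:
\[
m_{(1^j,n-j)} = \sum_{w \in A} T_w + \sum_{w \in A} T_{s_i w} = \sum_{w \in A} T_w + T_{s_i} \sum_{w \in A} T_w = (1 + T_{s_i}) \sum_{w \in A} T_w,
\]
which is the desired factorization. I do not anticipate any serious obstacle here; the entire content of the lemma is the observation that $s_i$ is a genuine Coxeter generator inside $\symm_{(1^j, n-j)}$ (because $i$ and $i+1$ both lie in $\{j+1, \ldots, n\}$), so the standard partition of a Coxeter group by a simple reflection applies, and the length additivity guarantees that we incur no $(q-1)$-correction when writing $T_{s_i w} = T_{s_i} T_w$.
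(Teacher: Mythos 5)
Your proposal is correct and is essentially the same argument as the paper's: both partition $\symm_{(1^j,n-j)}$ according to whether $w^{-1}(i) < w^{-1}(i+1)$, identify the other half as $s_i$ times the first via left multiplication, and use length additivity ($i$ not a left descent) to write $T_{s_i w} = T_{s_i}T_w$ with no $(q-1)$-correction. No gaps; the observation that $j < i < n$ puts $s_i$ inside the Young subgroup is exactly the point the paper relies on as well.
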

\begin{proof}
We have
\begin{align*}
    m_{\left(1^j, n - j\right)} &= \sum_{\substack{w \in\symm_{\left(1^j, n - j\right)}}}T_w = \sum_{\substack{w \in \symm_{\left(1^j, n - j\right)}: \\w^{-1}(i) < w^{-1}(i + 1) }} T_w + \sum_{\substack{w \in \symm_{\left(1^j, n - j\right)}: \\w^{-1}(i) > w^{-1}(i + 1) }} T_w.
\end{align*}

Observe that for any $w, u \in \symm_n$, as sets 
\[\big\{w:  w  \in \symm_{(1^j, n - j)},  w^{-1}(i) > w^{-1}(i + 1) \big\} = \big \{s_{i} \ u \ : u \in \symm_{(1^j, n - j)}, \  u^{-1}(i) < u^{-1}(i + 1)\big\}.\] Further, if $u^{-1}(i) < u^{-1}(i + 1),$ then $T_{s_{i}}T_{u} = T_{s_{i}u}.$ Hence, we can continue rewriting of $m_{(1^j, n - j)}$ as
\begin{align*}
    m_{\left(1^j, n - j\right)} = \sum_{\substack{w \in \symm_{(1^j, n - j)}: \\w^{-1}(i) < w^{-1}(i + 1) }} T_w + \sum_{\substack{u \in \symm_{\left(1^j, n - j\right)}: \\u^{-1}(i) < u^{-1}(i + 1) }} T_{s_{i}}T_{u} = (1 + T_{s_{i}})\sum_{\substack{w \in \symm_{\left(1^j, n - j\right)}: \\w^{-1}(i) < w^{-1}(i + 1) }} T_w.
\end{align*}
    \end{proof}

Finally, we will combine Lemma \ref{missinglemma} and Lemma \ref{lemma:factordelta} to give our straightening procedure.

\begin{lemma}[Straightening Lemma]\label{lem:horiz-strip-diff-by-cnostant}
Assume $\HH_n(q)$ is semisimple. Fix $\lambda \vdash n$ and $\mu \subseteq \lambda$ with $|\mu| = j$.
Let $\frakt \in \syt(\lambda \sm \mu)$.
Then there exists a constant $\alpha \in \CC$ such that for any $\fraks \in \syt(\mu)$,
\[ w_{\frakt(\fraks)} \ \B_{n, n-j}(q) = \alpha  \  w_{\frakt^{\lambda \sm \mu}(\fraks)} \ \B_{n, n-j}(q).\]

\end{lemma}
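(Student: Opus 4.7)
The plan is to straighten $\frakt$ to $\frakt^{\lambda \sm \mu}$ one adjacent transposition at a time, each step strictly increasing dominance order, and to use Lemmas~\ref{missinglemma} and \ref{lemma:factordelta} at each step to pick up a scalar factor that is independent of $\fraks$. I will induct on the number of elements of $\syt(\lambda \sm \mu)$ that strictly dominate $\frakt$. The base case $\frakt = \frakt^{\lambda \sm \mu}$ is immediate by taking $\alpha = 1$.

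For the inductive step, assuming $\frakt \neq \frakt^{\lambda \sm \mu}$, I would first invoke the standard fact on dominance of (skew) standard Young tableaux: there exists $i \in \{|\mu|+1, \ldots, n-1\}$ such that $\frakt' := \frakt \cdot s_i$ again lies in $\syt(\lambda \sm \mu)$ and satisfies $\frakt' \gtdom \frakt$. For any $\fraks \in \syt(\mu)$, this transposition lifts to $\frakt'(\fraks) = \frakt(\fraks) \cdot s_i$ in $\syt(\lambda)$, and because $i > |\mu|$, the restrictions $\frakt(\fraks)\vert_{i-1}$ and $\frakt'(\fraks)\vert_{i-1}$ agree, while $\frakt'(\fraks) \gtdom \frakt(\fraks)$ follows because the two tableaux differ only at the entry $i$, which moves to a weakly higher row.

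With this setup, Lemma~\ref{missinglemma} applies and gives
\[
w_{\frakt(\fraks)}(1 + T_{s_i}) = \left(q + \tfrac{1}{[\rho]_q}\right) \cdot w_{\frakt'(\fraks)}(1 + T_{s_i}),
\]
where $\rho = \content_{\frakt'(\fraks), i} - \content_{\frakt(\fraks), i}$. Crucially, $\rho$ is determined by the positions of $i$ and $i+1$ in $\frakt$ alone and does not depend on $\fraks$. Next, Proposition~\ref{prop:factoring-Cj-symm-coset} together with Lemma~\ref{lemma:factordelta} (both applicable because $|\mu| < i < n$, which matches our choice of $i$) lets me factor $\C_j^{(n)} = (1 + T_{s_i}) \cdot Y$ for some $Y \in \HH_n(q)$ also independent of $\fraks$. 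Composing the two identities yields
\[
w_{\frakt(\fraks)} \, \C_j^{(n)} = \left(q + \tfrac{1}{[\rho]_q}\right) w_{\frakt'(\fraks)} \, \C_j^{(n)},
\]
with prefactor independent of $\fraks$. Applying the inductive hypothesis to $\frakt'$ then produces the required scalar $\alpha$.

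The main subtlety I expect is exactly the $\fraks$-independence of $\alpha$: this is what the condition $i > |\mu|$ buys, since it simultaneously places the swap inside the skew portion (so $\rho$ is determined by $\frakt$ and $\frakt'$ alone) and places $i$ in the range required by Lemma~\ref{lemma:factordelta}. A second minor point I would check carefully is the existence of a suitable $i$ for every non-maximal $\frakt \in \syt(\lambda \sm \mu)$; this is a standard feature of dominance on skew standard Young tableaux, but verifying it cleanly in the skew setting is the only place where any combinatorial care is required.
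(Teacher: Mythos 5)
Your plan is correct and is essentially the paper's own proof: induct downward on dominance, choose an adjacent swap $s_k$ with $k>|\mu|$ so that $\frakt\cdot s_k \gtdom \frakt$, peel $(1+T_{s_k})$ off $\C_{j}^{(n)}$ via Proposition~\ref{prop:factoring-Cj-symm-coset} and Lemma~\ref{lemma:factordelta}, and apply Lemma~\ref{missinglemma} to obtain an $\fraks$-independent scalar before invoking the inductive hypothesis. The only quibbles are cosmetic: the entry $i$ must move to a \emph{strictly} (not weakly) higher row for $\frakt' \gtdom \frakt$, and the ``standard fact'' you defer is exactly the paper's two-line observation that whenever $\frakt \neq \frakt^{\lambda \sm \mu}$ some entry $k+1$ lies strictly north of $k$, so that $\frakt\cdot s_k$ is again standard and strictly dominates $\frakt$.
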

\begin{proof}
By induction, we assume that the claim is already proved for all tableaux $\frakq \gtdom \frakt$ in place of $\frakt$.

If $\frakt = \frakt^{\lambda \sm \mu}$, then the claim is clear (take $\alpha = 1$). Now assume $\frakt \neq \frakt^{\lambda \sm \mu}$.
Thus, the entries of $\frakt$ do not increase from north to south.
Hence, there exists some $k \in [j+1, n-1]$ such that the entry $k + 1$ appears in a row north of the entry $k$ in $\frakt$.
Letting $\frakq: = \frakt \cdot s_k$, we thus have $\frakq \in \syt(\lambda \sm \mu)$. Thus, by our induction hypothesis there exists a constant $\alpha'\in \CC$ (independent of $\fraks$) for which
\begin{align}
    w_{\frakq(\fraks)} \ \B_{n, n-j}(q) = \alpha' \ w_{\frakt^{\lambda \sm \mu}(\fraks)} \ \B_{n, n-j}(q)
\label{pf.lem:horiz-strip-diff-by-cnostant.1}
\end{align}
for all $\fraks \in \syt(\mu).$

 By Proposition \ref{prop:factoring-Cj-symm-coset} and Lemma \ref{lemma:factordelta}
\begin{align*}
    w_{\frakt(\fraks)} \ \B_{n, n-j}(q) = w_{\frakt(\fraks)} \ m_{\left(1^{j}, n - j\right)} \ x_{(j, n - j)} = w_{\frakt(\fraks)} \left(1 + T_{s_{k}}\right)\left(\sum_{\substack{w \in \symm_{\left(1^j, n - j\right)}: \\w^{-1}(k) < w^{-1}(k + 1) }} T_w\right)x_{(j, n - j)}.
\end{align*}
Since $k > j$, the contents $\content_{\frakt(\fraks), k}$ and $ \content_{\frakt(\fraks), k + 1}$ do not depend on $\fraks$. Further, since $\frakq \gtdom \frakt$,  $\frakq(\fraks) \gtdom \frakt(\fraks)$. Hence, Lemma \ref{missinglemma} implies that there exists a constant $\beta \in \CC$ (independent of $\fraks$) for which
\begin{align*}
   w_{\frakt(\fraks)} \left(1 + T_{s_{k}}\right) = \beta  \cdot w_{\frakq(\fraks)} \left(1 + T_{s_{k}}\right)
\end{align*}
for all $\fraks \in \syt(\mu)$. Hence, 
\begin{align*}
    w_{\frakt(\fraks)} \ \B_{n, n-j}(q) &=  w_{\frakt(\fraks)} \left(1 + T_{s_{k}}\right)\left(\sum_{\substack{w \in \symm_{\left(1^j, n - j\right)}:\\ w^{-1}(k) < w^{-1}(k + 1)}}T_w \right)\ x_{(j, n - j)}\\
    &= \beta \  w_{\frakq(\fraks)} \left(1 + T_{s_{k}}\right) \left(\sum_{\substack{w \in \symm_{\left(1^j, n - j\right)}:\\ w^{-1}(k) < w^{-1}(k + 1)}}T_w \right) \ x_{(j, n - j)}\\
    &= \beta \ w_{\frakq(\fraks)} \ \B_{n, n-j}(q).
\end{align*}
Setting $\alpha = \beta\alpha'$ and recalling \eqref{pf.lem:horiz-strip-diff-by-cnostant.1} completes the proof.
\end{proof}

We are finally ready to prove that $\basis_\lambda$ spans $S^\lambda$ when $q \in \R_{>0}$.
\begin{theorem}\label{thm:orderedspanningset}
When $q \in \R_{>0}$, the set $\basis_{\lambda}$ spans $S^\lambda$.\end{theorem}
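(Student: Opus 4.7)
The plan is to proceed by induction on $n = |\lambda|$. The base case $n = 0$ is trivial: $S^\emptyset$ is one-dimensional and $\basis_\emptyset = \{1\}$ (arising from $\mu = \emptyset$, with the empty product $\C_0^{(0)} = 1$). For the inductive step, I would invoke Lemma~\ref{lemma:whenqisreal}(4) to split $S^\lambda = \ker(\RR_n(q)|_{S^\lambda}) \oplus \im(\RR_n(q)|_{S^\lambda})$. The kernel is spanned by $\kappa_\lambda$, which sits inside $\basis_\lambda$ as the $\mu = \lambda$ case of the definition (since $\C_n^{(n)}$, $\Phi_{\frakt^{\lambda \sm \lambda}}$, and $p_{\frakt^{\lambda \sm \lambda}}$ are all trivial). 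Because $\im(\RR_n(q)|_{S^\lambda}) = (S^\lambda \B_n^*(q))\B_n(q) \subseteq S^\lambda \B_n(q)$, it is enough to establish the stronger inclusion $S^\lambda \B_n(q) \subseteq \spann \basis_\lambda$.

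To this end, I would apply Proposition~\ref{prop:restrictionrewrite} to write any $v \in S^\lambda$ as a sum
\[
v = \sum_{\lambda' \lessdot \lambda} v_{\lambda'}\ \Phi_{\lambda \sm \lambda'}\ p_\lambda, \qquad v_{\lambda'} \in S^{\lambda'}.
\]
Multiplying by $\B_n(q)$ on the right and using that $p_\lambda$ is central in $\HH_n(q)$, the task reduces to showing $v'\, \Phi_{\lambda \sm \lambda'}\, \B_n(q)\, p_\lambda \in \spann \basis_\lambda$ for every $\lambda' \lessdot \lambda$ and $v' \in S^{\lambda'}$. By the inductive hypothesis, every such $v'$ is a linear combination of generators $x^{\lambda'}_{\mu(u)} = u\, \Phi_{\frakt^{\lambda' \sm \mu}}\, p_{\frakt^{\lambda' \sm \mu}}\, \C_{|\mu|}^{(n-1)}$ with $\mu \subseteq \lambda'$ and $u \in \kappa_\mu$, so it suffices to treat these.

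The heart of the proof is a rearrangement computation: using Lemma~\ref{lem:commuting-phi}, I would commute $\Phi_{\lambda \sm \lambda'}$ past the $\HH_{n-1}(q)$-elements $p_{\frakt^{\lambda' \sm \mu}}$ and $\C_{|\mu|}^{(n-1)}$; merge $\C_{|\mu|}^{(n-1)} \B_n(q) = \C_{|\mu|}^{(n)}$; and absorb $p_\lambda$ into the product of Young idempotents via its centrality in $\HH_n(q)$. This transforms the expression into
\[
x^{\lambda'}_{\mu(u)}\, \Phi_{\lambda \sm \lambda'}\, \B_n(q)\, p_\lambda = u\ \Phi_\frakt\ p_\frakt\ \C_{|\mu|}^{(n)},
\]
where $\frakt \in \syt(\lambda \sm \mu)$ is the skew tableau obtained by concatenating $\frakt^{\lambda' \sm \mu}$ with the single box $\lambda \sm \lambda'$ labelled $n$. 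The main obstacle is precisely that this $\frakt$ is generally \emph{not} the dominance-maximal tableau $\frakt^{\lambda \sm \mu}$ used in the definition of $x^\lambda_{\mu(u)}$---this is exactly the issue illustrated in Example~\ref{ex:straighteningrule}. The Straightening Lemma~\ref{lem:horiz-strip-diff-by-cnostant} addresses this: combined with Lemma~\ref{lem:wts} and linearly extended over the seminormal basis $\{w_\fraks : \fraks \in \syt(\mu)\}$ of $S^\mu$, it produces a scalar $\alpha \in \CC$ (depending on $\frakt$ but not on $u$) with $u\, \Phi_\frakt\, p_\frakt\, \C_{|\mu|}^{(n)} = \alpha\, x^\lambda_{\mu(u)} \in \basis_\lambda$, closing the induction.
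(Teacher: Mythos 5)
Your proposal is correct and follows essentially the same route as the paper: induction on $|\lambda|$, Lemma~\ref{lemma:whenqisreal}(4) to split $S^\lambda$ into kernel plus image, reduction to showing $S^\lambda\,\B_n(q) \subseteq \spann\basis_\lambda$, the restriction rewrite of Proposition~\ref{prop:restrictionrewrite}, the commutation/merging rearrangement via Lemma~\ref{lem:commuting-phi} and centrality of $p_\lambda$, and the Straightening Lemma~\ref{lem:horiz-strip-diff-by-cnostant} (with Lemma~\ref{lem:wts}) to replace the concatenated tableau by $\frakt^{\lambda\sm\mu}$. The only cosmetic difference is that you phrase the decomposition $v = \sum_{\lambda'\lessdot\lambda} v_{\lambda'}\,\Phi_{\lambda\sm\lambda'}\,p_\lambda$ for a general $v$, whereas the paper applies it basis vector by basis vector; these are equivalent.
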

\begin{proof}
    We proceed by induction on $|\lambda| = n,$ where the base case $|\lambda| = 0$ is trivial.
	For $|\lambda| > 0$, note first that $\HH_n(q)$ is semisimple, since $q \in \R_{>0}$.
	Moreover, by Lemma~\ref{lemma:whenqisreal} (4), we can decompose $S^\lambda$ as a direct sum:
	\[
	S^\lambda = \ker \left(\RR_n(q)\vert_{S^\lambda}\right) \oplus \im \left(\RR_n(q)\vert_{S^\lambda}\right).
	\]
	By construction, $\kappa_\lambda \subseteq S^\lambda$ is a basis for $\ker(\RR_n(q)\mid_{S^\lambda})$. Consider the following subset of $\basis_\lambda$: 
        \[ \basis^{\circ}_{\lambda}:= \{ x_{\mu(u)}^{\lambda}: \mu \subsetneq \lambda, \ \ u \in \kappa_\mu \} \subseteq \basis_\lambda. \]
    Then 
    \[ \basis_\lambda = \kappa_\lambda \  \cup \ \basis^\circ_\lambda.\]

It thus remains to show that $\basis^\circ_\lambda$ spans $\im(\RR_n(q)\mid_{S^\lambda})$. Note that $\im\left(\RR_n(q\right)\vert_{S^\lambda}) \subseteq \im\left(\B_n(q)\vert_{S^\lambda}\right)$ since $\RR_n(q) = \B^*_n(q) \B_n(q)$. Therefore, it is sufficient to show that $\basis^\circ_\lambda$ spans $\im \left(\B_n(q)\vert_{S^\lambda}\right)$. In other words, we must prove that
$S^\lambda \B_n(q) \subseteq \spann \basis^\circ_\lambda$.

Since $S^\lambda$ is spanned by the $w_\frakt$ for all $\frakt \in \syt(\lambda)$, the statement $S^\lambda \B_n(q) \subseteq \spann \basis^\circ_\lambda$ will follow once we have shown that $w_\frakt \B_n(q) \in \spann \basis^\circ_\lambda$ for each $\frakt \in \syt(\lambda)$. Fix $\frakt \in \syt(\lambda)$, and recall that \eqref{eq:restrictionrewrite-wt} says $w_{\frakt} = w_{\frakt'} \ \Phi_{\lambda \sm \lambda'} \ p_{\lambda}$, where $\lambda' := \shape(\frakt')$ and $\frakt' := \frakt\vert|_{n-1}$.
It remains to show that $w_{\frakt'} \ \Phi_{\lambda \sm \lambda'} \ p_{\lambda} \ \B_n(q) \in \spann \basis^\circ_\lambda$, 

By induction, $w_{\frakt'} \in S^{\lambda'}$ can be written as a linear combination of elements $x^{\lambda'}_{\mu(u)} \in \basis_{\lambda'}$.
Thus it suffices to show that $x^{\lambda'}_{\mu(u)} \ \Phi_{\lambda \sm \lambda'} \ p_{\lambda} \ \B_n(q) \in \spann \basis^\circ_\lambda$ for each such $x^{\lambda'}_{\mu(u)} \in \basis_{\lambda'}$.

Consider such an element $x^{\lambda'}_{\mu(u)} \in \basis_{\lambda'}$ and the corresponding $\mu \subseteq \lambda'$ and $u \in \kappa_\mu$.
By its definition,
\[
x^{\lambda'}_{\mu(u)} = u \ \Phi_{\frakt^{\lambda' \sm \mu}} \ p_{\frakt^{\lambda' \sm \mu}} \ \B_{n -1, n-1 -|\mu|}(q) .
\]
Hence,
\begin{align*}
x^{\lambda'}_{\mu(u)} \ \Phi_{\lambda \sm \lambda'} \ p_{\lambda} \ \B_n(q)
&= u \ \Phi_{\frakt^{\lambda' \sm \mu}} \ p_{\frakt^{\lambda' \sm \mu}} \ \B_{n-1, n-1 -|\mu|}(q) \ \Phi_{\lambda \sm \lambda'} \ p_{\lambda} \ \B_n(q) \\
&= u \ \Phi_{\frakt^{\lambda' \sm \mu}} \ \Phi_{\lambda \sm \lambda'} \ p_{\frakt^{\lambda' \sm \mu}} \ \B_{n-1, n-1 -|\mu|}(q) \ p_{\lambda} \ \B_n(q)
\tag{by Lemma \ref{lem:commuting-phi}} \\
&= u \ \Phi_{\frakt^{\lambda' \sm \mu}} \ \Phi_{\lambda \sm \lambda'} \ p_{\frakt^{\lambda' \sm \mu}} \ p_{\lambda} \ \B_{n, n-|\mu|}(q)
\tag{since $p_\lambda$ is central and $ \B_{n, n-1 -|\mu|}(q) \ \B_n(q) = \B_{n, n - |\mu|}(q)$} \\
&= u \ \Phi_{\frakq} \ p_{\frakq} \ \B_{n, n - |\mu|}(q) ,
\end{align*}
where $\frakq \in \syt(\lambda \sm \mu)$ is the skew tableau obtained from $\frakt^{\lambda' \sm \mu}$ by adding the entry $n$ at the box of $\lambda \sm \lambda'$.

Our goal has thus become to show that $u \ \Phi_{\frakq} \ p_{\frakq} \ \B_{n, n - |\mu|} \in \spann \basis^\circ_\lambda$. Expand $u \in S^\mu$ as $\sum_{\fraks \in \syt(\mu)} c_{\fraks} \ w_{\fraks}$ with coefficients $c_\fraks \in \CC$.
Then,
\begin{align*}
u \ \Phi_{\frakq} \ p_{\frakq} \ \B_{n, n -|\mu|}(q)
&= \sum_{\fraks \in \syt(\mu)} c_{\fraks} \ w_{\fraks} \ \Phi_{\frakq} \ p_{\frakq} \  \B_{n, n -|\mu|}(q)\\
&= \sum_{\fraks \in \syt(\mu)} c_{\fraks} \ w_{\frakq(\fraks)} \ \B_{n, n -|\mu|}(q)
\tag{by Lemma \ref{lem:wts} for $\frakt = \frakq$} \\
&= \sum_{\fraks \in \syt(\mu)} c_{\fraks} \ \alpha \  w_{\frakt^{\lambda \sm \mu}(\fraks)} \  \B_{n, n -|\mu|}(q),
\tag{by Lemma \ref{lem:horiz-strip-diff-by-cnostant} for $\frakt = \frakq$}
\end{align*}
where $\alpha \in \CC$ is a constant independent of $\fraks$.
Hence,
\begin{align*}
u \ \Phi_{\frakq} \ p_{\frakq} \  \B_{n, n -|\mu|}(q)
&= \alpha \sum_{\fraks \in \syt(\mu)} c_{\fraks} \  w_{\frakt^{\lambda \sm \mu}(\fraks)} \  \B_{n, n -|\mu|}(q)\\
&= \alpha \sum_{\fraks \in \syt(\mu)} c_{\fraks} \  w_\fraks \ \Phi_{\frakt^{\lambda \sm \mu}} \ p_{\frakt^{\lambda \sm \mu}} \  \B_{n, n -|\mu|}(q)
\tag{by Lemma \ref{lem:wts} for $\frakt = \frakt^{\lambda \sm \mu}$} \\
&= \alpha u\ \Phi_{\frakt^{\lambda \sm \mu}} \ p_{\frakt^{\lambda \sm \mu}} \  \B_{n, n -|\mu|}(q)
\tag{by the expansion of $u$} \\
&= \alpha x^{\lambda}_{\mu(u)} \in \spann \basis^\circ_\lambda ,
\end{align*}
as desired.
\end{proof}

\subsection{A simplified description of $\basis_\lambda$}\label{section:simpledescriptionbasis}
We originally defined $\basis_\lambda$ as 
    \[ \basis_{\lambda}:= \{x_{\mu(u)}^{\lambda}:= u \  \Phi_{\frakt^{\lambda \sm \mu}} \  p_{\frakt^{\lambda \sm \mu}} \  \B_{n, n-|\mu|}(q): u \in \kappa_\mu, \ \  \mu \subseteq \lambda \}. \]
    We now show that the elements of $\basis_\lambda$ are either zero, or can be rewritten as a set with elements indexed by horizontal strips.
    
\begin{lemma}\label{lemma:xtoy}
Assume $\HH_n(q)$ is semisimple.
Fix $\lambda \vdash n$ and let $\mu \subseteq \lambda$. For any $u \in \kappa_\mu,$ 
   \[ x_{\mu(u)}^{\lambda}:=u \  \Phi_{\frakt^{\lambda \sm \mu}} \  p_{\frakt^{\lambda \sm \mu}} \  \B_{n, n-|\mu|}(q) = \begin{cases} 0 & \lambda \sm \mu \textrm{ is not a horizontal strip}\\
    y_{\mu(u)}^{\lambda}:= u\  \Phi_{\frakt^{\lambda \sm \mu}}  \  \B_{n, n-|\mu|}(q) \ p_{\lambda} & \lambda \sm \mu \textrm{ is a horizontal strip}.
    \end{cases}\] 
\end{lemma}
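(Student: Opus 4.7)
The plan is to proceed by induction on $|\lambda|-|\mu|$. The base case $\lambda=\mu$ is immediate: $\Phi_{\frakt^{\lambda\sm\mu}}$, $p_{\frakt^{\lambda\sm\mu}}$, and $\C_{|\mu|}^{(n)}$ all reduce to identities, so $x_{\mu(u)}^\lambda=u=u\cdot p_\mu=y_{\mu(u)}^\lambda$, matching the horizontal-strip branch.

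For the inductive step, set $\lambda'=\shape(\frakt^{\lambda\sm\mu}|_{n-1})$. The dominance-maximal construction of $\frakt^{\lambda\sm\mu}$ yields $\frakt^{\lambda\sm\mu}|_{n-1}=\frakt^{\lambda'\sm\mu}$, giving the factorizations $p_{\frakt^{\lambda\sm\mu}}=p_{\frakt^{\lambda'\sm\mu}}\,p_\lambda$, $\Phi_{\frakt^{\lambda\sm\mu}}=\Phi_{\frakt^{\lambda'\sm\mu}}\,\Phi_{\lambda\sm\lambda'}$, and $\C_{|\mu|}^{(n)}=\C_{|\mu|}^{(n-1)}\,\B_n(q)$. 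Plugging these in, applying Lemma~\ref{lem:commuting-phi} to commute $\Phi_{\lambda\sm\lambda'}$ past $\HH_{n-1}(q)$-elements, and using centrality of $p_\lambda$ in $\HH_n(q)$, one derives the clean recursion
\[
x_{\mu(u)}^\lambda \;=\; x_{\mu(u)}^{\lambda'}\,\Phi_{\lambda\sm\lambda'}\,p_\lambda\,\B_n(q),
\]
while the same manipulation gives $y_{\mu(u)}^\lambda=u\,\Phi_{\frakt^{\lambda'\sm\mu}}\,\C_{|\mu|}^{(n-1)}\,\Phi_{\lambda\sm\lambda'}\,p_\lambda\,\B_n(q)$.

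I then split into cases. If $\lambda'\sm\mu$ is not a horizontal strip, then $\lambda\sm\mu\supseteq\lambda'\sm\mu$ is not either; the inductive hypothesis forces $x_{\mu(u)}^{\lambda'}=0$, hence $x_{\mu(u)}^\lambda=0$ as required. Otherwise, the inductive hypothesis gives $x_{\mu(u)}^{\lambda'}=y_{\mu(u)}^{\lambda'}=u\,\Phi_{\frakt^{\lambda'\sm\mu}}\,\C_{|\mu|}^{(n-1)}\,p_{\lambda'}$. Substituting and commuting $p_{\lambda'}$ past $\Phi_{\lambda\sm\lambda'}$ via Lemma~\ref{lem:commuting-phi}, the difference becomes
\[
y_{\mu(u)}^\lambda-x_{\mu(u)}^\lambda \;=\; u\,\Phi_{\frakt^{\lambda'\sm\mu}}\,\C_{|\mu|}^{(n-1)}\,\Phi_{\lambda\sm\lambda'}\,(1-p_{\lambda'})\,p_\lambda\,\B_n(q).
\]
Showing this vanishes would give $x_{\mu(u)}^\lambda=y_{\mu(u)}^\lambda$ throughout the inductive step, and then the Horizontal Strip Lemma (Lemma~\ref{horizontalstriplemma}) finishes: it forces $y_{\mu(u)}^\lambda=0$ when $\lambda\sm\mu$ is not a horizontal strip (and hence $x_{\mu(u)}^\lambda=0$), and in the horizontal-strip case delivers the stated equality $x_{\mu(u)}^\lambda=y_{\mu(u)}^\lambda$.

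The main obstacle is precisely this final cancellation. The strategy would be to expand $1-p_{\lambda'}=\sum_{\rho\vdash n-1,\,\rho\neq\lambda'}p_\rho$ and prune the sum: the Horizontal Strip Lemma applied at level $n-1$ kills terms with $\rho\sm\mu$ not a horizontal strip, while a Tower-Rule/central-idempotent comparison shows $p_\rho p_\lambda=0$ unless $\rho\lessdot\lambda$, removing further terms. For the surviving $\rho$'s (with $\rho\neq\lambda'$, $\rho\lessdot\lambda$, and $\rho\sm\mu$ a horizontal strip), one would apply the Straightening Lemma (Lemma~\ref{lem:horiz-strip-diff-by-cnostant}) to recast the contributions in terms of the natural tableau $\frakt^{\rho\sm\mu}$ and use that $y_{\mu(u)}^\rho\,\Phi_{\lambda\sm\rho}\,p_\lambda\,\B_n(q)$ is an $\RR_n(q)$-eigenvector in $S^\lambda$ (by Theorem~\ref{thm:recursive_eigenvector_construction}) whose eigenvalue $q\,\eigen_{\rho\sm\mu}(q)+[n]_q+q^n\content_{\lambda\sm\rho}(q)$ differs from the one produced along the $\lambda'$-branch; the mismatch, together with the semisimplicity of the $\RR_n(q)$-action, forces the sum over such $\rho$ to vanish, completing the induction.
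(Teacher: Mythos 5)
Your reduction is fine up to the last step: the base case, the identity $\frakt^{\lambda\sm\mu}|_{n-1}=\frakt^{\lambda'\sm\mu}$, the recursion $x_{\mu(u)}^\lambda = x_{\mu(u)}^{\lambda'}\,\Phi_{\lambda\sm\lambda'}\,p_\lambda\,\B_n(q)$, and the expression for $y_{\mu(u)}^\lambda-x_{\mu(u)}^\lambda$ involving $(1-p_{\lambda'})$ are all correct. The genuine gap is the final cancellation, and the mechanism you propose for it does not work. First, you invoke ``the semisimplicity of the $\RR_n(q)$-action,'' but the lemma is asserted for every semisimple $\HH_n(q)$ (any $q$ that is neither $0$ nor a small root of unity), and the paper explicitly notes that $\RR_n(q)$ can fail to be diagonalizable even when $\HH_n(q)$ is semisimple; diagonalizability is only established for $q\in\R_{>0}$, and only \emph{after} this lemma. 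Second, even granting diagonalizability, the logic ``the eigenvalues mismatch, hence the sum vanishes'' is backwards: a sum of nonzero vectors from distinct eigenspaces is never zero, so distinctness of eigenvalues would only force each term to vanish if you already knew the total sum lies in a single (different) eigenspace --- i.e.\ if you already knew $y_{\mu(u)}^\lambda$ is an $\eigen_{\lambda\sm\mu}(q)$-eigenvector, which is not available independently (that is precisely a consequence of the identity $x=y$ you are proving). Third, the identification of the $\rho$-contributions $u\,\Phi_{\frakt^{\lambda'\sm\mu}}\,\C^{(n-1)}_{|\mu|}\,p_\rho\,\Phi_{\lambda\sm\lambda'}\,p_\lambda\,\B_n(q)$ with scalar multiples of $y^{\rho}_{\mu(u)}\,\Phi_{\lambda\sm\rho}\,p_\lambda\,\B_n(q)$ is unjustified: the appended box here is $\lambda\sm\lambda'$, not $\lambda\sm\rho$, the vector $u\,\Phi_{\frakt^{\lambda'\sm\mu}}\,\C^{(n-1)}_{|\mu|}\,p_\rho$ is not of the form $w_{\frakt(\fraks)}\,\C^{(n-1)}_{|\mu|}$ to which the Straightening Lemma applies, and there is no reason it is an $\RR_{n-1}(q)$-eigenvector. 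Finally, eigenvalue distinctness across the surviving $\rho$ would itself require proof and can fail at special admissible $q$.

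The good news is that your reduction can be closed without any spectral input. If $\rho\vdash n-1$ contributes, then $p_\rho\,p_\lambda\neq 0$ forces $\rho\lessdot\lambda$, and $W^{\lambda'}p_\rho\neq 0$ forces $\rho\geqdom\lambda'$ (Lemma \ref{lem:dominance-1}); for $\rho\neq\lambda'$ this means the corner $\lambda\sm\rho$ lies strictly below the row of $\lambda\sm\lambda'$, which is the lowest row of $\lambda\sm\mu$, so that corner lies in $\mu$ and hence $\mu\not\subseteq\rho$. The first part of the Horizontal Strip Lemma (at level $n-1$) then gives $u\,\Phi_{\frakt^{\lambda'\sm\mu}}\,\C^{(n-1)}_{|\mu|}\,p_\rho=0$, so every term vanishes individually. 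This is essentially a dominance argument, and it is how the paper proceeds, though more directly and without induction: Lemma \ref{technicallemma} shows $u\,\Phi_{\frakt^{\lambda\sm\mu}}\,p_{\frakt^{\lambda\sm\mu}} = u\,\Phi_{\frakt^{\lambda\sm\mu}}\,p_\lambda$ (expanding $p_\lambda=\sum_\frakq p_\frakq$ and killing all but one term via the dominance Lemmas \ref{lem:dominance-1} and \ref{lem:dominance-ws-pt}), after which centrality of $p_\lambda$ and the Horizontal Strip Lemma immediately give both cases of the statement; the Straightening Lemma plays no role in this lemma.
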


 Recall that by Lemma \ref{cor:nonhorizontalstripgives0}, when $\lambda \sm \mu$ is not a horizontal strip and $u \in S^\mu$, we have
\[    u \  \Phi_{\frakt^{\lambda \sm \mu}}  \ \B_{n, n-|\mu|}(q) \ p_{\lambda} = 0. \]
To prove Lemma \ref{lemma:xtoy}, we would like to apply the \emph{Horizontal Strip Lemma} (Lemma \ref{cor:nonhorizontalstripgives0}). However, in order to do so, we must first transform the $x_{\mu(u)}^{\lambda}$ to be of the form  $u \  \Phi_{\frakt^{\lambda \sm \mu}}  \  \B_{n, n-|\mu|}(q) \ p_{\lambda}$ rather than  $u \  \Phi_{\frakt^{\lambda \sm \mu}} \  p_{\frakt^{\lambda \sm \mu}} \  \B_{n, n-|\mu|}(q)$. This is the purpose of the Lemma \ref{technicallemma}, whose proof relies on two basic properties of $p_\lambda$ and $p_\frakt$, proved below in Lemmas \ref{lem:dominance-1} and \ref{lem:dominance-ws-pt}.

\begin{lemma}\label{lem:dominance-1}
        Assume $\HH_n(q)$ is semisimple. Let $\lambda$ and $\mu$ be two partitions of $n$
	such that $\lambda \not\geqdom \mu$.
	Then 
 \[ W^\mu \ p_\lambda = 0. \]
\end{lemma}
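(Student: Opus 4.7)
The plan is to identify $W^\mu$ with the standard Young permutation module of $\HH_n(q)$ and then invoke the Hecke-algebra analogue of Young's rule to decompose it into Specht modules.

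First, I would argue that $W^\mu$ is isomorphic as a right $\HH_n(q)$-module to $\ind_{\HH_\mu(q)}^{\HH_n(q)} \mathbf{1}$, where $\HH_\mu(q) := \HH_{\mu_1}(q) \otimes \cdots \otimes \HH_{\mu_k}(q)$ sits inside $\HH_n(q)$ in the standard way and $\mathbf{1}$ denotes the one-dimensional representation on which each simple generator acts by the scalar $q$. The isomorphism sends $1 \otimes 1$ to the word $\word(\frakt^\mu) = 1^{\mu_1} 2^{\mu_2} \cdots k^{\mu_k}$; indeed, if $s_i \in \symm_\mu$ then the $i$-th and $(i+1)$-st letters of this word are equal, so by \eqref{eqn: word-action} this word is rescaled by $q$, matching the action on $\mathbf{1}$. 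Comparing dimensions (both sides equal $n!/(\mu_1! \cdots \mu_k!)$) and using $\HH_n(q)$-equivariance then upgrades this to the desired isomorphism; alternatively, one recognizes the right-hand side as the module $m_\mu \HH_n(q)$ discussed in Remark~\ref{remark:mathasdictionary}.

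Second, iterated application of the Pieri rule (Corollary~\ref{pierirules}) gives
\[
\mathrm{ch}(W^\mu) = h_{\mu_1} h_{\mu_2} \cdots h_{\mu_k} = h_\mu,
\]
and the classical Young's rule for symmetric functions expands $h_\mu = \sum_\lambda K_{\lambda\mu}\, s_\lambda$, where the Kostka number $K_{\lambda\mu}$ vanishes unless $\lambda \geqdom \mu$. Hence
\[
W^\mu \cong \bigoplus_{\lambda \geqdom \mu} \left(S^\lambda\right)^{K_{\lambda\mu}}.
\]
Since $p_\lambda$ is the central idempotent of $\HH_n(q)$ projecting onto the $\lambda$-isotypic component (Proposition~\ref{lem:tower-rule}, \eqref{tower1}), and this isotypic component of $W^\mu$ is zero whenever $\lambda \not\geqdom \mu$, it follows immediately that $W^\mu \ p_\lambda = 0$.

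The main obstacle I anticipate is the careful justification of the identification $W^\mu \cong \ind_{\HH_\mu(q)}^{\HH_n(q)} \mathbf{1}$ in step one, since the word-action in \eqref{eqn: word-action} is slightly non-standard and requires checking that $\word(\frakt^\mu)$ really generates $W^\mu$ cyclically with the correct annihilator. Once this identification is in place, the decomposition via Young's rule and the centrality of $p_\lambda$ finish the proof essentially for free.
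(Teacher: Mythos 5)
Your proposal is correct, and it reaches the conclusion by a somewhat different route than the paper. Both arguments end the same way: right multiplication by the central idempotent $p_\lambda$ projects any right module onto its $S^\lambda$-isotypic component, so it suffices to know that $W^\mu$ has no Specht constituent $S^\lambda$ with $\lambda \not\geqdom \mu$. The paper gets this constituent constraint by citing Dipper--James (Corollary 4.12(i), after matching its $W^\mu$ and $S^\lambda$ with the $M^\mu$ and $D^\lambda$ of that reference), whereas you re-derive it: you identify $W^\mu$ with the induced module $\ind_{\HH_\mu(q)}^{\HH_n(q)}\mathbf{1}$ (equivalently $m_\mu\,\HH_n(q)$, as in Remark~\ref{remark:mathasdictionary}), apply the Frobenius characteristic to get $\mathrm{ch}(W^\mu)=h_\mu$, and invoke Young's rule $h_\mu=\sum_\lambda K_{\lambda\mu}s_\lambda$ with $K_{\lambda\mu}=0$ unless $\lambda\geqdom\mu$. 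Your version is more self-contained, using only tools already set up in the paper (the word action \eqref{eqn: word-action}, the semisimple Frobenius characteristic, and the isotypic projectors), at the cost of verifying the identification $W^\mu\cong\ind_{\HH_\mu(q)}^{\HH_n(q)}\mathbf{1}$; the verification you sketch does go through (the sorted word $\word(\frakt^\mu)$ is a $q$-eigenvector for $\HH_\mu(q)$, it generates $W^\mu$ since acting by $T_{s_i}$ at an ascent simply swaps letters, and the dimensions match). The paper's citation-based proof is shorter and rests on a Dipper--James result that is not tied to semisimplicity, while your argument leans on semisimplicity through the Frobenius characteristic --- harmless here, since the lemma assumes it. One tiny citation quibble: the fact that $p_\lambda$ projects onto the $\lambda$-isotypic component is established in Section~\ref{sec:idempotents} rather than by \eqref{tower1} itself, but this does not affect the argument.
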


\begin{proof}
As discussed in Section \ref{sec:idempotents}, $W^\mu \  p_\lambda$ is a right $\HH_n(q)$-module isomorphic to a direct sum of some number of copies of $S^\lambda.$ On the other hand, by \cite[Corollary 4.12 (i)]{DipperJames}, the module $W^\mu \supseteq W^\mu \  p_\lambda$ does not contain a copy of $S^\nu$ if $\nu \not\geqdom \mu$. (Note that our $W^\mu$ is called $M^\mu$ in \cite{DipperJames}, whereas our $S^\lambda$ agrees with the $D^\lambda$ of \cite{DipperJames} by \cite[Theorem 4.15 (i)]{DipperJames}). Thus, $W^\mu \  p_\lambda$ must be $0.$
\end{proof}

\begin{lemma}\label{lem:dominance-ws-pt}
    Assume $\HH_n(q)$ is semisimple. Let $\lambda \vdash n$ and $\frakq, \frakt \in \syt(\lambda)$ be such that $\frakt \not\geqdom \frakq$.
	Then
 \[ \word(\frakq) \ p_\frakt = 0. \]
\end{lemma}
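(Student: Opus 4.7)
The plan is to reduce this to Lemma~\ref{lem:dominance-1} by locating a ``bad level'' $k$ at which the dominance fails and pulling the corresponding central idempotent $p_{\lambda_k} \in \HH_k(q)$ out as the left-most factor of $p_\frakt$.

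First, unfolding Definition~\ref{def:dominanceorder}: since $\frakt \not\geqdom \frakq$, there exists some $k \in [1,n]$ such that, writing $\lambda_k := \shape(\frakt|_k)$ and $\mu_k := \shape(\frakq|_k)$, we have $\lambda_k \not\geqdom \mu_k$. Fix this $k$. By the Tower Rule (Proposition~\ref{lem:tower-rule}, equation \eqref{tower2}), $p_{\frakt|_k} = p_{\shape(\frakt|_1)} \cdots p_{\shape(\frakt|_k)}$ lies in $\HH_k(q)$, and $p_\frakt = p_{\frakt|_k} \cdot p_{\lambda_{k+1}} \cdots p_{\lambda_n}$. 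Because $p_{\lambda_k}$ is a central idempotent in $\HH_k(q)$ and $p_{\frakt|_k}$ is the rightmost initial segment of its defining product, one has $p_{\frakt|_k} = p_{\lambda_k} \cdot p_{\frakt|_k}$ (directly: $p_{\lambda_k} \cdot p_{\frakt|_k} = p_{\frakt|_k} \cdot p_{\lambda_k}$ by centrality in $\HH_k(q)$, and $p_{\frakt|_k} \cdot p_{\lambda_k} = p_{\frakt|_k}$ since $p_{\lambda_k}$ appears as an idempotent factor on the right of the defining product of $p_{\frakt|_k}$). Consequently
\[
p_\frakt \;=\; p_{\lambda_k} \cdot p_{\frakt|_k} \cdot p_{\lambda_{k+1}} \cdots p_{\lambda_n},
\]
exhibiting $p_{\lambda_k}$ as the leftmost factor.

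Next, I would show $\word(\frakq) \cdot p_{\lambda_k} = 0$, which immediately implies $\word(\frakq) \cdot p_\frakt = 0$ by the above factorization. Let $\frakr \in \syt(\lambda \sm \mu_k)$ be the skew tableau obtained from $\frakq$ by deleting the entries $1,\ldots,k$, so that $\frakq = \frakr(\frakq|_k)$ in the notation of Definition~\ref{defconstructtableau}. By Definition~\ref{def:phi}, $\word(\frakq) = \word(\frakq|_k) \cdot \Phi_\frakr$. Since $p_{\lambda_k} \in \HH_k(q)$, Lemma~\ref{lem:commuting-phi} gives
\[
\word(\frakq) \cdot p_{\lambda_k} \;=\; \word(\frakq|_k) \cdot \Phi_\frakr \cdot p_{\lambda_k} \;=\; \bigl(\word(\frakq|_k) \cdot p_{\lambda_k}\bigr)\,\Phi_\frakr .
\]
Now $\word(\frakq|_k)$ lies in $W^{\mu_k}$ (its content is exactly $\mu_k$), both $\lambda_k$ and $\mu_k$ are partitions of $k$, and $\lambda_k \not\geqdom \mu_k$. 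Applying Lemma~\ref{lem:dominance-1} inside $\HH_k(q)$ yields $W^{\mu_k} \cdot p_{\lambda_k} = 0$, so $\word(\frakq|_k) \cdot p_{\lambda_k} = 0$ and therefore $\word(\frakq) \cdot p_{\lambda_k} = 0$, completing the proof.

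The main obstacle I anticipate is purely bookkeeping: making sure that $p_{\lambda_k}$ can in fact be written as the leftmost factor of $p_\frakt$ (not merely an interior factor), since the $p_{\lambda_j}$ for $j > k$ are not central in $\HH_n(q)$ and cannot be commuted past $p_{\lambda_k}$ freely. The trick above---using that $p_{\frakt|_k} = p_{\lambda_k} \cdot p_{\frakt|_k}$ because $p_{\lambda_k}$ is central in the smaller algebra $\HH_k(q)$ containing $p_{\frakt|_k}$---is precisely what circumvents this, and once it is in place the rest is a direct application of Lemmas~\ref{lem:commuting-phi} and~\ref{lem:dominance-1}.
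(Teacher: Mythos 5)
Your proof is correct and follows essentially the same route as the paper's: locate a level $k$ where $\shape(\frakt|_k) \not\geqdom \shape(\frakq|_k)$, use centrality of $p_{\shape(\frakt|_k)}$ in $\HH_k(q)$ to place it as the leftmost factor of $p_\frakt$, and then kill $\word(\frakq)\,p_{\shape(\frakt|_k)}$ via Lemma~\ref{lem:commuting-phi} and Lemma~\ref{lem:dominance-1}. The only cosmetic difference is that you insert an extra copy of $p_{\shape(\frakt|_k)}$ by idempotency rather than commuting the existing factor to the front, which is the same centrality argument in slightly different clothing.
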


\begin{proof}
By assumption, $\frakt \not\geqdom \frakq$.
Thus, there exists some $1 \leq j \leq n$ such that
$\shape \left(\frakt \vert_j\right) \not\geqdom \shape \left(\frakq \vert_j\right)$.
Consider this $j$, and set
$\lambda' := \shape \left(\frakt \vert_j\right)$
and $\mu' := \shape \left(\frakq \vert_j\right)$,
so that $\lambda' \not\geqdom \mu'$.
Hence, Lemma~\ref{lem:dominance-1} yields
$W^{\mu'} p_{\lambda'} = 0$.
In particular,
$\word(\frakq\vert_j) p_{\lambda'} = 0$,
since $\word(\frakq\vert_j) \in W^{\mu'}$.

Recall that
\[
p_\frakt
= p_{\shape\left(\frakt\vert_1\right)} p_{\shape\left(\frakt\vert_2\right)} \cdots p_{\shape\left(\frakt\vert_n\right)} .
\]
The $j$-th factor $p_{\shape\left(\frakt\vert_j\right)} = p_{\lambda'}$ in this product is central in $\HH_j(q)$ and thus commutes with all the preceding factors, allowing us to rewrite the above as
\[
p_\frakt = p_{\lambda'}
\left(p_{\shape\left(\frakt\vert_1\right)} p_{\shape\left(\frakt\vert_2\right)} \cdots p_{\shape\left(\frakt\vert_{j-1}\right)}\right) \left(p_{\shape\left(\frakt\vert_{j+1}\right)} p_{\shape\left(\frakt\vert_{j+2}\right)} \cdots p_{\shape\left(\frakt\vert_n\right)}\right).
\]
Thus, in order to prove $\word(\frakq) \ p_\frakt = 0$, it suffices to show that $\word(\frakq) \ p_{\lambda'} = 0$, which we now do.

Write $\frakq$ as $\frakq'(\frakq\vert_j)$, where $\frakq' \in \syt(\lambda \sm \mu')$. Then, $\word(\frakq) = \word(\frakq\vert_j) \cdot \Phi_{\frakq'}$.
Hence,
\[
\word(\frakq) \ p_{\lambda'}
= \left(\word(\frakq\vert_j) \cdot \Phi_{\frakq'}\right) p_{\lambda'}
= \left(\word(\frakq\vert_j) \ p_{\lambda'}\right) \cdot \Phi_{\frakq'}
\qquad \text{by Lemma \ref{lem:commuting-phi}} .
\]
From $\word(\frakq\vert_j) \ p_{\lambda'} = 0$, we thus obtain
$\word(\frakq) \ p_{\lambda'} = 0$.
\end{proof}

We may now prove Lemma \ref{technicallemma}, which will allow us to apply the \emph{Horizontal Strip Lemma}.
\begin{lemma}\label{technicallemma}
    Assume $\HH_n(q)$ is semisimple. Let $\mu \subseteq \lambda$ and $u \in S^\mu$. Then,
    \[ u \ \Phi_{\frakt^{\lambda \sm \mu}} \ p_{\frakt^{\lambda \sm \mu}}  = u \ \Phi_{\frakt^{\lambda \sm \mu}} \  p_\lambda. \]
\end{lemma}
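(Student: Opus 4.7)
The plan is to expand $u \in S^\mu$ in Young's seminormal basis as $u = \sum_{\fraks \in \syt(\mu)} c_\fraks \, w_\fraks$ and show that both sides of the claimed identity collapse to the common value $\sum_{\fraks \in \syt(\mu)} c_\fraks \, w_{\frakt^{\lambda \sm \mu}(\fraks)}$. For the left-hand side this is immediate from Lemma~\ref{lem:wts} applied with $\frakt = \frakt^{\lambda \sm \mu}$, so the content of the argument is on the right.

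For the right-hand side, first use the tower rule (Proposition~\ref{lem:tower-rule}) to expand $p_\lambda = \sum_{\frakq \in \syt(\lambda)} p_\frakq$, and for each term use Lemma~\ref{lem:commuting-phi} to commute $p_\fraks \in \HH_{|\mu|}(q)$ past $\Phi_{\frakt^{\lambda \sm \mu}}$:
\[
w_\fraks \, \Phi_{\frakt^{\lambda \sm \mu}} \, p_\frakq
= \word(\fraks) \, \Phi_{\frakt^{\lambda \sm \mu}} \, p_\fraks \, p_\frakq .
\]
Iterated application of equation~\eqref{eqn:pt'=ptpt'} gives $p_\frakq = p_{\frakq\vert_{|\mu|}} \, p_\frakq$, and then mutual orthogonality of the Young idempotents in $\HH_{|\mu|}(q)$ (Proposition~\ref{lem:interaction-of-jucys-murphys-with-pts}\,(1)) reduces $p_\fraks \, p_\frakq$ to $\delta_{\fraks,\,\frakq\vert_{|\mu|}} \, p_\frakq$. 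Consequently only $\frakq$ with $\frakq\vert_{|\mu|} = \fraks$ contribute, and these are precisely the tableaux of the form $\frakq = \mathfrak{r}(\fraks)$ for $\mathfrak{r} \in \syt(\lambda \sm \mu)$.

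The remaining task is to show that all such terms with $\mathfrak{r} \neq \frakt^{\lambda \sm \mu}$ vanish. Since $\word(\fraks) \, \Phi_{\frakt^{\lambda \sm \mu}} = \word(\frakt^{\lambda \sm \mu}(\fraks))$ by Definition~\ref{def:phi}, Lemma~\ref{lem:dominance-ws-pt} will kill such a term provided $\mathfrak{r}(\fraks) \not\geqdom \frakt^{\lambda \sm \mu}(\fraks)$. This dominance check is the main technical obstacle: the two full tableaux agree on the restriction to entries $1, \ldots, |\mu|$ (both equal $\fraks$ there), so their relative dominance is determined entirely by the restrictions to entries $> |\mu|$, which are exactly $\mathfrak{r}$ and $\frakt^{\lambda \sm \mu}$ in $\syt(\lambda \sm \mu)$. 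Because $\frakt^{\lambda \sm \mu}$ is the unique $\leqdom$-maximum there, every $\mathfrak{r} \neq \frakt^{\lambda \sm \mu}$ satisfies $\mathfrak{r} \ledom \frakt^{\lambda \sm \mu}$; this strict inequality lifts to $\mathfrak{r}(\fraks) \ledom \frakt^{\lambda \sm \mu}(\fraks)$, which by antisymmetry of dominance forces $\mathfrak{r}(\fraks) \not\geqdom \frakt^{\lambda \sm \mu}(\fraks)$, as required.

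The sole surviving term $\mathfrak{r} = \frakt^{\lambda \sm \mu}$ then contributes $\word(\frakt^{\lambda \sm \mu}(\fraks)) \, p_{\frakt^{\lambda \sm \mu}(\fraks)} = w_{\frakt^{\lambda \sm \mu}(\fraks)}$, and summing over $\fraks$ recovers the expression the left-hand side was shown to equal. The hardest part is the lifting of the dominance comparison described above; everything else is careful bookkeeping with the tower rule, Lemma~\ref{lem:commuting-phi}, and orthogonality of Young idempotents.
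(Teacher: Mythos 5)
Your proposal is correct and follows essentially the same route as the paper's proof: expand $p_\lambda$ by the tower rule, commute $p_\fraks$ past $\Phi_{\frakt^{\lambda\sm\mu}}$, use orthogonality to restrict to $\frakq$ with $\frakq\vert_{|\mu|}=\fraks$, and kill all terms except $\frakq=\frakt^{\lambda\sm\mu}(\fraks)$ via the dominance argument and Lemma~\ref{lem:dominance-ws-pt}. The only differences are cosmetic: you invoke Lemma~\ref{lem:wts} for the left-hand side and spell out the orthogonality and dominance-lifting steps in slightly more detail than the paper does.
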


\begin{proof}
The equality we are proving is linear in $u$, and the Specht module $S^\mu$ is spanned by the elements $w_\fraks$ for $\fraks \in \syt(\mu)$.
Thus, it suffices to prove it for $u = w_\fraks$.

Let $\fraks \in \syt(\mu)$. We must show that $w_\fraks \ \Phi_{\frakt^{\lambda \sm \mu}} \ p_{\frakt^{\lambda \sm \mu}}  = w_\fraks \ \Phi_{\frakt^{\lambda \sm \mu}} \  p_\lambda$. Combining \eqref{tower2} with \eqref{def:skew-pt}, we can easily see that
\begin{equation}
p_{\frakt^{\lambda \sm \mu}(\fraks)} = p_\fraks \ p_{\frakt^{\lambda \sm \mu}}.
\label{pf.technicallemma.1}
\end{equation}

Let  $|\mu| = j$ and $|\lambda| = n$.
By the tower rule (Proposition \ref{lem:tower-rule}: \eqref{tower1}),
\begin{align*}
w_\fraks  \  \Phi_{\frakt^{\lambda \sm \mu}} \  p_\lambda
&= \sum_{\substack{\frakq \in \syt(\lambda)} } w_{\fraks} \  \Phi_{\frakt^{\lambda \sm \mu}} \  p_{\frakq} \\
&= \sum_{\substack{\frakq \in \syt(\lambda)} } \word(\fraks) \  \Phi_{\frakt^{\lambda \sm \mu}} \  p_{\fraks} \  p_{\frakq}
\tag{by \eqref{eq.def:youngseminormalunits.wt=} and Lemma \ref{lem:commuting-phi}} \\
&= \sum_{\substack{\frakq \in \syt(\lambda)\\ \frakq|_{j} = \fraks}} \word(\fraks) \  \Phi_{\frakt^{\lambda \sm \mu}} \  p_{\frakq}
\tag{by Proposition \ref{lem:interaction-of-jucys-murphys-with-pts} (1)} \\
&= \sum_{\substack{\frakq \in \syt(\lambda) \\ \frakq|_{j} = \fraks}} \word(\frakt^{\lambda\sm\mu}(\fraks)) \  p_{\frakq}.
\tag{by \eqref{eq.def:phi.wordPhi}}
\end{align*}
Note that for all $\frakq \neq \frakt^{\lambda\sm \mu}(\fraks)$ with $\frakq \vert_j = \fraks,$ we have
$\frakt^{\lambda\sm \mu}(\fraks) \gtdom \frakq$
and therefore
$\frakq \not\geqdom \frakt^{\lambda\sm \mu}(\fraks)$,
so that $\word(\frakt^{\lambda\sm\mu}(\fraks)) \  p_{\frakq} = 0$
by Lemma \ref{lem:dominance-ws-pt}.
The above sum thus becomes 
\begin{align*}
    w_\fraks  \  \Phi_{\frakt^{\lambda \sm \mu}} \  p_\lambda &= \word(\frakt^{\lambda \sm \mu}(\fraks)) \ p_{\frakt^{\lambda \sm \mu}(\fraks)} \\
    &= \word(\fraks)  \ \Phi_{\frakt^{\lambda \sm \mu}} \ p_\fraks \ p_{\frakt^{\lambda \sm \mu}}
	\tag{by \eqref{pf.technicallemma.1} and \eqref{eq.def:phi.wordPhi}}\\
    &=  w_\fraks  \  \Phi_{\frakt^{\lambda \sm \mu}} \  p_{\frakt^{\lambda \sm \mu}} ,
	\tag{by \eqref{eq.def:youngseminormalunits.wt=} and Lemma \ref{lem:commuting-phi}}
\end{align*}
as desired.
\end{proof}

Lemma \ref{lemma:xtoy} now easily follows from Lemma \ref{technicallemma}. 

\begin{proof}[Proof of Lemma \ref{lemma:xtoy}]
By Lemma \ref{technicallemma},
 \[ u \ \Phi_{\frakt^{\lambda \sm \mu}} \ p_{\frakt^{\lambda \sm \mu}}  = u \ \Phi_{\frakt^{\lambda \sm \mu}} \  p_\lambda. \]
Using the fact that $p_\lambda$ is central, we have 
\[ x_{\mu(u)}^{\lambda} = u \ \Phi_{\frakt^{\lambda \sm \mu}} \ p_{\frakt^{\lambda \sm \mu}} \ \B_{n, n-|\mu|}(q)= u \  \Phi_{\frakt^{\lambda \sm \mu}}  \ p_{\lambda} \  \B_{n, n-|\mu|}(q) = u \  \Phi_{\frakt^{\lambda \sm \mu}} \ \B_{n, n-|\mu|}(q) \ p_{\lambda}. \]

Thus by Lemma \ref{cor:nonhorizontalstripgives0}, if $\lambda \sm \mu$ is not a horizontal strip,
\[ u \  \Phi_{\frakt^{\lambda \sm \mu}}  \  \B_{n, n-|\mu|}(q) \ p_{\lambda} = 0. \]
\end{proof}

Having shown that $x_{\mu(u)}^{\lambda} = 0$ if $\lambda \sm \mu$ is not a horizontal strip, we will now slightly abuse notation and remove the copies of 0 from $\basis_\lambda$, so that
\[ \basis_\lambda := \{  y_{\mu(u)}^{\lambda} : = u\  \Phi_{\frakt^{\lambda \sm \mu}}  \  \B_{n, n-|\mu|}(q) \ p_{\lambda}: u \in \kappa_\mu \textrm{ for } \lambda \sm \mu \textrm{ a horizontal strip} \}. \]

\section{Proof of the Main Theorem: Spectrum for random-to-random}\label{section:maintheorem}

Our final task is to apply our work in Sections \ref{sec:flagintroduction}---\ref{section:spanning} to compute the full spectrum of $\RR_n(q)$ for any $q \in \CC$. We show in Section \ref{sec:eigenbasisprimeq} that when $q \in \R_{>0}$, the set $\basis_\lambda$ defines an eigenbasis of $S^\lambda$ for every $\lambda \vdash n$. We use this to prove our main result, Theorem \ref{thm:summarythm} (Theorem \ref{thm:intromainthm} from the Introduction) in Section \ref{sec:proofofmainthm}. Finally, in Section \ref{sec:further-results-for-positive-q}, we discuss special cases of Theorem \ref{thm:summarythm}, including Corollary \ref{cor:introsecondeigen} describing $\eigen_{(n) \sm \emptyset}(q)$ and $\eigen_{(n-1,1) \sm (1,1)}(q)$ from the introduction. 

\subsection{An $\RR_n(q)$-eigenbasis when $q$ is positive}\label{sec:eigenbasisprimeq}

We are at last ready to show that $\basis_\lambda$  is a basis for $S^\lambda$ when $q \in \R_{>0}$. Recall that for a fixed $\lambda$ of size $n$, by Lemma \ref{lemma:xtoy} the nonzero elements of $\basis_\lambda$ are of the form 
\[ \big \{ y_{\mu(u)}^{\lambda}= u\  \Phi_{\frakt^{\lambda \sm \mu}}  \  \B_{n, n-|\mu|}(q) \ p_{\lambda}: u \in \kappa_\mu \textrm{ and } \lambda \sm \mu \textrm{ is a horizontal strip}  \big \}, \]
where $\kappa_\mu$ is a basis for $ \ker \left(\RR_{|\mu|}(q) \vert_{S^\mu} \right)$. 

 We have already shown in Theorem \ref{thm:orderedspanningset} that for any $\lambda \vdash n$, the elements of $\basis_\lambda$ span $S^\lambda$ when $q \in \R_{>0}$. In Proposition \ref{cor:dim-restr-s-la} and Theorem \ref{thm:eigenbasisSlambda} below, we argue that the only additional ingredients needed to conclude that $\basis_\lambda$ is a  basis of $S^\lambda$ are that 
\[ \dim \ker \RR_j(q) = d_j \quad \quad \textrm{ for } \quad \quad 0 \leq j \leq n. \]

\begin{prop}\label{cor:dim-restr-s-la}
   Suppose $q \in \R_{>0}$.
   Then, for all $0 \leq j \leq n$, we have $\ker \RR_j(q) \cong \DD_j(q)$, and thus
   \[\dim \ker \left(\RR_{j}(q) \vert_{S^\mu}\right) = d^\mu
   \qquad \text{for any partition } \mu \vdash j .\]
\end{prop}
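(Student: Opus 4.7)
The plan is to combine three ingredients already established in the paper: (i) the identification of $\ker \RR_j(q)$ with $\ker \B_j^\ast(q)$ in the positive real regime, (ii) the explicit computation of $\ker \B_j^\ast(q)$ as a left $\HH_j(q)$-module carrying the derangement representation, and (iii) the bimodule decomposition of $\HH_j(q)$ that lets us read off kernel dimensions on each Specht module from representation multiplicities.

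First, since $q \in \R_{>0}$, the algebra $\HH_j(q)$ is semisimple (no positive real is a primitive $k$-th root of unity for $k \geq 2$), so all results of Section~\ref{section:reptheoryofhecke} and in particular Theorem~\ref{cor:prime-eigenspace} apply. Moreover, Lemma~\ref{lemma:whenqisreal}~(2) gives the equality $\ker \RR_j(q) = \ker \B_j^\ast(q)$ as subspaces (and therefore as left $\HH_j(q)$-modules, since both kernels are left $\HH_j(q)$-stable).

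Next, I would apply Theorem~\ref{cor:prime-eigenspace} in the case where the induction parameter equals $j$ itself, i.e.\ to the $[0]_q$-eigenspace. In this boundary case the induction becomes trivial: $\HH_{j,0}(q) = \HH_j(q)$ and $(S^{(0)})^\ast$ is the trivial one-dimensional module, so
\[
\ker \B_j^\ast(q) \;\cong\; \ind_{\HH_{j,0}(q)}^{\HH_j(q)} \left(\DD_j(q) \otimes (S^{(0)})^\ast\right) \;\cong\; \DD_j(q).
\]
Combined with step (i), this yields $\ker \RR_j(q) \cong \DD_j(q)$ as left $\HH_j(q)$-modules.

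For the dimension statement, I would use the bimodule decomposition \eqref{eq:leftrightbimoduledecomposition}, $\HH_j(q) \cong \bigoplus_{\nu \vdash j} (S^\nu)^\ast \otimes S^\nu$, under which right multiplication by $\RR_j(q)$ acts on the $S^\nu$ tensor factor only. Hence
\[
\ker \RR_j(q) \;\cong\; \bigoplus_{\nu \vdash j} (S^\nu)^\ast \otimes \ker\!\left(\RR_j(q)\vert_{S^\nu}\right)
\]
as left $\HH_j(q)$-modules, so the multiplicity of $(S^\mu)^\ast$ in $\ker \RR_j(q)$ equals $\dim \ker(\RR_j(q)\vert_{S^\mu})$. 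On the other hand, by Definition~\ref{def:desrep}, the multiplicity of $(S^\mu)^\ast$ in $\DD_j(q)$ is exactly $d^\mu$. Equating the two via the isomorphism from the previous step gives $\dim \ker(\RR_j(q)\vert_{S^\mu}) = d^\mu$, as desired.

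There is no serious obstacle here: all the hard work was done in Theorem~\ref{cor:prime-eigenspace} (which hinged on the flag model and the double centralizer theorem) and in Lemma~\ref{lemma:whenqisreal} (which used the positivity of $q$ to identify the two kernels). The only minor care needed is to verify that the induction parameter $n-j = 0$ edge case of Theorem~\ref{cor:prime-eigenspace} does indeed collapse to the derangement representation, which is immediate from the definitions.
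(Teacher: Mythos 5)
Your proof is correct and follows essentially the same route as the paper: semisimplicity from $q \in \R_{>0}$, Lemma~\ref{lemma:whenqisreal}~(2) to identify $\ker \RR_j(q)$ with $\ker \B_j^\ast(q)$, and the $[0]_q$-eigenspace case of Theorem~\ref{cor:prime-eigenspace} to get $\DD_j(q)$. The only cosmetic difference is that you extract the dimension statement by spelling out the bimodule decomposition \eqref{eq:leftrightbimoduledecomposition} directly, whereas the paper cites Corollary~\ref{cor:righteigenleftmodule}~(2), which encodes the same argument.
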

\begin{proof}
Since $q \in \R_{>0}$, the algebra $\HH_j(q)$ is semisimple, so that Theorem \ref{cor:prime-eigenspace} yields $\ker \B^*_j(q) \cong \DD_j(q)$.
Furthermore, Lemma \ref{lemma:whenqisreal} (2) shows that $\ker \RR_j(q) = \ker \B_j^\ast(q)$.
Hence,
\[\ker \RR_j(q) = \ker \B_j^\ast(q) \cong \DD_j(q) \cong \bigoplus_{\frakt \in \D_j} \left(S^{\shape(\frakt)}\right)^*.\]
Now apply Corollary \ref{cor:righteigenleftmodule} (2).
\end{proof}

\begin{theorem}\label{thm:eigenbasisSlambda}
If $q \in \R_{>0}$, then the set $\basis_{\lambda}$ is an eigenbasis for the right action of $\RR_n(q)$ on $S^\lambda$.
\end{theorem}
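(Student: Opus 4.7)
The approach is a short dimension count that consolidates the heavy lifting already done. First, I will observe that, by Lemma \ref{lemma:xtoy}, the set $\basis_\lambda$ (with zero elements removed) is indexed by pairs $(\mu, u)$ where $\mu \subseteq \lambda$ satisfies that $\lambda \sm \mu$ is a horizontal strip and $u$ ranges over the basis $\kappa_\mu$ of $\ker \left(\RR_{|\mu|}(q)\mid_{S^\mu}\right)$. Since $q \in \R_{>0}$, Proposition \ref{cor:dim-restr-s-la} gives $|\kappa_\mu| = d^\mu$, and so the indexed family $\basis_\lambda$ has cardinality
\[ |\basis_\lambda| = \sum_{\substack{\mu \subseteq \lambda \\ \lambda \sm \mu \text{ horizontal strip}}} d^\mu. \]

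Next, I will prove the combinatorial identity
\[ f^\lambda = \sum_{\substack{\mu \subseteq \lambda \\ \lambda \sm \mu \text{ horizontal strip}}} d^\mu, \]
which is the dimension match needed. This comes directly from the Frobenius characteristic description of the derangement representation in Definition \ref{def:desrep}: the relation $\sum_{j=0}^n \mathrm{ch}(\DD_j(q)) h_{n-j} = h_{1^n}$, combined with $\mathrm{ch}(\DD_j(q)) = \sum_{\mu \vdash j} d^\mu s_\mu$, the Pieri rule (which expands $s_\mu h_{n-j}$ as a sum of $s_\lambda$ over $\lambda \vdash n$ with $\lambda \sm \mu$ a horizontal strip), and the classical identity $h_{1^n} = \sum_{\lambda \vdash n} f^\lambda s_\lambda$, yields the claim upon comparing coefficients of $s_\lambda$.

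Combining the two steps, $|\basis_\lambda| = f^\lambda = \dim S^\lambda$. Since Theorem \ref{thm:orderedspanningset} shows that $\basis_\lambda$ spans $S^\lambda$, a spanning indexed family of size $\dim S^\lambda$ must in fact be linearly independent: every element is nonzero, all elements are distinct, and together they form a basis. Finally, each $y^\lambda_{\mu(u)} \in \basis_\lambda$ is an eigenvector for $\RR_n(q)$ by Theorem \ref{thm:eigenvectors} (as rewritten via Lemma \ref{lemma:xtoy}), so $\basis_\lambda$ is the claimed eigenbasis. No genuine obstacle arises here; the real work (straightening, spanning, and enforcing the horizontal-strip constraint) is already absorbed into Theorems \ref{thm:orderedspanningset} and \ref{thm:eigenvectors}, and the present theorem functions as a dimension-counting capstone.
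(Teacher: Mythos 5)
Your proof is correct and follows essentially the same route as the paper: spanning from Theorem \ref{thm:orderedspanningset}, the eigenvector property from Theorem \ref{thm:eigenvectors} via Lemma \ref{lemma:xtoy}, the count $|\kappa_\mu| = d^\mu$ from Proposition \ref{cor:dim-restr-s-la}, and the identity $\sum_{\lambda \sm \mu \text{ horizontal strip}} d^\mu = f^\lambda$ obtained by comparing $s_\lambda$-coefficients in \eqref{eq:frobeniusderangement}, concluding by the dimension count that the spanning family is a basis. The only difference is cosmetic bookkeeping (you phrase the count as an indexed family and spell out the Pieri-rule expansion, while the paper bounds $|\basis_\lambda|$ above and below), so there is nothing to change.
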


\begin{proof}
    By Theorem \ref{thm:orderedspanningset} the set $\basis_\lambda$ spans $S^\lambda$ and by Lemma \ref{lemma:xtoy} consists of elements
    \[ \basis_\lambda = \{ y_{\mu(u)}^{\lambda}: u \in \kappa_\mu \textrm{ for } \lambda \sm \mu \textrm{ a horizontal strip} \}. \]
    By Theorem \ref{thm:eigenvectors}, each element of $\basis_\lambda$ is an eigenvector of $\RR_n(q)$. 
    
    Thus it remains to prove that the $y_{\mu(u)}^{\lambda}$ are linearly independent. 
    We will show that $\lvert \basis_\lambda\rvert  = f^\lambda$, from which the result will follow. Since $\basis_\lambda$ spans $S^\lambda$,
\[ \dim(S^\lambda) = f^\lambda \leq |\basis_\lambda|. \]

By Proposition \ref{cor:dim-restr-s-la}, we have $\dim \ker \left(\RR_{|\mu|}(q) \vert_{S^\mu}\right) = d^\mu$, and thus $\basis_\lambda$ has at most $d^\mu$ nonzero $y_{\mu(u)}^{\lambda}$ for each horizontal strip $\lambda \sm \mu$ of $\lambda$, so
\[ \lvert \basis_\lambda \rvert \leq \sum_{\lambda \sm \mu \textrm{ a horizontal strip}} d^\mu. \]
Comparing $s_\lambda$-coefficients in \eqref{eq:frobeniusderangement} reveals
\[ \sum_{\lambda \sm \mu \textrm{ a horizontal strip}} d^\mu = f^\lambda,\]
and so $|\basis_\lambda| = f^\lambda$.  
\end{proof}

\begin{remark}[Eigenbasis for $\HH_n(q)$]
\label{rmk:eigenbasisforH}\rm
Note that it is not difficult to construct an $\RR_n(q)$-eigenbasis of $\HH_n(q)$ using the eigenbasis for each $S^\lambda$ by embedding each $y_{\mu(u)}^\lambda$ in $\HH_n(q)$. 

For any composition $\nu$, define the injective linear map
\begin{align*}
    \iota: W^\nu &\hookrightarrow \HH_n(q)\\
    {\bf w} &\longmapsto m_\nu T_{\sigma_{\bf w}},
\end{align*}
where $\sigma_{\bf w}$ is the unique element in $X_\nu$ for which ${\bf w} = \word(\frakt^\nu) \sigma_{\bf w} .$ We abuse notation and write 
\[ \iota(y_{\mu(u)}^{\lambda}) := \iota( u \cdot \Phi_{\frakt^{\lambda \sm \mu}}) \cdot \B_{n, n-|\mu|}(q)\cdot p_\lambda  \in \HH_n(q).\]

Then using the construction in \cite{Mathas} (see Remark \ref{remark:mathasdictionary}) a basis for the $S^\lambda$-isotypic component of $\HH_n(q)$ is 
\[ \left\{ p_{\frakt} \cdot m_\lambda \cdot   \iota(y_{\mu(u)}^{\lambda}) : \frakt \in \syt(\lambda) \textrm{ and } y_{\mu(u)}^{\lambda} \in \basis_\lambda \right \}.\] 
Varying over all $\lambda$ gives a $\RR_n(q)$-eigenbasis of $\HH_n(q)$.

\end{remark}
We immediately obtain the following.

\begin{cor}\label{thm:realpositivecase}
    Suppose $q \in \R_{>0}$. Then the following holds:
    \begin{enumerate}
		\item The right action of $\RR_n(q)$ on $\HH_n(q)$ is diagonalizable. \smallskip
        \item The $\eigen(q)$-eigenspace of $\RR_n(q)$ carries the left $\HH_n(q)$-representation \[\bigoplus_{\substack{\lambda \sm \mu \textrm{ a horizontal strip:} \\ \eigen_{\lambda \sm \mu}(q) = \eigen(q)}}\underbrace{\left(\left(S^\lambda\right)^\ast \right) \oplus \cdots \oplus\left(\left(S^\lambda\right)^\ast \right)}_{d^\mu \text{ copies}}.\] \smallskip
     
    \end{enumerate}
\end{cor}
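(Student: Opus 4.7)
The plan is to leverage the eigenbasis construction already established in Theorem \ref{thm:eigenbasisSlambda} together with the $(\HH_n(q),\HH_n(q))$-bimodule decomposition of $\HH_n(q)$ recalled in \eqref{eq:leftrightbimoduledecomposition}. Since $q \in \R_{>0}$, the algebra $\HH_n(q)$ is semisimple, so we may freely invoke Corollary \ref{cor:righteigenleftmodule}.

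For part (1), I would argue via the bimodule decomposition
\[
\HH_n(q) \cong \bigoplus_{\lambda \vdash n} (S^\lambda)^* \otimes S^\lambda,
\]
under which right multiplication by $\RR_n(q)$ acts only on the $S^\lambda$ tensor factor of each summand. By Theorem \ref{thm:eigenbasisSlambda}, the set $\basis_\lambda$ is an eigenbasis for the right action of $\RR_n(q)$ on $S^\lambda$ for every $\lambda$; hence $\RR_n(q)$ is diagonalizable on each summand, and therefore diagonalizable on all of $\HH_n(q)$.

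For part (2), the eigenvalue multiplicity of $\eigen(q)$ for the action of $\RR_n(q)$ on $S^\lambda$ can be read off $\basis_\lambda$ directly. By Theorem \ref{thm:eigenvectors}, each element $y_{\mu(u)}^\lambda \in \basis_\lambda$ is an eigenvector with eigenvalue $\eigen_{\lambda \sm \mu}(q)$; the indexing set consists of horizontal strips $\lambda \sm \mu$ together with choices of $u \in \kappa_\mu$, and by Proposition \ref{cor:dim-restr-s-la} we have $|\kappa_\mu| = d^\mu$. Thus the multiplicity of $\eigen(q)$ as an eigenvalue of $\RR_n(q)$ on $S^\lambda$ is
\[
m_{\eigen(q)}(\lambda) \;=\; \sum_{\substack{\mu \subseteq \lambda:\, \lambda \sm \mu \text{ a horiz. strip}\\ \eigen_{\lambda \sm \mu}(q) = \eigen(q)}} d^\mu.
\]
Applying Corollary \ref{cor:righteigenleftmodule}(2) with $\varphi = \RR_n(q)$ then identifies the $\eigen(q)$-eigenspace, as a left $\HH_n(q)$-module, with the direct sum of $m_{\eigen(q)}(\lambda)$ copies of $(S^\lambda)^*$ over all $\lambda \vdash n$. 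Combining the sums over $\lambda$ and $\mu$ into a single sum indexed by horizontal strips yields precisely the stated decomposition.

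The main obstacle is essentially notational rather than mathematical: all of the hard work has already been done, namely the construction of $\basis_\lambda$ in Section \ref{section:spanning}, the eigenvalue computation in Theorem \ref{thm:eigenvectors}, and the flag-theoretic identification $\ker \RR_j(q) \cong \DD_j(q)$ in Proposition \ref{cor:dim-restr-s-la}. The one point deserving care is that Corollary \ref{cor:righteigenleftmodule} requires both semisimplicity of $\HH_n(q)$ and diagonalizability of $\RR_n(q)$ as hypotheses; the former is ensured by $q \in \R_{>0}$ and the latter is exactly part (1), so the two parts should be proved in this order.
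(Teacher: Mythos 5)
Your proposal is correct and follows essentially the same route as the paper: Theorem \ref{thm:eigenbasisSlambda} supplies the eigenbasis $\basis_\lambda$ of each $S^\lambda$, Proposition \ref{cor:dim-restr-s-la} gives $|\kappa_\mu| = d^\mu$, and the bimodule decomposition \eqref{eq:leftrightbimoduledecomposition} (via Corollary \ref{cor:righteigenleftmodule}) translates per-Specht-module eigenvalue multiplicities into left $\HH_n(q)$-module structure on the eigenspaces. You have merely spelled out the invocation of Corollary \ref{cor:righteigenleftmodule} that the paper leaves implicit with ``Now recall the bimodule decomposition,'' and your remark about proving (1) before (2) so that the diagonalizability hypothesis of that corollary is available is a sound observation.
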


\begin{proof}
    \noindent (1) The first claim follows immediately from Theorem~\ref{thm:eigenbasisSlambda}. \smallskip
	
	\noindent (2) For a fixed $S^\lambda$, the construction of the eigenbasis $\basis_\lambda$ and Proposition \ref{cor:dim-restr-s-la} show that $\basis_\lambda$ contains $d^\mu$ many $\eigen_{\lambda \sm \mu}(q)$-eigenvectors of $\RR_n(q)$ coming from $\kappa_\mu$.
	Now recall the bimodule decomposition \eqref{eq:leftrightbimoduledecomposition}.
\end{proof}

\subsection{Proof of Theorem \ref{thm:intromainthm}: Spectrum of $\RR_n(q)$}\label{sec:proofofmainthm}

Theorem \ref{thm:eigenbasisSlambda} gives a complete understanding of how $\RR_n(q)$ acts on $S^\lambda$ when $q \in \R_{>0}$. We apply this knowledge to compute the spectrum for $\RR_n(q)$ for $q \in \CC$.

\begin{theorem}[Theorem \ref{thm:intromainthm}]\label{thm:summarythm} For $q \in \CC$, the action of $\RR_n(q)$ on $\HH_n(q)$ has the following properties:
\begin{enumerate}
    \item All eigenvalues of $\RR_n(q)$ are of the form 
    \[  \eigen_{\lambda \sm \mu}(q) = q^{n} \content_{\lambda \sm \mu}(q) + \sum_{k=|\mu|+1}^{n} q^{n-k} \  [k]_q,  \]
    where $\lambda \sm \mu$ is a horizontal strip with $|\lambda| = n$ and $0 \leq |\mu| \leq n$.
\medskip
    \item 
The (algebraic) multiplicity of a fixed eigenvalue $\eigen(q)$ is given by
\[ \sum_{\substack{\lambda \sm \mu \textrm{ a horizontal strip}: \\ \eigen_{\lambda \sm \mu}(q) = \eigen(q)}} f^\lambda d^\mu, \]
where $d^\mu = |\kappa_\mu|$ is the number of desarrangement tableaux of shape $\mu$. \medskip
\item Every $\eigen_{\lambda \sm \mu}(q)$ is a polynomial in $q$ with non-negative integer coefficients.
\end{enumerate}
\end{theorem}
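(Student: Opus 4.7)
The plan is to handle parts (1) and (2) uniformly by extending the characteristic polynomial formula from positive real $q$ to all $q \in \CC$ via the polynomial identity trick, then to attack (3) by a direct combinatorial manipulation of the explicit formula for $\eigen_{\lambda \sm \mu}(q)$.

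First I would define the candidate characteristic polynomial
\[
\chi(y, q) := \prod_{\substack{\mu \subseteq \lambda \vdash n \\ \lambda \sm \mu \text{ a horizontal strip}}} \bigl(y - \eigen_{\lambda \sm \mu}(q)\bigr)^{f^\lambda d^\mu}.
\]
Since $q^{n}\content_{\lambda \sm \mu}(q) \in \Z[q]$ (as noted in the Introduction) and the remaining summand in $\eigen_{\lambda \sm \mu}(q)$ is visibly in $\Z[q]$, we have $\chi(y, q) \in \Z[y, q]$. For each $q \in \R_{>0}$, Theorem~\ref{thm:eigenbasisSlambda} furnishes an $\RR_n(q)$-eigenbasis of each Specht module $S^\lambda$ with the claimed eigenvalues and multiplicities, and combining this with the bimodule decomposition \eqref{eq:leftrightbimoduledecomposition} (together with a total-dimension count $\sum_\lambda (f^\lambda)^2 = n!$) shows $\det(y \cdot 1 - \RR_n(q)) = \chi(y, q)$ for every $q \in \R_{>0}$. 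Since $\RR_n(q)$ has matrix entries in $\Z[q]$ in the basis $\{T_w\}_{w \in \symm_n}$, Lemma~\ref{lem:suffices-to-prove-for-infinite-q}(1) propagates this identity from the infinite set $\R_{>0}$ to all of $\CC$, yielding parts (1) and (2).

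The heart of the argument is (3). The plan is first to establish the following combinatorial lemma: \emph{in a horizontal strip $\lambda \sm \mu$, the boxes lie on pairwise distinct diagonals $i - j$}. Supposing two boxes $(a, b)$ and $(a', b')$ of the strip with $a < a'$ share a diagonal $a - b = a' - b'$, then $b < b'$, and the partition inequalities $\mu_{a'} \leq \mu_a < b \leq \lambda_{a'}$ force $(a', b) \in \lambda \sm \mu$ as well, contradicting the horizontal-strip property via column $b$. Given this lemma, I would rewrite
\[
\eigen_{\lambda \sm \mu}(q) = \frac{1}{q - 1}\left(\sum_{(i,j) \in \lambda \sm \mu} q^{n + j - i} - [n - m]_q\right),
\]
where $m = |\mu|$ (a short algebraic manipulation using $[k]_q = (q^k - 1)/(q - 1)$). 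Setting $N = n - m$ and writing the \emph{strictly} increasing sequence $e_1 < e_2 < \cdots < e_N$ of exponents $\{n + j - i : (i, j) \in \lambda \sm \mu\}$, the bound $n + j - i \geq n - (\ell(\lambda) - 1) \geq 1$ valid for any box in $\lambda \vdash n$, combined with strict monotonicity of the $e_k$'s, yields $e_k \geq k$ for all $k$. Pairing the $k$-th smallest exponent $e_k$ with the term $q^{k-1}$ of $[n - m]_q$ then gives the explicit decomposition
\[
\eigen_{\lambda \sm \mu}(q) = \sum_{k=1}^{N} \frac{q^{e_k} - q^{k-1}}{q - 1} = \sum_{k=1}^{N} q^{k-1} \, [e_k - k + 1]_q \in \Z_{\geq 0}[q],
\]
proving (3).

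I expect the main obstacle to be the distinct-diagonals lemma, since it is the only step that genuinely requires the horizontal-strip hypothesis; the rest of (3) is routine rearrangement once the $e_k$'s are known to be strictly increasing and bounded below. A related subtlety is that the naive formula for $\eigen_{\lambda \sm \mu}(q)$ mixes positive $q$-contents from boxes above the diagonal with negative $q$-contents from boxes below, so the pairing of exponents is essential to make the apparent cancellations manifestly non-negative.
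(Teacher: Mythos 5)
Your proposal is correct. For parts (1) and (2) you follow essentially the same route as the paper: establish the eigenbasis statement for $q \in \R_{>0}$ via Theorem \ref{thm:eigenbasisSlambda} and Proposition \ref{cor:dim-restr-s-la}, assemble the characteristic polynomial through the multiplicity-$f^\lambda$ decomposition of $\HH_n(q)$, and propagate the identity to all $q\in\CC$ with Lemma \ref{lem:suffices-to-prove-for-infinite-q}(1). For part (3) you take a genuinely different path. The paper groups the two sums entry-by-entry along the tableau $\frakt^{\lambda\sm\mu}$, using $q^{n-k}\bigl(q^k[\content_{\frakt^{\lambda\sm\mu},k}]_q+[k]_q\bigr)=q^{n-k}\,[\content_{\frakt^{\lambda\sm\mu},k}+k]_q$ together with $\content_{\frakt^{\lambda\sm\mu},k}\ge 1-k$; this is a two-line argument and, notably, never uses that $\lambda\sm\mu$ is a horizontal strip. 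You instead prove that the boxes of a horizontal strip lie on pairwise distinct diagonals (your argument for this is correct), clear denominators to write $\eigen_{\lambda\sm\mu}(q)=\frac{1}{q-1}\bigl(\sum_{(i,j)\in\lambda\sm\mu}q^{\,n+j-i}-\sum_{k=1}^{N}q^{\,k-1}\bigr)$ with $N=n-|\mu|$, and pair the sorted exponents $e_k\ge k$ with $q^{k-1}$ to obtain $\sum_{k}q^{k-1}[e_k-k+1]_q\in\Z_{\geq 0}[q]$ --- also correct, provided the manipulation is read in the field of rational functions (both sides are polynomials, so the identity persists at $q=1$). What your version buys is the structural fact about distinct diagonals and an explicit sorted-exponent closed form for the eigenvalue; what the paper's version buys is brevity and the observation that positivity holds for an arbitrary skew shape, the horizontal-strip hypothesis being needed only to know which eigenvalues actually occur.
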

Note that when $\lambda = \mu$, the formula $\eigen_{\lambda \sm \lambda}(q)$ is vacuous, and so
 $\eigen_{\lambda \sm \lambda}(q) = \eigen_{\emptyset}(q) = 0$ .

\begin{proof} 
The matrix representing the action of $\RR_n(q)$ on the $T_w$ basis of $\HH_n(q)$ has entries in $\Z[q].$ Hence, by Lemma \ref{lem:suffices-to-prove-for-infinite-q} (1), it suffices to prove (1) and (2) for infinitely  many $q$. We will do so in the case that $q \in \R_{>0}$. \smallskip

  \noindent  (1) Let $q \in \R_{>0}$. By Theorem \ref{thm:eigenbasisSlambda}, for any $\lambda \vdash n$ the set $\basis_\lambda$ is a basis of $S^\lambda$. By Theorem \ref{thm:eigenvectors}, each $y_{\mu(u)}^{\lambda}$ is an eigenvector of $\RR_n(q)$ with eigenvalue $\eigen_{\lambda \sm \mu}(q)$. The decomposition of $\HH_n(q)$ as a direct sum of Specht modules then gives the claim.  \medskip

    \noindent (2) Again let $q \in \R_{>0}$. For a fixed $S^\lambda$, the construction of the eigenbasis $\basis_\lambda$ and Proposition \ref{cor:dim-restr-s-la} show that $\basis_\lambda$ contains $d^\mu$ many $\eigen_{\lambda \sm \mu}(q)$-eigenvectors of $\RR_n(q)$ coming from $\kappa_\mu$.
	Since $\HH_n(q)$ is a direct sum of $f^\lambda$ copies of each Specht module $S^\lambda$, we thus obtain an eigenbasis of $\HH_n(q)$ that contains $f^\lambda d^\mu$ many $\eigen_{\lambda \sm \mu}(q)$-eigenvectors of $\RR_n(q)$ coming from each horizontal strip $\lambda \sm \mu$.
	Summing over all $\lambda \sm \mu$ where $\eigen(q) = \eigen_{\lambda \sm \mu}(q)$ then gives the claim. \medskip

 \noindent    (3) Finally, we will prove that for all $q \in \CC$ \begin{equation}\label{eq:eigenvalueformula} 
       q^{n} \content_{\lambda \sm \mu}(q) + \sum_{k=|\mu|+1}^{n} q^{n-k} \  [k]_q \in \Z_{\geq 0}[q].
     \end{equation}
     Since 
     \begin{align}
          q^{n} \content_{\lambda \sm \mu}(q) + \sum_{k=|\mu|+1}^{n} q^{n-k} \  [k]_q  &= \sum_{k = |\mu| + 1}^n q^{n - k}\left(q^k \content_{\frakt^{\lambda \sm \mu}, k}(q) + [k]_q\right) \nonumber\\
          &= \sum_{k = |\mu| + 1}^n q^{n - k}\left[\content_{\frakt^{\lambda \sm \mu}, k} + k \right]_q,
		  \label{eq:positive-poly}
     \end{align}
     the claim follows by observing that $\content_{\frakt^{\lambda \sm \mu}, k} \geq -k$ for all $k.$ 
\end{proof}

\begin{remark}\rm
Note that Theorem \ref{thm:summarythm} neither claims that $\RR_n(q)$ acts semisimply on $\HH_n(q)$ nor explains the left $\HH_n(q)$-representation structure on eigenspaces for general $q \in \CC$. In fact, it is not true that $\RR_n(q)$ is always diagonalizable, even when $\HH_n(q)$ is semisimple. 
\end{remark}

\begin{remark}[Degree of $\eigen_{\lambda \sm \mu}(q)$ as a polynomial in $q$] \label{rem:degree}\rm
For $\mu \subsetneq \lambda,$ define
\[ C_{\lambda \sm \mu} = \max_{|\mu|+1 \leq k \leq n} \left \{ \content_{\frakt^{\lambda \sm \mu},k} \right\}.\]
Write $\deg(\eigen_{\lambda \sm \mu}(q))$ to be the $q$-degree of $\eigen_{\lambda \sm \mu}(q)$. Then 
\eqref{eq:positive-poly} reveals that 
\[ \deg(\eigen_{\lambda \sm \mu}(q)) =   n + C_{\lambda \sm \mu} - 1.\] 
\end{remark}

\subsection{Special cases of Theorem \ref{thm:intromainthm}}\label{sec:further-results-for-positive-q}

We will use Theorem \ref{thm:summarythm} and to compute the eigenvalues $\eigen_{\lambda \sm \mu}(q)$ in some special cases of interest.

\vskip.1in

\paragraph{{\bf Hook eigenvalues $\eigen_{(n-k,1^k) \sm (j-k,1^k)}(q)$}}
In the case that $\lambda = (n-k,1^k)$ and $\mu = (j-k,1^k)$, our formula $\eigen_{\lambda \sm \mu}(q)$ simplifies dramatically. 
\begin{cor}\label{cor:introsecondeigen}
The case $\lambda = (n-k,1^k)$ and $\mu = (j-k,1^k)$ gives the eigenvalue
\[ \eigen_{(n-k,1^k) \sm (j-k,1^k)}(q) = [n-j]_q \ [n+j-k]_q.\]
\end{cor}
\begin{proof}
Using the formula in Theorem \ref{thm:summarythm}(1) and rearranging gives:
\begin{align*}
    \eigen_{(n-k,1^k) \sm (j-k,1^k)}(q) &= \sum_{i=j}^{n-1} q^n   [i-k]_q  + q^{n-(i+1)} [i+1]_q \\
    &= \sum_{i=j}^{n-1} q^{n} [i-k]_q + q^{i-j} [n+j-i]_q \\
    &= \sum_{i=j}^{n-1} q^{i-j} \big( [n+j-k]_q \big) \\
    &= [n-j]_q [n+j-k]_q.
\end{align*}
\end{proof}

We can specialize Corollary \ref{cor:introsecondeigen} to obtain the largest and second largest eigenvalues of $\RR_n(q)$ when $q \in \R_{>0}$.



\begin{cor}\label{prop:topeigenvalue}
For any $q \in \CC$, when $\lambda = (n)$ and $\mu = \emptyset$, we have 
\[ \eigen_{(n) \sm \emptyset}(q) = ([n]_q)^2,\]
which has left and right eigenvector 
\[ m_{(n)} = \sum_{w \in \symm_n} T_w. \]
When $q \in \R_{>0}$, the eigenvalue $\eigen_{(n) \sm \emptyset}(q) = ([n]_q)^2$ is the largest eigenvalue of $\RR_n(q)$.
\end{cor}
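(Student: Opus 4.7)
The plan is to directly verify that $m_{(n)} = \sum_{w \in \symm_n} T_w$ is a simultaneous left and right $([n]_q)^2$-eigenvector of $\RR_n(q)$, then match this with the formula from Theorem~\ref{thm:summarythm}, and finally establish maximality for $q \in \R_{>0}$ by a term-wise comparison. The first step is the identity $m_{(n)} \cdot T_{s_i} = q \cdot m_{(n)}$ for every $i \in [n-1]$. To prove this I would split $\symm_n = A \sqcup B$ where $A = \{w : \ell(ws_i) > \ell(w)\}$ and $B = \{w : \ell(ws_i) < \ell(w)\}$; since right multiplication by $s_i$ gives a bijection $A \leftrightarrow B$, the two cases of \eqref{eq:heckeactiononitself} contribute $\sum_{v \in B} T_v$ and $q \sum_{v \in A} T_v + (q-1) \sum_{w \in B} T_w$, which together equal $q \cdot m_{(n)}$. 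The mirror calculation gives $T_{s_i} \cdot m_{(n)} = q \cdot m_{(n)}$. Applying these identities along each of the $n$ summands of $\B_n(q)$ and $\B_n^*(q)$ yields $m_{(n)} \cdot \B_n(q) = [n]_q \cdot m_{(n)}$ and $m_{(n)} \cdot \B_n^*(q) = [n]_q \cdot m_{(n)}$ (and symmetrically on the left), so $m_{(n)} \cdot \RR_n(q) = ([n]_q)^2 \cdot m_{(n)} = \RR_n(q) \cdot m_{(n)}$.

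Next I would reconcile with the formula of Theorem~\ref{thm:summarythm}. Since $\frakt^{(n) \sm \emptyset}$ is the single-row tableau placing $k$ at $(1, k)$, we have $\content_{\frakt^{(n) \sm \emptyset}, k} = k - 1$. Substituting into the rewritten form \eqref{eq:positive-poly} and expanding $[k]_q = (1-q^k)/(1-q)$ telescopes the resulting sum $\sum_{k=1}^{n} q^{n-k}[2k-1]_q$ into $([n]_q)^2$; the remaining cases $q = 1$ and $q = 0$ follow by continuity or direct substitution. This confirms $\eigen_{(n) \sm \emptyset}(q) = ([n]_q)^2$ for every $q \in \CC$ and is consistent with the independent verification via $m_{(n)}$.

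For maximality when $q \in \R_{>0}$, the key ingredient is the bound $\content_{\frakt^{\lambda \sm \mu}, k} \leq k - 1$ for any horizontal strip $\lambda \sm \mu$ with $|\lambda| = n$ and every entry $k$ of $\frakt^{\lambda \sm \mu}$. This holds because at the position $(x_k, y_k)$ of entry $k$, every box weakly northwest either lies in $\mu$ or holds a strictly smaller entry of the skew filling, forcing $x_k + y_k - 1 \leq k$ and hence $y_k - x_k \leq k - 1$. Since $[m]_q$ is strictly increasing in $m \geq 0$ for $q \in \R_{>0}$, the term-wise inequality $[\content_{\frakt^{\lambda \sm \mu}, k} + k]_q \leq [2k-1]_q$ follows, and the missing summands $q^{n-k}[2k-1]_q$ for $1 \leq k \leq |\mu|$ are non-negative, so summing yields $\eigen_{\lambda \sm \mu}(q) \leq \eigen_{(n) \sm \emptyset}(q) = ([n]_q)^2$. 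No step here presents a serious obstacle; the most substantive ingredient is the content bound for skew tableaux, which is an elementary but essential consequence of the standardness of $\frakt^{\lambda \sm \mu}$.
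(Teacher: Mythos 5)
Your proposal is correct and takes essentially the same route as the paper. You verify the identity $\eigen_{(n)\sm\emptyset}(q) = ([n]_q)^2$ by telescoping the sum $\sum_{k=1}^n q^{n-k}[2k-1]_q$ (the paper instead pairs terms as $\sum_i q^i\bigl(q^{n-i}[i]_q + [n-i]_q\bigr) = \sum_i q^i[n]_q$, an equivalent rearrangement), you establish that $m_{(n)}$ is a two-sided $([n]_q)^2$-eigenvector by the same observation $m_{(n)}T_{s_i} = q\,m_{(n)}$ that underlies the paper's appeal to Example~\ref{ex:m1n-j} and centrality of $m_{(n)}$, and you prove maximality via the content bound $\content_{\frakt^{\lambda\sm\mu},k} \le k-1$ plus monotonicity of $[m]_q$ for $q>0$, which is the precise statement behind the paper's terse remark that the contents of $(n)\sm\emptyset$ ``dominate'' those of any other horizontal strip; your spelled-out hook/rectangle counting argument for the content bound is a welcome elaboration but not a different idea.
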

\begin{proof}
The formula $ \eigen_{(n) \sm \emptyset}(q) = ([n]_q)^2$ comes from Corollary \ref{cor:introsecondeigen} specialized to the case that $j=k=0$.

It is not hard to check that for any $q \in \CC$, the element $m_{(n)}$ is always an eigenvector of $\B_n(q), \B_n^*(q)$ and $\RR_n(q)$ with eigenvalues $[n]_q$ and $\eigen_{(n) \sm \emptyset}(q) = ([n]_q)^2$, respectively. This follows from the fact that $\RR_n(q) = \B_n^*(q) \B_n(q)$, the element $m_{(n)}$ is central in $\HH_n(q)$ and by Example \ref{ex:triv-rep}
\[ m_{(n)} \B_n^*(q) = [n]_q m_{(n)}  =\B_n^*(q)  m_{(n)};   \quad \quad m_{(n)} \B_n(q) = [n]_q m_{(n)} =  \B_n(q) m_{(n)}. \]
It is clear that $\eigen_{(n) \sm \emptyset}(q) = ([n]_q)^2$ is the largest eigenvalue when $q \in \R_{>0}$ since the contents of $(n) \sm \emptyset$ are $\{ 0, 1, \cdots, n-1 \}$, which dominate the contents of any other horizontal strip $\lambda \sm \mu$.
\end{proof}

Recall from the introduction that we defined in \eqref{eq:probr2rinto} the random walk $\widetilde{\RR}_n(q)$ when $q^{-1} \in (0,1]$. 
In the theory of Markov chains, a \emph{stationary distribution} is an eigenvector for eigenvalue $1$ (after normalization). In the case of $\widetilde{\RR}_n(q)$, this corresponds to the eigenvalue $\eigen_{(n) \sm \emptyset}(q)$. 

Bufetov showed that any random walk on $\HH_n(q)$ defined by right multiplication with elements of $\HH_n(q)$ has stationary distribution given by the \emph{Mallows measure} \cite[Prop 2.3]{bufetov2020interacting}, described below.

\begin{definition}[Mallows Measure]
The \emph{Mallows measure} of the symmetric group is      
\[ \mathcal{M}(\symm_n, q^{-1}):= \sum_{w \in \symm_n} q^{\ell(w)} \tT_{w}  .\]
\end{definition}
That $\widetilde{\RR}_n(q), \widetilde{\B}_n^*(q)$ and $\widetilde{\B}_n(q)$ define random walks on $\HH_n(q)$ thus immediately implies the following. 

\begin{cor}\label{cor:uniformdist}
The Mallows measure $\mathcal{M}(\symm_n, q^{-1})$ is a stationary distribution for $\widetilde{\B}_n^*(q)$, $\widetilde{\B}_n(q)$ and $\widetilde{\RR}_n(q)$.
\end{cor}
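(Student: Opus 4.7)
The plan is to observe that the Mallows measure is secretly just $m_{(n)}$ in disguise, and then leverage the fact (already established in Corollary \ref{prop:topeigenvalue}) that $m_{(n)}$ is an eigenvector for each of $\B_n(q)$, $\B_n^*(q)$, and $\RR_n(q)$ with the ``correct'' eigenvalues, namely the normalizing constants that define the tilded versions.

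The first step is the basis change. Since $\tT_w = q^{-\ell(w)} T_w$ by definition, we have
\[
\mathcal{M}(\symm_n, q^{-1}) = \sum_{w \in \symm_n} q^{\ell(w)} \tT_w = \sum_{w \in \symm_n} q^{\ell(w)} \cdot q^{-\ell(w)} T_w = \sum_{w \in \symm_n} T_w = m_{(n)}.
\]
Next, I would unpack the normalization in Equation \eqref{eq:probr2rinto}: expanding $\tT_{s_{n-1}} \cdots \tT_{s_j} = q^{-(n-j)} T_{s_{n-1}} \cdots T_{s_j}$ causes the factors of $q^{n-j}$ in the numerator to cancel, so that
\[
\widetilde{\B}_n(q) = \tfrac{1}{[n]_q}\,\B_n(q), \qquad \widetilde{\B}_n^*(q) = \tfrac{1}{[n]_q}\,\B_n^*(q), \qquad \widetilde{\RR}_n(q) = \tfrac{1}{([n]_q)^2}\,\RR_n(q).
\]

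Finally, Corollary \ref{prop:topeigenvalue} yields
\[
m_{(n)} \cdot \B_n(q) = [n]_q\, m_{(n)}, \qquad m_{(n)} \cdot \B_n^*(q) = [n]_q\, m_{(n)}, \qquad m_{(n)} \cdot \RR_n(q) = ([n]_q)^2\, m_{(n)}.
\]
Dividing by the respective normalization constants, $m_{(n)} = \mathcal{M}(\symm_n, q^{-1})$ is a $1$-eigenvector for each of $\widetilde{\B}_n(q)$, $\widetilde{\B}_n^*(q)$, and $\widetilde{\RR}_n(q)$. Since each of these operators is a Markov chain (by the Diaconis--Ram construction recalled in the introduction), a right $1$-eigenvector is precisely a stationary distribution. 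Alternatively, this is the content of \cite[Proposition 2.3]{bufetov2020interacting}, which asserts that the Mallows measure is stationary for any Markov chain on $\HH_n(q)$ defined by right multiplication.

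There is no real obstacle here: the only subtlety is keeping straight the two bases $\{T_w\}$ and $\{\tT_w\}$ and noticing that the normalizing constants of the tilded operators are designed precisely so that $m_{(n)}$ becomes a $1$-eigenvector. Everything else is a citation of Corollary \ref{prop:topeigenvalue}.
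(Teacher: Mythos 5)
Your argument is correct. Where it differs from the paper: the paper's proof of this corollary is a one-line citation of Bufetov's general result (\cite[Prop 2.3]{bufetov2020interacting}) that the Mallows measure is stationary for \emph{any} random walk on $\HH_n(q)$ given by right multiplication, plus the observation that $\widetilde{\B}_n^*(q)$, $\widetilde{\B}_n(q)$, $\widetilde{\RR}_n(q)$ are such walks; the identification $\mathcal{M}(\symm_n,q^{-1}) = m_{(n)}$ and the link to Corollary \ref{prop:topeigenvalue} appear in the paper only as a consistency remark afterwards. You instead make the direct verification the main argument: $\mathcal{M}(\symm_n,q^{-1}) = m_{(n)}$, the tilded operators are $\frac{1}{[n]_q}\B_n(q)$, $\frac{1}{[n]_q}\B_n^*(q)$, $\frac{1}{([n]_q)^2}\RR_n(q)$, and Corollary \ref{prop:topeigenvalue} supplies exactly the matching eigenvalues, so $m_{(n)}$ is fixed under right multiplication by each normalized operator; since the transition matrix of the chain is precisely the matrix of that right multiplication in the $\{\tT_w\}$ basis, the coefficient vector $\left(q^{\ell(w)}\right)_{w}$ is stationary. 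This route is self-contained and makes the eigenvalue bookkeeping explicit, while the paper's route buys the statement with no computation and for arbitrary right-multiplication chains; you correctly note the Bufetov citation as an equivalent alternative. The only nuance worth stating explicitly is that ``$1$-eigenvector $=$ stationary distribution'' also uses that the coefficients $q^{\ell(w)}$ are nonnegative (and normalizable by $\left[n\right]_q!$), and that the probabilistic interpretation requires $q \geq 1$ as in the Diaconis--Ram setup; both hold here, so this is a presentational point rather than a gap.
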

Note that in our earlier notation, 
\[ \mathcal{M}(\symm_n, q^{-1}) = m_{(n)}  = \sum_{w \in \symm_n} T_w. \]
Thus Corollary \ref{cor:uniformdist} is consistent with Corollary \ref{prop:topeigenvalue}. 


The second largest eigenvalue of a Markov chain is of interest when computing the mixing time of the process. In \cite{DiekerSaliola}, Dieker--Saliola show that the second largest eigenvalue in the $q=1$ case is $\eigen_{(n-1,1) \sm (1,1)} = (n-2)(n+1)$. We show that this formula generalizes to $\HH_n(q)$ as follows. 
{\begin{cor}\label{thm:second-largest-eigenval}
For any $q \in \CC$ and $n \geq 2$, when $\lambda = (n-1,1)$ and $\mu = (1,1)$, we have
    \[\eigen_{(n-1,1) \sm (1,1)}(q) = [n-2]_q \ [n+1]_q.\] 
    Moreover, when $q \in \R_{>0},$ the eigenvalue $\eigen_{(n-1,1) \sm (1,1)}(q)$ is the second largest eigenvalue of $\RR_n(q)$ and occurs with multiplicity $n-1.$
\end{cor}\begin{proof}
Setting $j = 2$ and $k=1$ in Corollary \ref{cor:introsecondeigen} gives $ \eigen_{(n-1,1) \sm (1,1)}(q) = [n-2]_q [n+1]_q$.

  For the second claim, fix a non-empty horizontal strip $\lambda \sm \mu$  with $d^\mu \neq 0$ and $|\lambda| = n$ and $|\mu|= j$ such that
   \begin{equation}\label{eq:corsecondlargest.1} \lambda \sm \mu \neq (n) \sm \emptyset \quad \quad \textrm{ and}\quad \quad \lambda \sm \mu \neq (n-1, 1) \sm (1,1).\end{equation}
   It suffices to show that 
   $\eigen_{(n-1,1)\sm (1,1)}(q) - \eigen_{\lambda \sm \mu}(q)$ is strictly positive.
    
    First, assume that $j = 2.$ By \eqref{eq:corsecondlargest.1}, the horizontal strip $\lambda \sm \mu$ is forced to be $(n-2,1,1) \sm (1,1).$ It is straightforward to use Theorem \ref{thm:summarythm}(1) to check that \[\eigen_{(n-1,1) \sm (1,1)}(q) - \eigen_{(n-2,1, 1) \sm (1,1)}(q) = q^n([n-2]_q - [-2]_q)\] which is positive for $q > 0.$
    
    Now, assume $j > 2.$ Observe that \eqref{eq:corsecondlargest.1} forces $\mu$ to contain the partition $(1,1)$. Hence the contents of the $n-j$ cells in $\lambda \sm \mu$ are bounded above by $j-1, j, j + 1, \cdots, n - 2,$ meaning that  
    \[ \content_{\lambda \sm \mu}(q) \leq [j-1]_q + [j]_q + \cdots + [n-2]_q. \] 
    Therefore, 
    \begin{align*}
       \eigen_{(n-1,1) \sm (1,1)}(q) - \eigen_{\lambda \sm \mu}(q) &= q^n \left([1]_q + [2]_q + \cdots + [n-2]_q - \content_{\lambda \sm \mu}(q)\right) + \sum_{k = 3}^{j} q^{n-k}[k]_q\\
       &\geq q^n \left([1]_q + [2]_q + \cdots + [j-2]_q\right)+ \sum_{k = 3}^{j } q^{n-k}[k]_q
    \end{align*}
which is strictly positive for $q \in \R_{>0}$.

Finally, to see the multiplicity of $\eigen_{(n-1,1) \sm (1,1)}(q)$ is $n-1$, note that $d^{(1,1)} = 1$ and $f^{(n-1,1)} = n-1$. By the above argument, when $q \in \R_{>0}$ there are no other eigenvalues $\eigen(q)$ which coincide with $\eigen_{(n-1,1) \sm (1,1)}(q)$.
\end{proof}

\vskip.1in
\noindent
\paragraph{\bf{The kernel of $\RR_n(q)$}} 
When $\lambda = \mu,$ both sums in the formula for $\eigen_{\lambda \sm \mu}(q)$ in Theorem \ref{thm:summarythm}(1) are empty, hence $\eigen_{\lambda \sm \lambda}(q) = \eigen_{\emptyset}(q) = 0$ for all $q \in \CC.$ However, any solution to $\eigen_{\lambda \sm \mu}(q) = 0$ will also contribute to the kernel of $\RR_n(q)$. When $q \in \R_{>0}$, we have that $\eigen_{\lambda \sm \mu}(q) > 0$ whenever $\mu \neq \lambda$, confirming that the multiplicity of $0$ is $d_n$ for $q \in \R_{>0}$.

\vskip.1in
\noindent
\paragraph{\bf{The eigenvalue on $S^{1^n}$}} 

When $n$ is even, the unique $\frakt \in \syt(1^n)$ is a desarrangement tableau, since ${\sf Des}(\frakt) = \{ 1, \cdots ,n-1 \}$, so $[n] \sm {\sf Des}(\frakt) = \{ n\}$. Thus in this case $\eigen_{(1^n) \sm (1^n)} (q) =0$.

When $n$ is odd, $\frakt \in \syt(1^n)$ is not a desarrangement tableau. The only nontrivial horizontal strip of $(1^n)$ is $(1^n) \sm (1^{n-1}).$ By Theorem \ref{thm:summarythm}(1),
\begin{align*}
        \eigen_{(1^n) \sm (1^{n - 1})}(q) = q^n[-(n - 1)]_q + [n]_q = [n - (n - 1)]_q = [1]_q = 1.
    \end{align*}
Since $n-1$ is even in this case, $d^{(1^{n-1})} = 1$, so $\eigen_{(1^n) \sm (1^{n - 1})}(q)$ occurs with multiplicity 1.
\vskip.1in
\noindent
\paragraph{\bf{Eigenvalues with multiplicity zero: $d^\mu = 0$}} 
The eigenvalue $\eigen_{\lambda \sm \mu}(q)$ is constructed from $u \in \kappa_\mu$. Hence if $\kappa_\mu = \emptyset,$ the eigenvalue $\eigen_{\lambda \sm \mu}(q)$ will appear with multiplicity 0. This is reflected in Theorem \ref{thm:summarythm}(2), since if $\kappa_\mu = \emptyset$, then $d^\mu = 0$.

\bibliographystyle{abbrv}
\bibliography{bibliography}
\appendix
\section{Data}\label{appendix:data}

Below we include the spectrum of $\RR_n(q)$ for $n = 2,3,4,5$, where: 
\begin{itemize}
    \item Each row in the table corresponds to an eigenvalue $\eigen_{\lambda \sm \mu}(q)$ of $\RR_n(q)$ in $S^\lambda$. For the important cases of ${\lambda \sm \mu}$  explained in Section \ref{sec:further-results-for-positive-q}, we write $\eigen_{\lambda \sm \mu}(q)$ as computed in that section.
    \item The first column shows the corresponding $\lambda$ with $\mu \subseteq \lambda$ shaded in gray. The boxes in $\lambda \sm \mu$ are filled with the content they contribute to $\content_{\lambda \sm \mu}.$

    \item The third column indicates the (algebraic) multiplicity of the eigenvalue from $\RR_n(q)$ acting on $S^\lambda$ as indicated by Theorem \ref{thm:eigenbasisSlambda}.
    \item   The last column indicates the (algebraic) multiplicity of the eigenvalue from $\RR_n(q)$ acting on $\HH_n(q)$, as explained by Theorem \ref{thm:summarythm}.

\end{itemize}

\setlength{\extrarowheight}{.5cm}
\ytableausetup{boxsize=1.38em}
\subsubsection*{$n = 2$}
    \begin{center}
        \begin{longtable}{cccc}
       Horizontal strip  $\lambda \sm \mu$& $\eigen_{\lambda \sm \mu}(q)$ & Multiplicity in $S^\lambda$: $d^\mu$& Multiplicity in $\HH_n(q)$: $d^\mu f^\lambda$ \\[.4cm]\hline 
           \begin{ytableau}
             \color{blue}{0} &\color{blue} 1  
           \end{ytableau} & 
           $([2]_q)^2$ & $1$ & $1$\\[.4cm]
            \hline
           $\ydiagram[*(lightgray)]{1,1}*[*(white)]{1,1}$&  $0$ & $1$ & $1$
        \end{longtable}
    \end{center}
\subsubsection*{$n = 3$}
\setlength{\extrarowheight}{.7cm}

 \begin{center}
        \begin{longtable}{cccc}
           Horizontal strip  $\lambda \sm \mu$& $\eigen_{\lambda \sm \mu}(q)$ & Multiplicity in $S^\lambda$: $d^\mu$& Multiplicity in $\HH_n(q)$: $d^\mu f^\lambda$ \\[.4cm]\hline
           $\begin{ytableau}
               \color{blue} 0 & \color{blue}1 & \color{blue}2
           \end{ytableau}$ & 
           $([3]_q)^2$ & $1$ & $1$\\[.4cm]
            \hline
            $\ydiagram[*(lightgray)]{2,1}$  & $0$ & $1$ & $2$\\[.4cm]
           $\begin{ytableau}
               *(lightgray)& *(white) \color{blue}{1} \normalcolor\\ *(lightgray)
           \end{ytableau}$ & $q^3\color{blue}{[1]_q}\normalcolor + [3]_q = [1]_q \cdot [4]_q $ & $1$ & $2$\\[.4cm]
            \hline
            $\begin{ytableau}
                *(lightgray)\\
                *(lightgray)\\
            \color{blue}{-2}
            \end{ytableau}$ & $1$ & $1$ & $1$
        \end{longtable}
    \end{center}
    
\subsubsection*{$n = 4$}
\setlength{\extrarowheight}{1cm}
\begin{center}
        \begin{longtable}{cccc}
            Horizontal strip  & $\eigen_{\lambda \sm \mu}(q)$ & Multiplicity in & Multiplicity in \\[-.7cm]
           $\lambda \sm \mu$&   &  $S^\lambda$: $d^\mu$   & $\HH_n(q)$: $d^\mu f^\lambda$\\[.4cm]\hline
           $\begin{ytableau}
               \color{blue} 0 & \color{blue}{1} & \color{blue} 2 & \color{blue} 3
           \end{ytableau}$ & 
           $([4]_q)^2$ & $1$ & $1$\\[.4cm]
            \hline
            $\ydiagram[*(lightgray)]{3, 1}$  & $0$ &  $1$ & $3$\\
          $\begin{ytableau}
              *(lightgray) & *(lightgray) & \color{blue}{2}\\
              *(lightgray)
          \end{ytableau}$  & $q^4\color{blue}{[2]_q}\normalcolor + [4]_q = [1]_q \cdot [6]_q$ & $1$ & $3$\\
          $\begin{ytableau}
              *(lightgray) & \color{blue}{1} & \color{blue}{2} \normalcolor\\
              *(lightgray)
          \end{ytableau}$ &$q^4\left(\color{blue}[1]_q + [2]_q \normalcolor\right) + \left(q[3]_q + [4]_q\right) = [2]_q \cdot [5]_q$ & $1$ & $3$\\[.6cm]
            \hline
           $\ydiagram[*(lightgray)]{2,2}$ & $0$ & $1$ & $2$\\
           $\begin{ytableau}
               *(lightgray) & *(lightgray)\\
               *(lightgray) & \color{blue}{0}\normalcolor
           \end{ytableau}$  & $q^4\color{blue}[0]_q \normalcolor + [4]_q$ & $1$ & $2$\\[.6cm]
            \hline 
            $\ydiagram[*(lightgray)]{2,1,1}$  & $0$ & $1$ & $3$\\
           $\begin{ytableau}
               *(lightgray) & *(lightgray)\\
               *(lightgray)\\
               {\color{blue}{-2}}\normalcolor
           \end{ytableau}$  & $q^4\color{blue}{[-2]_q} + \normalcolor{[4]_q}$ & $1$ & $3$\\
          $\begin{ytableau}
              *(lightgray) & \color{blue}{1}\\
              *(lightgray)\\
              \color{blue}{-2}
          \end{ytableau}$   & $q^4\left(\color{blue}[-2]_q + [1]_q\normalcolor\right)\normalcolor + \left(q[3]_q + [4]_q\right)$ & $1$ & $3$\\[.8cm]
            \hline 
            $\ydiagram[*(lightgray)]{1, 1, 1, 1}$ & $0$ & $1$ & $1$
        \end{longtable}
    \end{center}

\subsubsection*{$n = 5$}
\setlength{\extrarowheight}{1.2cm}

\begin{center}
        \begin{longtable}{cccc}
            Horizontal strip  & $\eigen_{\lambda \sm \mu}(q)$ & Multiplicity in & Multiplicity in \\[-.8cm]
           $\lambda \sm \mu$&   &  $S^\lambda$: $d^\mu$   & $\HH_n(q)$: $d^\mu f^\lambda$\\[.4cm]\hline
           $\begin{ytableau}
               \color{blue} 0 & \color{blue} 1 & \color{blue} 2 & \color{blue} 3 & \color{blue} 4
           \end{ytableau}$ & 
           $([5]_q)^2$ & $1$ & $1$\\[.4cm]
            \hline
            $\ydiagram[*(lightgray)]{4,1}$ & $0$ & $1$ & $4$\\
           $\begin{ytableau}
               *(lightgray) & *(lightgray) & *(lightgray) & \color{blue}{3}\\
               *(lightgray)
           \end{ytableau}$ & $q^5 \color{blue}{[3]_q}\normalcolor + [5]_q = [1]_q \cdot [8]_q$ & $1$ & $4$\\
            $\begin{ytableau}
               *(lightgray) & *(lightgray) & \color{blue}{2} & \color{blue}{3}\\
               *(lightgray)
           \end{ytableau}$ & $q^5 \left(\color{blue}[2]_q + [3]_q \normalcolor \right) + \left(q[4]_q + [5]_q\right) = [2]_q \cdot [7]_q$& $1$ & $4$\\
            $\begin{ytableau}
               *(lightgray) & \color{blue}{1} & \color{blue}{2} & \color{blue}{3}\\
               *(lightgray)
           \end{ytableau}$ &$q^5 \left(\color{blue}[1]_q + [2]_q + [3]_q\normalcolor\right) + \left(q^2[3]_q + q[4]_q + [5]_q\right) = [3]_q \cdot [6]_q$ & $1$ & $4$\\[.6cm]
            \hline
            $\ydiagram[*(lightgray)]{3, 2}$ & $0$ & $2$ & $10$\\
            $\begin{ytableau}
                *(lightgray) & *(lightgray) & *(lightgray)\\
                *(lightgray) & \color{blue}{0}
            \end{ytableau}$& $q^5\left(\color{blue}[0]_q \normalcolor\right) + [5]_q$ & $1$ & $5$\\
            $\begin{ytableau}
                *(lightgray) & *(lightgray) & \color{blue}{2}\\
                *(lightgray) & *(lightgray)
            \end{ytableau}$ & $q^5\left(\color{blue}[2]_q \normalcolor\right) + [5]_q$ & $1$ & $5$\\
            $\begin{ytableau}
                *(lightgray) & *(lightgray) & \color{blue}{2}\\
                *(lightgray) & \color{blue}{0}
            \end{ytableau}$  & $q^5 \left(\color{blue}[0]_q + [2]_q\normalcolor\right) + \left(q[4]_q + [5]_q\right)$ & $1$ & $5$\\[.6cm]
            \hline
            $\ydiagram[*(lightgray)]{3,1,1}$ & $0$ & $2$ & $12$\\
            $\begin{ytableau}
                *(lightgray) & *(lightgray) & *(lightgray)\\ *(lightgray)\\ \color{blue}{-2}
            \end{ytableau}$& $q^5 \color{blue}[-2]_q\normalcolor + [5]_q$ & $1$ & $6$\\
           $\begin{ytableau}
                *(lightgray) & *(lightgray) & \color{blue}{2}\\ *(lightgray)\\ *(lightgray)
            \end{ytableau}$ & $q^5\color{blue}[2]_q\normalcolor + [5]_q = [1]_q \cdot [7]_q$ & $1$ & $6$\\
           $\begin{ytableau}
                *(lightgray) & *(lightgray) & \color{blue}{2}\\ *(lightgray)\\ \color{blue}{-2}
            \end{ytableau}$ & $q^5 \left(\color{blue}[-2]_q + [2]_q\normalcolor\right) + q[4]_q + [5]_q$ & $1$ & $6$\\
            $\begin{ytableau}
                *(lightgray) & \color{blue}{1} & \color{blue}{2}\\ *(lightgray)\\ \color{blue}{-2}
            \end{ytableau}$& $q^5 \left(\color{blue}[-2]_q + [1]_q + [2]_q\normalcolor\right) + \left(q^2[3]_q + q[4]_q + [5]_q\right)$ & $1$ & $6$\\[.8cm]
            \hline
            $\ydiagram[*(lightgray)]{2, 2, 1}$ & $0$ & $2$ & $10$\\
            $\begin{ytableau}
                *(lightgray) & *(lightgray)\\
                *(lightgray) & *(lightgray)\\
                \color{blue}{-2}
            \end{ytableau}$& $q^5 \color{blue}[-2]_q \normalcolor + [5]_q$ & $1$ & $5$\\
           $\begin{ytableau}
                *(lightgray) & *(lightgray)\\
                *(lightgray) & \color{blue}{0}\\
                *(lightgray)
            \end{ytableau}$ & $q^5 \color{blue}[0]_q \normalcolor + [5]_q$ & $1$ & $5$\\
           $\begin{ytableau}
                *(lightgray) & *(lightgray)\\
                *(lightgray) & \color{blue}{0}\\
                \color{blue}{-2}
            \end{ytableau}$ & $q^5 \left(\color{blue}[0]_q + [-2]_q \normalcolor\right) + q[4]_q + [5]_q$ & $1$ & $5$\\[.8cm]
            \hline
            $\ydiagram[*(lightgray)]{2, 1, 1, 1}$ & $0$ & $2$ & $8$\\
          $\begin{ytableau}
              *(lightgray) & *(lightgray)\\
              *(lightgray)\\
              *(lightgray)\\
              \color{blue}{-3}
          \end{ytableau}$  & $q^5\color{blue}[-3]_q \normalcolor + [5]_q$ & $1$ & $4$\\
           $\begin{ytableau}
              *(lightgray) & \color{blue}{1}\\
              *(lightgray)\\
              *(lightgray)\\
              *(lightgray)
          \end{ytableau}$  & $q^5 \color{blue}[1]_q \normalcolor + [5]_q = [1]_q \cdot [6]_q$ & $1$ & $4$\\[1cm]
            \hline
            $\begin{ytableau}
                *(lightgray)\\
                *(lightgray)\\
                *(lightgray)\\
                *(lightgray)\\
                \color{blue} -4
            \end{ytableau}$ & $1$ & $1$ & $1$
        \end{longtable}
    \end{center}

\end{document}